\newcommand{\T}{\mathbb{T}}
\newcommand{\ii }{{\rm i} }
\newcolumntype{C}[1]{>{\centering\arraybackslash}b{#1}}
\newcolumntype{R}[1]{>{\raggedleft\arraybackslash}b{#1}}
\newcolumntype{L}[1]{>{\raggedright\arraybackslash}b{#1}}
\newcolumntype{M}[1]{>{\centering}m{#1}}
\numberwithin{equation}{section}
\newcommand{\R}{\mathbb{R}}
\numberwithin{equation}{section}
\newtheorem{theorem}{Theorem}[section]
\newtheorem{proposition}[theorem]{Proposition}
\newtheorem{lemma}[theorem]{Lemma}
\newtheorem{remark}[theorem]{Remark}
\newtheorem{coro}[theorem]{Corollary}
\title{Desingularization of  time-periodic vortex motion\\  in bounded domains via KAM tools}
\author{Zineb Hassainia \qquad Taoufik Hmidi \qquad Emeric Roulley}
\date{}
\begin{document}
	\maketitle
	\begin{abstract}
	We examine the Euler equations within a simply-connected bounded domain. The dynamics of a single point vortex are  governed by a Hamiltonian system, with most of its energy levels corresponding to time-periodic motion. We show that for the single point vortex, under certain non-degeneracy conditions, it is possible to desingularize most of these trajectories into time-periodic concentrated vortex patches. We provide concrete examples of these non-degeneracy conditions, which are satisfied by a broad class of domains, including convex ones. 
	The proof uses Nash-Moser scheme and KAM techniques, in the spirit of the recent work on the leapfrogging motion \cite{HHM23}, combined with complex geometry tools. 
	Additionally, we employ a vortex duplication mechanism to generate synchronized time-periodic motion of multiple vortices. This approach can be, for instance,  applied to desingularize the motion of two symmetric dipoles (with  four vortices) in a disc or a rectangle. To our knowledge, this is the first result showing the existence of non-rigid  time-periodic motion for Euler equations in generic simply-connected bounded domain. This answers an open problem that has been pointed, for example, in \cite{BS18}.
		\end{abstract}

	\tableofcontents

	\section{Introduction}

In this study, we investigate the vortex dynamics governed by the 2D incompressible Euler equations in a  simply-connected domain $\mathbf{D}$. These equations are formulated in the velocity-vorticity form as follows:
\begin{equation}\label{Euler-Eq}
	\partial_{t}\boldsymbol{\omega} + u \cdot \nabla \boldsymbol{\omega} = 0,\qquad u = \nabla^{\perp}\Delta_{\mathbf{D}}^{-1}\omega,\qquad \nabla^{\perp} = (-\partial_{y},\partial_{x}),
\end{equation}
where $\Delta_{\mathbf{D}}$ is the standard  Laplacian supplemented with Dirichlet boundary condition.
This system has been studied for a long time and is still the subject of intensive activities touching multiple facets. Here, we don't in any case  pretend to make a complete review of the main significant results, which is an ambitious program,  but we shall simply focus on two important subjects that fit with the scope of the paper. The first one   is related to the existence and uniqueness of global solutions which is proved at the level of integrable  and bounded vortices for smooth domains by Yudovich in his seminal work \cite{Y63}. However, the problem turns  out to be more tricky for rough domains, for instance when the boundary contains corners or fractal sets. The  existence part for very general domains is done by G\'erard-Varet and Lacave \cite{GVL13}. As to the uniqueness part, only partial results have been performed in the last decade, under some  angle constraints or positive vorticity condition, see for instance  \cite{HZ21, HZ22, LZ19}. We also refer the reader to the recent work of Agrawal and Nahmod \cite{Agrawal} where the uniqueness is obtained under some constraints on  $C^{1,\alpha}$ domains.  Another important topic that will be explored in part in this paper is related to the emergence of  coherent structures.
One particularly significant facet of this field revolves around relative equilibria, which represent equilibrium states in a rigid body frame.
 They stand   as one of the main  central field in  the study of vortex motion, garnering considerable theoretical and experimental examination.
  For a comprehensive exploration of bifurcation techniques used to construct   rigid time-periodic solutions  stemming  from various  stationary radial states, we refer the  reader to an exhaustive list that outlines numerous pertinent questions and findings in this domain \cite{B82,CCG2,CCG4,HHHM16,HFMV,GHM,GHS,gomez2019symmetry,HMW20,HM3,HM2,HMV13}. Additional steady solutions have been discovered in \cite{CJS24,EJ20}.   We observe that all these results are concerned with  rigid motion where the shape of the vorticity is  conserved without any deformation over the time. Very recently, new structures on  quasi-periodic vortex patches for Euler equations have been constructed via KAM methods near ellipses \cite{BHM22}, Rankine vortices \cite{HR22}, and annuli \cite{HHR23}. Similar studies have been conducted over the past few years for more active scalar equations such as  \cite{GSIP23, HHM21, HR21, R22}.  More investigations can be found in \cite{CL23,CF13,EPT22,FMM23,FM24,FM24-1}.
On the other hand, as we shall see later, the point vortex system offers multiple configurations of non-rigid motion where the dynamics can be tracked explicitly as for the leapfrogging motion.  A classic example of this is provided by Love \cite{Love}, who studied two symmetric dipoles. Recently, Hassainia, Hmidi and Masmoudi \cite{HHM23} have proven the desingularization of this system, using KAM tools.

The primary objective of this paper is to construct non-rigid, time-periodic solutions for the Euler equations in simply connected bounded domains. This will be achieved by desingularizing the motion of point vortices using KAM approach. It is important to note that in bounded domains, the point vortex system, where the vorticity is concentrated at a finite number of points $z_1,...,z_N$ (with $N \in \mathbb{N}^*$) with circulations $\Gamma_1, ..,\Gamma_N$,  follows  the ODE  
\begin{equation}\label{pnt vort syst}
	\forall k\in\llbracket1,N\rrbracket,\quad\frac{d\xi_{k}(t)}{dt}=\sum_{\ell=1\atop\ell\neq k}^{N}\frac{\ii\Gamma_{\ell}}{\pi}\partial_{\overline{\xi}}G_{\mathbf{D}}\big(\xi_{\ell}(t),\xi_{k}(t)\big)+\frac{\ii\Gamma_{k}}{2\pi}\partial_{\overline{\xi}}\mathcal{R}_{\mathbf{D}}\big(\xi_{k}(t)\big).
\end{equation}
Here,  $G_{\mathbf{D}}$ and $\mathcal{R}_{\mathbf{D}}$ are the Green and Robin functions of the domain $\mathbf{D}$ whose  definitions are provided in \mbox{Section \ref{sec Green Robin}.}  Notice that in \cite{H58}, Helmholtz  introduced   this asymptotic system in  the flat  case $\mathbf{D} = \mathbb{R}^2$ with $G_{\mathbb{R}^2}(z, w) = \log |z - w|$ and $\mathcal{R}_{\mathbb{R}^2} \equiv 0$, to describe  the behavior of mutual interactions of several concentrated vortices.  Later on, Kirchhoff and Routh  \cite{K76, R81} recast this system in the  Hamiltonian form, with the Hamiltonian
$$H(\xi_1,..,\xi_N) = \sum_{1 \leqslant k < \ell \leqslant N} \tfrac{\Gamma_{k} \Gamma_{\ell}}{2\pi} G_{\mathbf{D}}(\xi_k, \xi_{\ell}) + \sum_{k=1}^{N} \tfrac{\Gamma_{k}^2}{4\pi} \mathcal{R}_{\mathbf{D}}(\xi_k).$$
%
%
%
The point vortex system provides a tractable model describing in suitable regimes the  fluid motion. Despite its simplicity, it captures the main  features of vortex interactions and serves as a foundation for understanding more intricate fluid behaviors. A huge literature has been performed over the past century addressing various aspects of the point vortex system. While it is impossible to cover all the significant contributions in this area, we will concentrate on the  most relevant topics related to this paper. Below, we will highlight some key results obtained for the flat case $\mathbf{D}=\R^2,$  which can be appropriately adapted for more general domains.
 The Cauchy-Lipschitz Theorem ensures that the trajectories of the vortices are well-defined and smooth as long as any two  point vortices remain distant from each other. Collapse in finite time may occur with signed circulations as documented in \cite{A79, A10, G77, GP22, H08, KS18, N75}, although this phenomenon is rare \cite{D22, MP84}. On the other hand, this Hamiltonian system enjoys three important independent conservation laws, which are in involution: total circulation, linear impulse and angular impulse. This implies that the point vortex system is completely integrable for $N\leqslant3$ and starts to exhibit chaotic motion from $N=4,$ for instance \mbox{see \cite{BBPV94}.} Within the complex structure of  point vortex system, certain long-lived or coherent structures emerge. A specific class is given by configurations keeping their geometric form unchanged throughout their evolution. 
 An illustration of this is given by Thomson polygons where the point vortices are located on the vertices of a regular polygon with the same circulation. This configuration rotates uniformly about its center. A general review on this topic can be found in \cite{A03}. One can desingularize such configurations into steady vortex patch motion through various methods such as variational techniques \cite{CLZ21,CQZZ21,CWWZ21,SV10,T85} or gluing method \cite{DDMW20}. The use of the contour dynamics approach, more efficient to track the dynamics, was first developped by Hmidi and Mateu for symmetric vortex pairs \cite{HM17}. The asymmetric case was solved in \cite{HH20} and the global bifurcation has been addressed in \cite{GH23}. For symmetric configurations involving more point vortices, we refer the reader to Garc\'ia's works on K\'arm\'an vortex street \cite{G19} and Thomson polygons \cite{G20}. Finally, the desingularization of  general configurations satisfying a natural non-degeneracy condition has been exposed in \cite{HW22}.\\
 
%

 In this work, we focus on the motion of a single point vortex within a bounded simply-connected domain. In this special case, only the second term in the right-hand side of \eqref{pnt vort syst} persists and the equation takes the   Hamiltonian  form
			\begin{align}\label{Hamilt-0}\dot{\xi}(t)=\nabla^{\perp}H_{\mathbf{D}}\big(\xi(t)\big),\qquad\nabla^{\perp}\triangleq\begin{pmatrix}
				-\partial_{y}\\
				\partial_{x}
			\end{pmatrix},\qquad H_{\mathbf{D}}\triangleq\tfrac{\Gamma}{4\pi}\mathcal{R}_{\mathbf{D}}.
			\end{align}
This 2D system will be explored with more details in  Section \ref{sec pt vortex motion}. In particular, we infer from \eqref{Range} that the set of energy levels is an unbounded connected interval, that is, 
\begin{equation}\label{Range0}
	H_{\mathbf{D}}(\mathbf{D})\triangleq[\lambda_{\textnormal{\tiny{min}}},\infty).
	\end{equation}	
 By virtue of  the Hamiltonian structure, all the particle trajectories are closed curves defined by  the energy levels 
 $$ 
 \mathcal{E}_{\lambda}=\big\{z\in \mathbb{C}, \quad H_{\mathbf{D}}(z)=\lambda\big\},\qquad\lambda\geqslant \lambda_{\textnormal{\tiny{min}}}.
 $$
 The orbit for $\lambda= \lambda_{\textnormal{\tiny{min}}}$ is a single point given by the lowest energy level of the  critical points to the Hamiltonian.
 It is important to mention that the regular values of the Hamiltonian give rise to time-periodic trajectories and, in view of Sard's theorem, it is the generic situation. However, the study of critical points of the Robin function is a complex and delicate subject and the literature contains only a few known results on this topic. For example, when the domain is  convex and bounded, there is only one critical point and the Hamiltonian is strictly convex. This implies that all the orbits are periodic and enclose convex subdomains, see \cite{Cafar,Gust2}. However, except in radial domains, when the trajectory is closed, the particle does not generally exhibit rigid body motion. In fact, it remains far from equilibrium states (critical points) and its trajectory is significantly influenced and deformed by the geometry of the domain. More periodic configurarions with multiple vortices near the boundary were discovered by Bartsch and Sacchet in \cite{BS18}.
 Desingularizing these trajectories using classical solutions to the Euler equations that replicate similar time-periodic dynamics is particularly challenging, especially due to the time-space resonance, which can disrupt the formation of confined structures. Our primary objective is to achieve this construction using KAM theory, which requires that the energy levels belong to a suitable Cantor set, carefully constructed through an extraction procedure. This approach builds upon the recent work on the leapfrogging phenomenon established in \cite{HHM23}.\\
  To state our main result,  we will introduce some necessary objects  and  assumptions. Let $\lambda_{\textnormal{\tiny{min}}}<\lambda_*<\lambda^*$ such that  for any $\lambda\in[\lambda_*,\lambda^*]$ the orbit  $\mathcal{E}_{\lambda}$ is periodic with minimal  period ${\mathtt{T}(\lambda)}$ and parametrized by ${t\in\mathbb{R}\mapsto \xi_\lambda(t)}$. This assumption  holds provided that the interval  $[\lambda_*,\lambda^*]$ does not contain any critical value for the Hamiltonian. 
  According to Sard's theorem, the set of critical values has zero Lebesgue measure. We believe that this set, which is trivially compact, is actually discrete--likely even finite--due to the specific structure of the Hamiltonian. However, we have not yet been able to produce  any  proof of this formal conjecture.
  Remark that the map $\lambda\in[\lambda_*,\lambda^*]\mapsto {\mathtt{T}(\lambda)}$ is real analytic  regardless of   the boundary regularity of  the domain $\mathbf{D}$.   Next, for each $\lambda\in[\lambda_*,\lambda^*],$ we introduce  the $\mathtt{T}(\lambda)-$periodic matrix:
\begin{equation*}
			\mathbb{A}_{\lambda}(t)=\begin{pmatrix}
				\mathtt{u}_{\lambda}(t) & \mathtt{v}_{\lambda}(t)\\
				\overline{\mathtt{v}_{\lambda}(t)} & \overline{\mathtt{u}_{\lambda}(t)}
			\end{pmatrix},\qquad\mathtt{u}_{\lambda}(t)=-\tfrac{\ii }{2}e^{2\mathcal{R}_{\mathbf{D}}(\xi_{\lambda}(t))},\qquad\mathtt{v}_{\lambda}(t)=\tfrac{\ii }{4}\big[\partial_{z}\mathcal{R}_{\mathbf{D}}\big(\xi_{\lambda}(t)\big)\big]^2.
		\end{equation*}
		 We associate to this matrix  its   fundamental matrix $\mathscr{M}_{\lambda}$, which  solves the ODE
		\begin{align*}
		\partial_{t}\mathscr{M}_{\lambda}(t)=\mathbb{A}_{\lambda}(t)\mathscr{M}_{\lambda}(t),\qquad \mathscr{M}_{\lambda}(0)=\textnormal{Id}.
		\end{align*}
	 Then the monodromy matrix is defined  by $\mathscr{M}_{\lambda}\big(\mathtt{T}(\lambda)\big)$ and its spectrum is denoted by $\hbox{sp}\big(\mathscr{M}_\lambda\big(\mathtt{T}(\lambda)\big)\big).$ We emphasize that due to the zero trace of $\mathbb{A}_{\lambda}(t)$, the monodromy matrix belongs to the special linear group $\textnormal{SL}(2,\mathbb{C})$. Our first main result reads as follows.
\begin{theorem}\label{main-theorem}
		Let $\mathbf{D}$ be a simply connected bounded domain and $\lambda_{\textnormal{\tiny{min}}}<\lambda_*<\lambda^*$. Assume that:
		\begin{enumerate}
						\item Non-degeneracy of the period: 
			\begin{equation*}
				\min_{\lambda\in[\lambda_*,\lambda^*]}\left|\mathtt{T}^\prime(\lambda)\right|>0.
			\end{equation*}
			\item Spectral assumption: 
			\begin{equation*}
				\forall\lambda\in[\lambda_*,\lambda^*],\quad {1 \notin \textnormal{sp}\big(\mathscr{M}_\lambda\big(\mathtt{T}(\lambda)\big)\big)}.
			\end{equation*}
		\end{enumerate}
		Then,  $\exists \,\varepsilon_0>0$ such that $\forall \,\varepsilon\in(0,\varepsilon_0),$ there exists a Cantor like set ${\mathscr{C}_{\varepsilon}}\subset[\lambda_*,\lambda^*],$ with
		$$\lim_{\varepsilon\to0}|\mathscr{C}_{\varepsilon}|=\lambda^*-\lambda_*,$$
		and for any ${\lambda\in\mathscr{C}_{\varepsilon}},$ there exists a solution to Euler equation taking the form
			 $$
		\forall t\in\mathbb{R},\quad \omega(t)=\frac{1}{\varepsilon^2}{\bf{1}}_{D_t^\varepsilon}, 	\qquad D_t^\varepsilon={\xi_\lambda(t)}+\varepsilon O_t^\varepsilon, 			 $$
			 with
			 $$
			\forall t\in\mathbb{R},\quad D_{t+{\mathtt{T}(\lambda)}}^\varepsilon=D_t^\varepsilon,\qquad {\xi_\lambda}\big(t+{\mathtt{T}(\lambda)}\big)={\xi_\lambda}(t).
	 $$

	\end{theorem}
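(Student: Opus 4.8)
The plan is to recast the desingularization as a nonlinear functional equation for the boundary of the patch and to solve it by a Nash--Moser--KAM scheme, in the spirit of \cite{HHM23}, with the explicit point vortex trajectory $\xi_\lambda$ playing the role of the profile to be desingularized. First I would write the patch as $D_t^\varepsilon=\xi_\lambda(t)+\varepsilon O_t^\varepsilon$, parametrize $\partial O_t^\varepsilon$ by a polar-type graph $\theta\in\T\mapsto R(t,\theta)e^{\mathrm{i}\theta}$ with $R$ close to $1$, and allow the center to be $\xi_\lambda(t)$ plus a small unknown drift $b(t)$. Using $u=\nabla^\perp\Delta_{\mathbf{D}}^{-1}\omega$ together with the splitting of the Green function into the logarithmic kernel plus the harmonic correction encoded by $\mathcal{R}_{\mathbf{D}}$, the condition that $\partial D_t^\varepsilon$ be transported by the flow becomes a nonlocal evolution equation $\partial_t R=\mathcal{F}_\varepsilon(\lambda,t,R,b)=0$, $\dot b=\mathcal{G}_\varepsilon(\lambda,t,R,b)$. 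The singular self-interaction of the patch yields at leading order the rigid rotation of a disc; expanding $\mathcal{R}_{\mathbf{D}}$ near $\xi_\lambda(t)$ produces the transport of the center along $\mathcal{E}_\lambda$ at order $\varepsilon^0$, and at order $\varepsilon^2$ an elliptical forcing whose coefficients are exactly the entries $\mathtt{u}_\lambda,\mathtt{v}_\lambda$ of $\mathbb{A}_\lambda(t)$. Rescaling time by the frequency $\omega(\lambda)\triangleq 2\pi/\mathtt{T}(\lambda)$ turns the $\mathtt{T}(\lambda)$-periodicity into $2\pi$-periodicity and promotes $\omega(\lambda)$ to the internal frequency of the problem; since $\mathcal{R}_{\mathbf{D}}$ is real-analytic inside $\mathbf{D}$ (via the Riemann map, no boundary regularity of $\mathbf{D}$ being needed), all of $\xi_\lambda,\mathtt{T}(\lambda),\omega(\lambda),\mathbb{A}_\lambda$ are real-analytic in $(t,\lambda)$.

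The next step is to build a good approximate solution by solving the equation order by order in $\varepsilon$: the disc centered at $\xi_\lambda$ at order $0$; at order $1$ a correction of the center obtained by inverting $\partial_t-(\text{linearized point-vortex operator})$ on $2\pi$-periodic functions, which is possible precisely because $\textnormal{Id}-\mathscr{M}_\lambda(\mathtt{T}(\lambda))$ is invertible by Assumption~2; at order $2$ an elliptical deformation $R_2(t,\theta)=\textnormal{Re}\big(c_\lambda(t)e^{2\mathrm{i}\theta}\big)$ with $c_\lambda$ solving a scalar linear ODE forced by $\mathbb{A}_\lambda(t)$. Iterating produces $R_{\mathrm{app}},b_{\mathrm{app}}$ with $\mathcal{F}_\varepsilon(\lambda,\cdot,R_{\mathrm{app}},b_{\mathrm{app}})=O(\varepsilon^N)$ for $N$ as large as wanted. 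Linearizing the full system at $(R_{\mathrm{app}},b_{\mathrm{app}})$ gives an operator of the schematic form $\mathcal{L}_\varepsilon=\omega(\lambda)\partial_t+\partial_\theta\big(V_\varepsilon(t,\theta)\,\cdot\,\big)+\mathcal{K}_\varepsilon$, with $\mathcal{K}_\varepsilon$ a nonlocal operator of nonpositive order, acting on the shape modes (Fourier index $\geqslant 2$ in $\theta$) and coupled through $O(\varepsilon)$ terms to a finite-dimensional block on the modes $0$ (area, trivially conserved) and $1$ (translations, governed by $\mathscr{M}_\lambda$).

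The heart of the argument is the approximate inversion of $\mathcal{L}_\varepsilon$ on a suitable Cantor set of $\lambda$'s. One first isolates the finite-dimensional block and inverts it for small $\varepsilon$ by a perturbation argument, using Assumption~2. On the shape modes, one conjugates $\mathcal{L}_\varepsilon$, via a finite chain of changes of variables followed by a KAM reducibility scheme, to a $\theta$-diagonal transport operator $\omega(\lambda)\partial_t+\mathrm{i}\,m\,\mu_m(\lambda,\varepsilon)$ up to a fast-decaying remainder, where $\mu_m(\lambda,\varepsilon)=\tfrac{m-1}{2}\,\omega(\lambda)+O(\varepsilon)$ is the regularized eigenvalue carrying the Kirchhoff-type rotation. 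Invertibility then requires the non-resonance conditions $|\omega(\lambda)\,\ell+\mu_m(\lambda,\varepsilon)-\mu_n(\lambda,\varepsilon)|\geqslant\gamma\langle\ell\rangle^{-\tau}$ for $\ell\in\Z$ and $m,n\geqslant 2$. Since $\mu_m-\mu_n=\tfrac{m-n}{2}\,\omega(\lambda)+O(\varepsilon)$ and $\omega'(\lambda)=-2\pi\,\mathtt{T}'(\lambda)/\mathtt{T}(\lambda)^2\neq0$ by Assumption~1, the map $\lambda\mapsto\omega(\lambda)$ is a local diffeomorphism, and a R\"ussmann-type transversality lemma shows that the set $\mathscr{C}_\varepsilon$ of $\lambda\in[\lambda_*,\lambda^*]$ meeting all these conditions satisfies $|\mathscr{C}_\varepsilon|\geqslant\lambda^*-\lambda_*-C\gamma$; choosing $\gamma=\gamma(\varepsilon)\to0$ slowly yields $|\mathscr{C}_\varepsilon|\to\lambda^*-\lambda_*$. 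On $\mathscr{C}_\varepsilon$ the constructed right inverse is tame with a fixed loss of derivatives, so a Nash--Moser iteration started from $(R_{\mathrm{app}},b_{\mathrm{app}})$, with the standard Whitney-type treatment of the $\lambda$-dependence of the Cantor set, produces for each $\lambda\in\mathscr{C}_\varepsilon$ an exact $2\pi$-periodic solution; undoing the time rescaling gives the $\mathtt{T}(\lambda)$-periodic patch $D_t^\varepsilon=\xi_\lambda(t)+\varepsilon O_t^\varepsilon$ solving \eqref{Euler-Eq}, which is the asserted solution.

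The main obstacle is the reducibility step combined with the measure estimate: one must diagonalize $\mathcal{L}_\varepsilon$ on the shape modes while controlling the coupling with the non-autonomous translation block (whose dynamics is dictated by $\mathscr{M}_\lambda$, not by an explicit constant-coefficient operator), and one must show that the perturbed eigenvalues $\mu_m(\lambda,\varepsilon)$ inherit from $\omega(\lambda)$ enough transversality, \emph{uniformly} in $\varepsilon$, for the R\"ussmann estimate to survive. It is precisely the interplay between the time-periodic but domain-dependent coefficients generated by $\mathcal{R}_{\mathbf{D}}$ along the orbit and the small divisors that makes the two hypotheses --- non-degeneracy of the period and $1\notin\textnormal{sp}\big(\mathscr{M}_\lambda(\mathtt{T}(\lambda))\big)$ --- the natural and essentially sharp assumptions for the scheme to close.
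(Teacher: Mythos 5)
Your architecture is essentially the paper's: contour dynamics with the patch centered exactly on $\xi_\lambda(t)$, time rescaling by $\omega(\lambda)=2\pi/\mathtt{T}(\lambda)$, a higher-order approximate solution, an approximate right inverse built by straightening the transport part, inverting the degenerate modes $\pm1$ through the monodromy matrix (Assumption 2) and the remaining modes under Diophantine conditions, then Nash--Moser and a measure estimate exploiting $\omega'(\lambda)\neq0$ (Assumption 1). However, two quantitative statements in your sketch are wrong, and the first one would make the small-divisor step fail as written. The normal frequencies are not $\mu_m=\tfrac{m-1}{2}\,\omega(\lambda)+O(\varepsilon)$: in the rescaled variables the linearized operator is $\varepsilon^2\omega(\lambda)\partial_\varphi+\partial_\theta(\mathbf{V}^\varepsilon\cdot)-\tfrac12\mathbf{H}+\dots$ with $\mathbf{V}^\varepsilon=\tfrac12+O(\varepsilon)$, so the divisors are $\varepsilon^2\omega(\lambda)\ell+\mu_j$ with $\mu_j=\tfrac{|j|-1}{2}\,\mathrm{sign}(j)+O(\varepsilon^{3}|j|)$; the spatial part is $\lambda$-independent at leading order and the whole $\lambda$-dependence sits in the $\varepsilon^2$-degenerate time direction. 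With your asymptotics, the combinations with $\ell+\tfrac{m-n}{2}=0$ have a leading term that vanishes identically in $\lambda$, so the R\"ussmann transversality you invoke would have to come from uncontrolled $O(\varepsilon)$ corrections. With the correct asymptotics, transversality comes from $\varepsilon^2\omega'(\lambda)\ell$ in the regime $|j|\lesssim\varepsilon^2|\ell|$, which forces $\gamma\sim\varepsilon^{2+\delta}$ (so $\gamma$ must go to zero \emph{faster} than $\varepsilon^{2}$, not ``slowly'') and produces an $\varepsilon^{-2-\delta}$ loss in the approximate inverse; this calibration is exactly why the paper must first build an approximate solution beating that loss, which is possible because $\mathbf{G}(0)$ contains no modes $0,\pm1$. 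Your arbitrary-order approximate solution would absorb the loss once the degeneracy is correctly accounted for, but the sketch as written does not see it.

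Second, the order-$\varepsilon^2$ elliptical forcing is not given by the entries of $\mathbb{A}_\lambda$: it is $\mathtt{g}=\mathrm{Re}\{\mathtt{w}_2(p_\lambda(\varphi))e^{2\ii\theta}\}$ with $\mathtt{w}_2=(\partial_z\mathcal{R}_{\mathbf{D}})^2+\tfrac13 S(\Phi)$, i.e.\ it involves the Schwarzian derivative of the Riemann map; the matrix $\mathbb{A}_\lambda$ only emerges in the mode-$\pm1$ block of the linearized operator, after the conjugation by the approximate solution cancels the Schwarzian contribution there. Relatedly, $\mathscr{M}_\lambda$ is not the monodromy of the linearized point-vortex ODE (whose off-diagonal entry would be $\partial_z^2\mathcal{R}_{\mathbf{D}}$ rather than $(\partial_z\mathcal{R}_{\mathbf{D}})^2$), but of the translation block of the linearized contour dynamics, which also encodes the patch--boundary interaction through $r_{\mathbf{D}}^{-2}=e^{2\mathcal{R}_{\mathbf{D}}}$; your use of Assumption 2 already at order one of the approximate solution is therefore misplaced (in the paper the mode-$\pm1$ forcings vanish at every step of that construction, and Assumption 2 enters only in the inversion of the mode-$\pm1$ block inside the Nash--Moser scheme), although introducing a center drift $b(t)$ and inverting its periodic ODE via the same monodromy matrix is a legitimate alternative bookkeeping. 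With these two corrections your scheme coincides with the paper's proof.
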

	
	\begin{remark}
	\begin{enumerate}
	\item The previous theorem applies to generic simply-connected bounded domains, which are not necessarily a perturbation of explicit domains, such as discs, ellipses or rectangles. 
	\item In contrast to the leapfrogging vortex motion {\rm\cite{HHM23}}, the period $\mathtt{T}(\lambda)$ is not explicit. This explains the need of adding the first assumption in the previous theorem.  We believe that this latter is universally valid, as the energy levels of critical points of the period function are expected to constitute a countable set of isolated points. Indeed, it would be surprising to find domains for which the period remains constant on a non-trivial interval of energy levels. 
	\item As for the spectral assumption, verifying it in its original form is difficult due to the intricate nature of the trajectories of periodic orbits, but we believe it holds for the majority of energy levels. We mention that the spectral assumption corresponds to the classical one appearing in finite dimensional dynamical systems.  
	\end{enumerate}
	\end{remark}
	Next, we will present a practical implication of Theorem \ref{main-theorem}, connecting its assumptions to the geometry of the domain  $\mathbf{D}$ and applying it to relevant standard examples. This leads us to the following corollary.
 \begin{coro}\label{cor-special} Assume that the bounded domain $\mathbf{D}$ is convex.
Then, the conclusion of Theorem $\ref{main-theorem}$  holds true for any $\lambda_{\textnormal{min}}<\lambda_*<\lambda^*$ provided that the conformal mapping  $F:\mathbb{D}\to \mathbf{D}$ with  $F(0)=\xi_0$ satisfies
		\begin{equation*}
				{\left| \tfrac{F^{(3)}(0)}{F^\prime(0)}\right|\not\in\Big\{2\sqrt{1-\tfrac{1}{n^2}},\,\, n\in\mathbb{N}^*\cup\{\infty\}\Big\}}.
			\end{equation*}
			Here, $\mathbb{D}$ stands for  the open unit disc of the complex plane and ${\xi_0}$ is the unique critical point of Robin function.
\end{coro}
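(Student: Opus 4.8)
The plan is to verify both non-degeneracy assumptions of Theorem~\ref{main-theorem} for a convex $\mathbf{D}$ on the entire half-line $(\lambda_{\textnormal{min}},\infty)$ at once, so that they hold on every compact subinterval $[\lambda_*,\lambda^*]$. I would first record the classical structure of convex domains (see \cite{Cafar,Gust2}): $\mathcal{R}_{\mathbf{D}}$ has a single critical point $\xi_0$, which is a minimum; $\mathcal{R}_{\mathbf{D}}$ is strictly convex; $\lambda_{\textnormal{min}}=H_{\mathbf{D}}(\xi_0)$ is its only critical value; every orbit $\mathcal{E}_\lambda$ with $\lambda>\lambda_{\textnormal{min}}$ is a smooth closed convex curve; and $\lambda\mapsto\mathtt{T}(\lambda)$, $\lambda\mapsto\mathscr{M}_\lambda(\mathtt{T}(\lambda))$ are real-analytic there. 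I would also reformulate the spectral condition: the symmetry $\overline{\mathbb{A}_\lambda(t)}=\sigma\,\mathbb{A}_\lambda(t)\,\sigma$ with $\sigma=\left(\begin{smallmatrix}0&1\\1&0\end{smallmatrix}\right)$ forces $\mathscr{M}_\lambda(\mathtt{T}(\lambda))\in\mathrm{SL}(2,\mathbb{C})$ to have real trace, so $1\in\textnormal{sp}\big(\mathscr{M}_\lambda(\mathtt{T}(\lambda))\big)$ is equivalent to $\mathrm{tr}\,\mathscr{M}_\lambda(\mathtt{T}(\lambda))=2$.

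Next I would transfer everything to the disc through $F$, using $\mathcal{R}_{\mathbf{D}}(F(w))=\mathcal{R}_{\mathbb{D}}(w)-\log|F'(w)|=-\log\big[(1-|w|^2)|F'(w)|\big]$ (which also gives the Liouville identity $\Delta\mathcal{R}_{\mathbf{D}}=4e^{2\mathcal{R}_{\mathbf{D}}}$, accounting for the form of $\mathtt{u}_\lambda$). Normalizing $F(0)=\xi_0$ and $F'(0)>0$, criticality of $\xi_0$ forces $F''(0)=0$, and a third-order expansion at $w=0$ yields, with $c:=F^{(3)}(0)/F'(0)$,
$$\mathcal{R}_{\mathbf{D}}(\xi_0+\zeta)=-\log F'(0)+\tfrac{1}{F'(0)^2}\Big(|\zeta|^2-\tfrac12\mathrm{Re}(c\,\zeta^2)\Big)+O(|\zeta|^4),$$
so that $\det\mathrm{Hess}\,\mathcal{R}_{\mathbf{D}}(\xi_0)=(4-|c|^2)\,F'(0)^{-4}$. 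Convexity forces this to be nonnegative, i.e.\ $|c|\le2$, and the hypothesis $|c|\notin\{2\sqrt{1-1/n^2}\}$ in particular excludes $|c|=2$, so $\xi_0$ is a non-degenerate elliptic equilibrium.

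I would then analyse the degenerate limit $\lambda\downarrow\lambda_{\textnormal{min}}$, where $\mathcal{E}_\lambda$ shrinks to $\xi_0$: since $\xi_\lambda(t)\to\xi_0$ uniformly, the small-oscillation asymptotics give $\mathtt{T}(\lambda)\to\mathtt{T}_0=2\pi\,\big(\det\mathrm{Hess}\,H_{\mathbf{D}}(\xi_0)\big)^{-1/2}$, while $\mathbb{A}_\lambda(t)\to\mathbb{A}_{\lambda_{\textnormal{min}}}=\mathrm{diag}\big(-\tfrac{i}{2}e^{2\mathcal{R}_{\mathbf{D}}(\xi_0)},\,\tfrac{i}{2}e^{2\mathcal{R}_{\mathbf{D}}(\xi_0)}\big)$, the off-diagonal part vanishing because $\partial_z\mathcal{R}_{\mathbf{D}}(\xi_0)=0$. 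By continuous dependence of solutions of the defining ODE, $\mathscr{M}_\lambda(\mathtt{T}(\lambda))\to\exp\big(\mathtt{T}_0\mathbb{A}_{\lambda_{\textnormal{min}}}\big)$, whose trace equals $2\cos\big(\tfrac{1}{2}\mathtt{T}_0\,e^{2\mathcal{R}_{\mathbf{D}}(\xi_0)}\big)$. Substituting $\mathcal{R}_{\mathbf{D}}(\xi_0)=-\log F'(0)$, the explicit value of $\mathtt{T}_0$, and the circulation $\Gamma=\pi$ imposed by the patch normalization $\omega=\varepsilon^{-2}\mathbf{1}_{D_t^\varepsilon}$ with its near-disc profile, this trace becomes $2\cos\big(4\pi/\sqrt{4-|c|^2}\big)$, which is $=2$ precisely when $|c|\in\{2\sqrt{1-1/n^2}:n\in\mathbb{N}^*\}$; adjoining the degenerate case $|c|=2$ ($n=\infty$) gives exactly the excluded set. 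Thus the arithmetic condition on $F$ is nothing but the assertion that both hypotheses of Theorem~\ref{main-theorem} persist in the limit $\lambda\downarrow\lambda_{\textnormal{min}}$.

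Finally I would propagate these two facts from a neighbourhood of $\lambda_{\textnormal{min}}$ to all of $(\lambda_{\textnormal{min}},\infty)$, which is the real difficulty. For the period I would use the action identity $\mathtt{T}(\lambda)=\frac{d}{d\lambda}\big|\{H_{\mathbf{D}}<\lambda\}\big|$ together with convexity of the sublevel sets (Brunn--Minkowski) and the Liouville structure of $\mathcal{R}_{\mathbf{D}}$ to show $\mathtt{T}$ is strictly monotone, hence $\min_{\lambda\in[\lambda_*,\lambda^*]}|\mathtt{T}'(\lambda)|>0$. For the spectral condition I would exploit real-analyticity of the real-valued trace $\tau(\lambda):=\mathrm{tr}\,\mathscr{M}_\lambda(\mathtt{T}(\lambda))$, its explicit boundary value at $\lambda_{\textnormal{min}}$ (kept away from $2$ by the hypothesis) and its behaviour as $\lambda\to\infty$ (orbits flattening onto $\partial\mathbf{D}$), together with a sign/monotonicity analysis of the Floquet multiplier specific to convex domains, to ensure $\tau(\lambda)\ne2$ throughout --- equivalently, that the linearized patch dynamics never develops a neutral ($+1$) Floquet mode. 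Obtaining this \emph{uniform} spectral control over the entire energy range, rather than only near the degenerate orbit, is the main obstacle; everything preceding it is essentially the conformal bookkeeping that converts the abstract non-degeneracy conditions of Theorem~\ref{main-theorem} into the single explicit inequality on $F^{(3)}(0)/F'(0)$.
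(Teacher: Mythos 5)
Your local computation at the degenerate level is exactly the one the paper uses in its proof of Corollary~\ref{convex-dom}: since $\partial_z\mathcal{R}_{\mathbf{D}}(\xi_0)=0$ one has $\mathtt{v}_{\lambda_{\min}}\equiv 0$, the generator becomes a constant diagonal matrix, the limiting monodromy is an explicit exponential with trace $2\cos\big(4\pi/\sqrt{4-|S(F)(0)|^2}\big)$ (with $S(F)(0)=F^{(3)}(0)/F'(0)$ because $F''(0)=0$ at the critical point), and this trace equals $2$ precisely on the excluded set $\big\{2\sqrt{1-1/n^2}\big\}$. Up to that point your conformal bookkeeping, the small-oscillation period \eqref{T0 crit}, and the reformulation $1\in\textnormal{sp}\,\mathscr{M}_\lambda\Leftrightarrow\textnormal{Tr}\,\mathscr{M}_\lambda=2$ all agree with the paper.

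The genuine gap is the globalization step, which you leave unproved and which you try to attack with tools that are both unavailable and unnecessary. You propose to show that $\mathtt{T}$ is \emph{strictly monotone} on all of $(\lambda_{\min},\infty)$ (via Brunn--Minkowski and the Liouville equation) and that the Floquet trace \emph{never} equals $2$ (via an unspecified monotonicity of the multiplier); neither claim is substantiated, and the paper itself states that even global non-constancy of the period is an open conjecture, so this route is not viable as sketched — you acknowledge it yourself as ``the main obstacle.'' The paper's mechanism is much softer and is the idea you are missing: by Proposition~\ref{prop-analy} both $\lambda\mapsto\mathtt{T}(\lambda)$ and $\lambda\mapsto\psi(\lambda)\triangleq\textnormal{Tr}\,\mathscr{M}_\lambda(2\pi)-2$ are real analytic on $(\lambda_{\min},\infty)$; $\mathtt{T}$ is not constant near $\partial\mathbf{D}$ by the elementary area argument of Lemma~\ref{lemm-non-degen}, and $\psi$ is not identically zero because of the very limit value at $\lambda_{\min}$ that you computed. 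Hence the zero sets $\mathcal{Z}$ of $\mathtt{T}'$ and $\mathcal{Z}'$ of $\psi$ are discrete, and Theorem~\ref{main thm} is applied on compact intervals $[\lambda_*,\lambda^*]\subset(\lambda_{\min},\infty)\setminus(\mathcal{Z}\cup\mathcal{Z}')$; this is also how the literal ``for any $\lambda_*<\lambda^*$'' phrasing must be read, so you are chasing a uniform statement stronger than what the paper proves or needs. Without the analyticity-plus-discreteness argument your proposal establishes the hypotheses of Theorem~\ref{main-theorem} only in a neighbourhood of $\lambda_{\min}$, not on the energy ranges covered by the corollary.
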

\begin{remark}
\begin{enumerate}
\item The assumption in the previous corollary is an open condition. Therefore, for a given domain $\mathbf{D}$ satisfying the assumptions of Corollary \ref{cor-special} one might expect that the same conclusion holds for all convex domains $\mathbf{D}_\varepsilon$ which are $\varepsilon$-perturbation of $\mathbf{D}$. This is a consequence of the continuity of the Riemann mapping with respect to the domain, known as Carath\'eodory's kernel Theorem. 
\item More refined version, where the domain is not necessarily convex but the Robin function has only one critical point, is given by Corollary {\rm \ref{convex-dom}}. 
\end{enumerate}
\end{remark}
Corollary \ref{cor-special} applies to   domains  such as  rectangles and ellipses provided that the aspect ratio avoids a specific discrete set. This issue will be thoroughly examined in Section \ref{Sec-admissible}, as outlined in Propositions \ref{prop ellipse} and \ref{prop rectangle}. Below, we provide a formal statement.
\begin{proposition} The following properties hold true.
	\begin{enumerate}
		\item Let $\mathbf{D}$ be an ellipse with  semi-axes $0<b\leqslant a$. Then there exists a countable set $\mathcal{G}_{E}\subset(1,\infty)$ such that Corollary $\ref{cor-special}$ applies if and only if $\frac{a}{b}\notin\mathcal{G}_{E}.$
		\item Let $\mathbf{D}$ be a rectangle with sides $0<l\leqslant L$. Then there exists a countable set $\mathcal{G}_{R}\subset(0,1)$ such that Corollary $\ref{cor-special}$ applies if and only if $\tfrac{l}{L}\notin\mathcal{G}_{R}.$
	\end{enumerate}
		\end{proposition}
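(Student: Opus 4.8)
The plan is to express, for both families, the quantity appearing in Corollary~\ref{cor-special} as a function of the aspect ratio and then exploit analyticity. Since ellipses and rectangles are convex, Corollary~\ref{cor-special} applies and everything reduces to computing
$$
g(\mathbf{D}):=\Big|\tfrac{F^{(3)}(0)}{F^{\prime}(0)}\Big|,\qquad F:\mathbb{D}\to\mathbf{D},\quad F(0)=\xi_0,
$$
as a function of the shape. First we locate $\xi_0$: an ellipse (resp.\ a rectangle) is invariant under the two reflections across its axes, so the Green function, hence the Robin function $\mathcal{R}_{\mathbf{D}}$, inherits these invariances and its critical set is reflection-invariant; by the uniqueness of the critical point in convex domains \cite{Cafar,Gust2}, $\xi_0$ is fixed by both reflections, hence $\xi_0$ is the centre, which we place at the origin. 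Using again these symmetries and the uniqueness of the Riemann map up to a rotation of $\mathbb{D}$, we may normalize $F$ so that $F^{\prime}(0)>0$; then $F$ is odd with real Taylor coefficients, $F(z)=c_1z+c_3z^{3}+\cdots$ with $c_1>0$ and $c_k\in\mathbb{R}$, and $g(\mathbf{D})=6|c_3|/c_1$ (independently of the rotational normalization).

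\emph{The rectangle.} By the Schwarz--Christoffel representation from $\mathbb{D}$, using that all four interior angles equal $\pi/2$ together with the reflection symmetries of $\mathbf{D}$, the four pre-vertices on $\partial\mathbb{D}$ form a symmetric quadruple $\{\pm e^{i\theta},\pm e^{-i\theta}\}$, with $\theta\in(0,\tfrac{\pi}{4})$ for a proper rectangle $l<L$, and
$$
F(z)=C\int_{0}^{z}\frac{d\zeta}{\sqrt{\zeta^{4}-2\cos(2\theta)\,\zeta^{2}+1}}=C\Big(z+\tfrac{\cos 2\theta}{3}\,z^{3}+O(z^{5})\Big),
$$
whence $g(\mathbf{D})=2|\cos 2\theta|$. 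The correspondence $\theta\mapsto\mathbf{D}_{\theta}$ ranges over all proper rectangles up to similarity, its aspect ratio $\rho(\theta)=l/L$ being the classical ratio of complete elliptic integrals of Proposition~\ref{prop rectangle}, which maps $(0,\tfrac{\pi}{4})$ onto $(0,1)$. Now $g(\mathbf{D})\in\{2\sqrt{1-1/n^{2}}:n\in\mathbb{N}^{*}\cup\{\infty\}\}$ is equivalent to $\cos^{2}(2\theta)\in\{1-1/n^{2}\}$, i.e.\ to $|\sin 2\theta|\in\{1/n:n\geqslant 2\}$ (the cases $n=1$ and $n=\infty$ give $|\sin 2\theta|\in\{1,0\}$, impossible for $\theta\in(0,\tfrac{\pi}{4})$). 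For each $n\geqslant 2$ this leaves finitely many $\theta$, so the bad values of $\theta$ form a countable set, and its image under $\rho$ is a countable set $\mathcal{G}_{R}\subset(0,1)$ such that Corollary~\ref{cor-special} applies to the rectangle with sides $l\leqslant L$ if and only if $l/L\notin\mathcal{G}_{R}$.

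\emph{The ellipse.} The boundary of the ellipse with aspect ratio $\rho=a/b>1$ is a real-analytic curve depending real-analytically on $\rho$, so by the classical analytic dependence of the Riemann map on such a domain --- made explicit here through the Jacobi elliptic representation of $F$ (Proposition~\ref{prop ellipse}) --- the coefficients $c_1(\rho),c_3(\rho)$, hence $g(\rho):=6|c_3(\rho)|/c_1(\rho)$, are real-analytic on $(1,\infty)$. Solving the linearized Riemann-map equation for the nearly circular ellipse $a=1+t$, $b=1-t$ yields $F(z)=z\,e^{tz^{2}}+O(t^{2})$, hence $g(\rho)=3(\rho-1)+O((\rho-1)^{2})$ as $\rho\to 1^{+}$; in particular $g$ is non-constant. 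A non-constant real-analytic function on an interval has discrete --- hence at most countable --- level sets, so each set $\{\rho:g(\rho)=2\sqrt{1-1/n^{2}}\}$ is at most countable, and their union $\mathcal{G}_{E}:=\{\rho\in(1,\infty):g(\rho)\in\{2\sqrt{1-1/n^{2}}\}\}$ is countable; by Corollary~\ref{cor-special} its conclusion holds for the ellipse with semi-axes $b\leqslant a$ if and only if $a/b\notin\mathcal{G}_{E}$.

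The conceptual core is short: reduce, via the domain symmetries, to the single number $g=6|c_3|/c_1$, then invoke real-analyticity to see that the forbidden aspect ratios form the preimage of a countable set under a non-constant analytic map. The substantive work, carried out in Propositions~\ref{prop ellipse} and \ref{prop rectangle}, is the complex-analytic input: the Schwarz--Christoffel formula for the rectangle, with the identification of the aspect ratio as the elliptic-integral ratio and the surjectivity of $\theta\mapsto\mathbf{D}_{\theta}$; and the Jacobi elliptic representation of the conformal map onto an ellipse, with the extraction of $c_3$ and the ensuing analyticity and non-constancy of $\rho\mapsto g(\rho)$. The main anticipated difficulty is precisely this last point --- controlling the third Taylor coefficient of the elliptic-function conformal map globally in $\rho$, not merely in the near-circular regime where the computation is elementary.
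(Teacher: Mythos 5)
Your rectangle argument is essentially the paper's own: the paper locates $\xi_0$ at the centre via symmetry, writes the Schwarz--Christoffel map with the symmetric pre-vertex configuration $\theta_1\in(0,\frac{\pi}{4})$, and obtains $\big|\tfrac{F^{(3)}(0)}{F'(0)}\big|=2|\cos 2\theta_1|$ (there via the Schwarzian sum formula for symmetric convex polygons, in your case by Taylor-expanding the integrand --- the same computation), then transfers the countable bad set of angles to aspect ratios; the only extra content in the paper is the explicit identification $l/L=G(\cos\theta_1)$ with $G$ as in \eqref{def modulus G}, which pins down $\mathcal{G}_R$ exactly as $\{G_n\}$ but is not needed for bare countability. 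For the ellipse you genuinely diverge: the paper uses the Jacobi-elliptic formula \eqref{conformal map ellipse} to compute $|a_3|/a_1^3=\frac{1}{6k}\big((1+k^2)-(\tfrac{\pi}{2K(k)})^2\big)$ explicitly and then proves, via the AGM bound $K(k)<\pi/(2\sqrt{1-k^2})$, that the associated function $g(k)$ is strictly increasing from $1$ to $\infty$, so that each integer $n$ is hit exactly once --- a sharper statement (one bad ratio per $n$) than countability. Your route --- real-analyticity of the shape functional in the aspect ratio plus non-constancy read off from the near-circular linearization $F_t(z)=z+tz^3+O(t^2)$, hence $3(\rho-1)+O((\rho-1)^2)$ --- is softer and gives only countable level sets, which is all the stated proposition requires; your first-order expansion is correct and consistent with the paper's exact formula in the limit $k\to 0$.

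Two points need shoring up. First, "classical analytic dependence of the Riemann map on the domain" is not an off-the-shelf theorem you can invoke in this form; the analyticity you need should be extracted from the explicit elliptic representation: the quantity $|a_3|/a_1^3$ is an explicit real-analytic function of the modulus $k\in(0,1)$, and the correspondence $k\leftrightarrow a$ in \eqref{Gk} is an analytic bijection (this uses that $G'$ does not vanish, e.g.\ via the Legendre relation), after which your level-set argument goes through; non-constancy can then equally be read off the explicit formula, making the linearized-Riemann-map computation (which you state without derivation) dispensable. Second, be careful that Propositions~\ref{prop ellipse} and \ref{prop rectangle} \emph{are} the paper's proof of the statement under review, so deferring "the substantive work" to them is circular; the only external inputs your argument legitimately needs are the classical conformal-map formulas themselves (the Schwarz--Christoffel representation and the Jacobi-elliptic map onto the ellipse), together with the uniqueness of the critical point in convex domains, and with the fixes above your proof stands on those alone. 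Finally, note the degenerate case $l=L$: the square has $S(F)(0)=0=2\sqrt{1-1/1^2}$, so condition \eqref{cond-nondeg} fails there; like the paper's Section on admissible domains, your argument should (and implicitly does) restrict to $l<L$.
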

We shall now present another interesting  result related to  the  duplication method and its application in generating   synchronized multiple time periodic vortices. The duplication principle reads as follows. We refer the reader to Section \ref{sec dup meth} for the complete proof.
\begin{proposition}
	Let $\mathbf{D}$ be a simply-connected bounded domain whose boundary contains a non-trivial segment $[z,w]$ with $z\neq w$ such that
	$$\partial\mathbf{D}\cap(z,w)=[z,w].$$
	We denote $\mathtt{S}$ the reflexion through the axis $(z,w).$
	Let $\omega\in L^\infty \big(\mathbb{R};L^\infty_{c}(\mathbf{D})\big)$ be a global weak solution to Euler equations in $\mathbf{D}.$ Then, there exists $\omega^{\#}\in L^\infty \big(\mathbb{R};L^\infty_{c}(\mathbf{D}^{\star})\big)$ a global in time weak solution to Euler equations in $\mathbf{D}^{\star}\triangleq\textnormal{Int}(\overline{\mathbf{D}}\cup\overline{\mathtt{S}(\mathbf{D})})$ taking the form
	\begin{equation*}
		\omega^{\#}(t,x)\triangleq\begin{cases}
			\omega(t,x), &\textnormal{if }x\in\mathbf{D},\\
			-\omega(t,\mathtt{S}x), & \textnormal{if } x\in\mathtt{S}(\mathbf{D}).
		\end{cases}
	\end{equation*}
	In particular, if $\omega$ is a time periodic vortex patch, then $\omega^{\#}$ is a time periodic counter-rotating pair of patches.
\end{proposition}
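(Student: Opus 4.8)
\emph{Proof proposal.}
The statement is the vortex-patch analogue of the classical image principle for a point vortex facing a straight wall, and the plan is to reduce it to two robust facts: uniqueness for the Dirichlet Laplacian on $\mathbf{D}^{\star}$, and impermeability of $\partial\mathbf{D}$. Introduce the stream function $\psi^{\#}(t,\cdot)\triangleq\Delta_{\mathbf{D}^{\star}}^{-1}\omega^{\#}(t,\cdot)$ and the velocity $u^{\#}\triangleq\nabla^{\perp}\psi^{\#}$ of the candidate in $\mathbf{D}^{\star}$. The membership $\omega^{\#}\in L^{\infty}\big(\mathbb{R};L^{\infty}_{c}(\mathbf{D}^{\star})\big)$ is immediate from the formula, and by interior Calder\'on--Zygmund estimates $\psi^{\#}(t,\cdot)\in W^{2,p}_{\textnormal{loc}}(\mathbf{D}^{\star})$ for every finite $p$, so $u^{\#}(t,\cdot)$ is locally H\"older across the gluing segment $[z,w]$ and no singular vorticity is created there; it thus suffices to check that $(\omega^{\#},u^{\#})$ solves $\partial_{t}\omega^{\#}+u^{\#}\cdot\nabla\omega^{\#}=0$ on $\mathbf{D}^{\star}$ in the weak sense, the time periodicity being inherited verbatim from $\omega$.

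The first step is to show $\psi^{\#}$ is odd under $\mathtt{S}$. Indeed $\omega^{\#}\circ\mathtt{S}=-\omega^{\#}$ by construction, $\mathbf{D}^{\star}$ is $\mathtt{S}$-invariant, and $\mathtt{S}$ is a Euclidean isometry, so $-\psi^{\#}\circ\mathtt{S}$ solves the same Poisson problem as $\psi^{\#}$ with zero Dirichlet data on $\partial\mathbf{D}^{\star}$; uniqueness yields $\psi^{\#}\circ\mathtt{S}=-\psi^{\#}$. Evaluating on the fixed-point line of $\mathtt{S}$, which by the hypothesis $\partial\mathbf{D}\cap(z,w)=[z,w]$ meets $\overline{\mathbf{D}^{\star}}$ exactly along $[z,w]$, gives $\psi^{\#}\equiv0$ on $[z,w]$, hence $\psi^{\#}\equiv0$ on all of $\partial\mathbf{D}$. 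Therefore $\psi^{\#}|_{\mathbf{D}}$ solves $\Delta\psi^{\#}|_{\mathbf{D}}=\omega^{\#}|_{\mathbf{D}}=\omega$ in $\mathbf{D}$ with vanishing boundary values, so $\psi^{\#}|_{\mathbf{D}}=\Delta_{\mathbf{D}}^{-1}\omega$ and $u^{\#}=u$ on $\mathbf{D}$; writing $\mathtt{S}x=Rx+c$ with $R$ orthogonal, $\det R=-1$, differentiating the oddness relation and using $JR=-RJ$ (with $J$ the rotation by $\tfrac{\pi}{2}$) gives the equivariance $u^{\#}(t,\mathtt{S}x)=R\,u^{\#}(t,x)$. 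In particular $u^{\#}\cdot n=0$ on $(z,w)$ and on $\partial\mathbf{D}$: the mirror segment is a streamline that is never crossed, which is the dynamical content of the image principle.

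Next I would test the weak formulation on $\mathbf{D}^{\star}$. Given $\varphi\in C^{\infty}_{c}(\mathbb{R}\times\mathbf{D}^{\star})$, decompose $\varphi=\varphi_{+}+\varphi_{-}$ into its $\mathtt{S}$-even and $\mathtt{S}$-odd parts. Using $\omega^{\#}\circ\mathtt{S}=-\omega^{\#}$ and the equivariance of $u^{\#}$, the integrand $\omega^{\#}\big(\partial_{t}\varphi_{+}+u^{\#}\cdot\nabla\varphi_{+}\big)$ is $\mathtt{S}$-odd, hence integrates to $0$ over the symmetric set $\mathbf{D}^{\star}$, while $\omega^{\#}\big(\partial_{t}\varphi_{-}+u^{\#}\cdot\nabla\varphi_{-}\big)$ is $\mathtt{S}$-even and integrates over $\mathbf{D}^{\star}$ to twice its integral over $\mathbf{D}$, where $(\omega^{\#},u^{\#})=(\omega,u)$. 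Thus
\begin{equation*}
\int_{\mathbb{R}}\!\int_{\mathbf{D}^{\star}}\!\omega^{\#}\big(\partial_{t}\varphi+u^{\#}\!\cdot\!\nabla\varphi\big)\,dx\,dt=2\int_{\mathbb{R}}\!\int_{\mathbf{D}}\!\omega\big(\partial_{t}\phi+u\!\cdot\!\nabla\phi\big)\,dx\,dt,\qquad\phi\triangleq\varphi_{-}|_{\mathbf{D}},
\end{equation*}
and $\phi$ is smooth, compactly supported in time, and vanishes on $\partial\mathbf{D}$ (on $(z,w)$ because $\varphi_{-}$ is odd, on $\partial\mathbf{D}\setminus(z,w)\subset\partial\mathbf{D}^{\star}$ because $\varphi$ and $\varphi\circ\mathtt{S}$ are supported away from $\partial\mathbf{D}^{\star}$). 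It then remains to extend the weak formulation of Euler in $\mathbf{D}$ --- a priori tested only against $C^{\infty}_{c}(\mathbb{R}\times\mathbf{D})$ --- to such $\phi$: approximating $\phi$ by $\chi_{\delta}\phi$ with $\chi_{\delta}$ cutting off a $\delta$-collar of $\partial\mathbf{D}$, the error $\int\phi\,u\cdot\nabla\chi_{\delta}$ is $O(\delta)$ since $|\phi|\lesssim\textnormal{dist}(\cdot,\partial\mathbf{D})$ near $\partial\mathbf{D}$, $u\in L^{\infty}$, and the collar has area $O(\delta)$ --- this is exactly the impermeability $u\cdot n|_{\partial\mathbf{D}}=0$ from the previous step. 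Hence the right-hand side vanishes, $\omega^{\#}$ solves Euler in $\mathbf{D}^{\star}$, and if $\omega(t)=\varepsilon^{-2}\mathbf{1}_{D_{t}}$ with $D_{t+\mathtt{T}}=D_{t}$ then $\omega^{\#}(t)=\varepsilon^{-2}\big(\mathbf{1}_{D_{t}}-\mathbf{1}_{\mathtt{S}(D_{t})}\big)$ is a $\mathtt{T}$-periodic counter-rotating pair of patches.

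The main obstacle I anticipate is precisely this last extension of the weak formulation, i.e.\ passing from test functions compactly supported inside $\mathbf{D}$ to test functions merely vanishing on $\partial\mathbf{D}$ (this also underlies the identification $\psi^{\#}|_{\mathbf{D}}\in H^{1}_{0}(\mathbf{D})$ used in Step~1). It is harmless for Lipschitz boundaries and, more generally, requires only a mild regularity of $\partial\mathbf{D}$ --- for instance that $\partial\mathbf{D}$ have finite one-dimensional measure, so that the $\delta$-collar obeys the area bound above --- which is amply satisfied in all the applications considered in this paper. It is the analytic shadow of the geometric fact that, since $[z,w]\subset\partial\mathbf{D}$, reflecting the flow across $[z,w]$ glues the two dynamics along a common non-crossed streamline without disturbing the transport.
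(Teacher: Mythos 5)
Your proposal is correct, and it reaches the result by a route that is the same in spirit but different in implementation from the paper's. The paper works at the level of kernels: it proves the reflection identities $G_{\mathbf{D}^\star}(x,y)-G_{\mathbf{D}^\star}(x,\mathtt{S}y)=G_{\mathbf{D}}(x,y)$ for $x,y\in\mathbf{D}$ and its counterpart \eqref{Green-func2} for $x\in\mathtt{S}(\mathbf{D})$, using the $\mathtt{S}$-invariance of $\mathbf{D}^\star$ and uniqueness of the Dirichlet problem, and then reads off $\psi^{\#}=\psi$ on $\mathbf{D}$, $\psi^{\#}=-\psi\circ\mathtt{S}$ on $\mathtt{S}(\mathbf{D})$, concluding with the pointwise identity for $\nabla^{\perp}\psi^{\#}\cdot\nabla\omega^{\#}$; the kernel identities are an asset in their own right, since the same image computation is reused later (e.g.\ for the $m$-sector Green function in Lemma \ref{lemma-crit}). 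You instead apply the uniqueness/symmetry argument directly to the stream function, obtaining the oddness $\psi^{\#}\circ\mathtt{S}=-\psi^{\#}$, the identification $u^{\#}=u$ on $\mathbf{D}$ and the equivariance $u^{\#}\circ\mathtt{S}=Ru^{\#}$ — morally the same mechanism, at the level of solutions rather than kernels. Where you genuinely add something is the second half: the even/odd decomposition of test functions on $\mathbf{D}^\star$ and the cutoff argument near $[z,w]$ verify the weak formulation across the gluing segment, i.e.\ they rule out a vorticity-flux defect there, a point the paper's proof passes over silently (checking the equation separately in $\mathbf{D}$ and $\mathtt{S}(\mathbf{D})$ does not by itself give the equation in $\mathbf{D}^\star$). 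Two small remarks: the extra "mild regularity of $\partial\mathbf{D}$" you invoke for the collar area is not actually needed, because your test function $\phi=\varphi_{-}|_{\mathbf{D}}$ is supported at positive distance from $\partial\mathbf{D}^\star$, so the only portion of the collar that matters lies in a $\delta$-neighborhood of the flat segment, whose area is automatically $O(\delta)$ (and there the nearest boundary point lies on the segment, which is exactly what makes $|\phi|\lesssim\operatorname{dist}(\cdot,\partial\mathbf{D})$ valid); and the pointwise statement $u^{\#}\cdot n=0$ on $\partial\mathbf{D}$ should be read only as a heuristic for rough boundaries — your cutoff estimate in fact uses only $u\in L^{\infty}$ together with the linear vanishing of $\phi$, which is all that is required.
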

This construction can be repeated as long as the new domain retains the same properties as the initial one. In this manner, we can generate multiple synchronized vortices, with their dynamics replicating  that of a single vortex, up to appropriate reflections. To illustrate, let's apply the previous statement to construct synchronized non-rigid periodic motion of four vortex patches within the unit disc. In the planar case, the desingularization of the four point vortices has been studied by  Davila, del Pino, Musso and  Parmeshwar in \cite{DDMP23} using gluing techniques. It is worth to point out that in their case the problem is dispersive and one only can describe the asymptotic dynamics. However in our setting the presence of the boundary constraints the trajectories to be closed and therefore obtain a periodic motion. 
In the next proposition we provide our result in this direction. A more detailed discussion on 
$m$-sectors can be found in Section \ref{sec m sectors}.
\begin{proposition}
	Let $\mathbb{D}_{2}$ be the first quadrant of the unit disc, namely
	$$\mathbb{D}_2\triangleq\big\{(x,y)\in\mathbb{D}\quad\textnormal{s.t.}\quad x>0\quad\textnormal{and}\quad y>0\big\}.$$
	Denote by  $\mathtt{S}_{x}$ and $\mathtt{S}_{y}$  the reflexion with respect to the horizontal and vertical axis, respectively. Then, the following properties hold true.
	\begin{enumerate}
		\item The domain $\mathbb{D}_2$ satisfies the assumptions of Corollary $\ref{cor-special}$ with critical point
		$$\xi_0\triangleq\tfrac{1}{\sqrt{2}}\left(4+\sqrt{17}\right)^{-\frac{1}{4}}(1+\ii).$$
		\item Let $\mathcal{E}_{\lambda}\subset\mathbb{D}_2$ be a point vortex periodic orbit that can be desingularized into a periodic vortex patch motion $\mathbf{1}_{D_t}\in L^{\infty}\big(\mathbb{R};L_c^{\infty}(\mathbb{D}_2)\big).$ Then,  the function 
		$$\omega^\star\triangleq\mathbf{1}_{D_{t}}-\mathbf{1}_{\mathtt{S}_xD_{t}}-\mathbf{1}_{\mathtt{S}_yD_t}+\mathbf{1}_{-D_t}\in L^{\infty}\big(\mathbb{R};L_c^{\infty}(\mathbb{D})\big)$$
		is a time periodic  solution to the Euler equations in the unit disc $\mathbb{D}$. 
	\end{enumerate}
\end{proposition}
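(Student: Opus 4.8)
I would prove the two assertions separately: item (1) is a concrete conformal‑mapping computation on the quarter disc, while item (2) follows by applying the duplication principle stated just above twice in succession.

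\emph{Item (1).} The quarter disc $\mathbb{D}_2$ is bounded and convex, being the intersection of $\mathbb{D}$ with the half‑planes $\{\mathrm{Re}\,z>0\}$ and $\{\mathrm{Im}\,z>0\}$, so Corollary \ref{cor-special} will apply once the displayed condition on the third derivative of the conformal map is verified. The first step is to write down an explicit uniformizer $\Psi:\mathbb{D}_2\to\mathbb{D}$ as a composition of elementary maps: $z\mapsto z^2$ sends $\mathbb{D}_2$ onto the upper half‑disc, $w\mapsto\frac{1+w}{1-w}$ sends the upper half‑disc onto the first quadrant, $u\mapsto u^2$ sends the first quadrant onto the upper half‑plane, and $v\mapsto\frac{v-\ii}{v+\ii}$ sends the upper half‑plane onto $\mathbb{D}$; set $\Phi\triangleq\Psi^{-1}$. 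Using the transformation law $\mathcal{R}_{\mathbb{D}_2}\big(\Phi(z)\big)=-\log(1-|z|^2)-\log|\Phi'(z)|$, the critical‑point equation for $\mathcal{R}_{\mathbb{D}_2}$ at the point $F(0)$, where $F\triangleq\Phi\circ\psi$ with $\psi$ a disc automorphism, reduces to $F''(0)=0$, i.e. $\frac{\Phi''(\psi(0))}{\Phi'(\psi(0))}=\frac{2\,\overline{\psi(0)}}{1-|\psi(0)|^2}$; by convexity the solution, hence $\xi_0=F(0)$, is unique. The diagonal symmetry $z\mapsto\ii\bar z$ of $\mathbb{D}_2$ is conjugated by $\Psi$ to a reflection of $\mathbb{D}$ across a diameter, so $\xi_0$ lies on the diagonal; writing $\xi_0=r\,e^{\ii\pi/4}$ and unwinding the four factors, the equation $F''(0)=0$ reduces to a polynomial relation for $r$, namely $r^{8}+8r^{4}-1=0$, whose admissible root satisfies $r^{4}=\sqrt{17}-4=(4+\sqrt{17})^{-1}$, that is $\xi_0=\tfrac{1}{\sqrt 2}\,(4+\sqrt{17})^{-1/4}(1+\ii)$, as claimed. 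It then remains to differentiate $F$ three times at $0$ — propagating the chain rule through the four elementary factors and through $\psi$ — to obtain a closed value for $\big|F^{(3)}(0)/F'(0)\big|$ and to check that it is none of the numbers $2\sqrt{1-\tfrac{1}{n^2}}$, $n\in\mathbb{N}^*\cup\{\infty\}$; this last verification is the only genuinely computational part of the item.

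\emph{Item (2).} Here I would apply the duplication proposition twice. First, $\mathbb{D}_2$ carries on its boundary the segment $[0,1]$ of the real axis, and $\partial\mathbb{D}_2$ meets the real axis in exactly this segment (the circular arc and the other radial edge touch it only at the endpoints $1$ and $0$); so the hypotheses hold with the reflection $\mathtt{S}_x$, and the proposition produces the time‑periodic weak solution $\mathbf{1}_{D_t}-\mathbf{1}_{\mathtt{S}_xD_t}$ on $\mathbb{D}_2^{\star}=\{z\in\mathbb{D}:\mathrm{Re}\,z>0\}$. Since the support of $\mathbf{1}_{D_t}$ is a compact subset of $\mathbb{D}_2$, this new patch stays at positive distance from $\partial(\mathbb{D}_2^{\star})$, uniformly in $t$, so the new solution belongs to $L^\infty\big(\mathbb{R};L^\infty_c(\mathbb{D}_2^{\star})\big)$. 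The right half‑disc in turn carries on its boundary the segment $[-\ii,\ii]$ of the imaginary axis, met by its boundary in exactly that segment, so the proposition applies once more with the reflection $\mathtt{S}_y$, and $\big(\mathbb{D}_2^{\star}\big)^{\star}=\mathbb{D}$. The resulting time‑periodic weak solution on $\mathbb{D}$ equals $\mathbf{1}_{D_t}-\mathbf{1}_{\mathtt{S}_xD_t}$ on the right half‑disc and, on the left half‑disc, equals $x\mapsto-\big(\mathbf{1}_{D_t}-\mathbf{1}_{\mathtt{S}_xD_t}\big)(\mathtt{S}_yx)=-\mathbf{1}_{\mathtt{S}_yD_t}(x)+\mathbf{1}_{\mathtt{S}_y\mathtt{S}_xD_t}(x)$; since $\mathtt{S}_y\mathtt{S}_x=-\mathrm{Id}$, this is $-\mathbf{1}_{\mathtt{S}_yD_t}+\mathbf{1}_{-D_t}$, and adding the two halves gives precisely $\omega^{\star}=\mathbf{1}_{D_t}-\mathbf{1}_{\mathtt{S}_xD_t}-\mathbf{1}_{\mathtt{S}_yD_t}+\mathbf{1}_{-D_t}$. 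Time‑periodicity with the same period as $D_t$ is inherited because $\mathtt{S}_x$ and $\mathtt{S}_y$ do not depend on time, and the four patches sit one in each quadrant of the disc, so their supports are pairwise disjoint.

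\emph{Main obstacle.} The delicate part is the computation in item (1): first solving $F''(0)=0$ to obtain the Robin critical point $\xi_0$ in closed form, and then carrying the third derivative through the composition to produce $\big|F^{(3)}(0)/F'(0)\big|$ explicitly and compare it with the excluded discrete set. Item (2) is soft once the duplication principle is available; the points needing care there are checking, at each of the two stages, that the boundary of the current domain meets the reflection axis in exactly a closed segment (so that the glued domain is the one claimed), and keeping track of the composition of reflections and of the alternating signs.
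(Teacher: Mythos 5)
Your item (2) is correct and follows essentially the paper's route: Proposition \ref{reflect} applied twice (first across the real axis to pass from $\mathbb{D}_2$ to the right half-disc, then across the imaginary axis to reach $\mathbb{D}$), with the sign bookkeeping via $\mathtt{S}_y\mathtt{S}_x=-\mathrm{Id}$ giving exactly $\omega^{\star}=\mathbf{1}_{D_t}-\mathbf{1}_{\mathtt{S}_xD_t}-\mathbf{1}_{\mathtt{S}_yD_t}+\mathbf{1}_{-D_t}$; your verification that the boundary meets each reflection line in exactly a closed segment at both stages is the right thing to check. For the critical point in item (1), your route through the explicit uniformizer and Grakhov's equation, restricted to the diagonal by symmetry, is legitimate and indeed lands on $r^{8}+8r^{4}-1=0$, which is the $m=2$ case of the equation $t_m^{4m}+4mt_m^{2m}-1=0$ that the paper derives differently (Lemma \ref{lemma-crit} computes $\mathcal{R}_{\mathbb{D}_m}$ by the image method, summing reflected Green functions of $\mathbb{D}$, and extracts the critical point from that formula rather than from the conformal map).

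The genuine gap is the verification of condition \eqref{cond1 thm crit}, which is the actual content of item (1) and which you defer as "the only genuinely computational part". This deferral is not harmless: the excluded set $\big\{2\sqrt{1-\tfrac{1}{n^2}},\ n\in\mathbb{N}^*\cup\{\infty\}\big\}$ is infinite and accumulates at $2$, so "obtain a closed value and check it is none of these numbers" requires either an exact structural argument or rigorous numerics together with a separate treatment of the accumulation point; you provide neither the value of $\big|F^{(3)}(0)/F'(0)\big|$ nor a mechanism for excluding the whole family. The paper's proof of Proposition \ref{disc-chor} closes precisely this point: after composing $\phi_m$ with the normalizing M\"obius map it computes $\big|F^{(3)}(0)/F'(0)\big|^{2}=\mathtt{A}_m\sqrt{4m^{2}+1}+\mathtt{B}_m$ with $\mathtt{A}_m\in\mathbb{Q}^{*}$ and $\mathtt{B}_m\in\mathbb{Q}$ (for $m=2$ a rational combination of $1$ and $\sqrt{17}$ with nonzero $\sqrt{17}$-coefficient), hence an irrational number, whereas every excluded value $4\big(1-\tfrac{1}{n^{2}}\big)$ is rational, including the limit value $4$; this rules out the entire countable set in one stroke. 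Without an argument of this kind (or a fully carried-out rigorous computation with the accumulation point handled), your proof of item (1) is incomplete.
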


\begin{figure}[!h]
	\begin{center}
		\begin{tikzpicture}[scale=1]
			\filldraw[draw=black,fill=black!20] (0,0) circle (3cm);
			\draw[dashed] (-3,0)--(3,0);
			\draw[dashed] (0,-3)--(0,3);
			\draw (0.7,0.5)--(2.2,0.5);
			\draw (0.5,0.7)--(0.5,2.2);
			\draw (0.4,1.6)--(0.5,1.5);
			\draw (0.6,1.6)--(0.5,1.5);
			\draw (0.7,0.5)..controls+(-0.1,0) and +(0,-0.1)..(0.5,0.7);
			\draw (0.5,2.2)..controls+(0,0.2) and +(-0.2,0.04)..(0.75,2.44);
			\tikzset{
				partial ellipse/.style args={#1:#2:#3}{
					insert path={+ (#1:#3) arc (#1:#2:#3)}
				}
			}
			\draw (0,0) [partial ellipse=17:73:2.55cm and 2.55cm];
			\draw (2.2,0.5)..controls+(0.12,0) and +(0.06,-0.25)..(2.44,0.75);
			\filldraw[draw=black,fill=red] (1.5,0.38) .. controls +(0.05,0.05) and +(0,-0.05) .. (1.62,0.5) .. controls +(0,0.05) and +(0.04,0).. (1.5,0.62).. controls +(-0.04,0) and +(-0.03,0.1) ..(1.38,0.5).. controls +(0.03,-0.1) and +(-0.01,-0.01)..(1.5,0.38);
			%
			%
			\begin{scope}[yscale=-1,xscale=1]
				\draw[dashed] (-3,0)--(3,0);
				\draw[dashed] (0,-3)--(0,3);
				\draw (0.7,0.5)--(2.2,0.5);
				\draw (0.5,0.7)--(0.5,2.2);
				\draw (0.4,1.6)--(0.5,1.5);
				\draw (0.6,1.6)--(0.5,1.5);
				\draw (0.7,0.5)..controls+(-0.1,0) and +(0,-0.1)..(0.5,0.7);
				\draw (0.5,2.2)..controls+(0,0.2) and +(-0.2,0.04)..(0.75,2.44);
				\tikzset{
					partial ellipse/.style args={#1:#2:#3}{
						insert path={+ (#1:#3) arc (#1:#2:#3)}
					}
				}
				\draw (0,0) [partial ellipse=17:73:2.55cm and 2.55cm];
				\draw (2.2,0.5)..controls+(0.12,0) and +(0.06,-0.25)..(2.44,0.75);
				\filldraw[draw=black,fill=blue] (1.5,0.38) .. controls +(0.05,0.05) and +(0,-0.05) .. (1.62,0.5) .. controls +(0,0.05) and +(0.04,0).. (1.5,0.62).. controls +(-0.04,0) and +(-0.03,0.1) ..(1.38,0.5).. controls +(0.03,-0.1) and +(-0.01,-0.01)..(1.5,0.38);
			\end{scope}
			\begin{scope}[yscale=-1,xscale=-1]
				\draw[dashed] (-3,0)--(3,0);
				\draw[dashed] (0,-3)--(0,3);
				\draw (0.7,0.5)--(2.2,0.5);
				\draw (0.5,0.7)--(0.5,2.2);
				\draw (0.4,1.6)--(0.5,1.5);
				\draw (0.6,1.6)--(0.5,1.5);
				\draw (0.7,0.5)..controls+(-0.1,0) and +(0,-0.1)..(0.5,0.7);
				\draw (0.5,2.2)..controls+(0,0.2) and +(-0.2,0.04)..(0.75,2.44);
				\tikzset{
					partial ellipse/.style args={#1:#2:#3}{
						insert path={+ (#1:#3) arc (#1:#2:#3)}
					}
				}
				\draw (0,0) [partial ellipse=17:73:2.55cm and 2.55cm];
				\draw (2.2,0.5)..controls+(0.12,0) and +(0.06,-0.25)..(2.44,0.75);
				\filldraw[draw=black,fill=red] (1.5,0.38) .. controls +(0.05,0.05) and +(0,-0.05) .. (1.62,0.5) .. controls +(0,0.05) and +(0.04,0).. (1.5,0.62).. controls +(-0.04,0) and +(-0.03,0.1) ..(1.38,0.5).. controls +(0.03,-0.1) and +(-0.01,-0.01)..(1.5,0.38);
			\end{scope}
			\begin{scope}[yscale=1,xscale=-1]
				\draw[dashed] (-3,0)--(3,0);
				\draw[dashed] (0,-3)--(0,3);
				\draw (0.7,0.5)--(2.2,0.5);
				\draw (0.5,0.7)--(0.5,2.2);
				\draw (0.4,1.6)--(0.5,1.5);
				\draw (0.6,1.6)--(0.5,1.5);
				\draw (0.7,0.5)..controls+(-0.1,0) and +(0,-0.1)..(0.5,0.7);
				\draw (0.5,2.2)..controls+(0,0.2) and +(-0.2,0.04)..(0.75,2.44);
				\tikzset{
					partial ellipse/.style args={#1:#2:#3}{
						insert path={+ (#1:#3) arc (#1:#2:#3)}
					}
				}
				\draw (0,0) [partial ellipse=17:73:2.55cm and 2.55cm];
				\draw (2.2,0.5)..controls+(0.12,0) and +(0.06,-0.25)..(2.44,0.75);
				\filldraw[draw=black,fill=blue] (1.5,0.38) .. controls +(0.05,0.05) and +(0,-0.05) .. (1.62,0.5) .. controls +(0,0.05) and +(0.04,0).. (1.5,0.62).. controls +(-0.04,0) and +(-0.03,0.1) ..(1.38,0.5).. controls +(0.03,-0.1) and +(-0.01,-0.01)..(1.5,0.38);
			\end{scope}
		\end{tikzpicture}
	\end{center}
	\caption{Choreography of four synchronized vortex patches in the unit disc.}
\end{figure}

Next, we will discuss the key ideas behind the proof of Theorem \ref{main-theorem}. To the best of our knowledge, this result marks the first construction of classical solutions for Euler equations involving non-rigid periodic motion in bounded domains. Our proof is inspired by the approach used in \cite{HHM23}, which combines a desingularization procedure with the Nash-Moser scheme and KAM theory to tackle degenerate quasi-linear transport equations driven by time-space periodic coefficients associated with point vortex motion. This method is remarkably robust, making it applicable to various significant ordered structures in geophysical flows.
As we will see later, our proof not only addresses the challenges of  small divisor problem caused by time-space resonances but also overcomes several other substantial challenges. These include degeneracy in the time direction, the degeneracy of the mode 1 in the leading term, and the invertibility of an operator with variable coefficients. In addition, a supplementary difficulty in our setting arises from the geometry of the fixed domain $\mathbf{D}$, which is not explicit in the general statement. The key idea to overcome this issue is to employ complex analysis using the Riemann conformal mapping, which encodes the geometrical properties of $\mathbf{D}$. This procedure naturally leads to the appearance of interesting geometrical quantities, such as the Schwarzian derivative and the conformal radius.
 \\
Let us now  outline the fundamental  steps  of the proof of Theorem \ref{main-theorem} and discuss  the various  technical  challenges mentioned before.\\

\ding{202} \textit{Contour dynamics equations and linearization.} As we shall see in Section \ref{desing-vortex}, we will look for a solution to  Euler equations  \eqref{Euler-Eq} in the form
\begin{equation*}
		\boldsymbol{\omega}_{\varepsilon}(t)=\tfrac{1}{\varepsilon^2}{\bf{1}}_{{D}_{t}^\varepsilon},
	\end{equation*}
	where $\varepsilon\in(0,1)$ is small enough, the domain   $D_{t}^{\varepsilon}$ is given by  
	\begin{equation*}
		{D}_{t}^\varepsilon \triangleq   \varepsilon  {O}_{t}^\varepsilon+\xi_\lambda(t)
	\end{equation*}
	and ${O}_{t}^\varepsilon$ is a simply connected domain localized around the unit disc. The core of the vortex follows the dynamics of a point vortex, that is,    \begin{equation*}
		\dot{\overline \xi_\lambda}(t)=- \tfrac\ii2\partial_{{z}} \mathcal{R}_{\bf{D}}\big(\xi_\lambda(t)\big).
	\end{equation*} 
	We assume that the orbit $t\mapsto \xi_\lambda(t)$ is $\mathtt{T}(\lambda)-$periodic with $\lambda$ its energy level. The goal is to construct $\mathtt{T}(\lambda)$-periodic solution meaning that  	
	\begin{equation*}
		\forall t\in\R,\quad  {O}_{t+\mathtt{T}(\lambda)}^\varepsilon={O}_{t},
	\end{equation*}
	with  ${O}_{t}^\varepsilon$ being  a simply connected domain localized around the unit disc.   To provide a more precise description, we will parametrize the boundary $ \partial {O}_{t,1}^\varepsilon$ as follows
\begin{align*}
\theta\in\T\mapsto \sqrt{1+2\varepsilon r( \omega(\lambda) t,\theta)}\,e^{\ii\theta},\qquad \omega(\lambda)=\tfrac{2\pi}{\mathtt{T}(\lambda)},
\end{align*}
with $r:(\varphi,\theta)\in\mathbb{T}^2\mapsto r(\varphi,\theta)\in\R$ being a smooth periodic  function. We need to reparamterize the point trajectory as follows
$$
\xi_\lambda(t)=p\big(\omega(\lambda) t\big),\quad \hbox{with}\quad p:\mathbb{T}\mapsto \mathbb{C}.
$$
Then from the contour dynamics equation, see  \eqref{functional scrG}, we find by taking $G\triangleq\tfrac{ \mathbf{G}}{\varepsilon}$
\begin{equation}\label{functional scrG00}
		\begin{aligned}
			{G}(r)(\varphi,\theta)&\triangleq\varepsilon^2\omega(\lambda)\partial_\varphi r(\varphi,\theta)-\tfrac{1}{2}\partial_\theta \textnormal{Re}\Big\{\partial_{z} \mathcal{R}_{\bf{D}}\big(p(\varphi)\big)R(\varphi,\theta)e^{\ii \theta}\Big\}\\ &\quad+\tfrac{1}{\varepsilon}\partial_{\theta}\Big[\Psi_1\big(r,z(\varphi,\theta)\big)+\Psi_2\big(r,z(\varphi,\theta)\big)\Big]=0,
		\end{aligned}
	\end{equation}
where $\Psi_1$ describes the induced effect, whereas $\Psi_2$ describes the boundary interaction.
The  linearization  around a small state $r$ is described in Proposition \ref{proposition linear op exp}. Actually, we get the following asymptotic structure,
\begin{align*}
			d_{r}{G}(r)[h]&=\varepsilon^2\omega(\lambda)\partial_\varphi  h+\partial_{\theta}\big[\mathbf{V}^{\varepsilon}(r)h \big]-\tfrac{1}{2}\mathbf{H}[h]+\varepsilon^2\partial_{\theta}\mathbf{Q}_1[h]+\varepsilon^2\partial_{\theta}\mathcal{R}_1^{\varepsilon}[h]+\varepsilon^3\partial_{\theta}\mathcal{R}_2^{\varepsilon}[h],
		\end{align*}
		where $\mathbf{H}$ denotes the toroidal  Hilbert transform and the function $\mathbf{V}^{\varepsilon}(r)$ decomposes as follows
			\begin{equation*}
				\mathbf{V}^{\varepsilon}(r)\triangleq\tfrac{1}{2}-\tfrac{\varepsilon}{2}r+\varepsilon^2\big(\tfrac{1}{2}\mathtt{g}+V_1^{\varepsilon}(r)\big)+\varepsilon^3V_2^\varepsilon (r),
			\end{equation*}
		with
		\begin{align*}
			\mathtt{g}(\varphi,\theta)&\triangleq\textnormal{Re}\left\lbrace\mathtt{w}_2\big(p(\varphi)\big)e^{2\ii\theta}\right\rbrace,\qquad
			\mathtt{w}_2(p)\triangleq \big(\partial_z\mathcal{R}_{\mathbf{D}}(p)\big)^2+\tfrac13 S(\Phi)(p).
		\end{align*}
In the expression above $S(\Phi)$ is the classical Schwazian derivative, defined in  \eqref{Swart}.
However, the real-valued function $r\mapsto V_1^\varepsilon(r)$ is quadratic while   $r\mapsto V_2^\varepsilon(r)$ is {affine}.
As to the   operator $\mathbf{Q}_1,$ it is a of finite rank localizing in the spatial modes $\pm1$ and takes the integral form
\begin{align*}
	\mathbf{Q}_1[h](\varphi,\theta)&\triangleq\int_{\mathbb{T}}h(\eta)\left[\frac{\cos(\theta-\eta)}{r_{\mathbf{D}}^2\big(p(\varphi)\big)}+\frac16\textnormal{Re}\left\lbrace e^{\ii(\theta+\eta)} S(\Phi)\big(p(\varphi)\big)\right\rbrace\right]d\eta,\qquad r_{\mathbf{D}}\triangleq e^{-\mathcal{R}_{\mathbf{D}}}.
	\end{align*}
The operator $\mathcal{R}_1^\varepsilon(r)$ exhibits a smoothing effect in space and depends quadratically on $r$. On the other hand, the operator $\mathcal{R}_2^\varepsilon(r)$ also smoothes in space, but its dependence on $r$ is affine. The main challenge, in inverting the linearized operator,  involves a small divisor problem caused by the time-space resonance, where the time direction degenerates as $\varepsilon$ approaches zero. To solve this issue, 
 it is necessary to work within Cantor sets on the parameter $\lambda$  that satisfy Diophantine conditions, which also degenerate with rate of order $\varepsilon^{2+\delta}, \delta>0$. This allows to ensure an almost full Lebesgue measure to these sets after the multiple steps applied in the Nash-Moser scheme. Note that the rate of degeneracy in $\varepsilon$ has significant consequences  in constructing a right inverse for the linearized operator, which results in a loss of regularity and leads to a divergent control of order $\varepsilon^{-2-\delta}$.
Nevertheless, when initializing the Nash-Moser scheme with $r=0$, the subsequent iteration corresponds to the term $(d_{r}{G}(0))^{-1}[{G}(0)]$. However, from Lemma \ref{lem G0}, we infer  that ${G}(0)$ is of size $O(\varepsilon)$.
As a result, the estimate of $(d_{r}{G}(0))^{-1}[{G}(0)]$ 
 will exhibit a divergent behavior  as $\varepsilon$ becomes small, making this approach unsuitable for the current scheme. Therefore, before proving the invertibility of the linearized operator and applying the Nash-Moser scheme, we need to construct a better approximate solution for \eqref{functional scrG00}. This construction is provided by Lemma \ref{approxim repsi 0}, which ensures the existence of a function $r_\varepsilon:[\lambda_*,\lambda^*]\times \T^2\to\R$ satisfying
$$\|r_\varepsilon\|_{s}^{\textnormal{Lip}(\gamma)}\lesssim 1\qquad\hbox{and}\qquad\|G(\varepsilon r_\varepsilon)\|_{s}^{\textnormal{Lip}(\gamma)}\lesssim \varepsilon^4.$$
The proof uses the fact that  $G(0)$ is  localized outside the modes $\pm1$, as established in Lemma \ref{lem G0}. Then, we introduce a rescaling of the function $G$ as stated in \eqref{def func F} that takes the form
 \begin{equation}\label{definition functional bfF}
		\mathbf{F}(\rho)\triangleq\tfrac{1}{\varepsilon^{1+\mu}}G(\varepsilon r_\varepsilon+\varepsilon^{1+\mu}\rho),
		\end{equation}
where $\mu \in (0,1)$ is  a free parameter. Note that the parameter $\delta$  will be carefully selected later in the Nash-Moser iteration, in relation to $\mu$ and the measure of the final Cantor set.\\

\ding{203} \textit{Construction of an approximate right inverse.}
In Proposition \ref{Nash-Moser}, we present the formulation of a Nash-Moser iteration tailored to our problem, based on \cite[Prop. 6.1]{HHM23}. The implementation of this iteration requires the construction an approximate right inverse for the linearized operator of the functional $\mathbf{F}$, defined in \eqref{definition functional bfF}, at small state $\rho$. The process follows as the following plan.
First, we expand the linearized operator in terms of $\varepsilon$. Next, we use the KAM method, combined with an appropriate preliminary steps, to conjugate the transport equation into an operator with constant coefficients. This step makes appearing a Cantor-type set.  Then, we reduce a truncated operator at a suitable order in $\varepsilon$, addressing the challenge posed by the degeneracy of the spatial modes $\pm1$, utilizing the monodromy matrix. The inversion on the remaining modes requires a second Cantor-type set. Finally, the approximate right inverse is derived through a perturbative argument.\\

\ding{204} \textit{Nash-Moser scheme and measure of the final Cantor set.}
In the Section \ref{Nash Moser}, we build a non-trivial solution to the equation \eqref{functional scrG00} by means of Nash-Moser iteration scheme. The method is now classical and we borrow the result from \cite[Prop. 6.1]{HHM23}. As stated in Corollary \ref{prop-construction}, the solutions are obtained modulo a suitable choice of the energy level $\lambda$ among a Cantor set. Using the non degeneracy of the point vortex frequency $\lambda\mapsto\omega(\lambda)$ we prove in Lemma \ref{Lem-Cantor-measu} a lower bound for the Lebesgue measure of this final Cantor set.\\

\section{Motion of a single vortex}\label{sec pt vortex motion}
	The point vortex system in bounded domains is well-explored in the literature  and a lot of important facts illustrating  the boundary effects  have been established. We refer for instance to \cite{Gusta1,Newton}. Our primary goal here is to examine certain results related to the motion of a single vortex and analyze its orbit. Specifically, we are interested in the existence of periodic orbits, which are closely linked to the geometry of the domain.

\subsection{Green and Robin functions}\label{sec Green Robin}
In this section, we will gather some fundamental results on Green functions associated with bounded simply connected domains $\mathbf{D}.$ This topic is well-documented in the literature; for instance, see \cite{Evans} for more detailed information. To begin, let's recall that the stream function $\psi$ associated with a given vorticity  $\boldsymbol{\omega}$ is the unique solution to the elliptic problem
\begin{eqnarray*}	           
       \begin{cases}
          	\Delta\psi=\boldsymbol{\omega}, &\text{ in $\mathbf{D}$},\\
        \psi=0,&\text{ for all $z\in\partial \mathbf{D}$}. 
       \end{cases}
\end{eqnarray*}
Then $\psi=\Delta^{-1}\boldsymbol{\omega}$ is linked to   $\boldsymbol{\omega}$ through the following integral formula
	\begin{align}\label{stream-V0}\psi(t,z)=\frac{1}{2\pi}\int_{\mathbf{D}}G_{\mathbf{D}}(z,w)\boldsymbol{\omega}(t,w)dA(w),
	\end{align}
	where $G_{\mathbf{D}}$ denotes the \textit{Green function} in $\mathbf{D}$, which is defined as the unique solution to the elliptic equation
	\begin{eqnarray}   \label{Eulereq}	           
       \begin{cases}
          	\Delta_z G_{\mathbf{D}}(z,w)=2\pi\delta_{w}(z), &\text{ in $\mathbf{D}$},\\
         G_{\mathbf{D}}(z,w)=0,&\text{ for all $z\in\partial \mathbf{D}$}. 
       \end{cases}
\end{eqnarray}
	In addition, the function $G_{\mathbf{D}}$ decomposes as follows,
	\begin{align}\label{GS}
	G_{\mathbf{D}}(z,w)=\log|z-w|+K(z,w),
	\end{align}
	where $K:\mathbf{D}\times\mathbf{D}\rightarrow\mathbb{R}$ is the regular part of the Green function, which is harmonic in each variable $z$ and $w.$ In the particular case of the unit disc $\mathbb{D}$, one gets
	 	$$\qquad  G_{\mathbb{D}}(z,w)=\log\left|\frac{z-w}{1-z\overline{w}}\right|,\qquad\textnormal{i.e.}\qquad K(z,w)=-\log|1-z\overline{w}|.$$
	It is a classical fact \cite[Chap. 6]{A66}, that the Green function is linked to the conformal mappings $\Phi:\mathbf{D}\to\mathbb{D}$ as follows
	\begin{align}\label{Green-conf}
	G_{\mathbf{D}}(z,w)= G_{\mathbb{D}}\big(\Phi(z),\Phi(w)\big).
	\end{align}
	 Therefore, we easily deduce from \eqref{GS}
	 \begin{align}\label{Rob-double}
	\forall z\neq w\in\mathbf{D},\quad K(z,w)= \log\left|\frac{\frac{\Phi(z)-\Phi(w)}{z-w}}{1-\Phi(z)\overline{\Phi(w)}}\right|.
	\end{align}
	The \textit{Robin function} is defined by
	 \begin{align}\label{Robin}
	 	\forall z\in\mathbf{D},\quad\mathcal{R}_{\mathbf{D}}(z)\triangleq K(z,z)=\lim_{w\to z}K(z,w).
	 \end{align}
 Thus we get from \eqref{Rob-double} that the Robin function can be explicitly linked to the conformal mapping as follows,
	\begin{align}\label{Robin1}\nonumber 
	\forall z\in\mathbf{D},\quad\mathcal{R}_{\mathbf{D}}(z)&=\log\left(\frac{|\Phi^\prime(z)|}{1-|\Phi(z)|^2}\right)\\
	&=-\log\big(r_{\mathbf{D}}(z)\big),
	\end{align}
	where $r_{\mathbf{D}}$ is the conformal radius at a point $ z\in \mathbf{D}$  is defined by
	\begin{align}\label{Conf-R}
	r_{\mathbf{D}}(z)\triangleq\frac{1-|\Phi(z)|^2}{|\Phi^\prime(z)|}\cdot
	\end{align}
	In particular, $\mathcal{R}_{\mathbf{D}}$ is real analytic on $\mathbf{D}$ and, according to \cite{Bandle,Fied,Gusta1}, one may deduce from a direct computation that $\mathcal{R}_{\mathbf{D}}$ satisfies the Liouville  equation
	\begin{align}\label{Conf-R1}
	\Delta 	\mathcal{R}_{\mathbf{D}}=4 e^{2	\mathcal{R}_{\mathbf{D}}}\quad\hbox{in }\mathbf{D}.
	\end{align}
	Notice that Robin function satisfies the global bounds, see for instance \cite{Gust2}
	\begin{equation}\label{boundary behaviour Robin}
		\forall z\in \mathbf{D},\quad-\log\big(4\delta(z)\big)\leqslant \mathcal{R}_{\mathbf{D}}(z)\leqslant -\log\delta(z),
	\end{equation}
	with $\delta$ the distance to the boundary $\partial \mathbf{D}$ defined by
	$$
	\delta(z)\triangleq\inf_{w\in \partial \mathbf{D}}|z-w|.
	$$
	We point out that when  the domain $\mathbf{D}$ is  bounded and convex, the Robin function is strictly convex and admits only one non-degenerate critical point, see \cite{Cafar,Gust2}. Later, we will need the following identity which follows from a direct differentiation of    \eqref{Robin1}
	\begin{align}\label{Robin1N}
	\partial_z\mathcal{R}_{\mathbf{D}}(z)
	&=\frac12\frac{\Phi^{\prime\prime}(z)}{\Phi^\prime(z)}+\frac{\Phi^\prime(z)\overline{\Phi(z)}}{1-|\Phi(z)|^2}\cdot
		\end{align}

	
%
			\subsection{Hamiltonian structure and periodic orbits}
	In this section we continue to  assume that  the domain $\mathbf{D}$ is a simply connected bounded domain. The dynamics of a single vortex  $\omega=\Gamma \delta_{\xi(t)}$ inside the domain $\mathbf{D}$ is governed  by the Robin function, introduced in \eqref{Robin}, through the Hamiltonian complex equation, see for instance \cite{Gust2, Newton},
	\begin{equation}\label{Vort-leap}
		\dot{\xi}(t)=\ii \tfrac{\Gamma}{2\pi}\partial_{\overline{z}} \mathcal{R}_{\mathbf{D}}\big(\xi(t)\big).
			\end{equation}
			Notice that this equation can be recast in terms of a 2d real    Hamiltonian  system,
			\begin{align}\label{Hamilt}\dot{\xi}(t)=\nabla^{\perp}H_{\mathbf{D}}\big(\xi(t)\big),\qquad\nabla^{\perp}\triangleq\begin{pmatrix}
				-\partial_{y}\\
				\partial_{x}
			\end{pmatrix},\qquad H_{\mathbf{D}}\triangleq\tfrac{\Gamma}{4\pi}\mathcal{R}_{\mathbf{D}}.
			\end{align}
	Therefore the trajectories $t\mapsto \xi(t)$ are globally well-defined and lie  in the level sets of the Hamiltonian $H_{\mathbf{D}}.$ 
	However, exploring  critical points, such as their number and structure, together with the  shape of level sets within general domains remains a captivating pursuit, with only a sparse collection of results currently available. Notably, as indicated by \eqref{boundary behaviour Robin}, the growth of the Hamiltonian near the boundary implies the  existence of at least one critical point. In convex domains, it is shown in  \cite{Cafar,Gust2} that this point is unique and the Hamiltonian is strictly convex. As a by-product, the phase space is foliated by periodic orbits surrounding  convex subdomains.
	The uniqueness of critical points has been extended to more general domains. Actually, if we denote  the inverse conformal mapping $F\triangleq\Phi^{-1}:\mathbb{D}\rightarrow\mathbf{D},$ then we infer from  \eqref{Robin1N} that the critical points $F(\xi)$ to Robin function are given by the  $\xi\in\mathbb{D}$ which are the  roots of Grakhov's equation \cite{Grakov}
	 \begin{equation}\label{Grakhov}
	 \frac{F^{\prime\prime}(\xi)}{F^\prime(\xi)}=\frac{2\overline\xi}{1-|\xi|^2}\cdot
	 \end{equation}
	According to  \cite{CO94}, the uniqueness is established  under the Nehari univalence  criterion
	\begin{equation}\label{cond F crit pnt}
		\forall z\in\mathbb{D},\quad|S(F)(z)|\leqslant\tfrac{2}{(1-|z|^2)^2},
	\end{equation}
	where  $S$ is the Schwarzian derivative defined by
	\begin{align}\label{Swart}
	S(F)\triangleq\left(\tfrac{F''}{F'}\right)'-\tfrac{1}{2}\left(\tfrac{F''}{F'}\right)^2=\tfrac{F^{(3)}}{F'}-\tfrac{3}{2}\left(\tfrac{F''}{F'}\right)^2.
	\end{align}
We emphasize  that the  condition \eqref{cond F crit pnt} is satisfied by convex domains, see \cite{CDO11,N76}. More results in the same spirit   leading to the uniqueness of critical points  can be found in \cite{K21,KK18}. 
In general domains, a little is known on the critical points. For instance, by  invoking Sard theorem we can deduce weak results such as  the set of critical values 
	\begin{equation}\label{Singular}
	\mathcal{C}\triangleq\big\{ H_{\bf{D}}(z)\quad \hbox{s.t.}\quad \nabla H_{\bf{D}}(z)=0\big\}
	\end{equation}
	is compact with zero Lebesgue measure. Furthermore, the range of the Hamiltonian $H_{\bf{D}}$ is a connected set of $\mathbb{R}$ and takes the form
	\begin{equation}\label{Range}
	H_{\mathbf{D}}(\mathbf{D})\triangleq[\lambda_{\textnormal{\tiny{min}}},\infty).
	\end{equation}	
	Remark that $\lambda_{\textnormal{\tiny{min}}}\in \mathcal{C}$ and the set $H_{\mathbf{D}}(\mathbf{D})\setminus \mathcal{C}$ is open. For each regular energy value $\lambda \in H_{\mathbf{D}}(\mathbf{D})\setminus\mathcal{C}$ all the connected components of $H_{\mathbf{D}}^{-1}(\{\lambda\})$ are periodic orbits diffeomorphic to a circle, see for instance  \cite[Prop. 2.1]{LP04}. We  denote by $\mathtt{T}(\lambda)$ the  minimal period of each periodic  orbit $\mathcal{E}_\lambda$. This period can be recovered from area $A(\lambda)$ of the domain enclosed by $\mathcal{E}_\lambda$ as follows
$$\mathtt{T}(\lambda)=\frac{dA(\lambda)}{d\lambda}\cdot
$$
In the following proposition we shall collect some basic classical results on planar Hamiltonian dynamical system. They follow from the fact that the Hamiltonian $\mathcal{R}_{\bf{D}}$ is real analytic inside the domain $\mathbf{D}.$
	\begin{proposition}\label{prop-analy}
	Let  
$[\lambda_*,\lambda^*]\subset H_{\mathbf{D}}(\mathbf{D})\setminus\mathcal{C},$ and consider  a continuous family of periodic orbits $(\mathcal{E}_\lambda)_{\lambda\in[\lambda_*,\lambda^*]}$. Then the following results hold true.
\begin{enumerate}
\item The period map $\lambda\in [\lambda_*,\lambda^*]\mapsto \mathtt{T}(\lambda)$ is analytic.
\item Each periodic orbit $\mathcal{E}_\lambda$ admits an analytic parametrization $s\in\mathbb{T}\mapsto p_{\lambda}(s)$.
\item The map $\lambda\in [\lambda_*,\lambda^*]\mapsto p_\lambda$ is analytic.
\end{enumerate} 
	\end{proposition}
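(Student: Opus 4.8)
The statement asserts analyticity of the period function, analyticity of a parametrization of each orbit, and analyticity of the orbit as it varies with the energy level $\lambda$. The plan is to exploit that $H_{\mathbf{D}}=\tfrac{\Gamma}{4\pi}\mathcal{R}_{\mathbf{D}}$ is real analytic on $\mathbf{D}$ (by \eqref{Robin1}, since the conformal map $\Phi$ is holomorphic and nonvanishing in derivative, and the logarithm of a positive real-analytic function is real analytic), together with the fact that on $[\lambda_*,\lambda^*]\subset H_{\mathbf{D}}(\mathbf{D})\setminus\mathcal{C}$ the gradient $\nabla H_{\mathbf{D}}$ does not vanish. Thus the Hamiltonian vector field $X_{H}=\nabla^{\perp}H_{\mathbf{D}}$ is a nonvanishing real-analytic vector field on a neighborhood of the compact annular region swept by $(\mathcal{E}_\lambda)_{\lambda\in[\lambda_*,\lambda^*]}$, and each $\mathcal{E}_\lambda$ is a periodic orbit of it.

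First I would set up a transverse section. Fix $\lambda_0\in(\lambda_*,\lambda^*)$ and a point $z_0\in\mathcal{E}_{\lambda_0}$; choose a real-analytic arc $\Sigma$ through $z_0$ transverse to $X_H$ (for instance a short segment of the gradient flow line of $H_{\mathbf{D}}$, along which $H_{\mathbf{D}}$ is a real-analytic coordinate with nonzero derivative, since $\nabla H_{\mathbf{D}}\neq0$). By the Cauchy--Kovalevskaya / analytic dependence theorem for ODEs, the flow map $(t,z)\mapsto \phi^t(z)$ of $X_H$ is real analytic in both arguments. Parametrize $\Sigma$ analytically by $\lambda\mapsto q(\lambda)$ so that $H_{\mathbf{D}}(q(\lambda))=\lambda$; this is possible and analytic because $H_{\mathbf{D}}|_\Sigma$ is an analytic diffeomorphism onto a neighborhood of $\lambda_0$. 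The first-return (Poincaré) map is then defined by $\phi^{\mathtt{T}(\lambda)}(q(\lambda))\in\Sigma$, and $\mathtt{T}(\lambda)$ is characterized by the equation $g(\mathtt{T},\lambda):=\langle \phi^{\mathtt{T}}(q(\lambda))-z_0,\nu\rangle=0$, where $\nu$ is a fixed normal to $\Sigma$. The function $g$ is real analytic, and $\partial_{\mathtt{T}}g$ at the return time equals $\langle X_H(\phi^{\mathtt{T}}(q(\lambda))),\nu\rangle\neq0$ by transversality. The analytic implicit function theorem then yields that $\lambda\mapsto\mathtt{T}(\lambda)$ is real analytic near $\lambda_0$; since $\lambda_0\in(\lambda_*,\lambda^*)$ was arbitrary and the endpoints are handled by enlarging the interval slightly (it is contained in the open set $H_{\mathbf{D}}(\mathbf{D})\setminus\mathcal{C}$), we get (1) on all of $[\lambda_*,\lambda^*]$. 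For (2) and (3), define for $\lambda$ near $\lambda_0$ the parametrization $p_\lambda(s):=\phi^{\,\mathtt{T}(\lambda)s/2\pi}\big(q(\lambda)\big)$ for $s\in\mathbb{T}$; this is $2\pi$-periodic in $s$ since $\phi^{\mathtt{T}(\lambda)}(q(\lambda))=q(\lambda)$, it is a parametrization of $\mathcal{E}_\lambda$, and it is real analytic jointly in $(s,\lambda)$ as a composition of the analytic flow, the analytic function $\mathtt{T}$, and the analytic arc $q$. Patching the local sections via the flow (two overlapping sections are related by a time shift along $X_H$, which is analytic) gives a globally consistent analytic family, proving (2) and (3).

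The main obstacle is not any single analytic estimate but ensuring the construction is uniform over the full compact parameter interval and genuinely global on each orbit: one must check that the transverse section can be chosen so that every orbit in the family crosses it exactly once per period (which follows from continuity of the family $(\mathcal{E}_\lambda)$ and compactness of $[\lambda_*,\lambda^*]$, possibly after covering the interval by finitely many charts), and that the returns time $\mathtt{T}(\lambda)$ so obtained is the \emph{minimal} period rather than a multiple — this is where one uses that the orbits are diffeomorphic to circles (cited from \cite[Prop. 2.1]{LP04}) so the first return is simple. A secondary, purely bookkeeping point is the analyticity of $H_{\mathbf{D}}$ up to where the orbits live: this is immediate from \eqref{Robin1} because $\Phi$ is biholomorphic onto $\mathbb{D}$, so $|\Phi'|$ and $1-|\Phi|^2$ are positive real-analytic on $\mathbf{D}$ and their quotient's logarithm is real analytic there. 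No regularity of $\partial\mathbf{D}$ is needed since everything takes place in the interior.
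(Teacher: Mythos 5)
Your argument is correct, and it develops precisely the classical route the paper itself invokes: the paper states Proposition \ref{prop-analy} without proof, remarking only that these facts follow from the real analyticity of $\mathcal{R}_{\mathbf{D}}$ inside $\mathbf{D}$, which is exactly what your transverse-section/analytic-flow/implicit-function-theorem construction makes precise. Your handling of the two delicate points (that the IFT branch is the minimal period, and uniformity over the compact interval) is also the standard and adequate way to close the argument.
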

	
		\subsection{Period asymptotics}
	The purpose of this section is to analyze the asymptotic behaviors of the period of point vortex orbits in two extreme regimes: near an elliptic critical point and close to the domain boundary. Near an elliptic critical point, the vortex exhibits slow motion, whereas near the boundary, it rotates rapidly.
	\subsubsection{Near an elliptic critical point}
	Let us consider $\xi_0\in\mathbf{D}$ an elliptic critical point of the Robin function, such as the  point associated with the global minimum. Then 
	\begin{equation}\label{crit pnt}
		\partial_{z}\mathcal{R}_{\mathbf{D}}(\xi_0)=0.
	\end{equation}
	We can always choose (and this is done in a unique way) the conformal mapping so that
	$$\Phi(\xi_0)=0\qquad\textnormal{and}\qquad\Phi'(\xi_0)>0.$$
	According to \eqref{Robin1N}, the condition \eqref{crit pnt} is equivalent to 
	$$\Phi''(\xi_0)=0.$$
	Recall from \eqref{Vort-leap} that the point vortex motion  writes 
	\begin{equation}\label{dynamique pnt vx}
		\dot{\xi}=\tfrac{\ii\Gamma}{2\pi}\partial_{\overline z}\mathcal{R}_{\mathbf{D}}(\xi).
	\end{equation}
	By means of Morse's Lemma, locally the trajectory of the point vortex will be close to an ellipse. Therefore, one can choose a parametrization of the orbit using polar formulation as follows
	\begin{equation}\label{splitting xi}
		\xi(t)=\xi_0+\mathtt{R}\big(\lambda,\Theta(t)\big)e^{\ii\Theta(t)},
	\end{equation}
	where the energy level $\lambda\in H_{\mathbf{D}}(\mathbf{D})$ is taken close to  $\lambda_{\star}\triangleq H_{\mathbf{D}}(\xi_0).$
	In particular
	$$\mathtt{R}(\lambda_{\star},\Theta)=0.$$
	Inserting the ansatz \eqref{splitting xi} into \eqref{dynamique pnt vx} gives
	$$\dot{\Theta}=\frac{\Gamma}{2\pi\mathtt{R}(\lambda,\Theta)}\textnormal{Re}\big\{(\partial_{z}\mathcal{R}_{\mathbf{D}})\big(\xi_0+\mathtt{R}(\lambda,\Theta)e^{\ii\Theta}\big)e^{\ii\Theta}\big\}.$$
	It follows that
	$$\mathtt{T}(\lambda)=\frac{2\pi}{\Gamma}\int_0^{2\pi}\frac{\mathtt{R}(\lambda,\Theta)}{\textnormal{Re}\big\{(\partial_{{\xi}}\mathcal{R}_{\mathbf{D}})\big(\xi_0+\mathtt{R}(\lambda,\Theta)e^{\ii\Theta}\big)e^{\ii\Theta}\big\}}d\Theta.$$
	Performing a Taylor expansion, we find
	\begin{align*}\partial_{z}\mathcal{R}_{\mathbf{D}}\big(\xi_0+\mathtt{R}\big(\lambda,\Theta(t)\big)e^{\ii\Theta(t)}\big)=&\mathtt{R}(\lambda,\Theta)e^{\ii\Theta}\tfrac{\Phi^{(3)}(\xi_0)}{2\Phi'(\xi_0)}+\mathtt{R}(\lambda,\Theta)e^{-\ii\Theta}\big(\Phi'(\xi_0)\big)^2+O\big(\mathtt{R}^2(\lambda,\Theta)\big).
	\end{align*}
	Therefore, we deduce that
	\begin{align*}
		\mathtt{T}(\lambda_{\star})&\triangleq \lim_{\lambda\to \lambda_\star}\mathtt{T}(\lambda)=\frac{2\pi}{\Gamma}\int_{0}^{2\pi}\frac{d\Theta}{\mathtt{a}+|\mathtt{b}|\cos(2\Theta+\arg(\mathtt{b}))}\\
		&=\frac{4\pi}{\Gamma}\int_{0}^{\pi}\frac{d\Theta}{\mathtt{a}+|\mathtt{b}|\cos(\Theta)},
	\end{align*}
	where
	\begin{equation}\label{def a b}
\mathtt{a}\triangleq \big(\Phi'(\xi_0)\big)^2\qquad\textnormal{and}\qquad\mathtt{b}\triangleq\frac{\Phi^{(3)}(\xi_0)}{2\Phi'(\xi_0)}\cdot
	\end{equation}
	Finally, using \cite[p. 402]{GR15} or a direct computation based on the residue Theorem, we find
	\begin{equation}\label{T0 crit}
\mathtt{T}(\lambda_{\star})=\frac{4\pi^2}{\Gamma\sqrt{\mathtt{a}^2-|\mathtt{b}|^2}}\cdot
\end{equation}
Notice that this result was established in a different way in \cite{Gusta1}.

	\begin{remark}
	\begin{enumerate}
	\item In \eqref{T0 crit} the condition $\mathtt{a}>|\mathtt{b}|$ is required. This corresponds to the ellipticity of the critical point $\xi_0$ and it is necessary for the dynamics to be confined locally near $\xi_0$, as explained in~{\rm \cite{DI21}}.  
	
	\item	A more refined analysis based on the expansion of the orbit leads to the first order expansion
		\begin{align*}
			\mathtt{T}(\lambda)&=\frac{4\pi^2}{\Gamma\sqrt{\mathtt{a}^2-|\mathtt{b}|^2}}+\frac{16\pi^2 (\lambda-\lambda_{\star})}{\Gamma(\sqrt{\mathtt{a}^2-|\mathtt{b}|^2})^7}\Big[ \tfrac23\mathtt{a}^3|\mathtt{c}|^2+\mathtt{a}|\mathtt{b}|^2|\mathtt{c}|^2 +\tfrac53 \mathtt{a}  \textnormal{Re}\{\mathtt{c}^2\overline{\mathtt{b}}^2\}-\tfrac32(  \mathtt{a}-  |\mathtt{b}|^2)\textnormal{Re}\{\mathtt{d}\overline{\mathtt{b}}^2\}
			\\ &\qquad\qquad\qquad\qquad\qquad\qquad\qquad-\tfrac32\mathtt{a}^4|\mathtt{b}|^2-\mathtt{a}^6+\tfrac52\mathtt{a}^2|\mathtt{b}|^4\Big]+O\big((\lambda-\lambda_{\star})^{\frac32}\big),
		\end{align*}
		where
		$$\mathtt{c}\triangleq\frac{\Phi^{(4)}(\xi_0)}{4\Phi'(\xi_0)},\qquad\mathtt{d}\triangleq\frac{\Phi^{(5)}(\xi_0)}{12\Phi'(\xi_0)}-\left(\frac{\Phi^{(3)}(\xi_0)}{2\Phi'(\xi_0)}\right)^2\cdot
		$$
		\end{enumerate}
	\end{remark}
	
	\subsubsection{Near the boundary}
	In what follows,  we intend to study the asymptotic of $\mathtt{T}(\lambda)$ as $\lambda$ approaches infinity.  We will begin with a weaker result, which we believe remains valid even at a significant distance from the boundary.
	\begin{lemma}\label{lemm-non-degen}
		Let $\mathbf{D}$ be a simply-connected bounded domain. Then, all the orbits near the boundary are closed and  the period function  cannot be constant near the boundary $\partial\mathbf{D}.$ 
	\end{lemma}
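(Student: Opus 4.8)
The statement splits into two assertions: that every orbit near $\partial\mathbf D$ is closed, and that the period function is not locally constant there. Both follow from the boundary blow‑up of the Robin function, and the second one is essentially a soft area argument.

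\emph{Orbits near the boundary are closed.} Since $H_{\mathbf D}=\tfrac{\Gamma}{4\pi}\mathcal R_{\mathbf D}$, the global bounds \eqref{boundary behaviour Robin} give that a particle of energy $\lambda$ is trapped in a thin collar of $\partial\mathbf D$: for $z\in\mathcal E_\lambda$ one has $\mathcal R_{\mathbf D}(z)=\tfrac{4\pi\lambda}{\Gamma}$, hence $\tfrac14 e^{-4\pi\lambda/\Gamma}\le\delta(z)\le e^{-4\pi\lambda/\Gamma}$, so $\mathcal E_\lambda$ is contained in a compact subset of $\mathbf D$; being also closed in $\mathbf D$, it is compact. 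It is nonempty for all $\lambda$ large by \eqref{Range}. Using the compactness of the critical‑value set $\mathcal C$ recorded in the text, fix $\Lambda_0>\sup\mathcal C$. For $\lambda>\Lambda_0$ the value $\lambda$ is regular, so $\mathcal E_\lambda$ is a compact one–dimensional submanifold of $\mathbf D$, i.e.\ a finite disjoint union of embedded circles, and on each of them the nowhere‑vanishing field $\nabla^\perp H_{\mathbf D}$ generates a periodic flow. Thus every orbit lying in such an energy level is closed.

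\emph{The period is not constant near the boundary.} The key point is a finite‑area bound. For $\Lambda_0<\lambda_1<\lambda_2$ put $\Omega_{\lambda_1,\lambda_2}:=\{z\in\mathbf D:\ \lambda_1<H_{\mathbf D}(z)<\lambda_2\}$, an open subset of $\mathbf D$ whose closure is compact in $\mathbf D$; since $H_{\mathbf D}$ is real‑analytic there, the co‑area formula gives
$$|\Omega_{\lambda_1,\lambda_2}|=\int_{\lambda_1}^{\lambda_2}\Big(\int_{\mathcal E_s}\frac{d\ell}{|\nabla H_{\mathbf D}|}\Big)\,ds.$$
Along the Hamiltonian flow $|\nabla H_{\mathbf D}|=|\nabla^\perp H_{\mathbf D}|=|\dot\xi|$, so the inner integral is exactly the sum of the periods of the components of $\mathcal E_s$, hence $\ge\mathtt T(s)$. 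Therefore $\int_{\lambda_1}^{\lambda_2}\mathtt T(s)\,ds\le|\Omega_{\lambda_1,\lambda_2}|\le|\mathbf D|<\infty$, and letting $\lambda_2\to\infty$ we obtain $\int_{\lambda_1}^{\infty}\mathtt T(s)\,ds\le|\mathbf D|<\infty$. (Equivalently, use the identity $\mathtt T(\lambda)=A'(\lambda)$ recalled before Proposition~\ref{prop-analy} with $A(\lambda)\le|\mathbf D|$.) If $\mathtt T$ were constant, equal to $c$, on some interval $(\Lambda,\infty)\subset(\Lambda_0,\infty)$, then $\int_\Lambda^\infty\mathtt T(s)\,ds$ would be infinite unless $c=0$; but $c>0$ because for a regular value $s$ the orbit $\mathcal E_s$ is not a point and its minimal period is positive. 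This contradiction, together with the analyticity of $\lambda\mapsto\mathtt T(\lambda)$ on $(\sup\mathcal C,\infty)$ from Proposition~\ref{prop-analy}, shows that $\mathtt T$ cannot be constant on any non‑trivial interval near $\partial\mathbf D$.

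\emph{Main obstacle.} There is no genuine analytic obstruction: the heart of the argument is the observation that the collar neighbourhoods of $\partial\mathbf D$ have uniformly bounded area, which makes the period integrable at infinity and hence not bounded below. The only non‑trivial inputs are the two facts imported from the excerpt — the boundary behaviour \eqref{boundary behaviour Robin}, which confines each energy level to a compact collar, and the compactness of $\mathcal C$, which makes all large energies regular. Getting the sharper "rapid rotation" statement $\mathtt T(\lambda)\to 0$ as $\lambda\to\infty$ would instead require the precise asymptotics of the near‑boundary orbits and is not needed here.
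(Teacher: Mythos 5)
Your proof is correct and rests on the same mechanism as the paper's: the identity $\mathtt{T}=A'$ together with the bound $A(\lambda)\leqslant|\mathbf{D}|$, with the boundary estimate \eqref{boundary behaviour Robin} (and the compactness of the critical set) guaranteeing that all sufficiently large energy levels are regular and hence consist of closed orbits. The only cosmetic difference is how the contradiction is closed: you integrate $\mathtt{T}$ in $\lambda$ (coarea formula) and use positivity of the minimal period to exclude a constant value, whereas the paper first forces the affine area function $A$ to have zero slope and then contradicts the strict monotonicity of the enclosed area.
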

	
	\begin{proof}
	First, close to the boundary all the orbits are periodic since there is no  critical points for Robin function near the boundary. Assume by contraction that the period function  is constant on some interval $[\overline\lambda,\infty)$. Then from  $\mathtt{T}=A'$, we deduce that the area is an affine function in this range of  $\lambda$, namely there exists $(\alpha,\beta)\in\mathbb{R}^2$ such that
		$$\forall\lambda\in[\overline\lambda,\infty)\quad A(\lambda)=\alpha\lambda+\beta.$$
		Since $A(\lambda)$ is bounded by the area of $\mathbf{D}$, then taking the limit $\lambda\to\infty$ forces $\alpha=0.$ Hence, the area is constant in $\lambda.$ Now,  if $\overline\lambda\leqslant\lambda_1<\lambda_2,$ then the domain enclosed by the periodic orbit $\mathcal{E}_{\lambda_1}$ is strictly embedded in the domain enclosed by the periodic orbit $\mathcal{E}_{\lambda_2},$ implying in turn $A(\lambda_1)<A(\lambda_2).$ This gives the contradiction.
	\end{proof}
We emphasize that the previous argument applies to  any simply connected bounded domain without any regularity assumption on the boundary. When the boundary is supposed to be  slightly smooth, specifically more regular than $C^1$, then we obtain a more precise  estimate of the period. This result, was obtained for instance in  \cite{Gusta1}.
	
	\begin{proposition}\label{prop asymp bnd}
	Let  $\mathbf{D}$ be  a simply-connected bounded domain with boundary of regularity $C^{2}.$ Then, near the boundary, we have the following asymptotic of the period
	\begin{equation}\label{asymp bnd}
		\mathtt{T}(\lambda)\underset{\lambda\to\infty}{=}\frac{2\pi L_{\partial\mathbf{D}}}{\Gamma}e^{-\frac{4\pi\lambda}{\Gamma}}+O(e^{-\frac{8\pi\lambda}{\Gamma}}),
	\end{equation}
	where $L_{\partial\mathbf{D}}$ denotes the length of the boundary $\partial\mathbf{D}.$
	\end{proposition}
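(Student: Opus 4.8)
The plan is to write $\mathtt{T}(\lambda)$ as a line integral along the orbit and then push everything to the unit disc through the Riemann map $F=\Phi^{-1}$, where the equation of the orbit becomes completely explicit. Since $H_{\mathbf{D}}=\tfrac{\Gamma}{4\pi}\mathcal{R}_{\mathbf{D}}$, the energy level $\lambda$ corresponds to $\mathcal{R}_{\mathbf{D}}=\tfrac{4\pi\lambda}{\Gamma}$, i.e., by \eqref{Robin1}, to conformal radius $r_{\mathbf{D}}=\mu$ with $\mu\triangleq e^{-\frac{4\pi\lambda}{\Gamma}}$. For $\lambda$ large the superlevel set $\{r_{\mathbf{D}}<\mu\}$ is a thin collar of $\partial\mathbf{D}$ on which, by \eqref{Robin1N}, $\partial_z\mathcal{R}_{\mathbf{D}}$ does not vanish: the term $\tfrac{\Phi'\overline{\Phi}}{1-|\Phi|^2}$, of size $\asymp 1/\mu$, dominates the remaining bounded term $\tfrac12\tfrac{\Phi''}{\Phi'}$, so $\mathcal{E}_\lambda$ is a single Jordan curve (this also re-proves Lemma \ref{lemm-non-degen}). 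Writing \eqref{Vort-leap} as $|\dot\xi|=\tfrac{\Gamma}{2\pi}|\partial_z\mathcal{R}_{\mathbf{D}}(\xi)|$ and integrating the travel time once around the orbit,
\[\mathtt{T}(\lambda)=\frac{2\pi}{\Gamma}\oint_{\mathcal{E}_\lambda}\frac{|d\xi|}{|\partial_z\mathcal{R}_{\mathbf{D}}(\xi)|}.\]

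Next, I set $\xi=F(w)$. From \eqref{Robin1} one has $r_{\mathbf{D}}(F(w))=|F'(w)|(1-|w|^2)$, so in the $w$-disc the orbit is the explicit curve $\Gamma_\mu\triangleq\big\{w\in\mathbb{D}:\ |F'(w)|(1-|w|^2)=\mu\big\}$, which by the implicit function theorem — using that for $\partial\mathbf{D}\in C^2$ the map $F'$ is Lipschitz up to $\partial\mathbb{D}$ and bounded away from $0$ (Kellogg–Warschawski) — is, for small $\mu$, a smooth Jordan curve at distance $\asymp\mu$ from $\partial\mathbb{D}$. Transporting \eqref{Robin1N} through $F$ gives
\[\partial_z\mathcal{R}_{\mathbf{D}}(F(w))=\frac{1}{F'(w)(1-|w|^2)}\Big(\overline w-\tfrac12\tfrac{F''(w)}{F'(w)}(1-|w|^2)\Big),\qquad\text{hence}\qquad\frac{|F'(w)|}{|\partial_z\mathcal{R}_{\mathbf{D}}(F(w))|}=\frac{|F'(w)|^2(1-|w|^2)}{\big|\,\overline w-\tfrac12\tfrac{F''(w)}{F'(w)}(1-|w|^2)\,\big|}.\]
On $\Gamma_\mu$ one has $|F'(w)|(1-|w|^2)=\mu$ and $|w|=1+O(\mu)$, so the numerator equals $\mu|F'(w)|$ and the denominator equals $1+O(\mu)$ (here the boundedness of $F''$ near $\partial\mathbb{D}$ enters). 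Parametrizing $\mathcal{E}_\lambda=F(\Gamma_\mu)$ and using $|d\xi|=|F'(w)||dw|$,
\[\mathtt{T}(\lambda)=\frac{2\pi}{\Gamma}\oint_{\Gamma_\mu}\frac{|F'(w)|\,|dw|}{|\partial_z\mathcal{R}_{\mathbf{D}}(F(w))|}=\frac{2\pi\mu}{\Gamma}\big(1+O(\mu)\big)\oint_{\Gamma_\mu}|F'(w)|\,|dw|=\frac{2\pi\mu}{\Gamma}\big(1+O(\mu)\big)\,\mathrm{length}(\mathcal{E}_\lambda).\]

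Finally, as $\mu\to0$ the curve $\Gamma_\mu$ converges to $\partial\mathbb{D}$, so $\mathrm{length}(\mathcal{E}_\lambda)=\mathrm{length}\big(F(\Gamma_\mu)\big)\to\int_{\partial\mathbb{D}}|F'|=L_{\partial\mathbf{D}}$; using once more that $F'$ is Lipschitz up to the boundary and that $1-|w|=O(\mu)$ on $\Gamma_\mu$, this convergence is quantified as $\mathrm{length}(\mathcal{E}_\lambda)=L_{\partial\mathbf{D}}+O(\mu)$. Substituting into the previous display and recalling $\mu=e^{-\frac{4\pi\lambda}{\Gamma}}$ gives exactly \eqref{asymp bnd}. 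The hard part is precisely this last quantitative step together with the uniform control, up to the boundary, of the subleading quantity $\tfrac{F''}{F'}$ (equivalently $\tfrac{\Phi''}{\Phi'}$): these are where one must invoke the boundary regularity theory of conformal maps onto $C^2$ (or slightly more regular) domains, since only then are the corrections to the integrand and to the length genuinely $O(\mu)$ rather than a weaker Hölder power — with merely $C^1$ boundary one still gets the leading term but a weaker remainder. An essentially equivalent route uses $\mathtt{T}(\lambda)=A'(\lambda)$: the collar $\{r_{\mathbf{D}}<\mu\}$ has area $\int_{\{|F'(w)|(1-|w|^2)<\mu\}}|F'(w)|^2\,dA(w)=\tfrac{\mu}{2}L_{\partial\mathbf{D}}+O(\mu^2)$, so $A(\lambda)=\mathrm{Area}(\mathbf{D})-\tfrac{\mu}{2}L_{\partial\mathbf{D}}+O(\mu^2)$, and differentiating in $\lambda$ (using the analyticity of $A$ from Proposition \ref{prop-analy}) recovers \eqref{asymp bnd}.
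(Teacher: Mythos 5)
Your proposal is correct in its overall strategy and takes a genuinely different route from the paper. You compute $\mathtt{T}(\lambda)$ directly as the travel-time line integral $\tfrac{2\pi}{\Gamma}\oint_{\mathcal{E}_\lambda}|d\xi|/|\partial_z\mathcal{R}_{\mathbf{D}}(\xi)|$ and transport it to the unit disc, where the orbit becomes the explicit level set $\{|F'(w)|(1-|w|^2)=\mu\}$, $\mu=e^{-4\pi\lambda/\Gamma}$, and \eqref{Robin1N} reduces the integrand to $\mu|F'(w)|$ up to a factor $1+O(\mu)$, the constant $L_{\partial\mathbf{D}}$ appearing as the limit of the lengths of $F(\Gamma_\mu)$. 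The paper instead works in the physical domain: it imports from \cite{FG97} the boundary expansion $\mathcal{R}_{\mathbf{D}}(b-s\nu(b))=-\log(2s)+O(s^2)$, parametrizes the orbit as a normal graph over $\partial\mathbf{D}$ at distance $s(\lambda,b)=\tfrac12 e^{-4\pi\lambda/\Gamma}+O(e^{-8\pi\lambda/\Gamma})$, computes the enclosed area by Gauss--Green, and concludes via $\mathtt{T}=A'$. Your main route has the merit of producing $\mathtt{T}(\lambda)$ directly, thereby avoiding the differentiation of an asymptotic expansion of $A(\lambda)$ (a step the paper performs without further comment, and which your second, ``equivalent'' route via the collar area would also require); in exchange, the paper gets its quantitative input ready-made from the cited Robin-function expansion, whereas you must supply boundary estimates on $F'$ and $F''$ yourself. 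Your computations are sound: the formula for $\partial_z\mathcal{R}_{\mathbf{D}}\circ F$ is correct, and the scheme reproduces the exact value $\mathtt{T}(\lambda)=4\pi e^{-4\lambda}$ in the disc.

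The one genuine soft spot is the regularity input. You assert that a $C^2$ boundary gives, via Kellogg--Warschawski, $F'$ Lipschitz (equivalently $F''$ bounded) up to $\partial\mathbb{D}$. That is precisely the excluded endpoint of the theorem: $C^{1,\alpha}$ boundaries give $F'\in C^{0,\alpha}(\overline{\mathbb{D}})$ only for $0<\alpha<1$, and boundedness of $F''$ requires Dini-continuous curvature or $C^{2,\alpha}$; for a merely $C^2$ boundary $F''$ may be unbounded (the boundary values of $\arg F'$ are $C^1$ in $\theta$, but the conjugate function of a continuous function need not be bounded). With only continuity of $F'$ up to $\overline{\mathbb{D}}$ one still gets $(1-|w|^2)\,|F''/F'|=o(1)$ on the collar, hence the leading term of \eqref{asymp bnd}, but both the $1+O(\mu)$ factor and your length estimate $\mathrm{length}(\mathcal{E}_\lambda)=L_{\partial\mathbf{D}}+O(\mu)$ then degrade, and the remainder is weaker than $O(e^{-8\pi\lambda/\Gamma})$. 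To recover the stated rate along your route you should either strengthen the hypothesis slightly (Dini curvature, $C^{2,\alpha}$) or replace these conformal-map estimates by the quantitative Robin expansion of \cite{FG97} that the paper invokes.
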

\begin{proof}
According to \cite{P92}, the conformal mapping $\mathbf{F}:\mathbb{D}\to\mathbf{D}$ admits a smooth extension to the boundary, still denoted $\mathbf{F}.$ In addition,
$$\mathbf{F}(\mathbb{T})=\partial\mathbf{D}.$$
This allows us to consider the following parametrization of the boundary of $\partial\mathbf{D}$
$$\forall\theta\in\mathbb{T},\quad X(\theta)\triangleq\mathbf{F}(e^{\ii\theta}).$$
Referring to \cite[Cor. 14]{FG97}, the Robin function admits the following behavior close to a point $b\in\partial\mathbf{D}$
$$\mathcal{R}_{\mathbf{D}}\big(b-s\nu(b)\big)\underset{s\to0}{=}-\log(2s)+O(s^2),
$$
where $\nu(b)$ denotes the unit outward normal vector to the boundary $\partial\mathbf{D}$ at the point $b$. For a periodic orbit corresponding to the energy level $\lambda$, we have in view of \eqref{Hamilt}
$$s=s(\lambda,b)\underset{\lambda\to\infty}{=}\tfrac{1}{2}e^{-\frac{4\pi\lambda}{\Gamma}}+O(e^{-\frac{8\pi\lambda}{\Gamma}}),$$
uniformly in $b\in \partial\mathbf{D}$. We obtain a parametrization of the periodic orbit by setting
\begin{equation}\label{param bnd nrj}
\gamma_{\lambda}(\theta)\triangleq X(\theta)-s\big(\lambda,X(\theta)\big)\frac{\ii X'(\theta)}{|X'(\theta)|}=\mathbf{F}(e^{\ii\theta})+s\big(\lambda,X(\theta)\big)\frac{\mathbf{F}'(e^{\ii\theta})e^{\ii\theta}}{|\mathbf{F}'(e^{\ii\theta})|}\cdot
\end{equation}
The Gauss-Green formula gives the area of the enclosed domain 
\begin{equation}\label{area gm}
A(\lambda)=-\frac{1}{2}\int_{0}^{2\pi}\textnormal{Im}\left\{\overline{\gamma_{\lambda}(\theta)}\partial_{\theta}\gamma_{\lambda}(\theta)\right\}d\theta.
\end{equation}
Inserting \eqref{param bnd nrj} into \eqref{area gm}, we obtain the following asymptotic
$$A(\lambda)\underset{s\to0}{=}A_{\mathbf{D}}-\tfrac{1}{4}e^{-\frac{4\pi\lambda}{\Gamma}}\textnormal{Im}\left\{\int_0^{2\pi}\overline{F(e^{\ii\theta})}\partial_{\theta}\left(\frac{\mathbf{F}'(e^{\ii\theta})e^{\ii\theta}}{|\mathbf{F}'(e^{\ii\theta})|}\right)+\frac{\overline{\mathbf{F}'(e^{\ii\theta})}e^{-\ii\theta}}{|\mathbf{F}'(e^{\ii\theta})|}\ii e^{\ii\theta}\mathbf{F}'(e^{\ii\theta})d\theta\right\}+O(e^{-\frac{8\pi\lambda}{\Gamma}}),$$
where $A_{\mathbf{D}}$ is the area of the domain $\mathbf{D}.$ Integrating by parts yields
$$A(\lambda)\underset{s\to0}{=}A_{\mathbf{D}}-\tfrac{1}{2}e^{-\frac{4\pi\lambda}{\Gamma}}\int_0^{2\pi}|\mathbf{F}'(e^{\ii\theta})|d\theta+O(e^{-\frac{8\pi\lambda}{\Gamma}}).
$$
On the other hand, as
$$L_{\partial\mathbf{D}}\triangleq\int_0^{2\pi}|\mathbf{F}'(e^{\ii\theta})|d\theta$$
is the length of the boundary $\partial\mathbf{D},$ then we deduce that
$$A(\lambda)\underset{\lambda\to\infty}{=}A_{\mathbf{D}}-\tfrac12{L_{\partial\mathbf{D}}}e^{-\frac{4\pi\lambda}{\Gamma}}+O(e^{-\frac{8\pi\lambda}{\Gamma}}).$$
From this, we derive the  following asymptotic of the period,
$$\mathtt{T}(\lambda)\underset{\lambda\to\infty}{=}\frac{2\pi L_{\partial\mathbf{D}}}{\Gamma}e^{-\frac{4\pi\lambda}{\Gamma}}+O(e^{-\frac{8\pi\lambda}{\Gamma}}).$$
This concludes the proof of Proposition \ref{prop asymp bnd}.
\end{proof}
\begin{remark}
	The asymptotic \eqref{asymp bnd} only involves the length $L_{\partial\mathbf{D}}.$ This suggests that (at least at first order) this asymptotic remains valid for less regular domains, namely only rectifiable.
\end{remark}

\section{Main results}
In this section, we expanded upon the main results partially discussed in the introduction, providing  a broader generalization covering more than convex domains. We shall first see how to desingularize a single vortex using the contour dynamics. Afterwards, we will state our main results on the existence of time periodic solutions near the single vortex. 
	\subsection{Desingularization of a single vortex}\label{desing-vortex}
	In what follows, we shall consider the motion of a  single vortex inside a simply connected bounded domain at a non degenerate   energy level $\lambda\in H_{\mathbf{D}}(\mathbf{D})\setminus\mathcal{C}$ of the Hamiltonian $H_{\mathbf{D}}$ as described in Section \ref{sec pt vortex motion}. Without loss of generality, we can make the normalization 
	\begin{align}\label{choice circ}
	\Gamma=\pi
	\end{align} and we assume that the orbit is closed with a minimal \mbox{period  $\mathtt{T}(\lambda)$.} We denote by   $t\mapsto\xi(t)$ the vortex orbit which satisfies according to \eqref{Vort-leap} the Hamiltonian equation
	\begin{equation}\label{Ham conj}
		\dot{\overline \xi}(t)=- \tfrac\ii2\partial_{{z}} \mathcal{R}_{\bf{D}}\big(\xi(t)\big).
	\end{equation} The goal is to construct $\mathtt{T}(\lambda)$-periodic solution in  the vortex patch form whose core follows asymptotically   the point vortex motion. Given $\varepsilon\in(0,1)$, we will look for solutions to \eqref{Euler-Eq} in the form
	\begin{equation}\label{omegateps}
		\boldsymbol{\omega}_{\varepsilon}(t)=\tfrac{1}{\varepsilon^2}{\bf{1}}_{{D}_{t}^\varepsilon},
	\end{equation}
	where $D_{t}^{\varepsilon}$ is given by  
	\begin{equation*}
		{D}_{t}^\varepsilon \triangleq   \varepsilon  {O}_{t}^\varepsilon+\xi(t)
	\end{equation*}
	and ${O}_{t}^\varepsilon$ is a simply connected domain localized around the unit disc. Due to the mass conservation, one has
	$$|D_{t}^{\varepsilon}|=|D_{0}^{\varepsilon}|.$$
	We will normalize the area of the initial patch  to $\pi$ so that
	$$\lim_{\varepsilon\to0}\boldsymbol{\omega}_{\varepsilon}(t)=\pi\delta_{\xi(t)}\quad\textnormal{in}\quad \mathcal{D}'(\mathbf{D}),$$
	which is coherent with the choice of the point vortex circulation \eqref{choice circ}.
	In this particular case, and in view of \eqref{stream-V0} and \eqref{GS}, the stream function 
	takes the form
	\begin{equation}\label{def stream}
		\begin{split}
			\psi(t,z)
			=\frac{1}{2\pi\varepsilon^2}\int_{D_{t}^{\varepsilon}}\log(|z-\zeta|)dA(\zeta)+\frac{1}{2\pi\varepsilon^2}\int_{D_{t}^{\varepsilon}}K(z,\zeta)dA(\zeta).
		\end{split}	
	\end{equation}
	Let $ \gamma(t,\cdot): \mathbb{T}  \mapsto  \partial {O}_{t}^\varepsilon$ 
	be any smooth  parametrization of the boundary $\partial{O}_{t}^\varepsilon$, that gives in turn a parametrization  of the boundary $\partial {D}_{t}^\varepsilon$  through
	\begin{equation}\label{param Dk} 
		w(t,\cdot)\triangleq\varepsilon  \gamma(t,\cdot)+\xi(t).
	\end{equation}	
	According to \cite[p. 174]{HMV13},  the vortex patch equation writes 	
	\begin{equation}\label{vortex patches equation}
		\partial_{t}w(t,\theta)\cdot \mathbf{n}\big(t,w(t,\theta)\big)+\partial_{\theta}\big[\psi\big(t,w(t,\theta)\big)\big]=0,
	\end{equation}	
	where  $\mathbf{n}(t,\cdot)$ refers to an outward normal vector to the boundary $\partial D_{t}^{\varepsilon}$. Note that, from \eqref{def stream}, we readily  get
	\begin{align*}
		\partial_{\theta}\big[\psi\big(t,w(t,\theta)\big)\big]
		&= \frac{1}{2\pi \varepsilon^2}\partial_{\theta}\bigg[\int_{D_{t}^{\varepsilon}}\log(|w(t,\theta)-\zeta|)dA(\zeta)+\int_{D_{t}^{\varepsilon}}K(w(t,\theta),\zeta)dA(\zeta)\bigg].\nonumber
	\end{align*}
	Therefore,  using  \eqref{param Dk} with  a suitable change of variable we conclude that	
\begin{align}\label{partial psi}
		\nonumber \partial_{\theta}\big[\psi\big(t,w(t,\theta)\big)\big]&= \frac{1}{2\pi }\partial_{\theta}\int_{O_{t}^{\varepsilon}}\log(|\gamma(t,\theta)-  \zeta |)dA(\zeta)\\
		&\quad+\frac{1}{2\pi }\partial_{\theta}\int_{O_{t}^{\varepsilon}}K\big(\varepsilon  \gamma(t,\theta)+\xi(t),\varepsilon \zeta+\xi(t)\big)dA(\zeta).
\end{align}
Now, identifying $\mathbb{C}$ with $\mathbb{R}^{2}$ 
	and  making the choice  $\mathbf{n}\big(t,w(t,\theta)\big)=-\ii\partial_{\theta}w(t,\theta)$ we get
	\begin{align}\label{ident1}
		\nonumber\partial_{t}w(t,\theta)\cdot \mathbf{n}(t,w(t,\theta))&=\textnormal{Im}\Big\{\partial_{t}\overline{w(t,\theta)}\partial_{\theta}w(t,\theta)\Big\}\\ &=\varepsilon^2 \textnormal{Im}\Big\{ \partial_{t}\overline{\gamma(t,\theta)}\partial_{\theta}  \gamma(t,\theta)\Big\}+\varepsilon \textnormal{Im}\Big\{\partial_{t}\overline{\xi(t)}\partial_{\theta}  \gamma(t,\theta)\Big\}.
	\end{align}
Thus, combining \eqref{vortex patches equation}, \eqref{partial psi} and 	\eqref{ident1} we obtain the equation
\begin{equation}\label{vortex patch equation new}
\begin{aligned}
	&\varepsilon^2 \textnormal{Im}\Big\{ \partial_{t}\overline{\gamma(t,\theta)}\partial_{\theta}  \gamma(t,\theta)\Big\}+\varepsilon \textnormal{Im}\Big\{\partial_{t}\overline{\xi(t)}\partial_{\theta}  \gamma(t,\theta)\Big\}\\ &+\frac{1}{2\pi }\partial_{\theta}\int_{O_{t}^{\varepsilon}}\log(|\gamma(t,\theta)-\zeta |)dA(\zeta) +\frac{1}{2\pi }\partial_{\theta}\int_{O_{t}^{\varepsilon}}K\big(\varepsilon  \gamma(t,\theta)+\xi(t),\varepsilon \zeta+\xi(t)\big)dA(\zeta)=0.
			\end{aligned}
\end{equation}
Consequently, using \eqref{Ham conj}, the relation \eqref{vortex patch equation new} writes
	\begin{align}\label{new vortex patches system}
		\nonumber	\varepsilon^2 \textnormal{Im}\Big\{ \partial_{t}\overline{\gamma(t,\theta)}\partial_{\theta}  \gamma(t,\theta)\Big\}&-\tfrac{\varepsilon}{2} \textnormal{Re}\Big\{\partial_{{z}} \mathcal{R}_{\bf{D}}\big(\xi(t)\big)\partial_{\theta}  \gamma(t,\theta)\Big\} +\frac{1}{2\pi }\partial_{\theta}\int_{O_{t}^{\varepsilon}}\log(|\gamma(t,\theta)-\zeta |)dA(\zeta)\\
			&\quad +\frac{1}{2\pi }\partial_{\theta}\int_{O_{t}^{\varepsilon}}K\big(\varepsilon  \gamma(t,\theta)+\xi(t),\varepsilon \zeta+\xi(t)\big)dA(\zeta)=0.
		\end{align}
Recall that we are looking for  $\mathtt{T}(\lambda)$-periodic solution whose frequency is  
	$$\omega(\lambda)\triangleq\frac{2\pi}{\mathtt{T}(\lambda)}\cdot$$
	Therefore, we will work with the structure
	\begin{align}\label{lien-12}
		\xi(t)=p\big(\omega(\lambda) t\big), 
		\qquad \gamma(t,\theta)=z\big(\omega(\lambda) t,\theta\big),
	\end{align}
	where $p:\mathbb{T}\to\mathbb{C}$ is $2\pi$-periodic function and the boundary is parametrized as
	\begin{equation}\label{def zk}
		{z}(\varphi):\begin{array}[t]{rcl}
			\mathbb{T} & \mapsto & \partial {O}_{\varphi}^\varepsilon\\
			\theta & \mapsto & z (\varphi,\theta)
		\end{array}\quad \textnormal{with}\qquad\begin{array}[t]{rl}
			z (\varphi,\theta)&\triangleq R(\varphi,\theta)e^{\ii\theta},\\
			R(\varphi,\theta)&\triangleq\big(1+2\varepsilon r( \varphi,\theta)\big)^{\frac{1}{2}}.
		\end{array}
	\end{equation}  
	Thus, 
	 using \eqref{new vortex patches system} and polar coordinates,   the curve  $z$ satisfies the equation 
	\begin{equation*}
		\begin{aligned}
			&\varepsilon^2 \omega(\lambda)\textnormal{Im}\Big\{\partial_{\varphi}\overline{z(\varphi,\theta)}\partial_{\theta}  z(\varphi,\theta)\Big\}-\tfrac{\varepsilon}{2}\partial_\theta \textnormal{Re}\Big\{\partial_{{\xi}} \mathcal{R}_{\bf{D}}\big(p(\varphi)\big)R(\varphi,\theta)e^{\ii \theta}\Big\}\\ &
			\quad+\partial_{\theta}\Big[\Psi_1\big(r,z(\varphi,\theta)\big)+\Psi_2\big(r,z(\varphi,\theta)\big)\Big]=0,
		\end{aligned}
	\end{equation*}
	where
	\begin{equation}\label{def-Psi2}
		\begin{aligned}
			\Psi_1(r, z)&\triangleq\int_{\mathbb{T}}\int_{0}^{ R(t,\eta)}\log\big(\big| z- l e^{\ii\eta}\big|\big)l d l d\eta,\\  \Psi_2(r, z)&\triangleq\int_{\mathbb{T}}\int_{0}^{ R(\varphi,\eta)}K\big(p(\varphi)+\varepsilon z,p(\varphi)+\varepsilon  l e^{\ii\eta}\big)l d l d\eta.
		\end{aligned}
	\end{equation}
	Here, we are using the notation
	$$
	\int_{\mathbb{T}}..=\frac{1}{2\pi} \int_0^{2\pi}..
	$$
	Straightforward computations, using \eqref{def zk}, lead to 
	\begin{equation*}
		\begin{aligned}
			&\textnormal{Im}\Big\{ \partial_{\varphi}\overline{z(\varphi,\theta)}\partial_{\theta}  z(\varphi,\theta)\Big\} =\varepsilon\partial_\varphi r(\varphi ,\theta).
		\end{aligned}
	\end{equation*}
	It follows that  the radial deformation $r$, defined through \eqref{def zk}, satisfies the following nonlinear transport equation,
	\begin{equation}\label{functional scrG}
		\begin{aligned}
			\mathbf{G}(r)(\varphi,\theta)&\triangleq\varepsilon^3\omega(\lambda)\partial_\varphi r(\varphi,\theta)-\tfrac{\varepsilon}{2}\partial_\theta \textnormal{Re}\Big\{\partial_{z} \mathcal{R}_{\bf{D}}\big(p(\varphi)\big)R(\varphi,\theta)e^{\ii \theta}\Big\}\\ &\quad+\partial_{\theta}\Big[\Psi_1\big(r,z(\varphi,\theta)\big)+\Psi_2\big(r,z(\varphi,\theta)\big)\Big]=0.
		\end{aligned}
	\end{equation}
This is the final form of the contour dynamics equation   to which we have to show the existence of  solutions for small values of  $\varepsilon$.

\subsection{General statement}
In this paragraph, we intend to give our main result from which we can deduce several interesting consequences. For this we aim, we need to introduce some basic objects stemming from the periodic single vortex described by the periodic parametrization $\varphi\in\mathbb{T}\mapsto p(\varphi)$ as in \eqref{lien-12}. Notice that this orbit is associated with  the level energy $\lambda$ and it admits a minimal period $\mathtt{T}(\lambda).$ We define the following matrix with $2\pi-$ periodic coefficients,
\begin{equation}\label{Matrix A0}
			\mathbb{A}_{\lambda}(\varphi)=\begin{pmatrix}
				\mathtt{u}_{\lambda}(\varphi) & \mathtt{v}_{\lambda}(\varphi)\\
				\overline{\mathtt{v}_{\lambda}(\varphi)} & \overline{\mathtt{u}_{\lambda}(\varphi)}
			\end{pmatrix},\qquad\mathtt{u}_{\lambda}(\varphi)=-\frac{\ii \mathtt{T}(\lambda)}{4\pi}r_{\mathbf{D}}^{-2}\big(p_{\lambda}(\varphi)\big),\qquad\mathtt{v}_{\lambda}(\varphi)=\frac{\ii \mathtt{T}(\lambda)}{8\pi}\big(\partial_{z}\mathcal{R}_{\mathbf{D}}\big(p_{\lambda}(\varphi)\big)\big)^2,
		\end{equation}
		where $\mathcal{R}_{\mathbf{D}}$ denotes  the Robin function stated in \eqref{Robin} and $r_{\mathbf{D}}$ is the conformal radius defined in \eqref{Conf-R}. To this matrix $\mathbb{A}_{\lambda}$, we associate the fundamental matrix $\mathscr{M}_{\lambda}$ which satisfies the linear ODE
		\begin{align}\label{Monod-p1}
		\partial_{\varphi}\mathscr{M}_{\lambda}(\varphi)=\mathbb{A}_{\lambda}(\varphi)\mathscr{M}_{\lambda}(\varphi),\qquad \mathscr{M}_{\lambda}(0)=\textnormal{Id}.
		\end{align}
		Since $\textnormal{Tr}(\mathbb{A}_{\lambda}(\varphi))=0$, then from Abel's Theorem $\textnormal{det}\mathscr{M}_{\lambda}(\varphi)=1.$ Hence, one can easily show the equivalence relation on  the monodromy matrix $\mathscr{M}_{\lambda}(2\pi)$ 
		$$
		\textnormal{Tr}\big(\mathscr{M}_\lambda(2\pi)\big)\neq 2\qquad\Longleftrightarrow \qquad 1 \notin \hbox{sp}(\mathscr{M}_\lambda(2\pi)),
		$$
		where  $\hbox{sp}(\mathscr{M}_\lambda(2\pi))$ denotes the spectrum of the matrix $\mathscr{M}_\lambda(2\pi)$.
Now, we are in a position to state our main result. 
	\begin{theorem}\label{main thm}
		Let $\mathbf{D}$ be a simply-connected bounded domain in $\mathbb{R}^2.$   Let $\lambda_*<\lambda^*$ such that $[\lambda_*,\lambda^*]\subset H_{\mathbf{D}}(\mathbf{D})\setminus \mathcal{C},$ with  the definition \eqref{Singular}. Assume the following conditions. 
		\begin{enumerate}
			\item Non-degeneracy of the period: 
			\begin{equation}\label{non degeneracy period thm}
				\min_{\lambda\in[\lambda_*,\lambda^*]}\left|T^\prime(\lambda)\right|>0.
			\end{equation}
			\item Spectral assumption on the monodromy matrix: 
			\begin{equation}\label{non degeneracy monodromy thm}
				\forall\lambda\in[\lambda_*,\lambda^*],\quad1 \notin \textnormal{sp}\big(\mathscr{M}_\lambda(2\pi)\big).
			\end{equation}
		\end{enumerate}
		Then, there exists $\varepsilon_0>0$ such that for any $\varepsilon\in(0,\varepsilon_0),$ there exists a Cantor set $\mathscr{C}_{\varepsilon}\subset[\lambda_*,\lambda^*],$ with
		$$\lim_{\varepsilon\to0}|\mathscr{C}_{\varepsilon}|=\lambda^*-\lambda_*,$$
		such that for any $\lambda\in\mathscr{C}_{\varepsilon},$ we can construct a time-periodic solution  to \eqref{Euler-Eq} in the form
		$$\boldsymbol{\omega}(t)=\frac{1}{\varepsilon^2}\mathbf{1}_{D_{t}^{\varepsilon}},\qquad D_{t}^{\varepsilon}=p_{\lambda}\left(\tfrac{2\pi}{\mathtt{T}(\lambda)}t\right)+\varepsilon O_{t}^{\varepsilon},$$
		with
		$$O_{t}^{\varepsilon}=\left\lbrace\ell e^{\ii\theta},\quad\theta\in[0,2\pi],\quad0\leqslant\ell<\sqrt{1+\varepsilon r\left(\tfrac{2\pi}{\mathtt{T}(\lambda)}t,\theta\right)}\right\rbrace,$$
		where $r\in H^s(\mathbb{T}^2)$ for $ s$ large enough and  admits the asymptotics 
		$$r(\varphi,\theta)=-\varepsilon\textnormal{Re}\left\lbrace\left[\Big(\partial_z\mathcal{R}_{\mathbf{D}}\big(p_{\lambda}(\varphi)\big)\Big)^2+\tfrac13 S(\Phi)\big(p_{\lambda}(\varphi)\big)\right]e^{2\ii\theta}\right\rbrace+O(\varepsilon^{1+\mu}),$$
		for some $\mu\in(0,1)$ and $S(\Phi)$ stands for the Schwarzian derivative of the conformal mapping $\Phi:\mathbf{D}\to\mathbb{D}$ defined through \eqref{Swart}.
	\end{theorem}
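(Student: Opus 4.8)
The strategy is to recast the existence of a $\mathtt{T}(\lambda)$-periodic concentrated patch as the search for a zero of the contour dynamics functional $\mathbf{G}$ in \eqref{functional scrG}, and to solve $\mathbf{G}(r)=0$ for small $\varepsilon$ by a Nash--Moser iteration in the radial deformation $r(\varphi,\theta)$, working on a Cantor subset of energy levels $\lambda\in[\lambda_*,\lambda^*]$. The first point is that, writing $G=\mathbf{G}/\varepsilon$ as in \eqref{functional scrG00}, one has $G(0)=O(\varepsilon)$ by Lemma \ref{lem G0}, so the naive Newton step diverges since the associated small divisors degenerate like $\varepsilon^{2+\delta}$. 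I would therefore first construct a better approximate solution $r_\varepsilon$: exploiting that $G(0)$ is localized on the spatial modes $\pm2$ (and higher), one solves the leading transport-plus-Hilbert equation explicitly on those modes to get $\|r_\varepsilon\|_s^{\textnormal{Lip}(\gamma)}\lesssim 1$ and $\|G(\varepsilon r_\varepsilon)\|_s^{\textnormal{Lip}(\gamma)}\lesssim\varepsilon^4$ (Lemma \ref{approxim repsi 0}). This is precisely where the stated asymptotics $r=-\varepsilon\,\textnormal{Re}\{[(\partial_z\mathcal{R}_{\mathbf{D}})^2+\tfrac13 S(\Phi)]e^{2\ii\theta}\}+O(\varepsilon^{1+\mu})$ come from, the Schwarzian derivative arising from the second-order Taylor expansion of the regular part $K$ of the Green function expressed through the conformal map. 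Then I rescale around $r_\varepsilon$, setting $\mathbf{F}(\rho)=\varepsilon^{-(1+\mu)}G(\varepsilon r_\varepsilon+\varepsilon^{1+\mu}\rho)$ as in \eqref{definition functional bfF}, so that $\mathbf{F}(0)$ is small enough to initialize the scheme.

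The core of the argument is the construction of an approximate right inverse for $d_\rho\mathbf{F}(\rho)$ at a small state $\rho$. By Proposition \ref{proposition linear op exp} this operator reads $\varepsilon^2\omega(\lambda)\partial_\varphi+\partial_\theta(\mathbf{V}^\varepsilon(r)\,\cdot\,)-\tfrac12\mathbf{H}+\varepsilon^2\partial_\theta\mathbf{Q}_1+\textnormal{(smoothing remainders)}$, with $\mathbf{V}^\varepsilon(r)=\tfrac12-\tfrac\varepsilon2 r+\varepsilon^2(\tfrac12\mathtt{g}+\dots)$ and $\mathbf{Q}_1$ a finite-rank operator localizing on the modes $\pm1$. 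I would: (i) expand in powers of $\varepsilon$ and split off the modes $n=\pm1$ from the rest; (ii) on the modes $|n|\geqslant2$, run a KAM reducibility argument, after suitable symmetrization and averaging, to conjugate the transport part to constant coefficients, which forces a first Cantor condition of Diophantine type with thresholds degenerating like $\varepsilon^{2+\delta}$; (iii) on the modes $n=\pm1$, where the leading symbol vanishes, the obstruction is the $\varepsilon^2$-order coupling encoded by the matrix $\mathbb{A}_\lambda(\varphi)$ of \eqref{Matrix A0}, and its invertibility reduces exactly to $1\notin\textnormal{sp}(\mathscr{M}_\lambda(2\pi))$, i.e. hypothesis \eqref{non degeneracy monodromy thm}; (iv) invert the reduced high-mode operator on a second Diophantine Cantor set; (v) glue by a Neumann-series perturbation, obtaining the approximate inverse with tame estimates and the unavoidable loss $\varepsilon^{-2-\delta}$.

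With this at hand, I would run the Nash--Moser iteration of Proposition \ref{Nash-Moser} (in the form of \cite[Prop. 6.1]{HHM23}): on a nested family of Cantor sets it produces a solution $\rho_\infty(\lambda)$ of $\mathbf{F}(\rho)=0$, hence $r=\varepsilon r_\varepsilon+\varepsilon^{1+\mu}\rho_\infty$ solving $\mathbf{G}(r)=0$, for every $\lambda$ in the intersection $\mathscr{C}_\varepsilon$ of all these sets; reading back through \eqref{omegateps}--\eqref{def zk} yields the claimed $\mathtt{T}(\lambda)$-periodic patch with the stated profile. The remaining point is the measure estimate $|\mathscr{C}_\varepsilon|\to\lambda^*-\lambda_*$ (Lemma \ref{Lem-Cantor-measu}): each excised resonant sliver is bounded using a transversality estimate in $\lambda$ for the small-divisor functions, resting ultimately on $\mathtt{T}'(\lambda)\neq0$ (equivalently $\omega'(\lambda)\neq0$), hypothesis \eqref{non degeneracy period thm}, combined with the $\varepsilon$-dependence of the Diophantine thresholds; summing over resonant sites gives a total excised measure tending to $0$ with $\varepsilon$.

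I expect the main obstacle to be step (iii) together with the fact that $\mathbf{D}$ is not explicit: the coefficients $\mathtt{u}_\lambda,\mathtt{v}_\lambda$ of $\mathbb{A}_\lambda$ must be extracted by complex-analytic manipulations of the Green function and the conformal mapping (which is where the conformal radius and the Schwarzian enter), and the condition $1\notin\textnormal{sp}(\mathscr{M}_\lambda(2\pi))$ must be propagated quantitatively, uniformly in $\lambda\in[\lambda_*,\lambda^*]$ and with Lipschitz dependence, through the reduction --- all while tracking the $\varepsilon^{2+\delta}$-degenerate losses so that both the Nash--Moser convergence and the final measure estimate still close.
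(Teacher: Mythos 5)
Your overall architecture is the paper's: an approximate solution exploiting the absence of the modes $\pm1$ in $\mathbf{G}(0)$, the rescaling \eqref{def func F}, an approximate right inverse built from a straightening of the transport, a monodromy argument on the modes $\pm1$ and a Diophantine inversion on the remaining modes, then the Nash--Moser scheme and a measure estimate resting on $\omega'(\lambda)\neq 0$. There is, however, a genuine gap in the order of operations in your steps (i)--(iii). You project onto the modes $\pm1$ \emph{first} and run the KAM reduction only on $|n|\geqslant 2$. But the variable part of the transport coefficient in Lemma \ref{approxim repsi} is $\varepsilon^{2}\mathtt{g}$ with $\mathtt{g}=\textnormal{Re}\{\mathtt{w}_2(p)e^{2\ii\theta}\}$ (see \eqref{def scr V1}, \eqref{ttw2}), and multiplication by $\mathtt{g}$ does not commute with $\Pi_1$: it couples the modes $\pm1$ to the modes $\pm3$ at order $\varepsilon^{2}$, i.e.\ exactly the size of the degenerate $\pm1$ block itself. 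Since inverting that block costs $\varepsilon^{-2}$ (Proposition \ref{propo-monod}) and the high-mode inverse costs $\gamma^{-1}=\varepsilon^{-2-\delta}$, such a coupling cannot be absorbed by the Neumann-series gluing of your step (v): the relevant products are $O(1)$ or divergent. The paper avoids this by conjugating the \emph{full} operator by $\mathscr{B}=\mathscr{B}_1\mathscr{B}_2$ (Propositions \ref{prop CVAR1}--\ref{prop CVAR}) \emph{before} splitting, after which the only inter-block coupling left sits in a smoothing remainder of size $\varepsilon^{2+\mu}$, which is what makes the perturbative gluing close.

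Relatedly, your identification in (iii) of the $\pm1$ obstruction with the matrix $\mathbb{A}_{\lambda}$ of \eqref{Matrix A0} is not available in your ordering. Projecting $\mathcal{L}_0$ directly onto the modes $\pm1$ yields a $2\times2$ system whose off-diagonal entry is, at order $\varepsilon^{2}$, proportional to $\mathtt{w}_2+\tfrac16 S(\Phi)=(\partial_z\mathcal{R}_{\mathbf{D}})^2+\tfrac12 S(\Phi)$ (the $\mathtt{g}$-transport term plus the $S(\Phi)$-part of the kernel \eqref{ttQ}), whereas the matrix in \eqref{Mat A}/\eqref{Matrix A0} has $\mathtt{v}_\lambda\propto(\partial_z\mathcal{R}_{\mathbf{D}})^2$ with no Schwarzian and a different coefficient. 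The Schwarzian contributions cancel only after the straightening, through the commutator of the change of variables (whose generator contains $b_2=\textnormal{Im}\{\mathtt{w}_2e^{2\ii\theta}\}$) with the Hilbert transform, which produces the correction $\widehat{\mathbf{Q}}_1$ that combines with $\mathbf{Q}_1$ into $\mathfrak{Q}_1$ in Proposition \ref{prop CVAR}. The spectral hypothesis \eqref{non degeneracy monodromy thm} concerns precisely this post-conjugation block; with your ordering you would be led to a different, in general inequivalent, monodromy condition, and the conjugation does not commute with $\Pi_1$, so you cannot simply transfer it. To repair the plan you must either follow the paper's order (straighten the transport on the whole space, then split off the modes $\pm1$), or explicitly normal-form away the $\varepsilon^{2}$ inter-block couplings and track how this modifies the $\pm1$ system --- which amounts to redoing the paper's computation.
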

	\begin{remark}
	We believe that the non-degeneracy of the period in Theorem \ref{main thm} holds true for most energy levels. It would be surprising if the period were constant over a non-trivial open interval. Nevertheless, partial results have been obtained  in Lemma $\ref{lemm-non-degen}$ and Remark $\ref{T0 crit}$  that support this fact.\\
As to the monodromy condition stated in  theorem $\ref{main thm},$ it  sounds to be generic and  in the same time very challenging   to check for a given geometry. In  Section $\ref{Sec-admissible},$ we will provide 
 sufficient conditions to ensure the validity of this condition
 \end{remark}
 \begin{coro}\label{convex-dom}
	Theorem $\ref{main thm}$ holds true under the following assumptions
	\begin{enumerate}
	\item The  set defined by \eqref{Singular} contains only one critical point, that is, $\mathcal{C}=\{\xi_0\}$.
	\item  The conformal mapping  $F:\mathbb{D}\to \mathbf{D}$ with  $F(0)=\xi_0$ satisfies
		\begin{equation}\label{cond1 thm crit}
			{\left| \tfrac{F^{(3)}(0)}{F^\prime(0)}\right|\not\in\Big\{2\sqrt{1-\tfrac{1}{n^2}},\,\, n\in\mathbb{N}^*\cup\{\infty\}\Big\}}.
			\end{equation}
	\end{enumerate}
	\end{coro}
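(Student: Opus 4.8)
The plan is to deduce the statement from Theorem~\ref{main thm}: one only has to check that hypotheses (1)--(2) above force the non-degeneracy of the period \eqref{non degeneracy period thm} and the spectral condition \eqref{non degeneracy monodromy thm} on a suitable interval $[\lambda_*,\lambda^*]$. First I would fix the energy picture. Since $\mathcal{R}_{\mathbf{D}}\to+\infty$ at $\partial\mathbf{D}$ by \eqref{boundary behaviour Robin}, the unique critical point $\xi_0$ of hypothesis (1) is the global minimum of $H_{\mathbf{D}}$, so $\mathcal{C}=\{\lambda_\star\}$ with $\lambda_\star=H_{\mathbf{D}}(\xi_0)=\lambda_{\min}$, $H_{\mathbf{D}}(\mathbf{D})\setminus\mathcal{C}=(\lambda_{\min},\infty)$, and $\mathbf{D}\setminus\{\xi_0\}$ is foliated by the periodic orbits $\mathcal{E}_\lambda$ encircling $\xi_0$. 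By Proposition~\ref{prop-analy} the maps $\lambda\mapsto\mathtt{T}(\lambda)$ and $\lambda\mapsto p_\lambda$, hence $\lambda\mapsto\mathbb{A}_\lambda$ of \eqref{Matrix A0} and (by analytic dependence of ODEs on parameters) $\lambda\mapsto\mathscr{M}_\lambda(2\pi)$, are real-analytic on $(\lambda_{\min},\infty)$; since $\mathbb{A}_\lambda$ commutes with the conjugation $(a,b)\mapsto(\overline{b},\overline{a})$, $\textnormal{Tr}\,\mathscr{M}_\lambda(2\pi)$ is real-valued and, by the equivalence recorded after \eqref{Monod-p1}, \eqref{non degeneracy monodromy thm} amounts to $\textnormal{Tr}\,\mathscr{M}_\lambda(2\pi)\neq2$. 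Both $\lambda\mapsto T'(\lambda)$ and $\lambda\mapsto\textnormal{Tr}\,\mathscr{M}_\lambda(2\pi)-2$ are then real-analytic, so by analytic continuation each is either identically zero on $(\lambda_{\min},\infty)$ or vanishes only on a discrete set; once the two identically-zero alternatives are ruled out, Theorem~\ref{main thm} applies on every $[\lambda_*,\lambda^*]\subset(\lambda_{\min},\infty)$ avoiding these discrete exceptional sets, in particular on short intervals issued from $\lambda_{\min}$.

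The period alternative is immediate from Lemma~\ref{lemm-non-degen}: the period cannot be constant near $\partial\mathbf{D}$, hence, being analytic, it is non-constant on all of $(\lambda_{\min},\infty)$, so $T'$ has isolated zeros only. The core of the proof is the monodromy alternative, which I would settle by computing $\textnormal{Tr}\,\mathscr{M}_\lambda(2\pi)$ as $\lambda\downarrow\lambda_{\min}$. When $\lambda\downarrow\lambda_{\min}$ the orbit $\mathcal{E}_\lambda$ shrinks to $\xi_0$, so $p_\lambda(\varphi)\to\xi_0$ uniformly in $\varphi$; using continuity of $r_{\mathbf{D}}$ and $\partial_z\mathcal{R}_{\mathbf{D}}$ together with the critical-point relation $\partial_z\mathcal{R}_{\mathbf{D}}(\xi_0)=0$, the matrix $\mathbb{A}_\lambda(\varphi)$ converges uniformly to the constant diagonal matrix $\textnormal{diag}\big(-\tfrac{\ii\mathtt{T}(\lambda_\star)}{4\pi}\mathtt{a},\,\tfrac{\ii\mathtt{T}(\lambda_\star)}{4\pi}\mathtt{a}\big)$, where $\mathtt{a}=r_{\mathbf{D}}^{-2}(\xi_0)=(\Phi'(\xi_0))^2>0$ and $\mathtt{T}(\lambda_\star)$ is the limiting period \eqref{T0 crit}. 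Hence, by continuous dependence of linear ODEs on their coefficients, $\mathscr{M}_\lambda(2\pi)$ converges to a diagonal matrix with spectrum $\{e^{\pm\ii\mathtt{T}(\lambda_\star)\mathtt{a}/2}\}$, so
\[
\lim_{\lambda\downarrow\lambda_{\min}}\textnormal{Tr}\,\mathscr{M}_\lambda(2\pi)=2\cos\!\Big(\tfrac{\mathtt{T}(\lambda_\star)\,\mathtt{a}}{2}\Big).
\]

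It remains to turn the resonance into the condition \eqref{cond1 thm crit}. Inserting \eqref{T0 crit} with the normalization $\Gamma=\pi$ of \eqref{choice circ} and the notation $\mathtt{a}=(\Phi'(\xi_0))^2$, $\mathtt{b}=\tfrac{\Phi^{(3)}(\xi_0)}{2\Phi'(\xi_0)}$ of \eqref{def a b} gives $\tfrac{\mathtt{T}(\lambda_\star)\mathtt{a}}{2}=\tfrac{2\pi\mathtt{a}}{\sqrt{\mathtt{a}^2-|\mathtt{b}|^2}}$, so the above limit equals $2$ exactly when $\tfrac{\mathtt{a}}{\sqrt{\mathtt{a}^2-|\mathtt{b}|^2}}\in\mathbb{N}^*$, equivalently $\tfrac{|\mathtt{b}|}{\mathtt{a}}=\sqrt{1-\tfrac1{n^2}}$ for some $n\in\mathbb{N}^*$. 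On the other hand, differentiating $\Phi\circ F=\textnormal{id}$ at the origin and using $\Phi(\xi_0)=0$, $\Phi'(\xi_0)>0$ and $\Phi''(\xi_0)=0$ (equivalently $F''(0)=0$, which is Grakhov's equation \eqref{Grakhov} at $0$) yields $\tfrac{2|\mathtt{b}|}{\mathtt{a}}=\big|\tfrac{F^{(3)}(0)}{F'(0)}\big|$; and the borderline value $\tfrac{2|\mathtt{b}|}{\mathtt{a}}=2$, i.e. $n=\infty$, is the degenerate case $\mathtt{a}=|\mathtt{b}|$ of the Hessian of $\mathcal{R}_{\mathbf{D}}$ at $\xi_0$ (whose eigenvalues are $2(\mathtt{a}\pm|\mathtt{b}|)$, via the Liouville identity \eqref{Conf-R1}), which \eqref{cond1 thm crit} also excludes and which would make $\xi_0$ a degenerate center and $\mathtt{T}(\lambda_\star)$ infinite. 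Thus \eqref{cond1 thm crit} says precisely $\lim_{\lambda\downarrow\lambda_{\min}}\textnormal{Tr}\,\mathscr{M}_\lambda(2\pi)\neq2$, so $\lambda\mapsto\textnormal{Tr}\,\mathscr{M}_\lambda(2\pi)\not\equiv2$, and \eqref{non degeneracy monodromy thm} holds off a discrete set; together with the period step, Theorem~\ref{main thm} applies and produces the announced Cantor family of time-periodic patches. The hard part will be this limiting analysis of $\mathscr{M}_\lambda$ near $\lambda_{\min}$: one must control the fundamental matrix while the orbit collapses to a point and its angular parametrization becomes increasingly non-uniform, justify the uniform convergence of $\mathbb{A}_\lambda$ to the explicit constant matrix, and then match the arithmetic resonance $\{\mathtt{a}/\sqrt{\mathtt{a}^2-|\mathtt{b}|^2}\in\mathbb{N}^*\cup\{\infty\}\}$ with the condition \eqref{cond1 thm crit} on the Taylor coefficients of $F$ at $0$, checking along the way that \eqref{cond1 thm crit} forces $\xi_0$ to be a non-degenerate elliptic center.
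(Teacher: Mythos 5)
Your proposal is correct and follows essentially the same route as the paper: analyticity of $\mathtt{T}$ and of $\lambda\mapsto\textnormal{Tr}\,\mathscr{M}_\lambda(2\pi)$ from Proposition \ref{prop-analy}, Lemma \ref{lemm-non-degen} to rule out a constant period, and the constant-coefficient computation at the elliptic critical point showing $\textnormal{Tr}\,\mathscr{M}_{\lambda_{\textnormal{\tiny{min}}}}(2\pi)=2\cos\big(4\pi/\sqrt{4-|S(F)(0)|^2}\big)$, so that \eqref{cond1 thm crit} forces the trace map to be non-identically $2$ and both hypotheses of Theorem \ref{main thm} hold off discrete exceptional sets. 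The only cosmetic difference is that you obtain this value as the limit $\lambda\downarrow\lambda_{\textnormal{\tiny{min}}}$ of the monodromy (which is indeed the careful way to read the paper's direct evaluation at $\lambda_{\textnormal{\tiny{min}}}$), and your identification $2|\mathtt{b}|/\mathtt{a}=|F^{(3)}(0)/F'(0)|$ matches the paper's change of variables from $\Phi$ to $F$.
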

	Notice that this corollary generalizes Corollary \ref{cor-special}, seen in the introduction, as any bounded convex domain has only one critical point, as discussed in \cite{Cafar,Gust2}. As we will see in Section \ref{Sec-admissible}, assumption 2 in the preceding corollary holds for almost all rectangles and ellipses. Now, we intend to prove Corollary~ \ref{convex-dom}.
		
	\begin{proof} As the domain $\mathbf{D}$ is assumed to be a simply-connected bounded domain  with only one critical point. Then, the phase portrait of  the single vortex motion is foliated by periodic orbits. More precisely, for any level energy $\lambda>\lambda_{\textnormal{\tiny{min}}}$, see \eqref{Range}, the orbit $H_{\mathbf{D}}^{-1}(\{\lambda\})$ is a compact  trajectory. Applying Proposition \ref{prop-analy}, we infer that the map $\lambda\in (\lambda_{\textnormal{\tiny{min}}},\infty)\mapsto \mathtt{T}(\lambda)$ is analytic. On the other hand, Lemma \ref{lemm-non-degen} asserts that the period map is not constant near the boundary. Therefore the set of zeroes of the map $\lambda\in (\lambda_{\textnormal{\tiny{min}}},\infty)\mapsto \mathtt{T}^\prime(\lambda)$ is a discrete set denoted \mbox{by $\mathcal{Z}$.} Hence,  for any compact interval $[\lambda_*,\lambda^*]\subset (\lambda_{\textnormal{\tiny{min}}},\infty)\backslash \mathcal{Z}$ we have
	$$
				\min_{\lambda\in[\lambda_*,\lambda^*]}\left|\mathtt{T}^\prime(\lambda)\right|>0.
			$$
			which guarantees the validity of the assumption  \eqref{non degeneracy period thm}. Next, let us move to the second assumption \eqref{non degeneracy monodromy thm}. Combining \eqref{Matrix A0}, \eqref{Monod-p1} with Proposition \ref{prop-analy}, we infer that the mapping 
			\begin{align}\label{psi-inf}
			\psi: \lambda\in (\lambda_{\textnormal{\tiny{min}}},\infty)\mapsto \textnormal{Tr}\big(S_\lambda(2\pi)\big)- 2
			\end{align}
			is real analytic. Now, we shall show that this map is not identically zero  under the assumption 2. of \mbox{Corollary \ref{convex-dom}.}
			According to \eqref{Matrix A0}, \eqref{def a b}, \eqref{Conf-R} and \eqref{T0 crit}, we have at the critical point ($\lambda=\lambda_{\textnormal{\tiny{min}}}$),
	$$\mathtt{u}_{\lambda_{\textnormal{\tiny{min}}}}(\varphi)=-\frac{\ii}{4\pi}\frac{\mathtt{T}(\lambda_{\textnormal{\tiny{min}}})}{r_{\mathbf{D}}^{2}(\xi_0)}=-\ii\frac{\mathtt{a}}{\sqrt{\mathtt{a}^2-|\mathtt{b}|^2}}\qquad\textnormal{and}\qquad\mathtt{v}_{\lambda_{\textnormal{\tiny{min}}}}(\varphi)=\frac{\ii}{2\sqrt{\mathtt{a}^2-|\mathtt{b}|^2}}\Big(\partial_{z}\mathcal{R}_{\mathbf{D}}(\xi_0)\Big)^2=0,
	$$
	with
	\begin{align*}
		\mathtt{a}=\frac{1}{|F'(0)|^2}\qquad\textnormal{and}\qquad\mathtt{b}=-\frac{F^{(3)}(0)}{2[F'(0)]^3}\cdot
	\end{align*}
This implies that
	$$\mathtt{u}_{\lambda_{\textnormal{\tiny{min}}}}(\varphi)=-\frac{2\ii}{\sqrt{4-|S(F)(0)|^2}},\qquad S(F)(0)=\tfrac{F^{(3)}(0)}{F'(0)}
	$$
	and 
	$$\mathbb{A}_{\lambda_{\textnormal{\tiny{min}}}}=\tfrac{2\ii}{\sqrt{4-|S(F)(0)|^2}}\begin{pmatrix}
		-1 & 0\\
		0 & 1
	\end{pmatrix},  $$
	which is a constant matrix. From the equation \eqref{Monod-p1}, we obtain 
	$$\partial_\varphi \mathscr{M}_{\lambda_{\textnormal{\tiny{min}}}}(\varphi)=\mathbb{A}_{\lambda_{\textnormal{\tiny{min}}}} \mathscr{M}_{\lambda_{\textnormal{\tiny{min}}}}(\varphi),\qquad \mathscr{M}_{\lambda_{\textnormal{\tiny{min}}}}(0)=\textnormal{Id}.
	$$
	Consequently,
	\begin{align*}
		\forall \varphi\in\mathbb{R}, \quad \mathscr{M}_{\lambda_{\textnormal{\tiny{min}}}}(\varphi)=e^{\varphi \mathbb{A}_{\lambda_{\textnormal{\tiny{min}}}}}=\begin{pmatrix}
			e^{-\frac{2\ii\varphi}{\sqrt{4-|S(F)(0)|^2}}} & 0\\
			0 & e^{\frac{2\ii\varphi}{\sqrt{4-|S(F)(0)|^2}}}
		\end{pmatrix}.
	\end{align*}
	 Then taking the trace, we infer
	\begin{align*}
		\textnormal{Tr}\big(\mathscr{M}_{\lambda_{\textnormal{\tiny{min}}}}(2\pi)\big)=2\cos\Big(\tfrac{4\pi}{\sqrt{4-|S(F)(0)|^2}}\Big).
	\end{align*}
	Thus,
	\begin{align*}
		\textnormal{Tr}\big(\mathscr{M}_{\lambda_{\textnormal{\tiny{min}}}}(2\pi)\big)=2\qquad\Longleftrightarrow\qquad|S(F)(0)|\in\left\lbrace2\sqrt{1-\tfrac{1}{n^2}},\,\, n\in\mathbb{N}^*\cup\{\infty\}\right\rbrace.
	\end{align*}
	Therefore, in view of \eqref{psi-inf}, we get 
	\begin{align}\label{cond-secon}
	\psi(\lambda_{\textnormal{\tiny{min}}})\neq 0 \qquad\Longleftrightarrow\qquad|S(F)(0)|\notin\left\lbrace2\sqrt{1-\tfrac{1}{n^2}},\,\, n\in\mathbb{N}^*\cup\{\infty\}\right\rbrace.
	\end{align}
	Hence, the condition of the r.h.s. of \eqref{cond-secon} together with the analyticity of $\psi$ ensures that the set of zeroes of $\psi$ denoted \mbox{by $\mathcal{Z}^\prime$} is discrete.  Hence by imposing $[\lambda_*,\lambda^*]\subset (\lambda_{\textnormal{\tiny{min}}},\infty)\backslash( \mathcal{Z}\cup \mathcal{Z}^\prime)$ the conditions \eqref{non degeneracy period thm} and \eqref{non degeneracy monodromy thm} are satisfied and Theorem \ref{main thm} can be applied.
	This ends the proof of Corollary \ref{convex-dom}.
	\end{proof}
	\section{Unit disc and rigidity}
In this section, we will focus on the specific case of the unit disc  $\mathbf{D}=\mathbb{D}$. As we shall see below, the conditions  \eqref{non degeneracy period thm} and \eqref{non degeneracy monodromy thm} are satisfied ensuring the applicability of  Theorem \ref{main thm}. Furthermore, by leveraging the radial symmetry of the disc, we show  that it is possible to desingularize all the admissible energy levels without involving Cantor sets, and achieve this through rigid patch motion.
 \subsection{Validity of Theorem \ref{main thm}.}
 We will study the point vortex motion in the unit disc and show that  the particle follows a circular orbit. Afterwards, we will investigate the assumptions of  Theorem \ref{main thm}. First, 
recall that the conformal mapping here is just the identity. In particular,
\begin{equation}\label{conform disc}
	\Phi_{\mathbb{D}}(z)=z,\qquad\Phi_{\mathbb{D}}'(z)=1,\qquad\Phi_{\mathbb{D}}''(z)=0.
\end{equation}
In addition, from  the discussion of Sections \ref{sec Green Robin} and \ref{sec pt vortex motion} we know that the Hamiltonian $H_{\mathbb{D}}$ expresses as
$$
H_{\mathbb{D}}(z)=\tfrac{1}{4}\mathcal{R}_{\mathbb{D}}(z),\qquad\mathcal{R}_{\mathbb{D}}(z)=-\log(1-|z|^2).
$$
Therefore, from this structure, we observe that this Hamiltonian admits a unique critical point located at the origin and its range $H_{\mathbb{D}}(\mathbb{D})=[0,\infty).$
Let us consider a point vortex orbit $\pi\delta_{\xi(t)}$   on an energy level $\lambda>0$, parametrized as follows
$$\xi(t)=q(t)e^{\ii\Theta(t)}.$$
In view of \eqref{Vort-leap}, the point vortex dynamics is given by
$$\tfrac{\ii}{2}\,\overline{\partial_{z}\mathcal{R}_{\mathbb{D}}\big(\xi(t)\big)}=\dot{\xi}(t)=\big(\dot{q}(t)+\ii\dot{\Theta}(t)q(t)\big)e^{\ii\Theta(t)}.$$
According to \eqref{conform disc} and \eqref{Robin1N}, we have
$$\partial_{z}\mathcal{R}_{\mathbb{D}}\big(\xi(t)\big)=\frac{\overline{\xi(t)}}{1-|\xi(t)|^2}=\frac{q(t)e^{-\ii\Theta(t)}}{1-q^2(t)}\cdot$$
Putting together the previous two identities and identifying real/imaginary parts yield
$$\dot{q}(t)=0\qquad\textnormal{and}\qquad\dot{\Theta}(t)=\frac{1}{2(1-q^2(t))}\cdot$$
Integrating, we infer
$$q\equiv q_0(\lambda)\triangleq \sqrt{1-e^{-4\lambda}}\qquad\textnormal{and}\qquad\Theta(t)=\omega(\lambda)t,\qquad\omega(\lambda)\triangleq\frac{1}{2\big(1-q_0^2(\lambda)\big)}=\frac{e^{4\lambda}}{2}\cdot$$
Hence, the orbit is a circle  of radius $q_0(\lambda)$ and with minimal  period
$$\mathtt{T}(\lambda)=\frac{2\pi}{\omega(\lambda)}=4\pi e^{-4\lambda}.$$
Notice that this formulae  is compatible  with the asymptotic \eqref{asymp bnd}, given that  $L_{\partial\mathbb{D}}=2\pi$ and $\Gamma=\pi.$ Clearly  the period function is strictly monotone ensuring the condition \eqref{non degeneracy period thm} is met. It remains to check the assumption \eqref{non degeneracy monodromy thm}. Before doing so, we should  emphasize that Corollary \ref{convex-dom}  cannot be applied here because the second assumption is not satisfied. \\
From \eqref{Matrix A0} and \eqref{Monod-p1}, one can check by uniqueness of the Cauchy problem that the fundamental matrix $\mathscr{M}_\lambda$ enjoys the same structure as the generator $\mathbb{A}_{\lambda},$ that is, 
\begin{equation}\label{resolvante shape2}
	\mathscr{M}_\lambda(\varphi)=\begin{pmatrix}
		a_{\lambda}(\varphi) & b_{\lambda}(\varphi)\\
		\overline{b_{\lambda}(\varphi)} & \overline{a_{\lambda}(\varphi)}
	\end{pmatrix},\qquad a_{\lambda}(0)=1,\qquad b_{\lambda}(0)=0.
\end{equation}
By straightforward computation we get 
	\begin{align*}
		0=\partial_{\varphi}\mathscr{M}_\lambda-\mathbb{A}_{\lambda}\mathscr{M}_\lambda=\begin{pmatrix}
			a_{\lambda}^\prime-\mathtt{u}_{\lambda} a_{\lambda}-\mathtt{v}_{\lambda}\overline{b_{\lambda}} & b_{\lambda}^\prime-\mathtt{u}_{\lambda} b_{\lambda}-\mathtt{v}_{\lambda}\overline{a_{\lambda}}\\
			\overline{b_{\lambda}^\prime-\mathtt{u}_{\lambda} b_{\lambda}-\mathtt{v}_{\lambda}\overline{a_{\lambda}}} & \overline{a_{\lambda}^\prime-\mathtt{u}_{\lambda} a_{\lambda}-\mathtt{v}_{\lambda}\overline{b_{\lambda}}}
		\end{pmatrix},
	\end{align*}
	which is equivalent to solve the following system
	$$\begin{cases}
		a^\prime-\mathtt{u}_{\lambda} a_{\lambda}-\mathtt{v}_{\lambda}\overline{b_{\lambda}}=0,\\
		b_{\lambda}^\prime-\mathtt{u}_{\lambda} b_{\lambda}-\mathtt{v}_{\lambda}\overline{a_{\lambda}}=0.
	\end{cases}$$
	We set
	\begin{equation}\label{def U V}
		U(\varphi)\triangleq a_{\lambda}(\varphi)W_{\mathtt{u}_{\lambda}}^{-}(\varphi),\qquad V(\varphi)\triangleq b_{\lambda}(\varphi)W_{\mathtt{u}_{\lambda}}^{-}(\varphi),\qquad W^\pm_{\mathtt{u}_{\lambda}}(\varphi)\triangleq e^{\pm\int_{0}^{\varphi}\mathtt{u}_{\lambda}(\tau)d\tau}.
	\end{equation}
	From the initial dat in  \eqref{resolvante shape2} we deduce that
	\begin{equation}\label{U0 and V0}
		U(0)=1,\qquad V(0)=0.
	\end{equation}
	From straightforward computations we infer  
	\begin{equation*}
		U'=\mathtt{v}_{\lambda}\overline{V}W_{-2\ii\textnormal{Im}(\mathtt{u}_{\lambda})}^{-1},\qquad 
		V'=\mathtt{v}_{\lambda}\overline{U}W_{-2\ii\textnormal{Im}(\mathtt{u}_{\lambda})}^{-1},
	\end{equation*}
	with 
	\begin{equation*}
		U'(0)=0,\qquad V'(0)=\frac{\ii}{4\omega(\lambda)}\Big(\partial_{z}\mathcal{R}_{\mathbf{D}}\big(p(0)\big)\Big)^2.
	\end{equation*}
	Therefore, we deduce that both  $U$ and $V$ solve the following homogeneous linear scalar differential equation of second order
	\begin{equation}\label{EDO2}
		y^{\prime\prime}+\left(2\ii\textnormal{Im}(\mathtt{u}_{\lambda})-\tfrac{\mathtt{v}_{\lambda}^\prime}{\mathtt{v}_{\lambda}}\right)y^\prime-|\mathtt{v}_{\lambda}|^2y=0.
	\end{equation}
	Solving this  equation allows to recover the complex coefficients of  the fundamental matrix. In the particular case of the unit disc, the computations can be made explicitly, since the entries of the matrix $\mathbb{A}_{\lambda}(\varphi)$ in \eqref{Mat A} are
$$\mathtt{u}_{\lambda}=-\frac{\ii}{1-q_0^2(\lambda)}=-\ii e^{4\lambda},\qquad\mathtt{v}_{\lambda}(\varphi)=\ii\zeta(\lambda) e^{-2\ii\varphi},$$
with
$$\zeta(\lambda)\triangleq\frac{q_0^2(\lambda)}{2(1-q_0^2(\lambda))}=\frac{e^{4\lambda}-1}{2}\in(0,\infty).$$
Inserting this into \eqref{EDO2}, we find that $U$ solves the  second order EDO with constant coefficients,
$$y^{\prime\prime}-4\ii\zeta(\lambda) y^\prime-\zeta^2(\lambda)y=0.$$
Solving explicitly, we infer 
$$U(\varphi)=\tfrac{\sqrt{3}-2}{2\sqrt{3}} e^{\varphi\mu_+(\lambda)}+\tfrac{\sqrt{3}+2}{2\sqrt{3}} e^{\varphi\mu_-(\lambda)},\qquad \mu_{\pm}(\lambda)\triangleq\ii\zeta(\lambda)(2\pm\sqrt{3}).$$
Coming back to \eqref{def U V}, we find after simplifications
$$\textnormal{Re}\big(a_{\lambda}(2\pi)\big)=\cos\big(2\pi\zeta(\lambda)\sqrt{3}\big).$$
Finally,
\begin{equation}\label{non degeneracy monodromy unit disc}
	\textnormal{Tr}\big(\mathscr{M}_\lambda(2\pi)\big)=2\textnormal{Re}\big(a_{\lambda}(2\pi)\big)\neq2\qquad\Longleftrightarrow\qquad\lambda\notin\left\{\tfrac{1}{4}\log\left(1+\tfrac{2k}{\sqrt{3}}\right),\, k\in\mathbb{N}^*\right\}.
\end{equation}
Thus, under the condition
$$
[\lambda_*,\lambda^*]\subset (0,\infty)\backslash \left\{\tfrac{1}{4}\log\left(1+\tfrac{2k}{\sqrt{3}}\right),\, k\in\mathbb{N}^*\right\},
$$
Theorem   \ref{main thm} applies. However, as we shall see below, using the radial symmetry of of the unit disc we may derive  a better result. In fact,  we can remove this restriction on the energy levels and  desingularize all the positive energy levels with rigid rotating patches.
 \subsection{Rigid rotation}
 Now that we have described the periodic orbits of point vortices within the circular domain, we turn our attention to their desingularization into uniformly rotating patch motion. To achieve this, we insert the ansatz
\begin{equation}
	\xi(t)=q e^{\ii\Omega t}\quad\textnormal{and}\quad \gamma(t,\theta)=e^{\ii \Omega t} \gamma_0(\theta)
\end{equation}
into the equation \eqref{vortex patch equation new}, leading to 
\begin{equation}\label{vortex patch equation new rigid}
\begin{aligned}
	&-\varepsilon^2\tfrac{\Omega}{2}\partial_\theta\big( |\gamma_0(\theta)|^2\big)-\varepsilon q \Omega \partial_{\theta}\textnormal{Re}\big\{    \gamma_0(\theta)\big\}+\frac{1}{2\pi }\partial_{\theta}\int_{O_{0}^{\varepsilon}}\log(|\gamma_0(\theta)-\zeta |)dA(\zeta) \\ &-\frac{1}{2\pi }\partial_{\theta}\int_{O_{0}^{\varepsilon}}\log\big(\big|1-\big(\varepsilon  \overline{\gamma_0(\theta)}+q\big)\big(\varepsilon \zeta+q\big)\big)\big|dA(\zeta)=0.
			\end{aligned}
\end{equation}

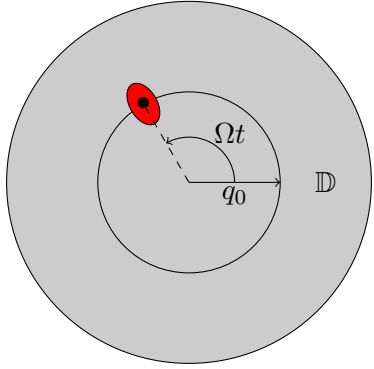
\begin{figure}[!h]
	\begin{center}
		\begin{tikzpicture}[scale=0.6]
			\filldraw[draw=black,fill=black!20] (0,0) circle (4);
			\draw[draw=black] (0,0) circle (2);
			\draw[->=latex] (0,0)--(2,0);
			\node at (1,-0.3) {$q_0$};
			\filldraw[draw=black,fill=red,rotate=120] (2,0) ellipse (0.5cm and 0.3cm);
			\node at (-1,1.732) {$\bullet$};
			\draw[dashed] (0,0)--(-1,1.732);
			\node at (3,0) {$\mathbb{D}$};
			\draw[->=latex] (1,0) arc (0:120:1);
			\node at (0.9,1.1) {$\Omega t$};
		\end{tikzpicture}
	\end{center}
	\caption{Rigid rotation in the unit disc.}
\end{figure}

\noindent We shall parametrize the boundary of the domain $O_{0}^{\varepsilon}$ as follows,
$$
\gamma_0(\theta)=R(\theta)e^{\ii\theta}, \qquad R(\theta)\triangleq\sqrt{1+2\varepsilon q  r(\theta)}.
$$
Hence, the contour equation takes the form 
\begin{equation}\label{def calG}
\begin{aligned}
	\mathcal{G}(\varepsilon,q,\Omega,r)&\triangleq \varepsilon^2{\Omega}\partial_\theta r(\theta)+ \Omega\partial_{\theta}\big[R(\theta)\cos(\theta)\big]-\frac{1}{2\pi \varepsilon q}\partial_{\theta}\int_{O_{0}^{\varepsilon}}\log(|R(\theta)e^{\ii\theta}-\zeta |)dA(\zeta)\\
	&\quad+\frac{1}{2\pi \varepsilon q}\partial_{\theta}\int_{O_{0}^{\varepsilon}}\log\big(\big|1-\big(\varepsilon  R(\theta)e^{-\ii\theta}+q\big)\big(\varepsilon \zeta +q\big)\big|\big) dA(\zeta)=0.
			\end{aligned}
\end{equation}
The goal is to establish solutions to this nonlinear equation. To achieve this, we plan to apply the implicit function theorem within appropriate function spaces. Specifically, we will work with H\"older spaces, given  \mbox{for $s\in(0,1)$} by
$$\mathbb{Y}_{\textnormal{\tiny{odd}}}^{s}\triangleq\left\lbrace r:\mathbb{T}\to\mathbb{R}\quad\textnormal{s.t.}\quad\forall \theta\in\mathbb{T},\,\,r(\theta)=\sum_{n=1}^{\infty}\alpha_{n}\sin(n\theta),\quad\alpha_{n}\in\mathbb{R},\quad\|r\|_{C^{s}(\mathbb{T})}<\infty\right\rbrace$$
equipped with the norm
$$\|r\|_{C^{s}(\mathbb{T})}\triangleq\|r\|_{L^{\infty}(\mathbb{T})}+\sup_{(\theta_1,\theta_2)\in\mathbb{T}^2\atop\theta_1\neq\theta_2}\frac{|r(\theta_1)-r(\theta_2)|}{|\theta_1-\theta_2|^{s}}$$
and
$$\mathbb{X}_{\textnormal{\tiny{even}}}^{1,s}\triangleq\left\lbrace r:\mathbb{T}\to\mathbb{R}\quad\textnormal{s.t.}\quad\forall \theta\in\mathbb{T},\,\,r(\theta)=\sum_{n=1}^{\infty}\alpha_{n}\cos(n\theta),\quad\alpha_{n}\in\mathbb{R},\quad\|r\|_{C^{1,s}(\mathbb{T})}<\infty\right\rbrace,$$
endowed with the norm
$$\|r\|_{C^{1,s}(\mathbb{T})}\triangleq\|r\|_{L^{\infty}(\mathbb{T})}+\|r'\|_{C^{s}(\mathbb{T})}.$$
For any $a>0$, we define  the open ball
$$B_{\mathbb{X}_{\textnormal{\tiny{even}}}^{1,s}}(a)\triangleq\left\lbrace r\in \mathbb{X}_{\textnormal{\tiny{even}}}^{1,s}\quad\textnormal{s.t.}\quad\|r\|_{C^{1,s}(\mathbb{T})}<a\right\rbrace.$$
Now, we state the main result of this section.
\begin{theorem}\label{thm rot disc}
	Let $s\in(0,1)$. For any $\delta\in(0,1)$, there exist positive numbers $\varepsilon_0,a_0>0$, such that 
	\begin{enumerate}[label=(\roman*)]
		\item The functional $\mathcal{G}:(-\varepsilon_0,\varepsilon_0)\times(-1+\delta,1-\delta)\times\mathbb{R}\times B_{\mathbb{X}_{\textnormal{\tiny{even}}}^{1,s}}(a_0)\to\mathbb{Y}_{\textnormal{\tiny{odd}}}^{s}$ is well-defined and of class $C^1$.
		\item We have the equivalence
		$$\forall q\in(-1+\delta,1-\delta),\quad \mathcal{G}(0,q,\Omega,0)=0\qquad\Leftrightarrow\qquad\Omega=\Omega_0\triangleq\frac{1}{2(1-q^2)}\cdot$$
		\item The linear operator $d_{(\Omega,r)}\mathcal{G}(0,q,\Omega_0,0):\mathbb{R}\times\mathbb{X}_{\textnormal{\tiny{even}}}^{1,s}\to\mathbb{Y}_{\textnormal{\tiny{odd}}}^{s}$ is an isomorphism.
		\item There exist $C^1-$functions
		$$\Omega:(-\varepsilon_0,\varepsilon_0)\times(-1+\delta,1-\delta)\to\mathbb{R}\quad\textnormal{and}\quad r:(-\varepsilon_0,\varepsilon_0)\times(-1+\delta,1-\delta)\to B_{\mathbb{X}_{\textnormal{\tiny{even}}}^{1,s}}(a_0)$$
		satisfying
		$$\Omega(0,q)=\Omega_0\qquad\textnormal{and}\qquad r(0,q)=0,$$
		such that 
		$$\forall(\varepsilon,q)\in(-\varepsilon_0,\varepsilon_0)\times(-1+\delta,1-\delta),\quad\mathcal{G}\big(\varepsilon,q,\Omega(\varepsilon,q),r(\varepsilon,q)\big)=0.$$
	\end{enumerate}
\end{theorem}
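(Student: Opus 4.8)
The plan is to solve the contour equation $\mathcal{G}(\varepsilon,q,\Omega,r)=0$ by the implicit function theorem in the H\"older scale, in the spirit of the contour‑dynamics arguments of \cite{HMV13,HM17,HH20}: we regard $(\varepsilon,q)$ as parameters and $(\Omega,r)$ as unknowns, and we perform the construction near the explicit solution $(0,q,\Omega_0,0)$ with $\Omega_0=\tfrac{1}{2(1-q^2)}$. The only point in \textbf{(i)} not already covered by the standard theory is the singular prefactor $\tfrac{1}{\varepsilon q}$ in front of the two layer potentials in \eqref{def calG}. Here one observes that the $\theta$‑independent leading parts of those potentials — namely $\int_{\mathbb{D}}\log|e^{\ii\theta}-\zeta|\,dA(\zeta)$ and the term $\log(1-q^2)$ — are annihilated by $\partial_\theta$, so the bracketed quantities carry a compensating factor $\varepsilon q$ and $\mathcal{G}$ extends to a $C^1$ map across $\varepsilon=0$. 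The remaining ingredients are classical: the mapping properties of the Cauchy‑type (single‑layer) operators from $C^{1,s}$ to $C^{s}$, and the real‑analyticity of $K_{\mathbb{D}}(z,w)=-\log|1-z\overline{w}|$ on $\mathbb{D}\times\mathbb{D}$, which applies because for $q$ in a compact subset of $(-1,1)$ and $\varepsilon$ small the points $q+\varepsilon z$, $q+\varepsilon\zeta$ remain inside $\mathbb{D}$. Finally, $\mathcal{G}$ is a $\partial_\theta$‑derivative, hence has zero mean, and the parity $r(-\theta)=r(\theta)\Rightarrow\mathcal{G}(-\theta)=-\mathcal{G}(\theta)$ follows from the symmetry of $O_0^\varepsilon$ about the real axis and $q\in\mathbb{R}$; together these give that $\mathcal{G}$ maps into $\mathbb{Y}_{\textnormal{\tiny{odd}}}^{s}$.

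For \textbf{(ii)}, set $\varepsilon=0$ and $r=0$, so that $R\equiv1$ and $O_0^\varepsilon=\mathbb{D}$. The self‑interaction term then vanishes by the remark above, while expanding $\log\big|1-(\varepsilon e^{-\ii\theta}+q)(\varepsilon\zeta+q)\big|$ to first order in $\varepsilon$ and integrating $\zeta$ over $\mathbb{D}$ (which reduces, after division by $\varepsilon q$, to $\int_{\mathbb{D}}dA=\pi$) produces $\tfrac{1}{2(1-q^2)}\sin\theta$. Combined with $\Omega\,\partial_\theta\cos\theta=-\Omega\sin\theta$ this yields $\mathcal{G}(0,q,\Omega,0)=(\Omega_0-\Omega)\sin\theta$, hence the stated equivalence.

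The core of the argument is \textbf{(iii)}. Since $\mathcal{G}(0,q,\cdot,\cdot)$ is affine in $(\Omega,r)$, its differential at $(0,q,\Omega_0,0)$ is read off directly. Differentiation in $\Omega$ gives $\widehat\Omega\,\partial_\theta\cos\theta=-\widehat\Omega\sin\theta$. In the $r$‑direction, at $\varepsilon=0$ only the self‑interaction survives, and the first‑variation formula for the Newtonian potential of a near‑circular domain — using $\log|e^{\ii\theta}-e^{\ii\eta}|=-\sum_{k\ge1}\tfrac{\cos k(\theta-\eta)}{k}$ together with the term $\pi\,h(\theta)$ coming from displacing the evaluation point radially — gives the Fourier‑diagonal symbol
\begin{equation*}
d_r\mathcal{G}(0,q,\Omega_0,0)[\cos(n\theta)]=\tfrac{n-1}{2}\,\sin(n\theta),\qquad n\ge1 .
\end{equation*}
This symbol is $q$‑independent and nonzero for every $n\ge2$, with $\tfrac{n}{n-1}\le2$, so on the frequencies $\ge2$ the $r$‑linearization is an isomorphism with bounds uniform in $q\in(-1+\delta,1-\delta)$; the frequency‑$1$ component of the target, where this symbol degenerates (the translation mode), is instead supplied by the $\Omega$‑direction, whose linearization is a nonzero multiple of $\sin\theta$. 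Assembling the two contributions shows that $d_{(\Omega,r)}\mathcal{G}(0,q,\Omega_0,0)\colon\mathbb{R}\times\mathbb{X}_{\textnormal{\tiny{even}}}^{1,s}\to\mathbb{Y}_{\textnormal{\tiny{odd}}}^{s}$ is an isomorphism.

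Part \textbf{(iv)} is then the $C^1$ implicit function theorem applied at $(0,q_0,\Omega_0(q_0),0)$ for each $q_0\in[-1+\delta,1-\delta]$, producing local $C^1$ branches $\Omega(\varepsilon,q),\,r(\varepsilon,q)$ with $\Omega(0,q)=\Omega_0(q)$ and $r(0,q)=0$; since the inverse of the linearization and the $C^1$ bounds on $\mathcal{G}$ obtained in (i)–(iii) are uniform in $q$ on this compact interval, the quantitative implicit function theorem applies with constants independent of $q$, giving a single $\varepsilon_0$ and a single radius $a_0$. I expect the main obstacle to be step (iii): isolating the exact Fourier symbol of the linearized operator, recognizing that its only resonance is the frequency‑$1$ translation mode, and checking that this mode is precisely compensated by the rotation speed $\Omega$ — together with the bookkeeping in (i) required to see that the prefactor $\tfrac{1}{\varepsilon q}$ is harmless as $\varepsilon\to0$.
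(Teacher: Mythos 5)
Your proposal follows essentially the same route as the paper: show that the prefactor $\tfrac{1}{\varepsilon q}$ is compensated because the deformation of the patch from the unit disc is of size $O(\varepsilon q)$, compute $\mathcal{G}(0,q,\Omega,0)=(\Omega_0-\Omega)\sin\theta$, identify the linearization at $(0,q,\Omega_0,0)$ as $-\widehat\Omega\sin\theta-\tfrac12(\partial_\theta-\mathbf{H})h$ with diagonal symbol $\tfrac{n-1}{2}$ on $\cos(n\theta)$ (degenerate only at $n=1$, where the $\Omega$-direction compensates), and conclude by the implicit function theorem, uniformly in $q$ on the compact interval. Two spots in your write-up are thinner than what is actually required and are precisely where the paper's proof does extra work. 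First, in (i), for the boundary-interaction potential the $\theta$-dependent remainder after subtracting $\log(1-q^2)$ contains contributions such as $\varepsilon^2 R(\theta)e^{-\ii\theta}\zeta$ which carry no factor $q$ at all; the compensation only appears after the $\zeta$-integration, because $\int_{O_0^{\varepsilon}}\zeta^m\,dA=O(\varepsilon q)$ for $m\geqslant1$ (the domain being an $O(\varepsilon q)$ perturbation of $\mathbb{D}$), and this moment cancellation — together with $R^m-1=O(\varepsilon q)$ — is what the paper's series and Taylor-formula bookkeeping extracts; your phrase ``$\partial_\theta$ kills the $\theta$-independent parts'' alone does not produce the factor $\varepsilon q$. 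Second, in (iii), the bound $\tfrac{n}{n-1}\leqslant2$ on the inverse symbol does not by itself yield an isomorphism onto $\mathbb{X}^{1,s}_{\textnormal{\tiny even}}$: bounded Fourier multipliers need not be bounded on H\"older spaces, and the paper closes this by writing $h'=\widetilde{g}\ast g+2\big[g-g_1\sin\theta\big]$ with $\widetilde{g}=-\sum_{n\geqslant2}\tfrac{2}{n-1}\cos(n\theta)\in L^{2}(\mathbb{T})\subset L^{1}(\mathbb{T})$ and invoking $L^{1}\ast C^{s}\subset C^{s}$. Neither point changes your strategy, but both need to be supplied for the argument to be complete.
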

Before giving the proof, we shall make some comments.
\begin{remark} 
	\begin{enumerate}
		\item Following exactly the same lines as \cite[Sec. 5.4]{HMW20}, we actually can show that the boundary of the uniformly rotating vortex patches inside the unit disc is analytic.
		\item Observe that 
		\begin{equation}\label{OMG00}
			\Omega(0,0)=\tfrac{1}{2}\cdot
		\end{equation}
		Now, according to \cite{HHHM16}, in the case $\overline{q}_0=0,$ the uniformly rotating solutions with amplitude $1$ bifurcate from the disc of radius $\varepsilon\in(0,1)$ at the values	
		$$\Omega_n=\frac{n-1+\varepsilon^{2n}}{2n}\cdot$$
		Thus, for a disc with amplitude $\tfrac{1}{\varepsilon^2}$ the spectrum is given by
		$$\Omega_n=\frac{n-1+\varepsilon^{2n}}{2n\varepsilon^2},$$
		which converges when $\varepsilon\to0$ only for $n=1$ (corresponding to $1$-fold solutions) and the limiting value is $\tfrac{1}{2},$ which is consistant with \eqref{OMG00} and Theorem \ref{thm rot disc}.
	\end{enumerate}
\end{remark}
\begin{proof}
$(i)$ We denote
\begin{align*}
	\mathcal{I}_1(\varepsilon,q,r)(\theta)&\triangleq \frac{1}{2\pi\varepsilon q}\partial_{\theta}\int_{O_{0}^{\varepsilon}}\log(|R(\theta)e^{\ii\theta}-\zeta|)dA(\zeta),\\
	\mathcal{I}_2(\varepsilon,q,r)(\theta)&\triangleq \frac{1}{2\pi \varepsilon q}\partial_{\theta}\int_{O_{0}^{\varepsilon}}\log\big(\big|1-\big(\varepsilon  R(\theta)e^{-\ii\theta}+q\big)\big(\varepsilon \zeta +q\big)\big|\big)dA(\zeta).
\end{align*}
According to \eqref{def calG}, it suffices to study the well-posedness and regularity of the terms $\mathcal{I}_1$ and $\mathcal{I}_2,$ the other terms being obvious. In view of \cite[Lemma 2.1]{HR22} we may write
\begin{equation}\label{integ slf ind}
\mathcal{I}_1(\varepsilon,q,r)(\theta)=\frac{1}{\varepsilon q}\int_{\mathbb{T}}\log(|R(\theta)e^{\ii\theta}-R(\eta)e^{\ii\eta} |)\partial^2_{\theta\eta}\big[R(\theta)R(\eta)\sin(\theta-\eta)\big]d\eta
\end{equation}
and 
\begin{equation}\label{integ slf ind2}
\begin{aligned}
	\mathcal{I}_2(\varepsilon,q,r)(\theta)&=-\frac{1}{2}\partial_{\theta}r(\theta)\int_{\mathbb{T}}\tfrac{R^2(\eta)}{R^2(\theta)}d\eta\\
	&\quad -\frac{1}{2\varepsilon q}\int_{\mathbb{T}}\log\big(\big|1-\big(\varepsilon  R(\theta)e^{-\ii\theta}+q\big)\big(\varepsilon R(\eta)e^{\ii\eta} +q\big)\big|\big) \partial^2_{\theta\eta}\Big[\tfrac{R(\eta)}{R(\theta)}\sin(\theta-\eta)\Big]d\eta.
			\end{aligned}
\end{equation}
First, we mention that the regularity with respect to the variable $r$ and the parity property of $\mathcal{I}_1$ have already been studied in previous works, see \cite[Sec. 3.3]{HHRZ24}. As for $\mathcal{I}_2$, since $\overline{q}_0<1,$ then taking $\varepsilon_0$ and $a_0$ small enough, the integrand inside \eqref{integ slf ind2} is not singular. This gives the desired regularity in $r$ with respect to the desired function spaces (loss of only one derivative). For the parity property, it is obtained by immediate changes of variables. Thus, it remains to proves that the quantities \eqref{integ slf ind}-\eqref{integ slf ind2} are actually not singular in $\varepsilon q.$ From the identities
\begin{align*}
\log(|R(\theta)e^{\ii\theta}- R(\eta)e^{\ii\eta} |)&=\log\big(\big|e^{\ii\theta}- e^{\ii\eta}\big|\big)+\log\big(\big|1+\tfrac{(R(\theta)-1)e^{\ii\theta}-(R(\eta)-1)e^{\ii\eta}}{e^{\ii\theta}-e^{\ii\eta}} \big|\big),\\
\forall |z|<1,\quad \log|1+z|&=\sum_{k=1}^\infty\frac{(-1)^{k+1}}{k}\textnormal{Re}\big\{z^k\big\},
\end{align*}
we obtain the following decomposition
\begin{equation}\label{I1}
	\mathcal{I}_1=\mathcal{I}_{1,1}+\mathcal{I}_{1,2}+\mathcal{I}_{1,3},
\end{equation}
with
\begin{equation}\label{I1-123}
\begin{aligned}
	\mathcal{I}_{1,1}(\varepsilon,q)(\theta)&\triangleq\frac{1}{\varepsilon q}\int_{\mathbb{T}}\log(|e^{\ii\theta}-e^{\ii\eta} |)\sin(\theta-\eta)d\eta,\\
	\mathcal{I}_{1,2}(\varepsilon,q,r)(\theta)&\triangleq\frac{1}{\varepsilon q}\int_{\mathbb{T}}\log(|e^{\ii\theta}-e^{\ii\eta} |)\partial^2_{\theta\eta}\big[\big(R(\theta)R(\eta)-1\big)\sin(\theta-\eta)\big]d\eta,\\
	\mathcal{I}_{1,3}(\varepsilon,q,r)(\theta)&\triangleq\frac{1}{\varepsilon q}\sum_{k=1}^\infty\frac{(-1)^{k+1}}{k}\int_{\mathbb{T}}\textnormal{Re}\Big\{\Big(\tfrac{(R(\theta)-1)e^{\ii\theta}-(R(\eta)-1)e^{\ii\eta}}{e^{\ii\theta}-e^{\ii\eta}}\Big)^k \Big\}\partial^2_{\theta\eta}\big[R(\theta)R(\eta)\sin(\theta-\eta)\big]d\eta.
\end{aligned}
\end{equation}
First observe that by parity argument,
\begin{equation}\label{I11}
\mathcal{I}_{1,1}(\varepsilon,q,r)=0.	
\end{equation}
Moreover, expanding the second term we get
\begin{equation*}
\begin{aligned}
	\mathcal{I}_{1,2}(\varepsilon,q,r)(\theta)&=\varepsilon q \int_{\mathbb{T}}\log(|e^{\ii\theta}-e^{\ii\eta} |)\frac{\partial_\theta r(\theta)\partial_\eta r(\eta)}{R(\theta)R(\eta)}\sin(\theta-\eta)d\eta\\
	&\quad+\int_{\mathbb{T}}\log(|e^{\ii\theta}-e^{\ii\eta} |)\left(\frac{\partial_\theta r(\theta)}{R(\theta)}R(\eta)-\frac{\partial_\eta r(\eta)}{R(\eta)}R(\theta)\right)\cos(\theta-\eta)d\eta\\
	&\quad+\frac{1}{\varepsilon q}\int_{\mathbb{T}}\log\big(\big|e^{\ii\theta}-e^{\ii\eta} |)\big(R(\theta)R(\eta)-1\big)\sin(\theta-\eta)d\eta.
\end{aligned}
\end{equation*}
Remark that one may write
$$R(\theta)R(\eta)-1 =\big(R(\theta)-1\big)\big(R(\eta)-1\big)+\big(R(\theta)-1\big)+\big(R(\eta)-1\big)$$
and, by Taylor formula,
\begin{equation}\label{Taylor R-1}
	R(\theta)-1=\varepsilon q\int_0^1\frac{(\partial_\theta r)(s\theta)}{R(s\theta)}ds.
\end{equation}
Therefore, we get
\begin{equation}\label{I12}
	\begin{aligned}
		\mathcal{I}_{1,2}(\varepsilon,q,r)(\theta)&=\varepsilon q \int_{\mathbb{T}}\log(|e^{\ii\theta}-e^{\ii\eta} |)\frac{\partial_\theta r(\theta)\partial_\eta r(\eta)}{R(\theta)R(\eta)}\sin(\theta-\eta)d\eta\\
		&\quad+\int_{\mathbb{T}}\log(|e^{\ii\theta}-e^{\ii\eta} |)\left(\frac{\partial_\theta r(\theta)}{R(\theta)}R(\eta)-\frac{\partial_\eta r(\eta)}{R(\eta)}R(\theta)\right)\cos(\theta-\eta)d\eta\\
		&\quad+\varepsilon q\int_{\mathbb{T}}\int_{0}^{1}\int_{0}^{1}\log\big(\big|e^{\ii\theta}-e^{\ii\eta} |)\frac{(\partial_\theta r)(s_1\theta)}{R(s_1\theta)}\frac{(\partial_\eta r)(s_2\eta)}{R(s_2\eta)}\sin(\theta-\eta)ds_1ds_2d\eta\\
		&\quad+\int_{\mathbb{T}}\int_{0}^{1}\log\big(\big|e^{\ii\theta}-e^{\ii\eta} |)\frac{(\partial_\theta r)(s_1\theta)}{R(s_1\theta)}\sin(\theta-\eta)ds_1d\eta\\
		&\quad+\int_{\mathbb{T}}\int_{0}^{1}\log\big(\big|e^{\ii\theta}-e^{\ii\eta} |)\frac{(\partial_\eta r)(s_2\eta)}{R(s_2\eta)}\sin(\theta-\eta)ds_2d\eta.
	\end{aligned}
\end{equation}
This implies that $\mathcal{I}_{1,2}$ is not singular in $\varepsilon q$ and actually smooth with respect to $\varepsilon$ and $q$. In addition, one obtains from \eqref{I12} that
\begin{equation}\label{I12 equal 0}
	\mathcal{I}_{1,2}(0,q,0)=0.
\end{equation} 
As for the last term in \eqref{I1} we use the identity
\begin{align*}
\textnormal{Re}\Big\{\Big(\tfrac{(R(\theta)-1)e^{\ii\theta}-(R(\eta)-1)e^{\ii\eta}}{e^{\ii\theta}-e^{\ii\eta}}\Big)^k \Big\}&=\textnormal{Re}\Big\{\Big(R(\theta)-1+\tfrac{R(\theta)-R(\eta)}{e^{\ii\theta}-e^{\ii\eta}}e^{\ii\eta}\Big)^k \Big\}\\
&=\textnormal{Re}\Big\{\Big(R(\theta)-1+\tfrac{R(\theta)-R(\eta)}{2\ii\sin(\frac{\theta-\eta}{2})}e^{\ii\frac{\eta-\theta}{2}}\Big)^k \Big\}.
\end{align*}
Using one more time Taylor formula, we can write
\begin{equation}\label{Taylor I31}
	\begin{aligned}
		R(\theta)-R(\eta)&=\varepsilon q\big(r(\theta)-r(\eta)\big)\mathtt{I}(\varepsilon,q,r,\theta,\eta),\\
		\mathtt{I}(\varepsilon,q,r,\theta,\eta)&\triangleq\int_{0}^{1}\frac{1}{\sqrt{1+2\varepsilon q\left(r(\eta)+s(r(\theta)-r(\eta))\right)}}ds.
	\end{aligned} 
\end{equation}
Combining \eqref{Taylor R-1}, \eqref{Taylor I31} and \eqref{I1-123}, we infer
\begin{equation*}
	\begin{aligned}
	&\mathcal{I}_{1,3}(\varepsilon,q,r)(\theta)\\
	&=\sum_{k=1}^{\infty}\tfrac{(-1)^{k+1}}{k}(\varepsilon q)^{k-1}\int_{\mathbb{T}}\textnormal{Re}\left\lbrace\left(\int_{0}^{1}\tfrac{(\partial_\theta r)(s\theta)}{R(s\theta)}ds+\tfrac{r(\theta)-r(\eta)}{2\ii\sin\left(\frac{\theta-\eta}{2}\right)}\mathtt{I}(\varepsilon, q,r,\theta,\eta)\right)^k\right\rbrace\partial_{\theta\eta}^{2}\big[R(\theta)R(\eta)\sin(\theta-\eta)\big]d\eta.	
	\end{aligned}
\end{equation*}
Therefore, $\mathcal{I}_{1,3}$ is not singular in $\varepsilon q$ and actually smooth with respect to $\varepsilon$ and $q.$ In addition,
\begin{equation}\label{I13 equal 0}
	\mathcal{I}_{1,3}(0,q,0)=0.
\end{equation}
The previous calculations show that $\mathcal{I}_1$ is not singular in $\varepsilon q$ and can actually been prolongated in $\varepsilon$ on an interval of the form $(-\varepsilon_0,\varepsilon_0)$ and in $q$ in an interval of the form $(\overline{q}_0-a_0,\overline{q}_0+a_0).$ Moreover, gathering \eqref{I1}, \eqref{I11}, \eqref{I12 equal 0} and \eqref{I13 equal 0} gives
\begin{equation}\label{I1 equal 0}
	\mathcal{I}_1(0,q,0)=0.
\end{equation}
For $\mathcal{I}_2(\varepsilon,q,r)$ we write
$$\mathcal{I}_2=\mathcal{I}_{2,1}+\mathcal{I}_{2,2}+\mathcal{I}_{2,3},$$
with
\begin{equation*}
\begin{aligned}
	\mathcal{I}_{2,1}(r)(\theta) &\triangleq-\frac{1}{2}\frac{\partial_{\theta}r(\theta)}{R^2(\theta)},
	\\
	\mathcal{I}_{2,2}(\varepsilon,q,r)(\theta)&\triangleq\frac{1}{\varepsilon q}\sum_{k=1}^\infty\frac{1}{k}\int_{\mathbb{T}}\textnormal{Re}\Big\{\Big(\big(\varepsilon  R(\theta)e^{-\ii\theta}+q\big)\big(\varepsilon R(\eta)e^{\ii\eta} +q\big)\Big)^k\Big\}\sin(\theta-\eta)d\eta,
	\\
	\mathcal{I}_{2,3}(\varepsilon,q,r)(\theta)&\triangleq\frac{1}{\varepsilon q}\sum_{k=1}^\infty\frac{1}{k}\int_{\mathbb{T}}\textnormal{Re}\Big\{\Big(\big(\varepsilon  R(\theta)e^{-\ii\theta}+q\big)\big(\varepsilon R(\eta)e^{\ii\eta} +q\big)\Big)^k\Big\} \partial^2_{\theta\eta}\Big[\Big(\tfrac{R(\eta)}{R(\theta)}-1\Big)\sin(\theta-\eta)\Big]d\eta.
\end{aligned}
\end{equation*}
First observe that 
\begin{equation}\label{I21 equal 0}
	\mathcal{I}_{2,1}(0)=0.
\end{equation}
Notice that for any $k\in\mathbb{N}^*,$
\begin{equation*}
\begin{aligned}
		\mathcal{J}_k(\varepsilon,q,r)(\theta)&\triangleq\frac{1}{\varepsilon q}\int_{\mathbb{T}}\textnormal{Re}\Big\{\Big(\big(\varepsilon  R(\theta)e^{-\ii\theta}+q\big)\big(\varepsilon R(\eta)e^{\ii\eta} +q\big)\Big)^k\Big\} \sin(\theta-\eta)d\eta\\
	&=\frac{1}{\varepsilon q}\sum_{n=0}^k\sum_{m=0}^k\binom{k}{n}\binom{k}{m}q^{2k-m-n} \varepsilon^{n+m} R^n(\theta)\textnormal{Re}\Big\{   e^{-\ii n\theta} \int_{\mathbb{T}}R^m(\eta) e^{\ii m\eta}  \sin(\theta-\eta)d\eta\Big\}\\
	&=\frac{1}{\varepsilon q}\sum_{n=0}^k\sum_{m=0}^k\binom{k}{n}\binom{k}{m} q^{2k-m-n} \varepsilon^{n+m} R^n(\theta)\textnormal{Re}\Big\{   e^{-\ii n\theta} \int_{\mathbb{T}} e^{\ii m\eta}  \sin(\theta-\eta)d\eta \Big\}\\
	&\quad+\frac{1}{\varepsilon q}\sum_{n=0}^k\sum_{m=0}^k\binom{k}{n}\binom{k}{m} q^{2k-m-n} \varepsilon^{n+m} R^n(\theta)\textnormal{Re}\Big\{   e^{-\ii n\theta} \int_{\mathbb{T}}(R^m(\eta)-1) e^{\ii m\eta}  \sin(\theta-\eta)d\eta\Big\}.
	\end{aligned}
\end{equation*}
Also, straightforward computations give
$$\textnormal{Re}\Big\{e^{-\ii n\theta} \int_{\mathbb{T}} e^{\ii m\eta}  \sin(\theta-\eta)d\eta \Big\}=\begin{cases}
	0, & \textnormal{if}\quad m\neq 1,\\
	-\tfrac12\sin\big((n-1)\theta\big), &\textnormal{if}\quad m=1.
\end{cases}$$
Therefore,
\begin{equation*}
\begin{aligned}
		\mathcal{J}_{k}(\varepsilon,q,r)(\theta)&=\frac{k}{2}q^{2(k-1)}\sin(\theta)+\mathcal{J}_{k,1}(\varepsilon,q,r)(\theta),\\
		\mathcal{J}_{k,1}(\varepsilon,q,r)(\theta)&\triangleq-\frac{k}{2 }\sum_{n=2}^k\binom{k}{n}q^{2(k-1)-n} \varepsilon^{n} R^n(\theta)\sin\big((n-1)\theta\big)\\
	&\quad+\frac{1}{\varepsilon q}\sum_{n=0}^k\sum_{m=0}^k\begin{pmatrix}
		k\\ n
	\end{pmatrix}\begin{pmatrix}
		k\\ m
	\end{pmatrix}q^{2k-m-n} \varepsilon^{n+m} R^n(\theta)\textnormal{Re}\Big\{   e^{-\ii n\theta} \int_{\mathbb{T}}(R^m(\eta)-1) e^{\ii m\eta}  \sin(\theta-\eta)d\eta \Big\}.
			\end{aligned}
\end{equation*}
Observe that
$$R^{m}(\eta)-1=\big(R(\eta)-1\big)\sum_{\ell=0}^{m-1}R^{m-1-\ell}(\eta).$$
Hence, making appeal to \eqref{Taylor R-1}, we infer
\begin{equation*}
	\begin{aligned}
		&\mathcal{J}_{k,1}(\varepsilon,q,r)(\theta)= -\frac{k}{2 }\sum_{n=2}^k\binom{k}{n}q^{2(k-1)-n} \varepsilon^{n} R^n(\theta)\sin\big((n-1)\theta\big)\\
		&\quad+\sum_{n=0}^k\sum_{m=0}^k\sum_{\ell=0}^{m-1}\begin{pmatrix}
			k\\ n
		\end{pmatrix}\begin{pmatrix}
			k\\ m
		\end{pmatrix}q^{2k-m-n} \varepsilon^{n+m} R^n(\theta)\textnormal{Re}\Big\{ e^{-\ii n\theta} \int_{\mathbb{T}}\int_0^1\tfrac{(\partial_{\eta}r)(s\eta)}{R(s\eta)}dsR^{m-1-\ell}(\eta)e^{\ii m\eta}\sin(\theta-\eta)d\eta \Big\}.
	\end{aligned}
\end{equation*}
We have removed the singularity and it is immediate that
\begin{equation}\label{Jk1 equal 0}
	\mathcal{J}_{k,1}(0,q,0)=0.
\end{equation}
Thus, we have the following decomposition
\begin{align*}
	\mathcal{I}_{2,2}(\varepsilon,q,r)(\theta)&=\tfrac{1}{2}\sum_{k=1}^{\infty}q^{2(k-1)}\sin(\theta)+\sum_{k=1}^{\infty}\tfrac{1}{k}\mathcal{J}_{k,1}(\varepsilon,q,r)(\theta)\\
	&\triangleq\frac{1}{2(1-q^2)}\sin(\theta)+\widetilde{\mathcal{I}}_{2,2}(\varepsilon,q,r)(\theta).
\end{align*}
From what preceeds, $\mathcal{I}_{2,2}$ and $\widetilde{\mathcal{I}}_{2,2}$ are smooth in $\varepsilon$ and $q$. Moreover, \eqref{Jk1 equal 0} implies
\begin{equation}\label{I22tilde equal 0}
	\widetilde{\mathcal{I}}_{2,2}(0,q,0)=0.
\end{equation}
Besides, another use of \eqref{Taylor R-1} yields
\begin{align*}
	\mathcal{I}_{2,3}(\varepsilon,q,r)(\theta)&=\frac{1}{\varepsilon q}\sum_{k=1}^\infty\frac{1}{k}\int_{\mathbb{T}}\textnormal{Re}\Big\{\Big(\big(\varepsilon  R(\theta)e^{-\ii\theta}+q\big)\big(\varepsilon R(\eta)e^{\ii\eta} +q\big)\Big)^k\Big\} \partial^2_{\theta\eta}\Big[\Big(\tfrac{R(\eta)}{R(\theta)}-1\Big)\sin(\theta-\eta)\Big]d\eta\\
	&=-\sum_{k=1}^\infty\frac{1}{k}\int_{\mathbb{T}}\textnormal{Re}\Big\{\Big(\big(\varepsilon  R(\theta)e^{-\ii\theta}+q\big)\big(\varepsilon R(\eta)e^{\ii\eta} +q\big)\Big)^k\Big\} \partial_{\theta}\Big[\int_{0}^{1}\tfrac{(\partial_{\eta}r)(s\eta)}{R(\theta)R(s\eta)}ds\cos(\theta-\eta)\Big]d\eta\\
	&\quad+\sum_{k=1}^\infty\frac{1}{k}\int_{\mathbb{T}}\textnormal{Re}\Big\{\Big(\big(\varepsilon  R(\theta)e^{-\ii\theta}+q\big)\big(\varepsilon R(\eta)e^{\ii\eta} +q\big)\Big)^k\Big\} \partial_{\theta}\Big[\tfrac{(\partial_{\eta}r)(\eta)}{R(\theta)R(\eta)}\sin(\theta-\eta)\Big]d\eta.
\end{align*}
One readily sees that $\mathcal{I}_{2,3}$ is smooth and that
\begin{equation}\label{I23 equal 0}
	\mathcal{I}_{2,3}(0,q,0)=0.
\end{equation}
Combining the foregoing calculations, we find that
\begin{equation}\label{vortex patch equation new rigid2}
	\mathcal{G}(\varepsilon,q,\Omega,r)=\varepsilon^2\Omega\partial_{\theta}r(\theta)+\Omega\partial_{\theta}\big[R(\theta)\cos(\theta)\big]+\frac{1}{2(1-q^2)}\sin(\theta)+\mathcal{I}_{3}(\varepsilon,q,r)(\theta),
\end{equation}
where
\begin{align*}
	\mathcal{I}_{3}&\triangleq\mathcal{I}_1+\mathcal{I}_{2,1}+\widetilde{\mathcal{I}}_{2,2}+\mathcal{I}_{2,3}
\end{align*}
is a smooth function. Finally, putting together \eqref{I1 equal 0}, \eqref{I21 equal 0}, \eqref{I22tilde equal 0} and \eqref{I23 equal 0}, we get
\begin{equation}\label{I3 equal 0}
	\mathcal{I}_{3}(0,q,0)=0.
\end{equation}
\noindent $(ii)$ From \eqref{vortex patch equation new rigid2} and \eqref{I3 equal 0}, we find
\begin{equation}\label{vortex patch equation new rigid2 r=0}
\mathcal{G}(0,q,\Omega,0)=-\Big[\Omega-\tfrac{1}{2(1-q^2)}\Big]\sin(\theta).
\end{equation}
This gives the desired equivalence.\\

\noindent $(iii)$ The linearized operator of $\mathcal{G}$ with respect to $(\Omega,r)$ at $(\varepsilon, \Omega,r)=(0, \Omega_0,0)$ is given by
$$d_{(\Omega,r)}\mathcal{G}(0,q,\Omega_0,0)[(\widehat\Omega,h)]=-\widehat\Omega\sin(\theta)-\tfrac12\big[\partial_\theta-\mathbf{H} \big]h(\theta).$$
Given $g\in\mathbb{Y}_{\textnormal{\tiny{odd}}}^{s}$ in the form 
$$g(\theta)=\sum_{n=1}^{\infty}g_n\sin(n\theta),\quad g_n\in \mathbb{R}.$$
Then, we choose
$$\widehat{\Omega}=-g_1\qquad\textnormal{and}\qquad\forall n\geqslant2,\quad h_n=-\frac{g_n}{nB_n},\qquad B_n\triangleq\frac{n-1}{2n}$$
so that setting
$$h(\theta)=\sum_{n=2}^{\infty}h_n\cos(n\theta),$$
we find
$$d_{(\Omega,r)}\mathcal{G}(0,q,\Omega_0,0)[(\widehat\Omega,h)]=g.$$
Moreover, since for any $n\geqslant2,$ we have $B_n\in(\tfrac{1}{4},\tfrac{1}{2}),$ we immediately get from Cauchy-Schwarz and Bessel inequalities
$$\|h\|_{L^{\infty}(\mathbb{T})}\lesssim \sum_{n=2}^{\infty}\frac{|g_n|}{n}\lesssim\|g\|_{L^{2}(\mathbb{T})}\lesssim\|g\|_{C^{s}(\mathbb{T})}.$$
We can also write
$$h'(\theta)=\sum_{n=2}^{\infty}\frac{g_n}{B_n}\sin(n\theta)=(\widetilde{g}\ast g)(\theta)+2\big[g(\theta)-g_1\sin(\theta)\big],$$
where
$$\widetilde{g}(\theta)\triangleq-\sum_{n=2}^{\infty}\frac{2}{n-1}\cos(n\theta),\qquad(\widetilde{g}\ast g)(\theta)\triangleq2\int_{\mathbb{T}}\widetilde{g}(\theta-\eta)g(\eta)d\eta.$$
Notice that $\widetilde{g}\in L^{2}(\mathbb{T})\subset L^{1}(\mathbb{T}).$ Thus, using that $L^{1}(\mathbb{T})\ast C^{s}(\mathbb{T})\to C^{s}(\mathbb{T})$, we obtain $\widetilde{g}\ast g\in C^{s}(\mathbb{T}).$ Hence $h'\in C^{s}(\mathbb{T})$. This proves that $h\in\mathbb{X}_{\textnormal{\tiny{even}}}^{1,s}$ and therefore that $d_{(\Omega,r)}\mathcal{G}(0,q,\Omega_0,0):\mathbb{X}_{\textnormal{\tiny{even}}}^{1,s}\to\mathbb{Y}_{\textnormal{\tiny{odd}}}^{s}$ is an isomorphism.\\

\noindent $(iv)$ It follows from the previous points by applying the Implicit Function Theorem.\\
The proof of Theorem \ref{thm rot disc} is now complete.
\end{proof}

		\section{Applications of Corollary \ref{convex-dom}}\label{Sec-admissible}
		In this section, we focus on the application of Corollary \ref{convex-dom} to specific domains. Since the domains we will consider are convex, the first assumption of this corollary is automatically satisfied. Therefore, we only need to verify the second assumption in Corollary \ref{convex-dom} for the admissibility of these domains. This condition requires knowledge of the critical point and the conformal mapping.
We have two strategies to check this condition. The first strategy involves providing the conformal mapping that satisfies the required constraint, from which we can derive the geometry. The second strategy involves defining the geometry first and then verifying the constraint. This second approach that we will develop here, which  is particularly challenging due to the complex structure of the conformal mapping, even for simple geometries.
We will focus on ellipses, rectangles, and more generally, polygonal domains. All these examples are convex bounded domains making  Corollary \ref{convex-dom}   applicable whenever its second assumption holds true, that is,
\begin{equation}\label{cond-nondeg}
			{\left| \tfrac{F^{(3)}(0)}{F^\prime(0)}\right|\not\in\Big\{2\sqrt{1-\tfrac{1}{n^2}},\,\, n\in\mathbb{N}^*\Big\}}.
			\end{equation}
			with $F:\mathbb{D}\to \mathbf{D}$ the conformal mapping such that $F(0)=\xi_0$,  is the critical point of the Robin function. In terms of the conformal mapping $\Phi=F^{-1}: \mathbf{D}\to \mathbb{D}$, this condition is equivalent to
			\begin{equation}\label{cond-nondeg1}
			{\left| \tfrac{\Phi^{(3)}(0)}{(\Phi^\prime(0))^3}\right|\not\in\Big\{2\sqrt{1-\tfrac{1}{n^2}},\,\, n\in\mathbb{N}^*\Big\}}.
			\end{equation}
To check one of these conditions, we need to get access to the conformal mapping structures.

		\subsection{Elliptic domains}
		We want  to check the validity of the  Corollary \ref{convex-dom} with  one of the most simplest domain shapes, namely ellipses. As we have mentioned before, since ellipses are convex, the first assumption of this corollary is automatically satisfied. Thus it remains to check  the second assumption reformulated in \eqref{cond-nondeg}.   			
		Consider  $\mathbf{D}=\mathbf{E}_{a}$ the domain delimited by  the renormalized ellipse with semi-axis $a>1$ and $b=1,$
		\begin{align*}
			\mathbf{E}_{a}\triangleq\Big\{(x,y)\in\mathbb{R}^2\quad\textnormal{s.t.}\quad\tfrac{x^2}{a^2}+y^2<1\Big\}.
		\end{align*}
		Using the symmetry of this ellipse, we can show that the critical point of Robin function will be given by $\xi_0=0.$ On the other hand, it is well known \cite{KS08} (see also \cite[p. 296]{N52}, that the conformal mapping $\Phi:\mathbf{E}_{a}\to\mathbb{D}$ is given by
		\begin{equation}\label{conformal map ellipse}
			\Phi(z)=\sqrt{k}\,\mathtt{sn}\left(\frac{2K(k)}{\pi}\sin^{-1}\left(\frac{z}{\sqrt{a^2-1}}\right);k\right),
		\end{equation}
		where $K:[0,1)\to[\tfrac{\pi}{2},\infty)$ is the complete Legendre elliptic integral of first kind defined by
		\begin{equation}\label{complete elliptic def}
			K(x)\triangleq\int_0^{\frac\pi2}\tfrac{d\varphi}{\sqrt{1-x^2\sin^2(\varphi)}}.
		\end{equation}
		The application $\mathtt{sn}$ is the Jacobi elliptic sinus amplitudinus function and $k$ is defined through
		\begin{equation}\label{Gk}
			G(k)=\frac{2}{\pi}\sinh^{-1}\left(\frac{2a}{a^2-1}\right),
		\end{equation}
		with $G:(0,1]\to\mathbb{R}$ the strictly decreasing function given by
		\begin{equation}\label{def modulus G}
			G(x)\triangleq\frac{K\big(\sqrt{1-x^2}\big)}{K(x)}\cdot
		\end{equation}
		The previous relation gives a bijective correspondance between $k\in(0,1)$ and $a>1$, with
		\begin{align*}
			a\to1\,\,\leftrightarrow\,\, k\to0\qquad\textnormal{and}\qquad a\to\infty\,\,\leftrightarrow\,\, k\to1.
		\end{align*}
		Our result reads as follows.
		\begin{proposition}\label{prop ellipse}
			Let $a>1$ and $k=k(a)\in(0,1)$ defined through \eqref{Gk}. Introduce the function
			\begin{align*}
				g(k)\triangleq\left(1-\frac{1}{4k^2}\left(\big(1+k^2\big)-\left(\frac{\pi}{2K(k)}\right)^2\right)^2\right)^{-\frac{1}{2}}.
			\end{align*}
			The function $g:[0,1)\to\mathbb{R}_+$ is continuous, strictly increasing and satisfies
			\begin{align*}
				g(0)=1\qquad\textnormal{and}\qquad\lim_{k\to1}g(k)=\infty.
			\end{align*}
			For any $n\in\mathbb{N}^*,$ there exists a unique $k_n\in[0,1)$ (and therefore a unique $a_n>1$) such that
			\begin{align*}
				g(k_n)=n.
			\end{align*}
			Finally, for any $a\in(1,\infty)\setminus\{a_n,\,n\in\mathbb{N}\}$, the non-degeneracy condition \eqref{cond-nondeg1} holds.		\end{proposition}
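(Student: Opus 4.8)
The plan is to convert the non-degeneracy condition \eqref{cond-nondeg1} into an explicit equation in the elliptic modulus $k$, and then study the resulting one-variable function; since ellipses are convex, Corollary \ref{convex-dom} then does the rest.

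\textbf{Reduction to the Schwarzian at the origin.} Since $\mathbf{E}_a$ is invariant under $z\mapsto-z$ and the normalized map satisfies $\Phi(0)=0$, $\Phi'(0)>0$, uniqueness of the Riemann map forces $\Phi$ to be odd; in particular $\Phi''(0)=0$, so $S(\Phi)(0)=\Phi^{(3)}(0)/\Phi'(0)$ and $\Phi^{(3)}(0)/(\Phi'(0))^3=S(\Phi)(0)/(\Phi'(0))^2$. Hence it suffices to compute $S(\Phi)(0)$ and $\Phi'(0)$ from \eqref{conformal map ellipse}. Writing $\Phi=\sqrt k\,\mathtt{sn}(\,\cdot\,;k)\circ h$ with $h(z)=\frac{2K(k)}{\pi}\sin^{-1}\!\big(z/\sqrt{a^2-1}\big)$, I would combine the Schwarzian cocycle identity $S(f\circ g)=(S(f)\circ g)(g')^2+S(g)$ with the invariance of $S$ under affine pre- and post-composition, and the elementary formula $S(G)=\frac{3(P')^2-4PP''}{8P^2}$ valid whenever $G'=P^{-1/2}$ with $P$ polynomial. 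Applied to $\sin^{-1}(z/\sqrt{a^2-1})$ (where $P(z)=(a^2-1)-z^2$) this gives $S(h)(0)=\frac1{a^2-1}$, and applied to the inverse elliptic integral $\mathtt{sn}^{-1}(y;k)=\int_0^y((1-t^2)(1-k^2t^2))^{-1/2}dt$, together with the inverse-function relation $S(\mathtt{sn})(0)=-S(\mathtt{sn}^{-1})(0)/((\mathtt{sn}^{-1})'(0))^2$, it gives $S(\mathtt{sn}(\,\cdot\,;k))(0)=-(1+k^2)$. Since $h'(0)=\frac{2K(k)}{\pi\sqrt{a^2-1}}$ and $\mathtt{sn}'(0;k)=1$, one obtains
$$S(\Phi)(0)=\frac{1}{a^2-1}\left[\,1-(1+k^2)\frac{4K(k)^2}{\pi^2}\right],\qquad \Phi'(0)=\frac{2\sqrt k\,K(k)}{\pi\sqrt{a^2-1}},$$
hence
$$\left|\frac{\Phi^{(3)}(0)}{(\Phi'(0))^3}\right|=\frac{|S(\Phi)(0)|}{(\Phi'(0))^2}=\frac1k\left|(1+k^2)-\left(\frac{\pi}{2K(k)}\right)^2\right|=2X(k),$$
where $X(k):=\frac{1}{2k}\big|(1+k^2)-(\pi/2K(k))^2\big|$. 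Consequently $g(k)=(1-X(k)^2)^{-1/2}$, and \eqref{cond-nondeg1} is equivalent to $g(k)\notin\mathbb{N}^*$.

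\textbf{Study of $g$.} Since $K$ is increasing with $K(0)=\pi/2$, one has $(\pi/2K(k))^2\le1\le1+k^2$, strictly for $k\in(0,1)$, so the absolute value drops and $X(k)=\frac{1}{2k}\big((1+k^2)-(\pi/2K(k))^2\big)$. Continuity of $g$ is then immediate; the boundary values follow from $K(k)=\frac{\pi}{2}(1+\frac{k^2}{4}+O(k^4))$ as $k\to0$, which gives $X(k)=\frac34 k+O(k^3)\to0$ hence $g(0)=1$, and from $K(k)\to\infty$ as $k\to1$, which gives $X(k)\to\frac12\cdot2=1$ hence $g(k)\to\infty$. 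For $X<1$ on $(0,1)$ (so that $g$ is well defined) I would use the factorization $1-X(k)^2=\frac{((1+k)^2-\beta^2)(\beta^2-(1-k)^2)}{4k^2}$ with $\beta=\pi/(2K(k))$, together with the fact that $(1-k)K(k)$ is strictly decreasing on $[0,1)$ — indeed $\frac{d}{dk}[(1-k)K(k)]=\frac{E(k)-(1+k)K(k)}{k(1+k)}<0$ since $E<K<(1+k)K$ — whence $\beta>1-k$ and both factors are positive.

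\textbf{Monotonicity and conclusion.} The heart of the matter is the strict monotonicity of $g$, equivalently $X'>0$ on $(0,1)$. Differentiating $2kX(k)=(1+k^2)-\pi^2/(4K(k)^2)$ and substituting $K'(k)=\frac{E(k)-(1-k^2)K(k)}{k(1-k^2)}$ reduces $X'>0$ to the positivity on $(0,1)$ of
$$\Psi(k):=2E(k)-(1-k^2)K(k)-\frac{4}{\pi^2}(1-k^2)^2K(k)^3,$$
an inequality for which $\Psi(0)=0$, $\Psi(k)=\frac{3\pi}{4}k^2+O(k^4)$, and $\Psi(1^-)=2$, and which I would settle using classical comparison inequalities between the complete elliptic integrals $E$ and $K$ (such as $(1-k^2)K<E<K$ and sharpenings thereof). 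Granting this, $g:[0,1)\to[1,\infty)$ is a continuous strictly increasing bijection, so for each $n\in\mathbb{N}^*$ there is a unique $k_n\in[0,1)$ with $g(k_n)=n$ (with $k_1=0$), and hence, via the decreasing bijection $k\mapsto a$ of \eqref{Gk}, a unique $a_n>1$; then \eqref{cond-nondeg1} holds precisely when $a\notin\{a_n\}$, which together with Corollary \ref{convex-dom} completes the proof. I expect the verification of $\Psi>0$ to be the main obstacle: the exact cancellation at $k=0$ makes any crude estimate insufficient and forces a sharp comparison of $E$ and $K$; if a direct sign analysis proves unwieldy, an alternative is to write $X(k)=\frac12(k+k^{-1})-\frac{1}{2k}(\pi/2K(k))^2$ and bound $X'$ from below using convexity properties of $k\mapsto K(k)$.
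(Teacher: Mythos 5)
Your computation of the invariants is correct and follows a genuinely different route from the paper: you extract $S(\Phi)(0)$ and $\Phi'(0)$ via the Schwarzian cocycle and the formula for $S(G)$ when $G'=P^{-1/2}$, whereas the paper expands $\sin^{-1}$ and $\mathtt{sn}(\cdot;k)$ in power series and reads off the Taylor coefficients $a_1(k),a_3(k)$ of $\Phi$; both give $\big|\Phi^{(3)}(0)/(\Phi'(0))^3\big|=\tfrac1k\big((1+k^2)-(\pi/2K(k))^2\big)$ and hence the equivalence of \eqref{cond-nondeg1} with $g(k)\notin\mathbb{N}^*$. Your factorization $1-X^2=\tfrac{((1+k)^2-\beta^2)(\beta^2-(1-k)^2)}{4k^2}$ with $\beta=\pi/(2K)$, together with the monotonicity of $(1-k)K(k)$, is a nice explicit verification that $g$ is real-valued, a point the paper leaves implicit, and your reduction of $g'>0$ to the positivity of $\Psi(k)=2E(k)-(1-k^2)K(k)-\tfrac{4}{\pi^2}(1-k^2)^2K(k)^3$ is algebraically equivalent to the bracket the paper obtains when differentiating $g$.

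The genuine gap is that you never prove $\Psi>0$, and this is the heart of the proposition (it is where the paper spends most of its proof). You only gesture at ``classical comparison inequalities'' and even suggest the cancellation at $k=0$ may defeat crude estimates; in fact it does not, and the needed input is precisely what the paper supplies. From the AGM representation $K(k)=\tfrac{\pi}{2M(1,\sqrt{1-k^2})}$ (or directly from the integral $K(k)=\int_0^{\pi/2}(1-k^2\sin^2\varphi)^{-1/2}d\varphi$) one has the strict bound $K(k)<\tfrac{\pi}{2\sqrt{1-k^2}}$ for $k\in(0,1)$, i.e.
\begin{align*}
(1-k^2)\left(\tfrac{2K(k)}{\pi}\right)^2<1,
\end{align*}
and combining this with the strict inequality $E(k)>(1-k^2)K(k)$ (which you do cite) gives
\begin{align*}
\Psi(k)>2(1-k^2)K(k)-(1-k^2)K(k)-\tfrac{4}{\pi^2}(1-k^2)^2K(k)^3=(1-k^2)K(k)\Big[1-\tfrac{4(1-k^2)K^2(k)}{\pi^2}\Big]>0,
\end{align*}
both factors being strictly positive on $(0,1)$. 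This is, in repackaged form, exactly the paper's argument: it writes $g'$ as a product, bounds $(1-k^2)(2K/\pi)^2<1$ by the AGM, and uses $K'>0$ to conclude. Until you supply this (or an equivalent) inequality, your proposal establishes the formula for $g$, its boundary values and well-definedness, but not its strict monotonicity, and therefore not the existence and uniqueness of the $k_n$ nor the final equivalence; with the inequality inserted as above, the argument closes and is correct.
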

		
		\begin{figure}[!h]
			\centering
			\includegraphics[width=7.5cm]{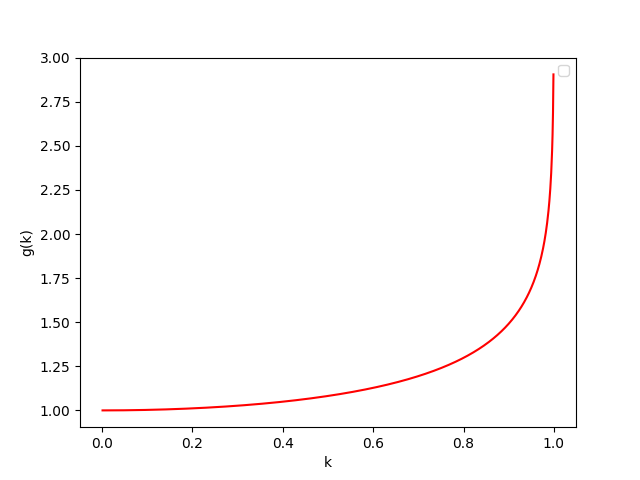}
			\caption{Graph of the function $g$.}
		\end{figure}
		
		\begin{proof}
			First, we recall  the following power series expansions, see for instance  \cite[p. 81]{AS64},
			\begin{align*}
				\sin^{-1}(z)=\sum_{n=0}^{\infty}\frac{(2k-1)!!}{(2k)!!}\frac{z^{2k+1}}{2k+1}=z+\tfrac{1}{6}z^3+\sum_{n=2}^{\infty}\frac{(2k-1)!!}{(2k)!!}\frac{z^{2k+1}}{2k+1}
			\end{align*}
			and, see \cite{S76},
			\begin{align*}
				\mathtt{sn}(z,k)=z-\tfrac{1}{6}\big(1+k^2\big)z^3+\sum_{n=2}^{\infty}b_n(k)z^{2n+1},\qquad b_n(k)\in\mathbb{R}.
			\end{align*}
			Inserting this into \eqref{conformal map ellipse}, we find
			\begin{align*}
				\Phi(z)=a_1(k)z+\sum_{n=3}^{\infty}a_n(k)z^n,
			\end{align*}
			and  in particular, we have 
			\begin{align*}
				a_1(k)=\frac{2K(k)\sqrt{k}}{\pi\sqrt{a^2-1}},\qquad a_3(k)=\frac{K(k)\sqrt{k}}{3\pi(a^2-1)^{\frac{3}{2}}}\left[1-\frac{4K^2(k)}{\pi^2}\big(1+k^2\big)\right].
			\end{align*}
			Recall that the mapping  $x\mapsto K(x)$ is continuous increasing on $[0,1)$ with
			\begin{equation}\label{limits K}
				K(0)=\tfrac{\pi}{2},\qquad\lim_{x\to1}K(x)=\infty.
			\end{equation}
			In particular,
			\begin{align*}
				K(k)>\tfrac{\pi}{2}\cdot
			\end{align*}
			Hence,
			\begin{align*}
				\frac{|a_3(k)|}{a_1^3(k)}&=\frac{\pi^2}{24kK^2(k)}\left(\frac{4K^2(k)}{\pi^2}\big(1+k^2\big)-1\right)\\
				&=\frac{1}{6k}\left(\big(1+k^2\big)-\left(\frac{\pi}{2K(k)}\right)^2\right).
			\end{align*}
			Denote by $M$ the arithmetic-geometric mean function defined on $\{(\alpha,\beta)\in(0,\infty)^2\}$ by
			\begin{align*}
				M(\alpha,\beta)\triangleq\lim_{n\to\infty}\alpha_n=\lim_{n\to\infty}\beta_n,
			\end{align*}
			where the sequences $(\alpha_n)_{n\in\mathbb{N}}$ and $(\beta_n)_{n\in\mathbb{N}}$ are given by
			\begin{align*}
				(\alpha_0,\beta_0)=(\alpha,\beta)\qquad\textnormal{and}\qquad\forall n\in\mathbb{N},\quad(\alpha_{n+1},\beta_{n+1})=\left(\tfrac{\alpha_n+\beta_n}{2},\sqrt{\alpha_n\beta_n}\right).
			\end{align*}
			According to \cite[p. 66-67]{MM99}, we have
			\begin{equation}\label{majoration Kk}
				K(k)=\frac{\pi}{2M(1,\sqrt{1-k^2})}\cdot
			\end{equation}
			 Now, we turn to condition \eqref{cond-nondeg1} which is equivalent to 
			\begin{equation}\label{n gk}
				\begin{aligned}
					&\frac{|a_3(k)|}{a_1^3(k)}\neq\frac{1}{3}\sqrt{1-\tfrac{1}{n^2}}
					\Longleftrightarrow&\quad n\neq\left(1-\frac{1}{4k^2}\left(\big(1+k^2\big)-\left(\frac{\pi}{2K(k)}\right)^2\right)^2\right)^{-\frac{1}{2}}\triangleq g(k).
				\end{aligned}
			\end{equation}
			The function $K$ admits the following power series expansion, see \cite[p. 591]{AS64}
			\begin{align*}
				K(k)=\frac{\pi}{2}\sum_{n=0}^{\infty}\left(\frac{(2n)!}{2^{2n}(n!)^2}\right)^2k^{2n}.
			\end{align*}
			Consequently, the map  $g:[0,1)\to\mathbb{R}_+$ is continuous and satisfies, in view of \eqref{limits K},
			\begin{equation}\label{limits g}
				g(0)=1\qquad\textnormal{and}\qquad\lim_{k\to1}g(k)=\infty.
			\end{equation}
			Now, let us turn to the monotonicity of $g$. The derivative of $g$ is
			\begin{align*}
				g'(k)&=\frac{g^3(k)}{2}\left(\big(1+k^2\big)-\left(\frac{\pi}{2K(k)}\right)^2\right)\left[\frac{\left(k+\frac{\pi^2K'(k)}{4K^3(k)}\right)}{k^2}-\frac{\big(1+k^2\big)-\left(\frac{\pi}{2K(k)}\right)^2}{2k^3}\right]\\
				&=\frac{g^3(k)}{4k^3}\left(\big(1+k^2\big)-\left(\frac{\pi}{2K(k)}\right)^2\right)\left[k^2-1+\left(\frac{\pi}{2K(k)}\right)^2\left(1+\frac{2kK'(k)}{K(k)}\right)\right].
			\end{align*}
			From  \eqref{majoration Kk}, we deduce that
			\begin{align*}
				K(k)<\frac{\pi}{2\sqrt{1-k^2}},
			\end{align*}
			which implies
			\begin{equation*}
				(1-k^2)\left(\frac{2K(k)}{\pi}\right)^2<1.
			\end{equation*}
			Since $K$ is positive and strictly increasing, we have proved that
			\begin{equation}\label{Maj0}
				(1-k^2)\left(\frac{2K(k)}{\pi}\right)^2<1+\frac{2kK'(k)}{K(k)}\cdot
			\end{equation}
			Finally, \eqref{Maj0} implies $g'(k)>0$, and therefore $g$ is strictly increasing on $[0,1)$. Combined with \eqref{limits g}, we obtain that $g:[0,1)\to[1,\infty)$ is a bijection. Hence, from  \eqref{n gk} we get  the proof of Proposition \ref{prop ellipse}.
		\end{proof}
		
		%
		
		\subsection{Polygonal domains}
		The main goal in this section is to explore some polygonal shapes that satisfy the assumption \eqref{cond-nondeg}. This will be implemented through the Schwarz-Christoffel mapping. The Schwarz-Christoffel transformation is a powerful tool in complex analysis that describes the conformal mapping of unit disc  onto the interior of a polygon. This mapping provides a useful framework for examining the geometric properties and behaviors of various polygonal domains, particularly in the context of the non-degeneracy condition \eqref{cond-nondeg}.
We begin this section with the following remark showing that for highly symmetric polygons, the condition \eqref{cond-nondeg} fails.  This observation underscores the necessity of exploring less symmetric, more irregular polygonal shapes to satisfy the given assumption.

		\begin{remark}
			Let $m\geqslant 3$ be an integer and consider a regular polygon $\mathbf{P}_m$ with $m$ sides, centered at $0$ and with length side
			\begin{align*}
				\frac{2^{1-\frac{4}{m}}\Gamma^2\left(\frac{1}{2}-\frac{1}{m}\right)}{m\Gamma\left(1-\frac{2}{m}\right)}\cdot
			\end{align*}
			It is well-known, see for instance $\cite[p. 196]{N52},$ that the associated conformal mapping $F:\mathbb{D}\to\mathbf{P}_m$ is given by
			\begin{equation}\label{SC reg poly}
				F(z)=\int_{0}^{z}\frac{d\xi}{(1-\xi^m)^{\frac{2}{m}}}\cdot
			\end{equation}
			From this we deduce that
			\begin{align*}
				F(0)=F''(0)=F^{(3)}(0)=0,\qquad F'(0)=1.
			\end{align*}
			Consequently, $0$ is the unique (since $\mathbf{P}_m$ is convex) critical point of the Robin function $\mathcal{R}_{\mathbf{P}_m}$ and
			\begin{align*}
				S(F)(0)=0.
			\end{align*}
			Thus, the condition   \eqref{cond-nondeg} fails.
		\end{remark}
		The previous remark teaches us that we need to look for less symmetric configurations. This is what we will explore in the next subsections. Before entering into details, let us discuss the general theory of conformal mapping for polygonal domains discovered by Schwarz and Christoffel. We refer the reader to \cite[p. 189]{N52} for the theory. The Schwarz-Christoffel mapping provides a way to map the unit disk conformally onto the interior of the polygon $\mathbf{P}$. This transformation is essential for understanding the geometric properties of the polygonal domain.
Consider $\mathbf{P}$ a polygon with $m\geqslant 3$ vertices labelled $z_1,$ $z_2,$ ..., $z_m.$ We denote $\pi\alpha_1,$ $\pi\alpha_2,$ ..., $\pi\alpha_m$ as its interior angles, see Figure \ref{Figure polygon}. The associated exterior angles $\pi\mu_1,$ $\pi\mu_2$, ..., $\pi\mu_{m}$ are defined by
		\begin{align*}
			\forall k\in\llbracket 1,m\rrbracket,\quad\mu_{k}\triangleq1-\alpha_{k}\in(-1,1)
		\end{align*}
		and  they satisfy the identity
		\begin{equation}\label{sum ext angles tri}
			\sum_{k=1}^{m}\mu_k=2.
		\end{equation}
		Then, there exist $\alpha,\beta\in\mathbb{C},$ determining respectively the size and the position of the polygon $\mathbf{P}$, and distinct angles $\theta_1$, $\theta_2$, ... , $\theta_{m}\in[0,2\pi)$ such that the following application maps conformally the unit disc $\mathbb{D}$ onto the interior of the polygon $\mathbf{P}$
		\begin{equation}\label{Swart-Christ}
			F(z)=\alpha\bigintsss_{0}^{z}\prod_{k=1}^{m}\big(\xi-e^{\ii\theta_k}\big)^{-\mu_k}d\xi+\beta,
		\end{equation}
		with the additional property
		\begin{align*}
			\forall k\in\llbracket 1,m\rrbracket,\quad F\big(e^{\ii\theta_k}\big)=z_k.
		\end{align*}
		
		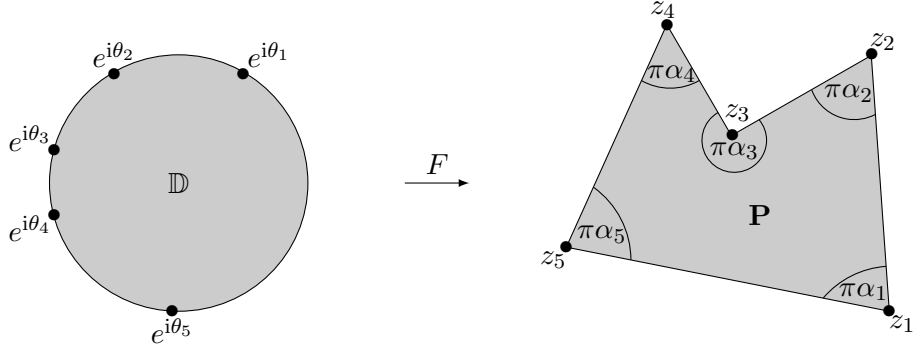
\begin{figure}[!h]
			\begin{center}
				\begin{tikzpicture}[scale=0.85]
					\filldraw[draw=black,fill=black!20] (-4,0) circle (2);
					\node at (-3,1.7) {$\bullet$};
					\node at (-2.6,2) {$e^{\ii\theta_1}$};
					\node at (-5,1.7) {$\bullet$};
					\node at (-5,2) {$e^{\ii\theta_2}$};
					\node at (-5.93,0.5) {$\bullet$};
					\node at (-6.3,0.7) {$e^{\ii\theta_3}$};
					\node at (-5.93,-0.5) {$\bullet$};
					\node at (-6.3,-0.7) {$e^{\ii\theta_4}$};
					\node at (-4.1,-2) {$\bullet$};
					\node at (-4.1,-2.3) {$e^{\ii\theta_5}$};
					\node at (-4,0) {$\mathbb{D}$};
					\draw[>=latex,->](-0.5,0)--(0.5,0);
					\node at (0,0.3) {$F$};
					\filldraw[draw=black,fill=black!20] (2,-1) -- (7,-2) -- ++(94:4) -- ++(210:2.5) -- ++(120:2)-- cycle;
					\node at (7.2,-2.2) {$z_1$};
					\node at (6.9,2.2) {$z_2$};
					\node at (4.65,1.1) {$z_3$};
					\node at (3.5,2.7) {$z_4$};
					\node at (1.8,-1.2) {$z_5$};
					\node at (6.6,-1.7) {$\pi\alpha_1$};
					\node at (6.37,1.4) {$\pi\alpha_2$};
					\node at (4.6,0.5) {$\pi\alpha_3$};
					\node at (3.63,1.7) {$\pi\alpha_4$};
					\node at (2.55,-0.8) {$\pi\alpha_5$};
					\draw (6.95,-1.3) arc (94:140:1.4);
					\draw (3,-1.2) arc (0:54:1.4);
					\draw (4.35,1.1) arc (120:400:0.5);
					\draw (5.8,1.45) arc (200:298:0.7);
					\draw (3.16,1.6) arc (240:300:0.9);
					\node at (5,-0.5) {$\mathbf{P}$};
					\node at (2,-1) {$\bullet$};
					\node at (7,-2) {$\bullet$};
					\node at (6.73,2) {$\bullet$};
					\node at (4.57,0.74) {$\bullet$};
					\node at (3.56,2.45) {$\bullet$};
				\end{tikzpicture}
			\end{center}
			\caption{Schwarz Christoffel conformal mapping for polygonal domains.}\label{Figure polygon}
		\end{figure}
		
		\noindent Observe that the map in \eqref{SC reg poly} is nothing but  the Schwarz-Christoffel conformal mapping with 
		\begin{align*}
			\alpha=1,\qquad \beta=0\qquad\textnormal{and}\qquad\forall k\in\llbracket1,m\rrbracket,\quad\theta_k=\tfrac{2k\pi}{m}\cdot
		\end{align*}
		It is readily seen from the expression of $F$ that
		\begin{equation}\label{trii}
			\frac{F''(z)}{F'(z)}=-\sum_{k=1}^{m}\frac{\mu_k}{z-e^{\ii\theta_k}}\cdot
		\end{equation}
		Differentiating \eqref{trii}, we get
		\begin{align*}
			\frac{F^{(3)}(z)}{F'(z)}-\left(\frac{F''(z)}{F'(z)}\right)^2=\sum_{k=1}^{m}\frac{\mu_k}{(z-e^{\ii\theta_k})^2}\cdot
		\end{align*}
		Therefore, by vitue of \eqref{Swart}, the Schwarzian derivative of $F$ is given by 
		\begin{equation}\label{Swart poly}
			S(F)(z)=\sum_{k=1}^{m}\frac{\mu_k}{(z-e^{\ii\theta_k})^2}-\frac{1}{2}\left(\sum_{k=1}^{m}\frac{\mu_k}{z-e^{\ii\theta_k}}\right)^2.
		\end{equation}
		Notice that  the convexity of the polygon $\mathbf{P}$ is equivalent to require
		$$\forall k\in\llbracket1,m\rrbracket,\quad\mu_{k}>0.$$
			\subsubsection{Symmetric convex polygons}
	Here we will explore the non-degeneracy condition \eqref{cond-nondeg} for symmetric polygons and see a concrete  application for rectangles.  We consider   a polygon $\mathbf{P}$ with $2m$ vertices ($m\geqslant2$) denoted $(z_k)_{1\leqslant k\leqslant2m}$ subject to the following configuration,
	\begin{enumerate}
		\item (Symmetry with respect to $0$) The vertices satisfy the following properties
		\begin{align}\label{symm-pol1}
			\forall k\in\llbracket 1,m\rrbracket,\quad\,z_{m+k}=-z_k,\qquad \textnormal{Im}(z_k)\geqslant0.
		\end{align}
		\item (Convexity) The exterior angles satisfy
		\begin{equation}\label{muk pos}
			\forall k\in\llbracket1,2m\rrbracket,\quad\mu_k>0.
		\end{equation}
	\end{enumerate}
	
	\begin{figure}[!h]
		\begin{center}
			\begin{tikzpicture}[scale=0.6]
				\draw[fill=black!20] (-3.5,-3.5)--(0.5,-3)--(3,-1)--(3.5,3.5)--(-0.5,3)--(-3,1)--cycle; 
				\draw[->](-4,0)--(4,0);
				\draw[->](0,-4)--(0,4);
				\node at (-3.5,-3.5) {$\bullet$};
				\node at (0.5,-3) {$\bullet$};
				\node at (3,-1) {$\bullet$};
				\node at (3.5,3.5) {$\bullet$};
				\node at (-0.5,3) {$\bullet$};
				\node at (-3,1) {$\bullet$};
				\node at (3.8,3.8) {$z_1$};
				\node at (-3.7,-3.8) {$z_4=-z_1$};
				\node at (-0.5,3.4) {$z_2$};
				\node at (1.3,-3.3) {$z_5=-z_2$};
				\node at (-3.4,1.3) {$z_3$};
				\node at (4,-1.4) {$z_6=-z_3$};
				\node at (2,2) {$\mathbf{P}$};
				\node at (0.2,-0.2) {$0$};
			\end{tikzpicture}
			\caption{Symmetric convex polygons.}
		\end{center}
	\end{figure}
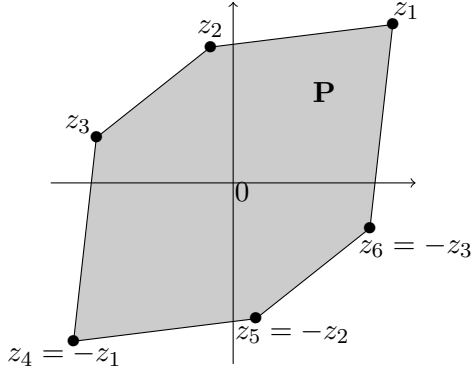
	\noindent The symmetry property \eqref{symm-pol1} imposes that
	\begin{align}\label{symm-pol2}
		\forall k\in\llbracket1,m\rrbracket,\quad\mu_{m+k}=\mu_k.
	\end{align}
	Putting together \eqref{symm-pol2} and \eqref{sum ext angles tri} (with $m$ replaced by $2m$), we deduce that
	\begin{align}\label{conf-2}
		\sum_{k=1}^{m}\mu_k=1.
	\end{align} 
	Recall from \eqref{Swart-Christ} that the Schwarz-Christoffel mapping satisfying $F(0)=0$ and $F'(0)>0$  takes the form
	\begin{equation*}
		F(z)=\alpha\bigintsss_{0}^{z}\prod_{k=1}^{2m}\big(\xi-e^{\ii\theta_k}\big)^{-\mu_k}d\xi,\qquad\alpha=\widetilde{\alpha}\prod_{k=0}^{2m-1}\left(-e^{\ii\theta_k}\right)^{\mu_k},\qquad\widetilde{\alpha}>0.
	\end{equation*}
	Due to the symmetry, we can impose to the angles $\theta_k$ the constraints
	\begin{align}\label{symm-pol3}
		0\leqslant\theta_1<\theta_2<...<\theta_m<\pi\qquad\textnormal{and}\qquad\forall k\in\llbracket1,m\rrbracket,\quad\theta_{k+m}=\theta_k+\pi
	\end{align}
	leading, with \eqref{symm-pol1}, to
	\begin{equation}\label{Swart sym conv poly1}
		F(z)=\widetilde{\alpha}\bigintsss_{0}^{z}\prod_{k=1}^{m}\big(\xi^2-e^{2\ii\theta_k}\big)^{-\mu_k}d\xi.
	\end{equation}
	Next, we will show the following result.
	\begin{proposition}\label{prop sym poly}
		Consider $\mathbf{P}$ a symmetric convex polygonal domain associated with the Schwarz-Christoffel conformal mapping \eqref{Swart sym conv poly1}. Assume that
		\begin{equation}\label{hyp sym poly}
			\sum_{k=1}^{m}{\mu_k}\cos(2\theta_k)\notin\left\{\sqrt{1-\tfrac{1}{n^2}},\, n\in\mathbb{N}^*\right\}.
		\end{equation}
		Then, the origin $0$ is the unique critical point of the Robin function $\mathcal{R}_{\mathbf{P}}$ and the condition \eqref{cond-nondeg} is satisfied.
	\end{proposition}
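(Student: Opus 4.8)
The plan is to establish the two assertions of the proposition separately. That $0$ is the unique critical point of $\mathcal R_{\mathbf P}$ follows from the symmetry and convexity already at hand: by \eqref{symm-pol1} the domain $\mathbf P$ is invariant under $z\mapsto -z$, hence $\mathcal R_{\mathbf P}$ is an even function of $z$ and $\nabla\mathcal R_{\mathbf P}(0)=0$, so $0$ is a critical point; moreover $z\mapsto -F(-z)$ is a conformal map $\mathbb D\to\mathbf P$ fixing $0$, so by the uniqueness of the Riemann map up to a rotation of $\mathbb D$ (and comparing first derivatives at $0$) the mapping $F$ is odd, in particular $F''(0)=0$. Since $\mathbf P$ is convex, $\mathcal R_{\mathbf P}$ is strictly convex and has exactly one critical point by \cite{Cafar,Gust2}, so that point is $0$. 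This settles the first claim and leaves only the non-degeneracy condition \eqref{cond-nondeg}.

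The next step is to reduce \eqref{cond-nondeg} to a single explicit number. Because $F''(0)=0$, the definition \eqref{Swart} of the Schwarzian gives $F^{(3)}(0)/F'(0)=S(F)(0)$, so \eqref{cond-nondeg} is equivalent to $|S(F)(0)|\notin\{2\sqrt{1-1/n^{2}}:n\in\mathbb N^{*}\}$. I would then compute $S(F)(0)$ from \eqref{Swart sym conv poly1} by reading $\prod_{k=1}^{m}(\xi^{2}-e^{2\ii\theta_k})^{-\mu_k}$ as a Schwarz--Christoffel integrand over the $2m$ prevertices $e^{\ii\theta_k}$ and $-e^{\ii\theta_k}=e^{\ii(\theta_k+\pi)}$, each carrying exponent $\mu_k$ — this is exactly the configuration \eqref{symm-pol1}--\eqref{symm-pol3} — so that \eqref{Swart poly} applies. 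Evaluating its two sums at $z=0$: the first yields $\sum_{k=1}^{m}\mu_k\big(e^{-2\ii\theta_k}+e^{-2\ii(\theta_k+\pi)}\big)=2\sum_{k=1}^{m}\mu_k e^{-2\ii\theta_k}$, while the second vanishes since $\sum_{k=1}^{m}\mu_k\big(e^{-\ii\theta_k}+e^{-\ii(\theta_k+\pi)}\big)=0$. Hence
$$
S(F)(0)=2\sum_{k=1}^{m}\mu_k\,e^{-2\ii\theta_k}.
$$
(The same value also comes out of $\log F'(z)=\log F'(0)+\sum_{n\geqslant1}\tfrac{z^{2n}}{n}\sum_{k=1}^{m}\mu_k e^{-2\ii n\theta_k}$, read off from \eqref{Swart sym conv poly1}, upon integrating and extracting the cubic Taylor coefficient of $F$.)

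It remains to pass to moduli and match with the hypothesis: $|S(F)(0)|=2\left|\sum_{k=1}^{m}\mu_k e^{-2\ii\theta_k}\right|$, and the symmetry of the prevertex set under $\theta\mapsto\theta+\pi$ together with the normalization $F'(0)>0$ reduces the right-hand side to $2\sum_{k=1}^{m}\mu_k\cos(2\theta_k)$; with this identity, \eqref{cond-nondeg} is precisely the assumption \eqref{hyp sym poly}, so Corollary \ref{convex-dom} applies and delivers both conclusions. I expect the real work to lie in this last reduction — carrying the symmetry constraints \eqref{symm-pol1}--\eqref{symm-pol3} and the normalization through the Schwarz--Christoffel parametrization carefully enough to recognize $S(F)(0)$ as the clean real combination $2\sum_{k=1}^{m}\mu_k\cos(2\theta_k)$. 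The two structural facts that make it run are $F''(0)=0$, which deletes the squared term in $S(F)$, and the antipodal cancellation $\sum_{j=1}^{2m}\mu_j e^{-\ii\theta_j}=0$ over the full prevertex set; the relation $\sum_{k=1}^{m}\mu_k=1$ from \eqref{conf-2} enters as well.
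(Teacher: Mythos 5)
Your proposal is correct and follows essentially the same route as the paper: you obtain $F''(0)=0$ from the central symmetry (the paper reads it off by differentiating \eqref{Swart sym conv poly1}), deduce via Grakhov's equation and convexity that $0$ is the unique critical point, and compute $S(F)(0)$ from the prevertex formula \eqref{Swart poly}, where the linear sum cancels by antipodal symmetry, giving $S(F)(0)=2\sum_{k=1}^{m}\mu_k e^{-2\ii\theta_k}$ exactly as in \eqref{symm-poly4}. Your final identification of this quantity with $2\sum_{k=1}^{m}\mu_k\cos(2\theta_k)$, and hence the matching with \eqref{hyp sym poly} and condition \eqref{cond-nondeg}, is the same step the paper performs in the last line of \eqref{symm-poly4}, so the two arguments coincide.
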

	\begin{proof} 
		Recall that by construction $F(0)=0$ and $F'(0)>0.$ Moreover, a direct differentiation in \eqref{Swart sym conv poly1} gives
		\begin{align}\label{Fs sur Fp poly}
			F^{\prime\prime}(0)=0.
		\end{align}
		It follows that $z=0$ is a solution to  Grakhov's equation \eqref{Grakhov}. Therefore $z=0$ is a critical point to the Robin function associated with the domain $\mathbf{P}.$ As the domain $\mathbf{P}$ is convex, then this critical point is unique. Now, using \eqref{Swart poly}, \eqref{trii}, \eqref{Fs sur Fp poly}, \eqref{symm-pol2} and \eqref{symm-pol3} yield
		\begin{align}\label{symm-poly4}
			\nonumber S(F)(0)=\frac{F^{(3)}(0)}{F^\prime(0)}&=\sum_{k=1}^{2m}{\mu_k}{e^{-2\ii\theta_k}}\\
			\nonumber &=\sum_{k=1}^{m}{\mu_k}{e^{-2\ii\theta_k}}+\sum_{k=1}^{m}{\mu_{m+k}}{e^{-2\ii\theta_{m+k}}}\\
			&=2\sum_{k=1}^{m}{\mu_k}\cos(2\theta_k).
		\end{align}
Thus the condition \eqref{cond-nondeg} is equivalent to  \eqref{hyp sym poly}. This concludes the proof of Proposition \ref{prop sym poly}.
	\end{proof}
	\subsubsection{Rectangles}
	As an application of the previous proposition, we intend to study rectangular domains for which the condition \eqref{cond-nondeg} is explicitly described. In this context, the non-degeneracy condition \eqref{cond-nondeg} will be related to the aspect ratio which should avoid a discrete set of values. Our main result reads as follows. 
	\begin{proposition}\label{prop rectangle}
		Let $\mathbf{D}$ be a rectangle with sides $0<l<L$. Then the condition \eqref{cond-nondeg} is satisfied if and only if
		\begin{equation}\label{def Gn}
			\tfrac{l}{L}\notin\big\{G_n,\, n\in\mathbb{N}^\star\big\},\qquad G_n\triangleq G\left(\sqrt{\tfrac{1+\sqrt{1-\frac{1}{n^2}}}{2}}\right),
		\end{equation}
		where $G$ has been introduced in \eqref{def modulus G}.
	\end{proposition}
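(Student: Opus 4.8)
The plan is to specialize the symmetric convex polygon analysis of Proposition~\ref{prop sym poly} to a rectangle, and then to convert the resulting arithmetic constraint on the Schwarz--Christoffel prevertex angle into a constraint on the aspect ratio $\tfrac{l}{L}$ by explicitly evaluating two elliptic integrals.

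First I would record the Schwarz--Christoffel data of a rectangle. A rectangle is a symmetric convex polygon with $2m=4$ vertices, all of whose interior angles equal $\tfrac{\pi}{2}$, hence $\mu_1=\mu_2=\tfrac12$ in \eqref{Swart sym conv poly1}. Normalizing so that the rectangle is centered at the origin with sides parallel to the axes, the conformal map $F:\mathbb{D}\to\mathbf{D}$ with $F(0)=0$ and $F'(0)>0$ is unique and therefore commutes with the two reflections $z\mapsto\overline{z}$ and $z\mapsto-\overline{z}$ that fix $\mathbf{D}$; this forces the four prevertices to sit at the angles $\theta_1=\theta_0$, $\theta_2=\pi-\theta_0$, $\theta_3=\pi+\theta_0$, $\theta_4=2\pi-\theta_0$ for some $\theta_0\in(0,\tfrac{\pi}{2})$. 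Then \eqref{Swart sym conv poly1} becomes
\begin{equation*}
F(z)=\widetilde{\alpha}\int_0^z\frac{d\xi}{\sqrt{\xi^4-2\xi^2\cos(2\theta_0)+1}},\qquad \widetilde{\alpha}=F'(0)>0 .
\end{equation*}
In particular $F''(0)=0$, so $0$ solves Grakhov's equation \eqref{Grakhov} and, the rectangle being convex, $\xi_0=0$ is the unique critical point of $\mathcal{R}_{\mathbf{D}}$; moreover the identity \eqref{symm-poly4} from the proof of Proposition~\ref{prop sym poly} gives $S(F)(0)=2\sum_{k=1}^{2}\mu_k\cos(2\theta_k)=2\cos(2\theta_0)$. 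Hence condition \eqref{cond-nondeg} holds if and only if $\cos(2\theta_0)\notin\big\{\sqrt{1-\tfrac{1}{n^2}}:n\in\mathbb{N}^*\big\}$.

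Next I would relate $\theta_0$ to the aspect ratio. Since $F$ commutes with $z\mapsto\overline{z}$ and $z\mapsto-\overline{z}$, it maps the real segment $(-1,1)$ and the imaginary segment $(-\ii,\ii)$ onto the two segments joining the center of the rectangle to the midpoints of adjacent sides, and the integrand is real there, so
\begin{equation*}
\tfrac{L}{2}=|F(1)|=\widetilde{\alpha}\int_0^1\frac{dt}{\sqrt{t^4-2t^2\cos(2\theta_0)+1}},\qquad \tfrac{l}{2}=|F(\ii)|=\widetilde{\alpha}\int_0^1\frac{dt}{\sqrt{t^4+2t^2\cos(2\theta_0)+1}},
\end{equation*}
the second identity after the substitution $\xi=\ii t$. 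The crucial step is to evaluate these integrals: writing $t^4-2t^2\cos(2\theta_0)+1=(1+t^2)^2-4t^2\cos^2\theta_0$ and $t^4+2t^2\cos(2\theta_0)+1=(1+t^2)^2-4t^2\sin^2\theta_0$, and performing the substitution $t=\tan(\psi/2)$, so that $\tfrac{2t}{1+t^2}=\sin\psi$ and $dt=\tfrac12(1+t^2)\,d\psi$, both integrals collapse to $\tfrac12\int_0^{\pi/2}(1-a^2\sin^2\psi)^{-1/2}d\psi=\tfrac12 K(a)$, with $a=\cos\theta_0$ in the first case and $a=\sin\theta_0$ in the second. Therefore $\tfrac{l}{L}=\tfrac{K(\sin\theta_0)}{K(\cos\theta_0)}=G(\cos\theta_0)$, with $G$ as in \eqref{def modulus G}. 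As $G$ is strictly decreasing with $G(1/\sqrt{2})=1$, the assumption $l<L$ forces $\cos\theta_0>1/\sqrt{2}$, i.e. $\theta_0\in(0,\tfrac{\pi}{4})$ and $\cos(2\theta_0)\in(0,1)$, and $\theta_0$ is uniquely determined by $\tfrac{l}{L}$.

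Finally I would combine the two facts. Using $\cos(2\theta_0)=2\cos^2\theta_0-1$, the forbidden set is reached exactly when $\cos^2\theta_0=\tfrac12\big(1+\sqrt{1-\tfrac{1}{n^2}}\big)$, that is $\cos\theta_0=\sqrt{\tfrac{1+\sqrt{1-1/n^2}}{2}}$, equivalently, by injectivity of $G$, when $\tfrac{l}{L}=G\big(\sqrt{\tfrac{1+\sqrt{1-1/n^2}}{2}}\big)=G_n$. Hence \eqref{cond-nondeg} holds if and only if $\tfrac{l}{L}\notin\{G_n:n\in\mathbb{N}^*\}$ (the value $G_1=G(1/\sqrt{2})=1$ being automatically excluded by $l<L$), which is the assertion of Proposition~\ref{prop rectangle}. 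The main obstacle is the explicit elliptic-integral evaluation in the second step; the symmetry determination of the prevertices, the appeal to \eqref{symm-poly4}, and the concluding algebra are routine.
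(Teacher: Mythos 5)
Your proof is correct and follows the same overall skeleton as the paper (symmetric placement of the rectangle, Proposition~\ref{prop sym poly} giving the condition $\cos(2\theta_1)\notin\{\sqrt{1-\tfrac{1}{n^2}}\}$, an explicit evaluation showing $\tfrac{l}{L}=\tfrac{K(\sin\theta_1)}{K(\cos\theta_1)}=G(\cos\theta_1)$, and the strict monotonicity of $G$ to convert this into the set $\{G_n\}$), but the central computation is carried out by a genuinely different method. The paper obtains the side lengths by integrating the boundary parametrization, $L=\int_{\theta_1}^{\pi-\theta_1}|F'(e^{\ii t})|\,dt$, which leads to trigonometric integrals of the type $\int d t/\sqrt{|\cos(2\theta_1)-\cos(2t)|}$ and is evaluated through the Legendre function $P_{-1/2}$ and hypergeometric identities taken from tables. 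You instead exploit the two reflection symmetries to identify $F(1)$ and $F(\ii)$ with the midpoints of two adjacent sides, so that the half-sides are $|F(1)|=\widetilde{\alpha}\int_0^1 dt/\sqrt{t^4-2t^2\cos(2\theta_0)+1}$ and $|F(\ii)|$, and the substitution $t=\tan(\psi/2)$ reduces these algebraic integrals directly to $\tfrac12 K(\cos\theta_0)$ and $\tfrac12 K(\sin\theta_0)$. This buys a more elementary, self-contained evaluation (no special-function tables), at the modest price of invoking the continuous extension of $F$ to $\overline{\mathbb{D}}$ and the symmetry identification of the side midpoints, both of which are standard; the paper's route avoids any discussion of midpoints by computing the full side lengths along the boundary. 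Two small points you should keep tidy: since \eqref{cond-nondeg} involves $|S(F)(0)|=2|\cos(2\theta_0)|$, retain the absolute value until you have argued (as you do, via $l<L$ and the monotonicity of $G$) that $\theta_0\in(0,\tfrac{\pi}{4})$; and the identification $|F(1)|=\tfrac{L}{2}$, $|F(\ii)|=\tfrac{l}{2}$ tacitly orients the long side horizontally, a harmless normalization that the paper makes implicitly as well through the choice $\theta_1\in(0,\tfrac{\pi}{4}]$.
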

	
	\begin{figure}[!h]
		\centering
		\includegraphics[width=7.5cm]{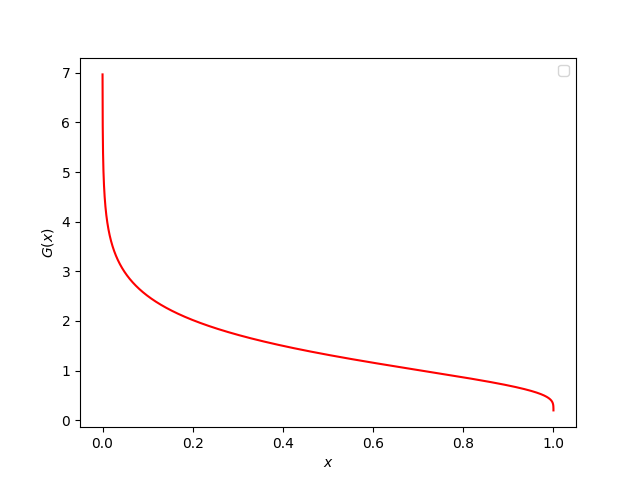}
		\includegraphics[width=7.5cm]{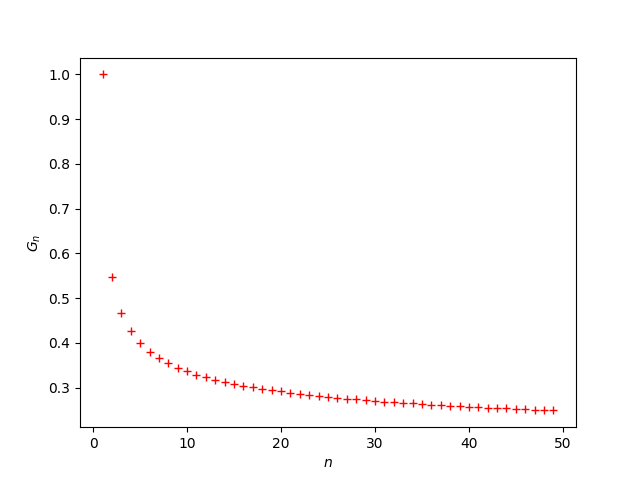}
		\caption{Graph of the function $G$ and the sequence $(G_n)_{n\in\mathbb{N}^*}$.}
	\end{figure}
	
	\begin{proof}
		Without loss of generality, we can  shall chose the rectangle $\mathbf{D}$ to be  symmetric with respect to the real and imaginary axis,
		\begin{equation}\label{thetak rectangle}
			\theta_1\in(0,\tfrac\pi4],\qquad \theta_2=\pi-\theta_1,\qquad \theta_3=\theta_1+\pi,\qquad \theta_4=\theta_2+\pi.
		\end{equation}
		Moreover, 
		\begin{equation}\label{muk rectangle}
			\mu_1=\mu_2=\frac12,
		\end{equation}
		implying in turn
		\begin{align*}
			\sum_{k=1}^{2}{\mu_k}\cos(2\theta_k)&=\cos(2\theta_1).
		\end{align*}
		Therefore, the condition \eqref{hyp sym poly} becomes
		\begin{align*}
			\cos(2\theta_1)\notin\left\{\sqrt{1-\tfrac{1}{n^2}},\, n\in\mathbb{N}^*\right\},
		\end{align*}
		which is equivalent to 
		\begin{align}\label{cond-2}
			\cos(\theta_1)\notin\left\{\sqrt{\tfrac{1+\sqrt{1-\tfrac{1}{n^2}}}{2}},\, n\in\mathbb{N}^*\right\}.
		\end{align}
		According to \eqref{Swart sym conv poly1}, \eqref{thetak rectangle} and \eqref{muk rectangle}, we have 
		\begin{align*}
			F(z)&=\widetilde{\alpha}\int_0^z\frac{d\xi}{\sqrt{(\xi^2-e^{2\ii\theta_1})(\xi^2-e^{-2\ii\theta_1})}},\qquad\widetilde{\alpha}>0.
		\end{align*}
		The length of the side joining $z_1$ and $z_2$ is given by
		\begin{align}
			L=\int_{\theta_1}^{\pi-\theta_1}|F^\prime(e^{\ii t})|dt.
		\end{align}
		From straightfoward computations we infer
		\begin{align*}
			|F^\prime(e^{\ii t})|&=\frac{\widetilde{\alpha}}{\sqrt{|e^{2\ii t}-e^{2\ii\theta_1}||e^{2\ii t}-e^{-2\ii\theta_1}|}}\\
			&=\frac{\widetilde{\alpha}}{2\sqrt{|\sin(t-\theta_1)||\sin(t+\theta_1)|}}\\
			&=\frac{\widetilde{\alpha}}{\sqrt{2}\sqrt{|\cos(2\theta_1)-\cos(2t)|}}\cdot
		\end{align*}
		Therefore, by change of variables, we get with $\beta=\pi-2\theta_1$,
		\begin{align*}
			\frac{L}{\widetilde{\alpha}}
			&=\frac{1}{\sqrt{2}}\int_{\theta_1}^{\pi-\theta_1}\frac{dt}{\sqrt{|\cos(2\theta_1)-\cos(2t)|}}\\
			&=\frac{1}{2\sqrt{2}}\int_{2\theta_1}^{\pi}\frac{dt}{\sqrt{|\cos(2\theta_1)-\cos(t)|}}+\frac{1}{2\sqrt{2}}\int_{\pi}^{2\pi-2\theta_1}\frac{dt}{\sqrt{|\cos(2\theta_1)-\cos(t)|}}\\
			&=\int_{0}^{\beta}\frac{dt}{\sqrt{2\cos(t)-2\cos(\beta)}}\\
			&=\frac{\pi}{2}{P_{-\frac12}}\big(\cos(\beta)\big)=\frac{\pi}{2}{P_{-\frac12}}\big(-\cos(2\theta_1)\big),
		\end{align*}
		where $P_\nu$ is the Legendre functions of degree $\nu$ related to the hypergeometric function $\mathtt{F}$ through
		\begin{align*}
			P_\nu(z)=\mathtt{F}\left(-\nu,\nu+1;1;\tfrac{1-z}{2}\right).
		\end{align*}
		We refer the reader to \cite[p. 337 and 561]{AS64} for the justification of the previous identities involving special functions. Hence
		\begin{align*}
			\frac{L}{\widetilde{\alpha}}&=\frac{\pi}{2}\mathtt{F}\left(\tfrac12,\tfrac12;1;\tfrac{1+\cos(2\theta_1)}{2}\right)\\
			&=\frac{\pi}{2}\mathtt{F}\left(\tfrac12,\tfrac12;1;\cos^2(\theta_1)\right)\\
			&=K\big(\cos(\theta_1)\big),
		\end{align*}
		with $K$ the complete elliptic function defined in \eqref{complete elliptic def}. The last equality comes from \cite[p. 591]{AS64}. The length $l$ of the side $[z_1,z_4]$ satisfies
		\begin{align*}
			\frac{l}{\widetilde{\alpha}}
			&=\frac{1}{\sqrt{2}}\int_{-\theta_1}^{\theta_1}\frac{dt}{\sqrt{|\cos(2\theta_1)-\cos(2t)|}}\\
			&=\int_{0}^{2\theta_1}\frac{dt}{\sqrt{2\cos(t)-2\cos(2\theta_1)}}\\
			&=\frac{\pi}{2}{P_{-\frac12}}\big(\cos(\theta_1)\big).
		\end{align*}
		Hence
		\begin{align*}
			\frac{l}{\widetilde{\alpha}}&=\frac{\pi}{2}\mathtt{F}\left(\tfrac12,\tfrac12;1;\tfrac{1-\cos(2\theta_1)}{2}\right)\\
			&=\frac{\pi}{2}\mathtt{F}\left(\tfrac12,\tfrac12;1;\sin^2(\theta_1)\right)\\
			&=K\big(\sin(\theta_1)\big).
		\end{align*}
		Therefore for $ \theta_1\in(0,\tfrac\pi4]$
		\begin{align}
			\frac{l}{L}&=\frac{K\big(\sin(\theta_1)\big)}{K\big(\cos(\theta_1)\big)}=G\big(\cos(\theta_1)\big),
		\end{align}
		where $G$ has been introduced in \eqref{def modulus G}. We can easily check that $G$ is strictly decreasing. It follows that \eqref{cond-2} is satisfied if and only if
		\begin{align*}
			\tfrac{l}{L}\notin\big\{G_n, n\in\mathbb{N}^*\big\},\qquad G_n\triangleq G\left(\sqrt{\tfrac{1+\sqrt{1-\frac{1}{n^2}}}{2}}\right).
		\end{align*}
		This concludes the proof of Proposition \ref{prop rectangle}.
	\end{proof}

	\section{Vortex duplication}\label{Sec-duplic}
	The aim of this section is to explore the duplication method achievable through a standard reflection scheme. This mechanism will be employed to construct multi-vortex configurations from a single point vortex, ultimately leading to vortex synchronization.
To illustrate this, we will discuss two cases. The first case involves a rectangular domain divided into several identical rectangular cells, each filled with a time-periodic vortex patch and  exhibiting alternating circulations. The second case involves a disc divided into several identical sectors.
%
 	\subsection{Duplication method}\label{sec dup meth}
	Let $\mathbf{D}$ be a bounded simply connected domain such that its boundary $\partial\mathbf{D}$ contains a non-trivial segment $[z,w]$ with $z\neq w$ and
	\begin{align}\label{sym-taou}
		\partial\mathbf{D}\cap(z,w)=[z,w],
	\end{align}
	where $(z,w)$ denotes the infinite line joining the points $z$ and $w.$ Now, denote by  $\mathtt{S}$ the reflection mapping  through the line $(z,w)$, namely
	\begin{align*}
		\forall \, x\in \mathbb{C},\quad \mathtt{S}(x)=2\textnormal{Re}\big\{x(\overline{z-w})\big\}\frac{z-w}{|z-w|}-x.
	\end{align*}
	Define
	\begin{align*}
		\mathbf{D}^\prime\triangleq\mathtt{S}(\mathbf{D})=\big\{\mathtt{S}(x),\,x\in\mathbf{D}\big\}.
	\end{align*}
	Notice that $\mathbf{D}\cap \mathbf{D}^\prime=[z,w].$ We set
	\begin{align*}
		\mathbf{D}^\star\triangleq\textnormal{Int}\big(\overline{\mathbf{D}}\cup\overline{\mathbf{D}^\prime}\big).
	\end{align*}
	Let  $\omega_0\in L^\infty_{c}(\mathbf{D})$ and $\omega_0^\star\in L^\infty_{c}(\mathbf{D}^\star)$  and consider  Euler equations,  
	\begin{equation}\label{Euler-Eq0}
		\begin{cases}
			\partial_{t}{\omega}+u\cdot\nabla\omega=0,\quad x\in\mathbf{D},\\
			u=\nabla^{\perp}(\Delta_{\mathbf{D}})^{-1}{\omega},\\
			\omega(0,x)=\omega_0
		\end{cases}
	\end{equation}
	and
	\begin{equation}\label{Euler-Eq1}
		\begin{cases}
			\partial_{t}{\omega}^\star+u^\star\cdot\nabla\omega^\star=0,\quad x\in\mathbf{D}^\star,\\
			u^\star=\nabla^{\perp}(\Delta_{\mathbf{D}^\star})^{-1}{\omega}^\star,\\
			\omega^\star(0,x)=\omega_0^\star.
		\end{cases}
	\end{equation}
	For a function $f:\mathbf{D}\to\mathbb{R}$ compactly supported in $\mathbf{D}$, we define its reflection $f^\#:\mathbf{D}^\star\to\mathbb{R}$ as
	\begin{equation}\label{Euler-Eq3}
		f^\#(x)\triangleq\begin{cases}
			f(x), & \textnormal{if }x\in\mathbf{D},
			\\
			-f(\mathtt{S}x), & \textnormal{if } x\in \mathbf{D}^\prime.
		\end{cases}
	\end{equation}
	We intend to prove the following result.
	\begin{proposition}\label{reflect}
		Let $\mathbf{D}$ be a  simply connected bounded domain satisfying \eqref{sym-taou}. Given  $\omega_0\in L^\infty_c(\mathbf{D})$  such that the system \eqref{Euler-Eq0} admits a global weak solution $\omega\in L^\infty \big(\mathbb{R};L^\infty_{c}(\mathbf{D})\big)$.   Let  $\omega_0^\star=\omega_0^\#$, then 
		the system \eqref{Euler-Eq1} admits  a weak global  solution $\omega^\star\in L^\infty \big(\mathbb{R};L^\infty_{c}(\mathbf{D}^\star)\big)$,  such that, 
		\begin{equation*}
			\forall t\in\mathbb{R},\quad \omega^\star(t)=\omega(t)^\#.
		\end{equation*}
		In particular, if $\omega$ is time periodic then $ \omega^\star$ is a time periodic counter-rotating pairs.
	\end{proposition}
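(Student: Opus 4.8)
The plan is to show that $\omega^\star(t)\triangleq\omega(t)^\#$ is the desired weak solution by exploiting the reflection symmetry of the Dirichlet Laplacian. The key observation is that the Green function of $\mathbf{D}^\star$ inherits an antisymmetry under $\mathtt{S}$: since $\mathtt{S}$ is an isometry fixing the segment $[z,w]$ and $\mathbf{D}^\star$ is $\mathtt{S}$-invariant, for $x\in\mathbf{D}$ the function $x\mapsto G_{\mathbf{D}^\star}(x,\zeta)-G_{\mathbf{D}^\star}(\mathtt{S}x,\zeta)$ vanishes on $\partial\mathbf{D}$ (on $[z,w]$ because $\mathtt{S}x=x$ there, and on $\partial\mathbf{D}\setminus(z,w)\subset\partial\mathbf{D}^\star$ because both terms vanish) and has the right singularity, whence by uniqueness $G_{\mathbf{D}^\star}\big|_{\mathbf{D}\times\mathbf{D}}=G_{\mathbf{D}}$ and more generally $G_{\mathbf{D}^\star}(\mathtt{S}x,\mathtt{S}\zeta)=G_{\mathbf{D}^\star}(x,\zeta)$, $G_{\mathbf{D}^\star}(\mathtt{S}x,\zeta)=G_{\mathbf{D}^\star}(x,\mathtt{S}\zeta)$. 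First I would record these identities carefully, together with their consequence for the stream function: if $\psi^\star(t,\cdot)=\Delta_{\mathbf{D}^\star}^{-1}\omega^\star(t,\cdot)$ and $\psi(t,\cdot)=\Delta_{\mathbf{D}}^{-1}\omega(t,\cdot)$, then using \eqref{stream-V0}, the decomposition $\mathbf{D}^\star=\mathbf{D}\cup\mathbf{D}'$ (up to the measure-zero segment), the change of variables $\zeta\mapsto\mathtt{S}\zeta$ and the definition \eqref{Euler-Eq3} of $\omega^\star=\omega^\#$, one gets $\psi^\star(t,\mathtt{S}x)=-\psi^\star(t,x)$, and $\psi^\star(t,x)=\psi(t,x)$ for $x\in\mathbf{D}$. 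In particular $\psi^\star(t,\cdot)$ vanishes on the line $(z,w)$.

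Next I would transfer this to the velocity field. Since $\mathtt{S}$ is an orthogonal reflection, differentiating the relation $\psi^\star\circ\mathtt{S}=-\psi^\star$ gives $u^\star(t,\mathtt{S}x)=\nabla^\perp\psi^\star(t,\mathtt{S}x)=-\mathtt{S}\,u^\star(t,x)$ (the extra minus from $\nabla^\perp$ composed with the reflection cancels one of the signs; I would do this one-line computation writing $\mathtt{S}$ as a $2\times2$ orthogonal matrix with $\det=-1$), and $u^\star(t,x)=u(t,x)$ for $x\in\mathbf{D}$. Crucially, the normal component of $u^\star$ on the segment $(z,w)$ vanishes, so $\mathbf{D}$ and $\mathbf{D}'$ are each invariant under the flow of $u^\star$. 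Then the weak formulation of \eqref{Euler-Eq1} tested against $\varphi\in C_c^\infty(\mathbb{R}\times\mathbf{D}^\star)$ splits as an integral over $\mathbb{R}\times\mathbf{D}$ plus an integral over $\mathbb{R}\times\mathbf{D}'$; in the first, $\omega^\star=\omega$ and $u^\star=u$, so it vanishes because $\omega$ solves \eqref{Euler-Eq0} weakly (with test function $\varphi|_{\mathbf{D}}$, which need not be compactly supported in $\mathbf{D}$ but the boundary term is controlled since $u\cdot\mathbf{n}=0$ on $\partial\mathbf{D}$ in the weak sense — here one uses that no vorticity touches $\partial\mathbf{D}$, as $\omega_0\in L_c^\infty(\mathbf{D})$ and transport preserves compact support away from the boundary); in the second, the change of variables $x\mapsto\mathtt{S}x$ together with $\omega^\star\circ\mathtt{S}=-\omega^\star$, $u^\star\circ\mathtt{S}=-\mathtt{S}u^\star$, and $\nabla(\varphi\circ\mathtt{S})=\mathtt{S}\nabla\varphi\circ\mathtt{S}$ reduces it again to the weak formulation for $\omega$ on $\mathbf{D}$. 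This shows $\omega^\star$ is a global weak solution, and it is compactly supported in $\mathbf{D}^\star$ and in $L^\infty(\mathbb{R};L_c^\infty(\mathbf{D}^\star))$ because $\omega$ is in $L^\infty(\mathbb{R};L_c^\infty(\mathbf{D}))$ and the support stays away from $[z,w]$ (again by the invariance of $\mathbf{D}$ under the flow and compact support of $\omega_0$). The time-periodicity statement is then immediate: if $\omega(t+\mathtt{T})=\omega(t)$ for all $t$, then $\omega^\star(t+\mathtt{T})=\omega(t+\mathtt{T})^\#=\omega(t)^\#=\omega^\star(t)$, and by construction $\omega^\star$ restricted to $\mathbf{D}'$ is the negative reflection of its restriction to $\mathbf{D}$, i.e. a counter-rotating partner.

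The main obstacle I anticipate is the rigorous justification at the level of weak (Yudovich-type) solutions rather than smooth ones: one must make sense of "$u^\star\cdot\mathbf{n}=0$ on $(z,w)$" and of the splitting of the weak formulation across the interface $[z,w]$, since a priori $\varphi|_{\mathbf{D}}$ is only smooth up to the flat part of the boundary, not compactly supported there. The clean way around this is to note that, because $\omega_0\in L^\infty_c(\mathbf{D})$ and the vorticity is transported by a velocity field that is log-Lipschitz and tangent to $\partial\mathbf{D}$, $\mathrm{supp}\,\omega(t)$ remains at positive distance from $\partial\mathbf{D}$ — hence from $[z,w]$ — on any finite time interval; thus only test functions supported away from the interface matter, and the interface integrals never see the singular set. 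Alternatively, and perhaps more robustly, one can avoid the PDE splitting entirely by observing that $\mathtt{S}$ maps weak solutions of Euler on $\mathbf{D}^\star$ to weak solutions (by the symmetry of $\Delta_{\mathbf{D}^\star}^{-1}$ established above and invariance of the equation under isometries composed with the sign flip $\omega\mapsto-\omega$), so $\omega^\star$ and $-\omega^\star\circ\mathtt{S}$ are two weak solutions with the same initial datum $\omega_0^\#$; if one grants uniqueness in the relevant class (e.g. Yudovich, valid here since $\mathbf{D}^\star$ can be arranged smooth, or at least $C^{1,\alpha}$, for the examples of interest) then $\omega^\star=-\omega^\star\circ\mathtt{S}$, recovering the symmetry a posteriori, and existence follows from building $\omega^\star$ on $\mathbf{D}$ by the given solution and on $\mathbf{D}'$ by reflection. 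I would present the direct argument as the main line and mention the uniqueness-based shortcut as a remark.
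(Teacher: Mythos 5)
Your proposal follows essentially the same route as the paper: you establish the reflection identities for the Green function of $\mathbf{D}^\star$ by the same uniqueness argument (the difference of Green functions is harmonic away from the singularity and vanishes on $\partial\mathbf{D}=A\cup[z,w]$, using $\mathtt{S}x=x$ on the segment and the $\mathtt{S}$-invariance of $\mathbf{D}^\star$ elsewhere), deduce $\psi^\#=\psi$ on $\mathbf{D}$ and $\psi^\#=-\psi\circ\mathtt{S}$ on $\mathbf{D}'$, and conclude that $\omega^\#$ solves the transport equation; this is exactly the paper's derivation of \eqref{Green-func1}--\eqref{Green-func2} and of the proposition. Your extra discussion of the weak formulation (tangency of $u^\star$ on the interface, flow invariance of the two halves, the support of the vorticity staying away from $[z,w]$, and the uniqueness-based shortcut) goes beyond what the paper writes down and is a sensible supplement.

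Two slips should be corrected. First, the intermediate claim ``$G_{\mathbf{D}^\star}\big|_{\mathbf{D}\times\mathbf{D}}=G_{\mathbf{D}}$'' is false as stated, since $G_{\mathbf{D}^\star}$ does not vanish when one argument reaches the open segment; what your uniqueness argument actually yields, and what you in fact use to get $\psi^\star=\psi$ on $\mathbf{D}$, is the difference identity $G_{\mathbf{D}^\star}(x,y)-G_{\mathbf{D}^\star}(x,\mathtt{S}y)=G_{\mathbf{D}}(x,y)$ for $x,y\in\mathbf{D}$, i.e. \eqref{Green-func1}. Second, the velocity symmetry carries the wrong sign: from $\psi^\star\circ\mathtt{S}=-\psi^\star$ one gets $\nabla\psi^\star(\mathtt{S}x)=-\mathtt{S}\nabla\psi^\star(x)$, and since $J\mathtt{S}=-\mathtt{S}J$ for the rotation $J$ by $\tfrac{\pi}{2}$ and a reflection $\mathtt{S}$, this gives $u^\star(t,\mathtt{S}x)=+\mathtt{S}\,u^\star(t,x)$, not $-\mathtt{S}\,u^\star(t,x)$. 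The plus sign is precisely what makes $u^\star$ tangent to the fixed line (your stated conclusion) and what makes the change of variables $x\mapsto\mathtt{S}x$ reduce the $\mathbf{D}'$ part of the weak formulation to that of $\omega$ on $\mathbf{D}$; with the minus sign as written, the velocity at fixed points of $\mathtt{S}$ would be normal to the segment and the reduction would produce a sign mismatch between the $\partial_t$ and transport terms. With these two corrections your argument is complete and coincides in substance with the paper's proof.
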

	\begin{remark}
	The existence of at least one  solution to $\eqref{Euler-Eq0}$  satisfying  $\omega\in L^\infty \big(\mathbb{R};L^\infty_{c}(\mathbf{D})\big)$,  particularly with   compact support for each time, can be guaranteed    with a piecewise smooth boundary $\partial \mathbf{D}$ that has a   finite number of corners with angles  smaller than $\pi$, see \cite{LZ19}. Uniqueness can be achieved in slightly smoother domains, as detailed in $\cite{Agrawal,GVL13,HZ21,HZ22,LZ19}.$ It is important to note that  in  Theorem $\ref{main thm}$ and Corollary $\ref{convex-dom},$ we do not impose any regularity conditions on the boundary. In fact, the time-periodic solutions we construct are inherently compactly supported in space due to the construction scheme. 	\end{remark}
	\begin{proof}
Let $\omega$ be  a compactly supported solution of \eqref{Euler-Eq0} whose stream function is $\psi.$ Let us show that  the function $\omega^\#$ is a solution to \eqref{Euler-Eq1}. Define
\begin{align}\label{psi23}
\nonumber \forall x\in \mathbf{D}^\star,\quad  \psi^\#(t,x)&\triangleq\int_{ \mathbf{D}^\star}G_{ \mathbf{D}^\star}(x,y)\omega^\#(t,y)dy\\
&=\int_{ \mathbf{D}}\big[G_{ \mathbf{D}^\star}(x,y)- G_{ \mathbf{D}^\star}(x,\mathtt{S}y) \big]\omega(t,y)dy.
\end{align} 
Assume for a while that
\begin{align}\label{Green-func1}
\forall x, y\in \mathbf{D},\quad G_{ \mathbf{D}^\star}(x,y)- G_{ \mathbf{D}^\star}(x,\mathtt{S}y)=G_{ \mathbf{D}}(x,y)
\end{align}
and
\begin{align}\label{Green-func2}
\forall x\in \mathbf{D}^\prime,\quad\forall y\in \mathbf{D},\quad G_{ \mathbf{D}^\star}(x,y)- G_{ \mathbf{D}^\star}(x,\mathtt{S}y)=-G_{ \mathbf{D}}(\mathtt{S}x,y).
\end{align}	
Then, we deduce from \eqref{psi23} that 
\begin{align*}
\nonumber \forall x\in \mathbf{D},\quad  \psi^\#(t,x)&=\int_{ \mathbf{D}}G_{ \mathbf{D}}(x,y)\omega(t,y)dy\\
&=\psi(t,x)
\end{align*} 
and
\begin{align*}
\nonumber \forall x\in \mathbf{D}^\prime,\quad  \psi^\#(t,x)&=-\int_{ \mathbf{D}}G_{ \mathbf{D}}(\mathtt{S} x,y)\omega(t,y)dy\\
&=-\psi(t,\mathtt{S}x).
\end{align*} 
Therefore, we get
\begin{align*}
			\nabla^\perp \psi^\#(t,x)\cdot \nabla \omega^\#(t,x)=\begin{cases}
				(\nabla^\perp \psi\cdot \nabla \omega)(t,x), \quad x\in \mathbf{D},\\
				-(\nabla^\perp \psi\cdot \nabla \omega)(t,\mathtt{S}x), \quad x\in\mathbf{D}^\prime.
			\end{cases}
		\end{align*}
Consequently, we find
$$
\partial_{t}{\omega}^\#+\nabla^\perp \psi^\#\cdot\nabla\omega^\#=0,\quad \hbox{in}\quad \mathbf{D}^\star
$$
implying that ${\omega}^\#$ is a solution to \eqref{Euler-Eq1}. This gives the desired result. It remains to check the identities \eqref{Green-func1} and \eqref{Green-func2}. Before checking these identities, we will first show 
\begin{align}\label{sym-green}
			\forall\, x,y\in\mathbf{D}^\star,\quad G_{\mathbf{D}^\star}(x,\mathtt{S}y)=G_{\mathbf{D}^\star}(\mathtt{S}x,y).
		\end{align}
		Its proof follows from the fact that
		\begin{align*}
			\Delta_x [G_{\mathbf{D}^\star}(\mathtt{S}x,\mathtt{S}y)]&=[\Delta_x G_{\mathbf{D}^\star}](\mathtt{S}x,\mathtt{S}y)\\
			&=\delta_{\mathtt{S} x}(\mathtt{S}y)\\
			&=\delta_x(y).
		\end{align*}
		In addition, as $\mathtt{S}\mathbf{D}^\star=\mathbf{D}^\star$ and for $x\in\partial\mathbf{D}^\star$ we have $\mathtt{S}x\in\partial\mathbf{D}^\star$, then 
		\begin{align*}
			\forall y\in\mathbf{D},\quad \forall\,x\in\partial\mathbf{D},\quad G_{\mathbf{D}^\star}(\mathtt{S}x,\mathtt{S}y)=0.
		\end{align*}
		By uniqueness of the Green function, we find that
		\begin{align*}
			\forall\, x,y\in\mathbf{D}^\star,\quad G_{\mathbf{D}^\star}( \mathtt{S}x,\mathtt{S}y)=G_{\mathbf{D}^\star}(x,y),
		\end{align*}
		which implies \eqref{sym-green} in view of the identity $\mathtt{S}^2=\hbox{Id}.$
	Let's start with checking the identity \eqref{Green-func1}. For this aim, we  introduce the auxiliary  function
\begin{align}\label{green-lo}
			\forall\, x,y\in\mathbf{D},\quad G_{1}(x,y)\triangleq G_{\mathbf{D}^\star}(x,y)-G_{\mathbf{D}^\star}(x,\mathtt{S}y).
		\end{align}
First, for $x,y\in \mathbf{D}$, as  $\mathtt{S}y\notin {\mathbf{D}}$, we have 
		\begin{align*}
			\Delta_x[G_{1}(x,y)]&=\Delta_x G_{\mathbf{D}^\star}(x,y)-\Delta_x  G_{\mathbf{D}^\star}(x,\mathtt{S}y)\\
			&=\delta_x(y)-\delta_{x}(\mathtt{S}y)\\
			&=\delta_x(y).
		\end{align*}
One may easily verify that 
		\begin{align*}
			\partial\mathbf{D}=A\cup[z,w],
		\end{align*}
		where $A\subset \partial\mathbf{D}^\star$. As $\mathtt{S}(\mathbf{D}^\star)=\mathbf{D}^\star$, then  for $y\in\mathbf{D}$, we obtain from \eqref{green-lo} and \eqref{Eulereq}
		\begin{align*}
			\forall\, x\in A, \quad G_{1}(x,y)=0-0=0.
		\end{align*}
		However, for $x\in [z,w]$, we have $\mathtt{S}x=x$ and thus we find once again  from \eqref{green-lo} and \eqref{sym-green}
		\begin{align*}
			\forall\, y\in\mathbf{D},\quad G_{1}(x,y)&=G_{\mathbf{D}^\star}(x,y)-G_{\mathbf{D}^\star}(\mathtt{S}x,y)\\
			&=0.
		\end{align*}
		Therefore $G_1$ satisfies the same elliptic equation as $G_{\mathbf{D}}$. By uniqueness, we find
		\begin{align*}
			\forall x,y\in\mathbf{D},\quad G_1(x,y)=G_{\mathbf{D}}(x,y).
		\end{align*}
		Together with \eqref{sym-green}, it yields the identity \eqref{Green-func1}. It remains to prove \eqref{Green-func2}. We write by virtue of \eqref{sym-green}
		\begin{align}\label{Green-func02}
\nonumber\forall x\in \mathbf{D}^\prime,\quad\forall y\in \mathbf{D},\quad G_{ \mathbf{D}^\star}(x,y)- G_{ \mathbf{D}^\star}(x,\mathtt{S}y)&= G_{ \mathbf{D}^\star}(\mathtt{S}^2x,y)- G_{ \mathbf{D}^\star}(\mathtt{S}^2x,\mathtt{S}y)\\
&=G_{ \mathbf{D}^\star}(\mathtt{S}x,\mathtt{S}y)- G_{ \mathbf{D}^\star}(\mathtt{S}x,y).
\end{align}
On the other hand, as $\mathtt{S} x\in \mathbf{D},$ then applying \eqref{Green-func1}
\begin{align}\label{Green-func3}
 G_{ \mathbf{D}^\star}(\mathtt{S} x,y)- G_{ \mathbf{D}^\star}(\mathtt{S} x,\mathtt{S}y)=G_{ \mathbf{D}}(\mathtt{S} x,y).
\end{align}	
Putting together \eqref{Green-func3} with \eqref{Green-func02} yields
\begin{align*}
\forall x\in \mathbf{D}^\prime,\quad\forall y\in \mathbf{D},\quad G_{ \mathbf{D}^\star}(x,y)- G_{ \mathbf{D}^\star}(x,\mathtt{S}y)&= -G_{ \mathbf{D}}(\mathtt{S} x,y).
\end{align*}
		This achieves the proof of \eqref{Green-func2}.
		
	\end{proof}
	Combining  Theorem \ref{main thm} together with Proposition \ref{reflect} allows to generate synchronized  pairs of counter-rotating time periodic patches. More precisely, we obtain the following result.
	\begin{coro}
		Let  $\mathbf{D}$ be a simply-connected bounded domain such that Theorem \ref{main thm} holds true. Assume the existence of a non-trivial segment $[z,w]$ such that 
		\begin{align*}
			\partial\mathbf{D}\cap(z,w)=[z,w].
		\end{align*}
		Then,  counter-rotating time periodic vortex patches exist as solutions to Euler equations \eqref{Euler-Eq1} in the domain $\mathbf{D}^\star,$ generated in the spirit of Proposition \ref{reflect} by duplication.
	\end{coro}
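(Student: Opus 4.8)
The plan is to chain Theorem \ref{main thm} and Proposition \ref{reflect}, with essentially no extra analysis required. First I would fix, as in Theorem \ref{main thm}, a compact interval $[\lambda_*,\lambda^*]\subset H_{\mathbf{D}}(\mathbf{D})\setminus\mathcal{C}$ of regular energy values for which the two non-degeneracy assumptions hold, and extract for every $\varepsilon\in(0,\varepsilon_0)$ the Cantor set $\mathscr{C}_\varepsilon$ together with, for each $\lambda\in\mathscr{C}_\varepsilon$, the time-periodic vortex patch
$$\boldsymbol{\omega}_\varepsilon(t)=\tfrac{1}{\varepsilon^2}\mathbf{1}_{D_t^\varepsilon},\qquad D_t^\varepsilon=p_\lambda\big(\tfrac{2\pi}{\mathtt{T}(\lambda)}t\big)+\varepsilon O_t^\varepsilon,$$
solving \eqref{Euler-Eq0} in $\mathbf{D}$ and satisfying $D_{t+\mathtt{T}(\lambda)}^\varepsilon=D_t^\varepsilon$, with $O_t^\varepsilon$ localized around the unit disc.

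Next I would check that $\boldsymbol{\omega}_\varepsilon$ is an admissible input for Proposition \ref{reflect}. Since $[\lambda_*,\lambda^*]$ is compact and contains no critical value of $H_{\mathbf{D}}$, the corresponding periodic orbits $(\mathcal{E}_\lambda)_{\lambda\in[\lambda_*,\lambda^*]}$ lie in a fixed compact subset of $\mathbf{D}$ (Proposition \ref{prop-analy}); consequently, after shrinking $\varepsilon_0$ if needed, the patch $D_t^\varepsilon$ stays at a positive distance from $\partial\mathbf{D}$ uniformly in $t\in\mathbb{R}$ and $\lambda\in\mathscr{C}_\varepsilon$. Therefore $\boldsymbol{\omega}_\varepsilon\in L^\infty\big(\mathbb{R};L^\infty_c(\mathbf{D})\big)$ is a global weak solution of \eqref{Euler-Eq0} with compact spatial support, and the hypothesis $\partial\mathbf{D}\cap(z,w)=[z,w]$ is precisely \eqref{sym-taou}. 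Hence all assumptions of Proposition \ref{reflect} are met with $\omega_0=\boldsymbol{\omega}_\varepsilon(0)$.

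Then I would apply Proposition \ref{reflect} with $\omega_0^\star=\boldsymbol{\omega}_\varepsilon(0)^\#$, which produces a global weak solution $\omega^\star\in L^\infty\big(\mathbb{R};L^\infty_c(\mathbf{D}^\star)\big)$ of \eqref{Euler-Eq1} in $\mathbf{D}^\star=\textnormal{Int}\big(\overline{\mathbf{D}}\cup\overline{\mathtt{S}(\mathbf{D})}\big)$ with $\omega^\star(t)=\boldsymbol{\omega}_\varepsilon(t)^\#$ for all $t\in\mathbb{R}$. By the definition \eqref{Euler-Eq3} this is
$$\omega^\star(t)=\tfrac{1}{\varepsilon^2}\mathbf{1}_{D_t^\varepsilon}-\tfrac{1}{\varepsilon^2}\mathbf{1}_{\mathtt{S}(D_t^\varepsilon)},$$
a counter-rotating pair of concentrated patches. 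Because $\mathtt{S}$ is an isometry acting only on the space variable and $\boldsymbol{\omega}_\varepsilon$ is $\mathtt{T}(\lambda)$-periodic, $\omega^\star$ is $\mathtt{T}(\lambda)$-periodic as well, and its motion is non-rigid since that of $\boldsymbol{\omega}_\varepsilon$ is (the orbit $\varphi\mapsto p_\lambda(\varphi)$ is in general not a rigidly moving curve). I do not anticipate a real obstacle: the only delicate point is the uniform interior localization of $D_t^\varepsilon$ in the second step, which is what makes the reflection step of Proposition \ref{reflect} legitimate; the rest is a verbatim transcription. Finally I would remark that the argument iterates---whenever $\partial\mathbf{D}^\star$ again contains a non-trivial segment satisfying \eqref{sym-taou}, one more application of Proposition \ref{reflect} doubles the number of synchronized time-periodic patches---which is exactly the duplication mechanism used in the $m$-sector constructions.
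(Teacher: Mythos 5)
Your proposal is correct and follows exactly the route the paper intends: the corollary is stated as an immediate consequence of combining Theorem \ref{main thm} with Proposition \ref{reflect}, and your only additional step—the uniform interior localization of $D_t^\varepsilon$ guaranteeing $\boldsymbol{\omega}_\varepsilon\in L^\infty\big(\mathbb{R};L^\infty_c(\mathbf{D})\big)$—is precisely the point the paper covers in the remark following Proposition \ref{reflect} (the constructed time-periodic solutions are compactly supported by the construction scheme). Nothing further is needed.
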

	In the remainder of this section, we shall explore how to iterate the duplication process  when the boundary of the initial domain $\mathbf{D}$ contains more than one segment. 
	Let $\mathbf{D}$ be a domain such that its boundary $\partial\mathbf{D}$ contains two  non-trivial segments $[z_1,w_1]$ and  $[z_2,w_2]$ and
	\begin{align*}
		\forall j\in\{1,2\},\quad\partial\mathbf{D}\cap(z_j,w_j)=[z_j,w_j].
	\end{align*}
	Notice that the line $(z_1,w_1)$ will necessary intersect (at infinity when they are parallel) the line $(z_2,w_2)$ at a point $I$ outside the segment $[z_1,w_1].$ We denote by $\theta$ the geometric angle $\widehat{z_1 I z_2}$, which belongs to $[0,\pi).$ We denote by $\mathtt{S}_j$ the reflection with respect to the axis $(z_j,w_j)$. Denote 
	\begin{align*}
		\mathbf{D}_{\star,1}\triangleq\textnormal{Int}\big(\overline{\mathbf{D}}\cup\overline{\mathtt{S}_1\mathbf{D}}\big).
	\end{align*}
	Then, for $\theta\in[0,\tfrac\pi2)$ we still get that
	\begin{align*}
		\forall j\in\{1,2\},\quad\partial\mathbf{D}_{\star,1}\cap(z_2,w_2)=[z_2,w_2].
	\end{align*}
	Therefore, we can generate a new duplication by introducing
	\begin{align*}
		\mathbf{D}_{\star,2}\triangleq\textnormal{Int}\big(\overline{\mathbf{D}_{\star,1}}\cup\overline{\mathtt{S}_2\mathbf{D}_{\star,1}}\big).
	\end{align*}
	The new domain is  simply connected. Notice that we have excluded $\theta=\frac\pi2$ because in that case the resulting domain would not be simply connected and would contain a hole. However, we can reach $\theta=\tfrac{\pi}{2}$ if we allow  $w_1=w_2$ which permits the duplication of  rectangles. 

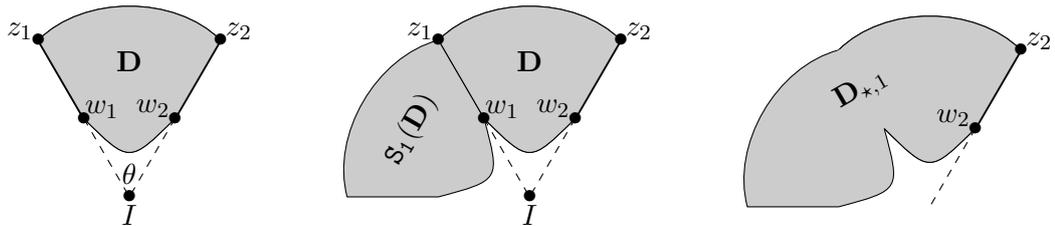
\begin{figure}[!h]
	\begin{center}
		\hspace{1.7cm}\begin{tikzpicture}[scale=1.2]
			\draw[line width=1pt] (-0.5,0.866)--(-1,1.732);
			\draw[line width=1pt] (0.5,0.866)--(1,1.732);
			\draw[dashed] (-0.5,0.866)--(0,0);
			\draw[dashed] (0.5,0.866)--(0,0);
			\node at (0,0) {$\bullet$};
			\node at (0,-0.2) {$I$};
			\draw (0.5,0.866) ..controls +(-0.5,-0.5) and +(0.5,-0.5).. (-0.5,0.866);
			\draw (1,1.732) ..controls +(-0.5,0.5) and +(0.5,0.5).. (-1,1.732);
			\draw [fill=black!20]
			(0.5,0.866) .. controls +(-0.5,-0.5) and +(0.5,-0.5) .. (-0.5,0.866) -- (-1,1.732)
			.. controls +(0.5,0.5) and +(-0.5,0.5) .. (1,1.732) -- (0.5,0.866);
			\node at (-0.5,0.866) {$\bullet$};
			\node at (0.5,0.866) {$\bullet$};
			\node at (-1,1.732) {$\bullet$};
			\node at (1,1.732) {$\bullet$};
			\node at (0,1.5) {$\mathbf{D}$};
			\node at (-0.3,0.966) {$w_1$};
			\node at (0.26,0.98) {$w_2$};
			\node at (-1.2,1.832) {$z_1$};
			\node at (1.2,1.832) {$z_2$};
			\node at (0,0.25) {$\theta$};
		\end{tikzpicture}
		\qquad
		\begin{tikzpicture}[scale=1.2]
			\draw[line width=1pt] (-0.5,0.866)--(-1,1.732);
			\draw[line width=1pt] (0.5,0.866)--(1,1.732);
			\draw[dashed] (-0.5,0.866)--(0,0);
			\draw[dashed] (0.5,0.866)--(0,0);
			\node at (0,0) {$\bullet$};
			\node at (0,-0.2) {$I$};
			\draw (0.5,0.866) ..controls +(-0.5,-0.5) and +(0.5,-0.5).. (-0.5,0.866);
			\draw (1,1.732) ..controls +(-0.5,0.5) and +(0.5,0.5).. (-1,1.732);
			\draw [fill=black!20]
			(0.5,0.866) .. controls +(-0.5,-0.5) and +(0.5,-0.5) .. (-0.5,0.866) -- (-1,1.732)
			.. controls +(0.5,0.5) and +(-0.5,0.5) .. (1,1.732) -- (0.5,0.866);
			\node at (0,1.5) {$\mathbf{D}$};
			\draw [rotate=60,fill=black!20]
			(0.5,0.866) .. controls +(-0.5,-0.5) and +(0.5,-0.5) .. (-0.5,0.866) -- (-1,1.732)
			.. controls +(0.5,0.5) and +(-0.5,0.5) .. (1,1.732) -- (0.5,0.866);
			\node at (-0.5,0.866) {$\bullet$};
			\node at (0.5,0.866) {$\bullet$};
			\node at (-1,1.732) {$\bullet$};
			\node at (1,1.732) {$\bullet$};
			\node[rotate=60] at (-1.3,0.75) {$\mathtt{S_1}(\mathbf{D})$};
			\node at (-0.3,0.966) {$w_1$};
			\node at (0.26,0.98) {$w_2$};
			\node at (-1.2,1.832) {$z_1$};
			\node at (1.2,1.832) {$z_2$};
		\end{tikzpicture}
		\qquad
		\begin{tikzpicture}[scale=1.2]
			\draw[line width=1pt] (0.5,0.866)--(1,1.732);
			\draw[dashed] (0.5,0.866)--(0,0);
			\draw (0.5,0.866) ..controls +(-0.5,-0.5) and +(0.5,-0.5).. (-0.5,0.866);
			\draw (1,1.732) ..controls +(-0.5,0.5) and +(0.5,0.5).. (-1,1.732);
			\draw [fill=black!20]
			(0.5,0.866) .. controls +(-0.5,-0.5) and +(0.5,-0.5) .. (-0.5,0.866) -- (-1,1.732)
			.. controls +(0.5,0.5) and +(-0.5,0.5) .. (1,1.732) -- (0.5,0.866);
			\draw [rotate=60,fill=black!20]
			(0.5,0.866) .. controls +(-0.5,-0.5) and +(0.5,-0.5) .. (-0.5,0.866) -- (-1,1.732)
			.. controls +(0.5,0.5) and +(-0.5,0.5) .. (1,1.732) -- (0.5,0.866);
			\node at (0.5,0.866) {$\bullet$};
			\node at (1,1.732) {$\bullet$};
			\node at (0.26,0.98) {$w_2$};
			\node at (1.2,1.832) {$z_2$};
			\draw[color=black!20,line width=1pt] (-0.5,0.866)--(-1,1.732);
			\node[rotate=30] at (-0.75,1.3) {$\mathbf{D}_{\star,1}$};
			\node at (0,-0.2) {$ $};
		\end{tikzpicture}
	\end{center}
	\caption{Duplication iteration : 1st step ($\theta=\tfrac{\pi}{3}$)}
\end{figure}

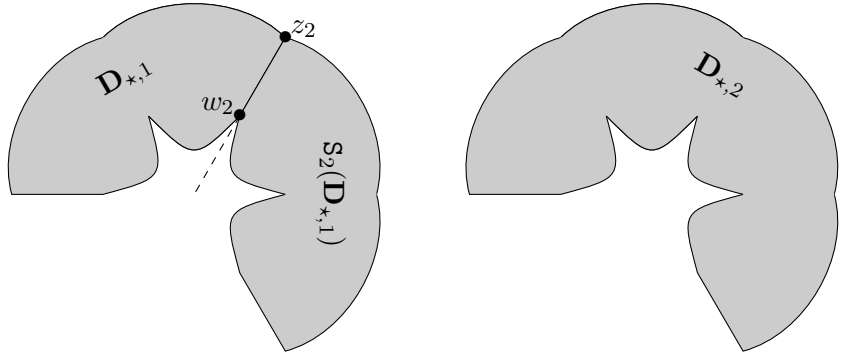
\begin{figure}
	\begin{center}
		\begin{tikzpicture}[scale=1.2]
			\draw[line width=1pt] (0.5,0.866)--(1,1.732);
			\draw[dashed] (0.5,0.866)--(0,0);
			\draw (0.5,0.866) ..controls +(-0.5,-0.5) and +(0.5,-0.5).. (-0.5,0.866);
			\draw (1,1.732) ..controls +(-0.5,0.5) and +(0.5,0.5).. (-1,1.732);
			\draw [fill=black!20]
			(0.5,0.866) .. controls +(-0.5,-0.5) and +(0.5,-0.5) .. (-0.5,0.866) -- (-1,1.732)
			.. controls +(0.5,0.5) and +(-0.5,0.5) .. (1,1.732) -- (0.5,0.866);
			\draw [rotate=-60,fill=black!20]
			(0.5,0.866) .. controls +(-0.5,-0.5) and +(0.5,-0.5) .. (-0.5,0.866) -- (-1,1.732)
			.. controls +(0.5,0.5) and +(-0.5,0.5) .. (1,1.732) -- (0.5,0.866);
			\draw [rotate=-120,fill=black!20]
			(0.5,0.866) .. controls +(-0.5,-0.5) and +(0.5,-0.5) .. (-0.5,0.866) -- (-1,1.732)
			.. controls +(0.5,0.5) and +(-0.5,0.5) .. (1,1.732) -- (0.5,0.866);
			\draw [rotate=60,fill=black!20]
			(0.5,0.866) .. controls +(-0.5,-0.5) and +(0.5,-0.5) .. (-0.5,0.866) -- (-1,1.732)
			.. controls +(0.5,0.5) and +(-0.5,0.5) .. (1,1.732) -- (0.5,0.866);
			\node at (0.5,0.866) {$\bullet$};
			\node at (1,1.732) {$\bullet$};
			\node at (0.26,0.98) {$w_2$};
			\node at (1.2,1.832) {$z_2$};
			\draw[color=black!20,line width=1pt] (-0.5,0.866)--(-1,1.732);
			\draw[color=black!20,line width=1pt] (1,0)--(2,0);
			\node[rotate=30] at (-0.75,1.3) {$\mathbf{D}_{\star,1}$};
			\node[rotate=-90] at (1.5,0) {$\mathtt{S}_{2}(\mathbf{D}_{\star,1})$};
		\end{tikzpicture}\qquad
		\begin{tikzpicture}[scale=1.2]
			\draw (0.5,0.866) ..controls +(-0.5,-0.5) and +(0.5,-0.5).. (-0.5,0.866);
			\draw (1,1.732) ..controls +(-0.5,0.5) and +(0.5,0.5).. (-1,1.732);
			\draw [fill=black!20]
			(0.5,0.866) .. controls +(-0.5,-0.5) and +(0.5,-0.5) .. (-0.5,0.866) -- (-1,1.732)
			.. controls +(0.5,0.5) and +(-0.5,0.5) .. (1,1.732) -- (0.5,0.866);
			\draw [rotate=-60,fill=black!20]
			(0.5,0.866) .. controls +(-0.5,-0.5) and +(0.5,-0.5) .. (-0.5,0.866) -- (-1,1.732)
			.. controls +(0.5,0.5) and +(-0.5,0.5) .. (1,1.732) -- (0.5,0.866);
			\draw [rotate=-120,fill=black!20]
			(0.5,0.866) .. controls +(-0.5,-0.5) and +(0.5,-0.5) .. (-0.5,0.866) -- (-1,1.732)
			.. controls +(0.5,0.5) and +(-0.5,0.5) .. (1,1.732) -- (0.5,0.866);
			\draw [rotate=60,fill=black!20]
			(0.5,0.866) .. controls +(-0.5,-0.5) and +(0.5,-0.5) .. (-0.5,0.866) -- (-1,1.732)
			.. controls +(0.5,0.5) and +(-0.5,0.5) .. (1,1.732) -- (0.5,0.866);
			\draw[color=black!20,line width=1pt] (-0.5,0.866)--(-1,1.732);
			\draw[color=black!20,line width=1pt] (0.5,0.866)--(1,1.732);
			\draw[color=black!20,line width=1pt] (1,0)--(2,0);
			\node[rotate=-30] at (0.75,1.3) {$\mathbf{D}_{\star,2}$};
		\end{tikzpicture}
	\end{center}
	\caption{Duplication iteration : 2nd step}
\end{figure}

	\newpage
	\subsection{Application to vortex synchronization}
	Our aim here is to apply the duplication method explained in the Section \ref{Sec-duplic} to study the desingularization of a highly symmetric system of $2^m$ point vortices. Due to the symmetry, the problem reduces to the evolution a single point vortex within a primitive fundamental  cell, in the application it will be a rectangle or an  $m$-sector.
	\subsubsection{Time periodic vortex choreography within rectangular  cells}
	Let us consider a rectangular fundamental domain $\mathbf{R}$ with length $L$ and width $l.$ Assume that
	$$\tfrac{l}{L}\not\in\{G_n,\,n\in\mathbb{N}^*\},$$
	where the sequence $(G_n)_{n\in\mathbb{N}^*}$ has been introduced in \eqref{def Gn}. Therefore, Proposition \ref{prop rectangle} applies and by virtue of Corollary \ref{convex-dom} {\it most} of the point vortex periodic orbits can be desingularized into  periodic vortex patch solutions. Given  such dynamics, one can apply and iterate  the duplication method  to construct a simply connected domain formed by $2^m$ cells (copies of $\mathbf{R}$) within  which time  periodic multi-vortex patch motion occurs. Therefore one obtains  time periodic symmetric choreography of $2^m$ patches as illustrated in Figure \ref{rectangle choreography}.
	
	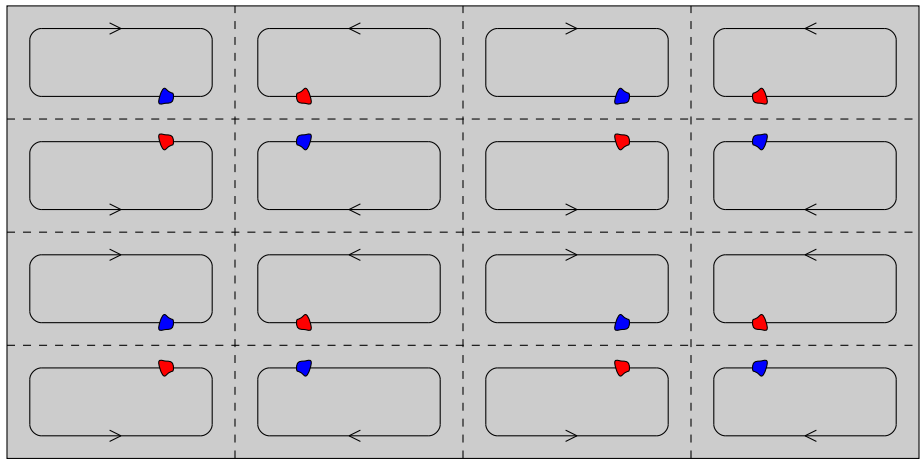
\begin{figure}[!h]
		\begin{center}
			\begin{tikzpicture}[scale=1.5]
				\draw [fill=black!20]
				(-4,-2)--(-4,2)--(4,2)--(4,-2)--cycle;
				\draw [dashed] (-2,-2)--(-2,2);
				\draw [dashed] (0,-2)--(0,2);
				\draw [dashed] (2,-2)--(2,2);
				\draw [dashed] (-4,-1)--(4,-1);
				\draw [dashed] (-4,0)--(4,0);
				\draw [dashed] (-4,1)--(4,1);
				\draw (0.3,0.2)--(1.7,0.2);
				\draw (1.8,0.3)--(1.8,0.7);
				\draw (1.7,0.8)--(0.3,0.8);
				\draw (0.2,0.7)--(0.2,0.3);
				\draw (0.9,0.25)--(1,0.2);
				\draw (0.9,0.15)--(1,0.2);
				\draw (1.7,0.2) .. controls +(0.05,0) and +(0,-0.05) .. (1.8,0.3);
				\draw (1.8,0.7) .. controls +(0,0.05) and +(0.05,0) .. (1.7,0.8);
				\draw (0.3,0.8) .. controls +(-0.05,0) and +(0,0.05) .. (0.2,0.7);
				\draw (0.2,0.3) .. controls +(0,-0.05) and +(-0.05,0) .. (0.3,0.2);
				\filldraw[draw=black,fill=red] (1.4,0.74) .. controls +(0.05,0.05) and +(0,-0.05) .. (1.46,0.8) .. controls +(0,0.05) and +(0.04,0).. (1.4,0.86).. controls +(-0.04,0) and +(-0.03,0.1) ..(1.34,0.8).. controls +(0.03,-0.1) and +(-0.01,-0.01)..(1.4,0.74);
				%
				%
				\draw[shift={(0,-2)}] (0.3,0.2)--(1.7,0.2);
				\draw[shift={(0,-2)}] (1.8,0.3)--(1.8,0.7);
				\draw[shift={(0,-2)}] (1.7,0.8)--(0.3,0.8);
				\draw[shift={(0,-2)}] (0.2,0.7)--(0.2,0.3);
				\draw[shift={(0,-2)}] (0.9,0.25)--(1,0.2);
				\draw[shift={(0,-2)}] (0.9,0.15)--(1,0.2);
				\draw[shift={(0,-2)}] (1.7,0.2) .. controls +(0.05,0) and +(0,-0.05) .. (1.8,0.3);
				\draw[shift={(0,-2)}] (1.8,0.7) .. controls +(0,0.05) and +(0.05,0) .. (1.7,0.8);
				\draw[shift={(0,-2)}] (0.3,0.8) .. controls +(-0.05,0) and +(0,0.05) .. (0.2,0.7);
				\draw[shift={(0,-2)}] (0.2,0.3) .. controls +(0,-0.05) and +(-0.05,0) .. (0.3,0.2);
				\filldraw[shift={(0,-2)},draw=black,fill=red] (1.4,0.74) .. controls +(0.05,0.05) and +(0,-0.05) .. (1.46,0.8) .. controls +(0,0.05) and +(0.04,0).. (1.4,0.86).. controls +(-0.04,0) and +(-0.03,0.1) ..(1.34,0.8).. controls +(0.03,-0.1) and +(-0.01,-0.01)..(1.4,0.74);
				%
				%
				\draw[shift={(-4,0)}] (0.3,0.2)--(1.7,0.2);
				\draw[shift={(-4,0)}] (1.8,0.3)--(1.8,0.7);
				\draw[shift={(-4,0)}] (1.7,0.8)--(0.3,0.8);
				\draw[shift={(-4,0)}] (0.2,0.7)--(0.2,0.3);
				\draw[shift={(-4,0)}] (0.9,0.25)--(1,0.2);
				\draw[shift={(-4,0)}] (0.9,0.15)--(1,0.2);
				\draw[shift={(-4,0)}] (1.7,0.2) .. controls +(0.05,0) and +(0,-0.05) .. (1.8,0.3);
				\draw[shift={(-4,0)}] (1.8,0.7) .. controls +(0,0.05) and +(0.05,0) .. (1.7,0.8);
				\draw[shift={(-4,0)}] (0.3,0.8) .. controls +(-0.05,0) and +(0,0.05) .. (0.2,0.7);
				\draw[shift={(-4,0)}] (0.2,0.3) .. controls +(0,-0.05) and +(-0.05,0) .. (0.3,0.2);
				\filldraw[shift={(-4,0)},draw=black,fill=red] (1.4,0.74) .. controls +(0.05,0.05) and +(0,-0.05) .. (1.46,0.8) .. controls +(0,0.05) and +(0.04,0).. (1.4,0.86).. controls +(-0.04,0) and +(-0.03,0.1) ..(1.34,0.8).. controls +(0.03,-0.1) and +(-0.01,-0.01)..(1.4,0.74);
				%
				%
				\draw[shift={(-4,-2)}] (0.3,0.2)--(1.7,0.2);
				\draw[shift={(-4,-2)}] (1.8,0.3)--(1.8,0.7);
				\draw[shift={(-4,-2)}] (1.7,0.8)--(0.3,0.8);
				\draw[shift={(-4,-2)}] (0.2,0.7)--(0.2,0.3);
				\draw[shift={(-4,-2)}] (0.9,0.25)--(1,0.2);
				\draw[shift={(-4,-2)}] (0.9,0.15)--(1,0.2);
				\draw[shift={(-4,-2)}] (1.7,0.2) .. controls +(0.05,0) and +(0,-0.05) .. (1.8,0.3);
				\draw[shift={(-4,-2)}] (1.8,0.7) .. controls +(0,0.05) and +(0.05,0) .. (1.7,0.8);
				\draw[shift={(-4,-2)}] (0.3,0.8) .. controls +(-0.05,0) and +(0,0.05) .. (0.2,0.7);
				\draw[shift={(-4,-2)}] (0.2,0.3) .. controls +(0,-0.05) and +(-0.05,0) .. (0.3,0.2);
				\filldraw[shift={(-4,-2)},draw=black,fill=red] (1.4,0.74) .. controls +(0.05,0.05) and +(0,-0.05) .. (1.46,0.8) .. controls +(0,0.05) and +(0.04,0).. (1.4,0.86).. controls +(-0.04,0) and +(-0.03,0.1) ..(1.34,0.8).. controls +(0.03,-0.1) and +(-0.01,-0.01)..(1.4,0.74);
				%
				%
				\begin{scope}[yscale=1,xscale=-1]
					\draw (0.3,0.2)--(1.7,0.2);
					\draw (1.8,0.3)--(1.8,0.7);
					\draw (1.7,0.8)--(0.3,0.8);
					\draw (0.2,0.7)--(0.2,0.3);
					\draw (0.9,0.25)--(1,0.2);
					\draw (0.9,0.15)--(1,0.2);
					\draw (1.7,0.2) .. controls +(0.05,0) and +(0,-0.05) .. (1.8,0.3);
					\draw (1.8,0.7) .. controls +(0,0.05) and +(0.05,0) .. (1.7,0.8);
					\draw (0.3,0.8) .. controls +(-0.05,0) and +(0,0.05) .. (0.2,0.7);
					\draw (0.2,0.3) .. controls +(0,-0.05) and +(-0.05,0) .. (0.3,0.2);
					\filldraw[draw=black, fill=blue] (1.4,0.74) .. controls +(0.05,0.05) and +(0,-0.05) .. (1.46,0.8) .. controls +(0,0.05) and +(0.04,0).. (1.4,0.86).. controls +(-0.04,0) and +(-0.03,0.1) ..(1.34,0.8).. controls +(0.03,-0.1) and +(-0.01,-0.01)..(1.4,0.74);
				\end{scope}
				\begin{scope}[shift={(4,0)},yscale=1,xscale=-1]
					\draw (0.3,0.2)--(1.7,0.2);
					\draw (1.8,0.3)--(1.8,0.7);
					\draw (1.7,0.8)--(0.3,0.8);
					\draw (0.2,0.7)--(0.2,0.3);
					\draw (0.9,0.25)--(1,0.2);
					\draw (0.9,0.15)--(1,0.2);
					\draw (1.7,0.2) .. controls +(0.05,0) and +(0,-0.05) .. (1.8,0.3);
					\draw (1.8,0.7) .. controls +(0,0.05) and +(0.05,0) .. (1.7,0.8);
					\draw (0.3,0.8) .. controls +(-0.05,0) and +(0,0.05) .. (0.2,0.7);
					\draw (0.2,0.3) .. controls +(0,-0.05) and +(-0.05,0) .. (0.3,0.2);
					\filldraw[draw=black,fill=blue] (1.4,0.74) .. controls +(0.05,0.05) and +(0,-0.05) .. (1.46,0.8) .. controls +(0,0.05) and +(0.04,0).. (1.4,0.86).. controls +(-0.04,0) and +(-0.03,0.1) ..(1.34,0.8).. controls +(0.03,-0.1) and +(-0.01,-0.01)..(1.4,0.74);
				\end{scope}
				\begin{scope}[shift={(0,-2)},yscale=1,xscale=-1]
					\draw (0.3,0.2)--(1.7,0.2);
					\draw (1.8,0.3)--(1.8,0.7);
					\draw (1.7,0.8)--(0.3,0.8);
					\draw (0.2,0.7)--(0.2,0.3);
					\draw (0.9,0.25)--(1,0.2);
					\draw (0.9,0.15)--(1,0.2);
					\draw (1.7,0.2) .. controls +(0.05,0) and +(0,-0.05) .. (1.8,0.3);
					\draw (1.8,0.7) .. controls +(0,0.05) and +(0.05,0) .. (1.7,0.8);
					\draw (0.3,0.8) .. controls +(-0.05,0) and +(0,0.05) .. (0.2,0.7);
					\draw (0.2,0.3) .. controls +(0,-0.05) and +(-0.05,0) .. (0.3,0.2);
					\filldraw[draw=black,fill=blue] (1.4,0.74) .. controls +(0.05,0.05) and +(0,-0.05) .. (1.46,0.8) .. controls +(0,0.05) and +(0.04,0).. (1.4,0.86).. controls +(-0.04,0) and +(-0.03,0.1) ..(1.34,0.8).. controls +(0.03,-0.1) and +(-0.01,-0.01)..(1.4,0.74);
				\end{scope}
				\begin{scope}[shift={(4,-2)},yscale=1,xscale=-1]
					\draw (0.3,0.2)--(1.7,0.2);
					\draw (1.8,0.3)--(1.8,0.7);
					\draw (1.7,0.8)--(0.3,0.8);
					\draw (0.2,0.7)--(0.2,0.3);
					\draw (0.9,0.25)--(1,0.2);
					\draw (0.9,0.15)--(1,0.2);
					\draw (1.7,0.2) .. controls +(0.05,0) and +(0,-0.05) .. (1.8,0.3);
					\draw (1.8,0.7) .. controls +(0,0.05) and +(0.05,0) .. (1.7,0.8);
					\draw (0.3,0.8) .. controls +(-0.05,0) and +(0,0.05) .. (0.2,0.7);
					\draw (0.2,0.3) .. controls +(0,-0.05) and +(-0.05,0) .. (0.3,0.2);
					\filldraw[draw=black,fill=blue] (1.4,0.74) .. controls +(0.05,0.05) and +(0,-0.05) .. (1.46,0.8) .. controls +(0,0.05) and +(0.04,0).. (1.4,0.86).. controls +(-0.04,0) and +(-0.03,0.1) ..(1.34,0.8).. controls +(0.03,-0.1) and +(-0.01,-0.01)..(1.4,0.74);
				\end{scope}
				\begin{scope}[shift={(-4,0)},yscale=-1,xscale=1]
					\draw (0.3,0.2)--(1.7,0.2);
					\draw (1.8,0.3)--(1.8,0.7);
					\draw (1.7,0.8)--(0.3,0.8);
					\draw (0.2,0.7)--(0.2,0.3);
					\draw (0.9,0.25)--(1,0.2);
					\draw (0.9,0.15)--(1,0.2);
					\draw (1.7,0.2) .. controls +(0.05,0) and +(0,-0.05) .. (1.8,0.3);
					\draw (1.8,0.7) .. controls +(0,0.05) and +(0.05,0) .. (1.7,0.8);
					\draw (0.3,0.8) .. controls +(-0.05,0) and +(0,0.05) .. (0.2,0.7);
					\draw (0.2,0.3) .. controls +(0,-0.05) and +(-0.05,0) .. (0.3,0.2);
					\filldraw[draw=black,fill=blue] (1.4,0.74) .. controls +(0.05,0.05) and +(0,-0.05) .. (1.46,0.8) .. controls +(0,0.05) and +(0.04,0).. (1.4,0.86).. controls +(-0.04,0) and +(-0.03,0.1) ..(1.34,0.8).. controls +(0.03,-0.1) and +(-0.01,-0.01)..(1.4,0.74);
				\end{scope}
				\begin{scope}[shift={(0,2)},yscale=-1,xscale=1]
					\draw (0.3,0.2)--(1.7,0.2);
					\draw (1.8,0.3)--(1.8,0.7);
					\draw (1.7,0.8)--(0.3,0.8);
					\draw (0.2,0.7)--(0.2,0.3);
					\draw (0.9,0.25)--(1,0.2);
					\draw (0.9,0.15)--(1,0.2);
					\draw (1.7,0.2) .. controls +(0.05,0) and +(0,-0.05) .. (1.8,0.3);
					\draw (1.8,0.7) .. controls +(0,0.05) and +(0.05,0) .. (1.7,0.8);
					\draw (0.3,0.8) .. controls +(-0.05,0) and +(0,0.05) .. (0.2,0.7);
					\draw (0.2,0.3) .. controls +(0,-0.05) and +(-0.05,0) .. (0.3,0.2);
					\filldraw[draw=black,fill=blue] (1.4,0.74) .. controls +(0.05,0.05) and +(0,-0.05) .. (1.46,0.8) .. controls +(0,0.05) and +(0.04,0).. (1.4,0.86).. controls +(-0.04,0) and +(-0.03,0.1) ..(1.34,0.8).. controls +(0.03,-0.1) and +(-0.01,-0.01)..(1.4,0.74);
				\end{scope}
				\begin{scope}[shift={(-4,2)},yscale=-1,xscale=1]
					\draw (0.3,0.2)--(1.7,0.2);
					\draw (1.8,0.3)--(1.8,0.7);
					\draw (1.7,0.8)--(0.3,0.8);
					\draw (0.2,0.7)--(0.2,0.3);
					\draw (0.9,0.25)--(1,0.2);
					\draw (0.9,0.15)--(1,0.2);
					\draw (1.7,0.2) .. controls +(0.05,0) and +(0,-0.05) .. (1.8,0.3);
					\draw (1.8,0.7) .. controls +(0,0.05) and +(0.05,0) .. (1.7,0.8);
					\draw (0.3,0.8) .. controls +(-0.05,0) and +(0,0.05) .. (0.2,0.7);
					\draw (0.2,0.3) .. controls +(0,-0.05) and +(-0.05,0) .. (0.3,0.2);
					\filldraw[draw=black,fill=blue] (1.4,0.74) .. controls +(0.05,0.05) and +(0,-0.05) .. (1.46,0.8) .. controls +(0,0.05) and +(0.04,0).. (1.4,0.86).. controls +(-0.04,0) and +(-0.03,0.1) ..(1.34,0.8).. controls +(0.03,-0.1) and +(-0.01,-0.01)..(1.4,0.74);
				\end{scope}
				\begin{scope}[yscale=-1,xscale=1]
					\draw (0.3,0.2)--(1.7,0.2);
					\draw (1.8,0.3)--(1.8,0.7);
					\draw (1.7,0.8)--(0.3,0.8);
					\draw (0.2,0.7)--(0.2,0.3);
					\draw (0.9,0.25)--(1,0.2);
					\draw (0.9,0.15)--(1,0.2);
					\draw (1.7,0.2) .. controls +(0.05,0) and +(0,-0.05) .. (1.8,0.3);
					\draw (1.8,0.7) .. controls +(0,0.05) and +(0.05,0) .. (1.7,0.8);
					\draw (0.3,0.8) .. controls +(-0.05,0) and +(0,0.05) .. (0.2,0.7);
					\draw (0.2,0.3) .. controls +(0,-0.05) and +(-0.05,0) .. (0.3,0.2);
					\filldraw[draw=black,fill=blue] (1.4,0.74) .. controls +(0.05,0.05) and +(0,-0.05) .. (1.46,0.8) .. controls +(0,0.05) and +(0.04,0).. (1.4,0.86).. controls +(-0.04,0) and +(-0.03,0.1) ..(1.34,0.8).. controls +(0.03,-0.1) and +(-0.01,-0.01)..(1.4,0.74);
				\end{scope}
				\begin{scope}[shift={(0,2)},yscale=-1,xscale=-1]
					\draw (0.3,0.2)--(1.7,0.2);
					\draw (1.8,0.3)--(1.8,0.7);
					\draw (1.7,0.8)--(0.3,0.8);
					\draw (0.2,0.7)--(0.2,0.3);
					\draw (0.9,0.25)--(1,0.2);
					\draw (0.9,0.15)--(1,0.2);
					\draw (1.7,0.2) .. controls +(0.05,0) and +(0,-0.05) .. (1.8,0.3);
					\draw (1.8,0.7) .. controls +(0,0.05) and +(0.05,0) .. (1.7,0.8);
					\draw (0.3,0.8) .. controls +(-0.05,0) and +(0,0.05) .. (0.2,0.7);
					\draw (0.2,0.3) .. controls +(0,-0.05) and +(-0.05,0) .. (0.3,0.2);
					\filldraw[draw=black,fill=red] (1.4,0.74) .. controls +(0.05,0.05) and +(0,-0.05) .. (1.46,0.8) .. controls +(0,0.05) and +(0.04,0).. (1.4,0.86).. controls +(-0.04,0) and +(-0.03,0.1) ..(1.34,0.8).. controls +(0.03,-0.1) and +(-0.01,-0.01)..(1.4,0.74);
				\end{scope}
				\begin{scope}[shift={(4,2)},yscale=-1,xscale=-1]
					\draw (0.3,0.2)--(1.7,0.2);
					\draw (1.8,0.3)--(1.8,0.7);
					\draw (1.7,0.8)--(0.3,0.8);
					\draw (0.2,0.7)--(0.2,0.3);
					\draw (0.9,0.25)--(1,0.2);
					\draw (0.9,0.15)--(1,0.2);
					\draw (1.7,0.2) .. controls +(0.05,0) and +(0,-0.05) .. (1.8,0.3);
					\draw (1.8,0.7) .. controls +(0,0.05) and +(0.05,0) .. (1.7,0.8);
					\draw (0.3,0.8) .. controls +(-0.05,0) and +(0,0.05) .. (0.2,0.7);
					\draw (0.2,0.3) .. controls +(0,-0.05) and +(-0.05,0) .. (0.3,0.2);
					\filldraw[draw=black,fill=red] (1.4,0.74) .. controls +(0.05,0.05) and +(0,-0.05) .. (1.46,0.8) .. controls +(0,0.05) and +(0.04,0).. (1.4,0.86).. controls +(-0.04,0) and +(-0.03,0.1) ..(1.34,0.8).. controls +(0.03,-0.1) and +(-0.01,-0.01)..(1.4,0.74);
				\end{scope}
				\begin{scope}[shift={(4,0)},yscale=-1,xscale=-1]
					\draw (0.3,0.2)--(1.7,0.2);
					\draw (1.8,0.3)--(1.8,0.7);
					\draw (1.7,0.8)--(0.3,0.8);
					\draw (0.2,0.7)--(0.2,0.3);
					\draw (0.9,0.25)--(1,0.2);
					\draw (0.9,0.15)--(1,0.2);
					\draw (1.7,0.2) .. controls +(0.05,0) and +(0,-0.05) .. (1.8,0.3);
					\draw (1.8,0.7) .. controls +(0,0.05) and +(0.05,0) .. (1.7,0.8);
					\draw (0.3,0.8) .. controls +(-0.05,0) and +(0,0.05) .. (0.2,0.7);
					\draw (0.2,0.3) .. controls +(0,-0.05) and +(-0.05,0) .. (0.3,0.2);
					\filldraw[draw=black,fill=red] (1.4,0.74) .. controls +(0.05,0.05) and +(0,-0.05) .. (1.46,0.8) .. controls +(0,0.05) and +(0.04,0).. (1.4,0.86).. controls +(-0.04,0) and +(-0.03,0.1) ..(1.34,0.8).. controls +(0.03,-0.1) and +(-0.01,-0.01)..(1.4,0.74);
				\end{scope}
				\begin{scope}[yscale=-1,xscale=-1]
					\draw (0.3,0.2)--(1.7,0.2);
					\draw (1.8,0.3)--(1.8,0.7);
					\draw (1.7,0.8)--(0.3,0.8);
					\draw (0.2,0.7)--(0.2,0.3);
					\draw (0.9,0.25)--(1,0.2);
					\draw (0.9,0.15)--(1,0.2);
					\draw (1.7,0.2) .. controls +(0.05,0) and +(0,-0.05) .. (1.8,0.3);
					\draw (1.8,0.7) .. controls +(0,0.05) and +(0.05,0) .. (1.7,0.8);
					\draw (0.3,0.8) .. controls +(-0.05,0) and +(0,0.05) .. (0.2,0.7);
					\draw (0.2,0.3) .. controls +(0,-0.05) and +(-0.05,0) .. (0.3,0.2);
					\filldraw[draw=black,fill=red] (1.4,0.74) .. controls +(0.05,0.05) and +(0,-0.05) .. (1.46,0.8) .. controls +(0,0.05) and +(0.04,0).. (1.4,0.86).. controls +(-0.04,0) and +(-0.03,0.1) ..(1.34,0.8).. controls +(0.03,-0.1) and +(-0.01,-0.01)..(1.4,0.74);
				\end{scope}
			\end{tikzpicture}
		\end{center}
		\caption{Rectangle cells choreography}\label{rectangle choreography}
	\end{figure}
	
	\subsubsection{Duplication of $m$-sectors: application to the unit disc}\label{sec m sectors}
	Here, we aim to investigate time-periodic multi-vortex motion in sectors, similar to our previous discussion on rectangles.
	Let  $\mathbf{D}$ be a simply connected bounded  domain  centered in $0$ such that there exists  $m\in\mathbb{N}^*$ with 
	\begin{equation}\label{def bfD mfold}
		\mathbf{D}=\textnormal{Int}\left(\bigcup_{k=0}^{2m-1}e^{\frac{\ii k\pi}{m}}\overline{\mathbf{D}_m}\right),
	\end{equation}
	where $\mathbf{D}_m$ is the fundamental cell given by the $m$-sector
	\begin{equation}\label{def m-sectorD}
		\mathbf{D}_m\triangleq\big\{z\in\mathbf{D}\quad\textnormal{s.t.}\quad0<\arg(z)<\tfrac{\pi}{m}\big\}.
	\end{equation}
	As examples, we can take discs, ellipses with $m=2$ or  regular polygons  of $2^m$ sides.
	
		
Using the image method, we can establish a connection between the Green functions of the cell  $\mathbf{D}_m$ and the full domain  $\mathbf{D}.$
The result is as follows, with a proof analogous to the initial part of the Lemma. \ref{lemma-crit}.
	\begin{proposition}
		The Green function of the domain $\mathbf{D}_m$ defined through \eqref{def m-sectorD} takes the form 
		\begin{align*}
			G_{\mathbf{D}_m}(z,w)=\sum_{k=0}^{m-1}\Big[G_{\mathbf{D}}\big(z,\omega_m^kw\big)-G_{\mathbf{D}}\big(z,\omega_m^k\overline{w}\big)\Big], \quad \omega_{m}\triangleq e^{\frac{2\ii\pi}{m}}.
		\end{align*}
	\end{proposition}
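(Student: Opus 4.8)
The plan is to verify that, for each fixed $w\in\mathbf{D}_m$, the function
\[
\widetilde{G}(z,w)\triangleq\sum_{k=0}^{m-1}\Big[G_{\mathbf{D}}\big(z,\omega_m^kw\big)-G_{\mathbf{D}}\big(z,\omega_m^k\overline{w}\big)\Big],\qquad z\in\mathbf{D}_m,
\]
solves the defining elliptic problem \eqref{Eulereq} posed on the cell $\mathbf{D}_m$, and then to conclude by uniqueness of the Green function, following the same pattern as in the proof of Proposition \ref{reflect} and in the first part of Lemma \ref{lemma-crit}. The geometric input, read off from \eqref{def bfD mfold}--\eqref{def m-sectorD}, is that $\mathbf{D}$ is invariant under the two reflections $\mathtt{S}_0: z\mapsto\overline{z}$ and $\mathtt{S}_1: z\mapsto\omega_m\overline{z}$ across the two radial edges of $\mathbf{D}_m$, hence under the dihedral group $\mathscr{R}\triangleq\langle\mathtt{S}_0,\mathtt{S}_1\rangle$ of order $2m$, whose elements are precisely the rotations $z\mapsto\omega_m^kz$ (with $\det=+1$) and the reflections $z\mapsto\omega_m^k\overline{z}$ (with $\det=-1$), $k\in\llbracket0,m-1\rrbracket$, and the $2m$ tiles $g(\mathbf{D}_m)$, $g\in\mathscr{R}$, tile $\mathbf{D}$. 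Thus $\widetilde{G}(z,w)=\sum_{g\in\mathscr{R}}\det(g)\,G_{\mathbf{D}}(z,g(w))$ is the classical method-of-images ansatz for a wedge of aperture $\pi/m$, transplanted to the symmetric bounded domain $\mathbf{D}$; the family stays finite precisely because $\pi/(\pi/m)=m\in\mathbb{N}^*$.

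First I would check the source term. For $w$ in the interior of $\mathbf{D}_m$, every image point $\omega_m^kw$ (with $k\neq0$) and $\omega_m^k\overline{w}$ lies in $\mathbf{D}$ but outside $\mathbf{D}_m$ — it belongs to the interior of another tile — so $z\mapsto\widetilde{G}(z,w)-G_{\mathbf{D}}(z,w)$ is harmonic on $\mathbf{D}_m$, while $G_{\mathbf{D}}(\cdot,w)$ alone carries the Dirac mass; hence $\Delta_z\widetilde{G}(z,w)=2\pi\delta_w(z)$ in $\mathbf{D}_m$. Next I would check the homogeneous Dirichlet condition. By the uniqueness argument behind \eqref{sym-green}, $G_{\mathbf{D}}$ inherits the symmetries of $\mathbf{D}$, i.e. $G_{\mathbf{D}}(\sigma z,\sigma\zeta)=G_{\mathbf{D}}(z,\zeta)$ for every $\sigma\in\mathscr{R}$. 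Write $\partial\mathbf{D}_m=\Gamma\cup I_0\cup I_1$, with $\Gamma\subset\partial\mathbf{D}$ the outer arc and $I_j$ the edge pointwise fixed by $\mathtt{S}_j$. On $\Gamma$ one has $G_{\mathbf{D}}(z,\cdot)\equiv0$, so every summand vanishes. On $I_0$, where $\mathtt{S}_0 z=z$, applying $\sigma=\mathtt{S}_0$ yields $G_{\mathbf{D}}(z,\omega_m^kw)=G_{\mathbf{D}}(z,\overline{\omega_m^kw})=G_{\mathbf{D}}(z,\omega_m^{-k}\overline{w})$, and the reindexing $k\mapsto -k\ (\mathrm{mod}\ m)$ turns $\sum_kG_{\mathbf{D}}(z,\omega_m^kw)$ into $\sum_kG_{\mathbf{D}}(z,\omega_m^k\overline{w})$, so $\widetilde{G}(z,w)=0$. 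On $I_1$, where $\mathtt{S}_1 z=z$, applying $\sigma=\mathtt{S}_1$ yields $G_{\mathbf{D}}(z,\omega_m^kw)=G_{\mathbf{D}}(z,\omega_m^{1-k}\overline{w})$, and the analogous reindexing again identifies the two sums, so $\widetilde{G}(z,w)=0$. Hence $\widetilde{G}(\cdot,w)$ vanishes on $\partial\mathbf{D}_m$, and by uniqueness of the solution of \eqref{Eulereq} — understood in the weak sense, so that the corners of the sector are harmless, exactly as in the proof of Proposition \ref{reflect} — we obtain $\widetilde{G}=G_{\mathbf{D}_m}$.

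The main point requiring care is the combinatorics of the images: one must verify that the $2m$ charges $\{\omega_m^kw\}_{k}\cup\{\omega_m^k\overline{w}\}_{k}$ form exactly the $\mathscr{R}$-orbit of $w$, one charge per tile (this holds for $w$ in the open cell, the degenerate coincidences occurring only when $w$ lies on $I_0$ or $I_1$, in which case the offending $+$ and $-$ terms cancel — a consistency check, since $G_{\mathbf{D}_m}$ should vanish there), and that the reflected images must enter with the minus sign in order for the Dirichlet trace to cancel on $I_0$ and $I_1$ simultaneously. Once the $\mathscr{R}$-action and the sign rule $\det(g)=\pm1$ are organised this way, both steps above are routine and run in parallel with the argument already carried out for Proposition \ref{reflect}; I do not expect any further obstacle.
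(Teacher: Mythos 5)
Your proof is correct and follows essentially the same route as the paper: the paper establishes the disc case in Lemma \ref{lemma-crit} by the method of images together with uniqueness of the Green function and declares the general case analogous, which is exactly your verification that the image sum carries the single Dirac mass in $\mathbf{D}_m$ and vanishes on $\partial\mathbf{D}_m$. The only organizational difference is that the paper phrases the image method ``upstairs'' (characterizing the antisymmetric extension in $\mathbf{D}$ through an elliptic problem with $2m$ signed Dirac masses), whereas you verify the candidate directly on the cell using the dihedral invariance of $G_{\mathbf{D}}$ and the reindexing cancellations on the two radial edges — the same argument, with the details the paper leaves implicit spelled out.
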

	Now, we shall focus on  the particular case of the unit disc $\mathbb{D}.$
	Define the sets
	\begin{align}\label{dom+}\mathbb{D}_+\triangleq\big\{z\in\mathbb{D}\quad\textnormal{s.t.}\quad\textnormal{Im}(z)>0\big\},\qquad  \mathbb{H}_+\triangleq\big\{z\in\mathbb{C}\quad\textnormal{s.t.}\quad\textnormal{Im}(z)>0\big\}
	\end{align}
	and for $m\in\mathbb{N}^*$ we define the unit $m$-sector
	\begin{equation}\label{def sector Dm}
		\mathbb{D}_m\triangleq\big\{z\in\mathbb{D}\quad\textnormal{s.t.}\quad 0<\arg(z)<\tfrac{\pi}{m}\big\}.
	\end{equation}
	 For the specific case of the disc, the computations are explicit.
	\begin{lemma}\label{lemma-crit}
		The Green function of the unit $m$-sector $\mathbb{D}_m$ defined in \eqref{def sector Dm} takes the form 
		$$G_{\mathbb{D}_m}(z,w)=\sum_{k=0}^{m-1}\Big[G_{\mathbb{D}}\big(z,\omega_m^kw\big)-G_{\mathbb{D}}\big(z,\omega_m^k\overline{w}\big)\Big].$$
		Its associated Robin function writes
		$$\mathcal{R}_{\mathbb{D}_m}(z)=-\log\left({1-|z|^2}\right)-\log\left|\tfrac{z-\overline{z}}{1-z^2}\right|+\sum_{k=1}^{m-1}\Big[G_{\mathbb{D}}\big(z,\omega_m^kz\big)-G_{\mathbb{D}}\big(z,\omega_m^k\overline{z}\big)\Big]$$
		and admits a unique critical point $\xi_m\in\mathbb{D}_m$ given by
		\begin{equation}\label{def crit xim}
			\xi_m=t_me^{\frac{\ii\pi}{2m}},\qquad t_m\triangleq\left(2m+\sqrt{4m^2+1}\right)^{-\frac{1}{2m}}.
		\end{equation}
	\end{lemma}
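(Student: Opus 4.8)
The key device is the conformal map $\phi:z\mapsto z^m$, which sends the unit $m$-sector $\mathbb{D}_m$ of \eqref{def sector Dm} biholomorphically onto the upper half-disc $\mathbb{D}_+$ of \eqref{dom+}, so that by conformal invariance of the Green function $G_{\mathbb{D}_m}(z,w)=G_{\mathbb{D}_+}(z^m,w^m)$. For the half-disc I would invoke Schwarz reflection across the diameter $(-1,1)\subset\partial\mathbb{D}_+$: the candidate $(z,w)\mapsto G_{\mathbb{D}}(z,w)-G_{\mathbb{D}}(z,\overline w)$ is harmonic in $z$ off the diagonal, carries the correct logarithmic singularity at $z=w$ — because $\overline w$ lies in the lower half-disc whenever $w\in\mathbb{D}_+$, so $z\mapsto G_{\mathbb{D}}(z,\overline w)$ is smooth near $z=w$ — and vanishes on $\partial\mathbb{D}_+$: on the circular part since $G_{\mathbb{D}}$ does, and on $(-1,1)$ because $G_{\mathbb{D}}(z,\overline w)=G_{\mathbb{D}}(\overline z,\overline w)=G_{\mathbb{D}}(z,w)$ there, using the invariance of $G_{\mathbb{D}}$ under complex conjugation. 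By uniqueness of the Green function, $G_{\mathbb{D}_+}(z,w)=G_{\mathbb{D}}(z,w)-G_{\mathbb{D}}(z,\overline w)$. It then remains to unfold the $m$-th powers via $z^m-w^m=\prod_{k=0}^{m-1}(z-\omega_m^kw)$ and $1-(z\overline w)^m=\prod_{k=0}^{m-1}(1-\omega_m^kz\overline w)$; combined with $G_{\mathbb{D}}(z,w)=\log|z-w|-\log|1-z\overline w|$ these give $G_{\mathbb{D}}(z^m,w^m)=\sum_{k=0}^{m-1}G_{\mathbb{D}}(z,\omega_m^kw)$ and, after $w\mapsto\overline w$, the analogous identity for the reflected term, yielding the stated formula for $G_{\mathbb{D}_m}$.

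For the Robin function I would pass to the diagonal limit $w\to z$ in $G_{\mathbb{D}_m}(z,w)-\log|z-w|$, see \eqref{Robin}. Only the $k=0$ term $G_{\mathbb{D}}(z,w)$ of the first sum is singular on the diagonal, contributing $\mathcal{R}_{\mathbb{D}}(z)=-\log(1-|z|^2)$; the $k=0$ reflected term converges to $-G_{\mathbb{D}}(z,\overline z)=-\log\bigl|\tfrac{z-\overline z}{1-z^2}\bigr|$; and the terms with $1\leqslant k\leqslant m-1$ are continuous at $w=z$ and reproduce $\sum_{k=1}^{m-1}[G_{\mathbb{D}}(z,\omega_m^kz)-G_{\mathbb{D}}(z,\omega_m^k\overline z)]$. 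This is precisely the claimed expression for $\mathcal{R}_{\mathbb{D}_m}$.

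Existence of a critical point follows from the boundary blow-up \eqref{boundary behaviour Robin}. Since a circular sector of aperture $\pi/m\leqslant\pi$ is convex, the results of \cite{Cafar,Gust2} ensure this critical point is unique (and $\mathcal{R}_{\mathbb{D}_m}$ strictly convex). To locate it I would use the Euclidean reflection $\sigma:z\mapsto e^{\ii\pi/m}\overline z$ through the bisecting ray $\{t\,e^{\ii\pi/(2m)}:0<t<1\}$, which maps $\mathbb{D}_m$ onto itself; as $\mathcal{R}_{\mathbb{D}_m}\circ\sigma=\mathcal{R}_{\mathbb{D}_m}$ and critical points are unique, the critical point is fixed by $\sigma$, hence lies on that ray. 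Writing $\xi_m=t\,e^{\ii\pi/(2m)}$ and using the transformation rule $\mathcal{R}_{\mathbb{D}_m}(z)=\log m+(m-1)\log|z|+\mathcal{R}_{\mathbb{D}_+}(z^m)$ under $\phi$, together with $\mathcal{R}_{\mathbb{D}_+}(\zeta)=-\log(1-|\zeta|^2)-\log\bigl|\tfrac{\zeta-\overline\zeta}{1-\zeta^2}\bigr|$ evaluated at $\zeta=z^m=\ii t^m$, the restriction of $\mathcal{R}_{\mathbb{D}_m}$ to the ray equals, with $s=t^m$,
\[
f(s)=-\log(1-s^2)+\log(1+s^2)-\tfrac1m\log s+\text{const}.
\]
The equation $f'(s)=0$, after multiplication by $s$ and the substitution $u=s^2$, becomes $\tfrac{4u}{1-u^2}=\tfrac1m$, i.e. $u^2+4mu-1=0$, whose unique root in $(0,1)$ is $u=\bigl(2m+\sqrt{4m^2+1}\bigr)^{-1}$; hence $t=s^{1/m}=\bigl(2m+\sqrt{4m^2+1}\bigr)^{-1/(2m)}=t_m$, which is \eqref{def crit xim}.

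The delicate points are purely bookkeeping: verifying that the reflected combination reproduces exactly the singularity and the boundary values of $G_{\mathbb{D}_+}$ so that uniqueness of the Green function applies, and that no summand other than $k=0$ contributes to the diagonal limit. Once the symmetry reduction to the bisecting ray is in place, pinpointing $t_m$ is a one-line computation; the only substantive input is the convexity of the sector, used to guarantee uniqueness before the explicit value is identified.
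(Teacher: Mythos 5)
Your proposal is correct, and it reaches the lemma by a route that differs from the paper's in two of the three steps. For the Green function, the paper argues directly by the image method: it writes down the elliptic problem in the full disc with the $2m$ image sources $\sum_k\delta_{\omega_m^k w}-\sum_k\delta_{\omega_m^k\overline w}$ and invokes uniqueness, whereas you transfer to the half-disc by the conformal map $z\mapsto z^m$, obtain $G_{\mathbb{D}_+}(z,w)=G_{\mathbb{D}}(z,w)-G_{\mathbb{D}}(z,\overline w)$ by a single Schwarz reflection, and then unfold via $z^m-w^m=\prod_k(z-\omega_m^kw)$ and $1-(z\overline w)^m=\prod_k(1-\omega_m^kz\overline w)$ (with the harmless reindexing $\overline{\omega_m^k}=\omega_m^{-k}$); both are standard and give the same formula, yours trading the PDE uniqueness argument for an algebraic factorization. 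The Robin function step (diagonal limit, isolating the $k=0$ singular term) is the same in both. The real divergence is the critical point: the paper differentiates the sum formula, posits the symmetric ansatz $\xi_m=t_m\omega_{4m}$, and runs a fairly long computation with geometric series and root-of-unity identities to land on $t_m^{4m}+4mt_m^{2m}-1=0$; you instead justify the restriction to the bisecting ray cleanly (invariance of $\mathcal{R}_{\mathbb{D}_m}$ under the reflection $\sigma$ plus uniqueness from convexity, so the unique critical point is $\sigma$-fixed), and then use the transformation rule $\mathcal{R}_{\mathbb{D}_m}(z)=\mathcal{R}_{\mathbb{D}_+}(z^m)+\log m+(m-1)\log|z|$ to reduce to the one-variable equation $u^2+4mu-1=0$, $u=s^2=t^{2m}$, which is the same quadratic the paper obtains. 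Your reduction is more elementary and makes rigorous the symmetry claim that the paper only states as "by symmetry, we expect"; the paper's computation, on the other hand, works entirely from the explicit sum formula and needs no Robin transformation rule. The only point worth making explicit in a write-up is the small logical step that, since the full gradient vanishes at $\xi_m$ and $\xi_m$ lies on the ray, the tangential derivative $f'$ vanishes there, so the unique root of $f'=0$ in $(0,1)$ indeed identifies $\xi_m$.
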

	
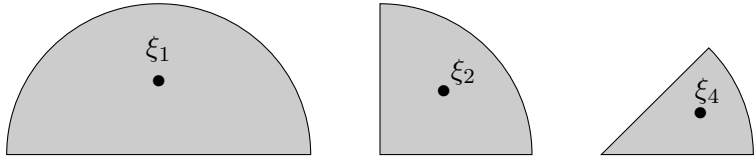
\begin{figure}[!h]
	\begin{center}
		\begin{tikzpicture}[scale=2]
			\filldraw[draw=black,fill=black!20] (1,0) arc(0:180:1) -- (0,0) --cycle;
			\node at (0,0.486) {$\bullet$};
			\node at (0,0.7) {$\xi_{1}$};
		\end{tikzpicture}
		\qquad
		\begin{tikzpicture}[scale=2]
			\filldraw[draw=black,fill=black!20] (1,0) arc(0:90:1) -- (0,0) --cycle;
			\node at (0.419,0.419) {$\bullet$};
			\node at (0.55,0.55) {$\xi_{2}$};
		\end{tikzpicture}
		\qquad
		\begin{tikzpicture}[scale=2]
			\filldraw[draw=black,fill=black!20] (1,0) arc(0:45:1) -- (0,0) --cycle;
			\node at (0.653,0.270) {$\bullet$};
			\node at (0.7,0.42) {$\xi_{4}$};
		\end{tikzpicture}
	\end{center}
	\caption{Position of the critical point $\xi_{m}$ in the circular sector $D_m$ for $m\in\{1,2,4\}.$}
\end{figure}

	\begin{proof}
		By the image method \cite{K53}, finding the Green function in $\mathbb{D}_{m}$ amounts to solve in $\mathbb{D}$ the equation
		\begin{eqnarray}   \label{Eulereqm}	        
			\begin{cases}
				\Delta_z G_{\mathbb{D}_m}(z,w)=\displaystyle2\pi\sum_{k=0}^{m-1}\delta_{\omega_m^kw}(z)-2\pi\sum_{k=0}^{m-1}\delta_{\omega_m^k\overline{w}}(z), &\text{ in $\mathbb{D}$}, \\
				G_{\mathbb{D}_m}(z,w)=0,&\text{ for all $z\in\partial \mathbb{D}$}. 
			\end{cases}
		\end{eqnarray}
		By a uniqueness argument, we conclude that
		$$G_{\mathbb{D}_m}(z,w)=\sum_{k=0}^{m-1}\Big[G_{\mathbb{D}}\big(z,\omega_m^kw\big)-G_{\mathbb{D}}\big(z,\omega_m^k\overline{w}\big)\Big].$$
		By isolating the term $k=0$, we can write
		\begin{align*}
			G_{\mathbb{D}_m}(z,w)=\log\left|\tfrac{z-w}{1-z\overline{w}}\right|-\log\left|\tfrac{z-\overline{w}}{1-zw}\right|+\sum_{k=1}^{m-1}\Big[G_{\mathbb{D}}\big(z,\omega_m^kw\big)-G_{\mathbb{D}}\big(z,\omega_m^k\overline{w}\big)\Big].
		\end{align*}
		Thus Robin Function takes the form
		\begin{align*}
			\mathcal{R}_{\mathbb{D}_m}(z)=-\log\left({1-|z|^2}\right)-\log\left|\tfrac{z-\overline{z}}{1-z^2}\right|+\sum_{k=1}^{m-1}\Big[G_{\mathbb{D}}\big(z,\omega_m^kz\big)-G_{\mathbb{D}}\big(z,\omega_m^k\overline{z}\big)\Big].
		\end{align*}
		Straightforward computations yield for any $k\in\llbracket1,m-1\rrbracket,$
		\begin{align*}
			\partial_z\Big(G_{\mathbb{D}}\big(z,\omega_m^kz\big)\Big)=\frac{1}{2z}+\overline{z}\,\hbox{Re}\left(\tfrac{\omega_m^k}{1-\omega_m^k |z|^2}\right)
		\end{align*}
		and for any $k\in\llbracket0,m-1\rrbracket,$
		\begin{align*}
			\partial_z\Big(G_{\mathbb{D}}\big(z,\omega_m^k\overline{z}\big)\Big)=\tfrac{1}{z-\omega_m^k\overline{z}}+\tfrac{\omega_m^{-k}{z}}{1-\omega_m^{-k}z^2}\cdot
		\end{align*}
		Hence
		\begin{align*}
			\partial_z\mathcal{R}_{\mathbb{D}_m}(z)=\frac{m-1}{2z}+\sum_{k=0}^{m-1}\overline{z}\,\hbox{Re}\left(\tfrac{\omega_m^k}{1-\omega_m^k|z|^2}\right)+\tfrac{1}{\omega_m^k\overline{z}-z}+\tfrac{\omega_m^{-k}{z}}{\omega_m^{-k}z^2-1}\cdot
		\end{align*}
		The critical point $\xi_m\in\mathbb{D}_m$ is unique since $\mathbb{D}_m$ is convex and it solves the equation
		\begin{equation}\label{eq crit xim}
			\frac{m-1}{2\xi_m}+\sum_{k=0}^{m-1}\overline{\xi_m}\,\hbox{Re}\left(\tfrac{\omega_m^k}{1-\omega_m^k|\xi_m|^2}\right)+\tfrac{1}{\omega_m^k\overline{\xi_m}-\xi_m}+\tfrac{\omega_m^{-k}{\xi_m}}{\omega_m^{-k}\xi_m^2-1}=0.
		\end{equation}
		By symmetry, we expect 
		\begin{equation}\label{def xim}
			\xi_m=t_m\,\omega_{4m},\qquad t_m\in(0,1).
		\end{equation}
		Notice that
		\begin{equation}\label{relations omgm}
			\omega_{4m}=e^{\frac{\ii\pi}{2m}},\qquad\omega_{4m}^{4}=\omega_m,\qquad\omega_{4m}^{4m}=1.
		\end{equation}
		Inserting \eqref{def xim} into \eqref{eq crit xim} and using \eqref{relations omgm} yields
		\begin{align*}
			\frac{m-1}{2t_m}+\sum_{k=0}^{m-1} t_m\,\hbox{Re}\left(\tfrac{\omega_{4m}^{4k}}{1-\omega_{4m}^{4k} t_m^2} \right)+\tfrac{1}{t_m(\omega_{4m}^{4k-2}-1)}+\tfrac{\omega_{4m}^{2-4k}{t_m}}{\omega_{4m}^{2-4k}t_m^2-1}=0. \end{align*}
		In addition, the third identity in \eqref{relations omgm} implies
		\begin{align*}
			\sum_{k=0}^{m-1} \hbox{Re}\left(\tfrac{\omega_{4m}^{4k}}{1-\omega_{4m}^{4k}t_m^2}\right)&=\tfrac12\sum_{k=0}^{m-1} \tfrac{\omega_{4m}^{4k}}{1-\omega_{4m}^{4k}t_m^2} +\tfrac12\sum_{k=0}^{m-1} \tfrac{\omega_{4m}^{-4k}}{1-\omega_{4m}^{-4k}t_m^2}\\
			&=\sum_{k=0}^{m-1} \tfrac{\omega_{4m}^{4k}}{1-\omega_{4m}^{4k} t_m^2}
		\end{align*}
		and
		\begin{align*}
			\sum_{k=0}^{m-1} \tfrac{\omega_{4m}^{2-4k}{t}}{\omega_{4m}^{2-4k}t_m^2-1}=\sum_{k=0}^{m-1}\tfrac{\omega_{4m}^{2+4k}t_m}{\omega_{4m}^{2+4k}t_m^2-1}\cdot
		\end{align*}
		Therefore,
		\begin{align*}
			\left(\frac{m-1}{2}+\sum_{k=0}^{m-1}\tfrac{1}{\omega_{4m}^{4k-2}-1}\right)+\sum_{k=0}^{m-1} \tfrac{\omega_{4m}^{4k}t_m^2}{1-\omega_{4m}^{4k}t_m^2}+\tfrac{\omega_{4m}^{2+4k}t_m^2}{\omega_{4m}^{2+4k}t_m^2-1}=0,
		\end{align*}
		which is equivalent to
		\begin{align*}
			\left(\frac{m-1}{2}+\sum_{k=0}^{m-1}\tfrac{1}{\omega_{4m}^{4k-2}-1}\right)+\sum_{k=0}^{m-1}\tfrac{1}{1-\omega_{4m}^{4k}t_m^2}+\tfrac{1}{\omega_{4m}^{4k+2}t_m^2-1}=0.
		\end{align*}
		Writing
		\begin{align*}
			\tfrac{1}{1-\omega_{4m}^{4k} t_m^2}&=\sum_{j=0}^\infty \omega_{4m}^{4kj} t_m^{2j}\\
			&=\sum_{\ell=0}^\infty\sum_{j=\ell m}^{(\ell+1) m-1}\omega_{4m}^{4kj} t_m^{2j}\\
			&=\sum_{\ell=0}^\infty t_m^{2\ell m}\sum_{j=0}^{m-1} \omega_{4m}^{4kj} t_m^{2j}\\
			&=\frac{P_k(t_m)}{1-t_m^{2m}},
		\end{align*}
		with
		\begin{align*}
			P_k(x)\triangleq\sum_{j=0}^{ m-1} \omega_{4m}^{4kj}x^{2j}.
		\end{align*}
		Then
		\begin{align*}
			\tfrac{1}{\omega_{4m}^{4k+2}t_m^2-1}=\tfrac{P_k(\omega_{4m} t_m)}{\omega_{4m}^{2m}t_m^{2m}-1}=-\tfrac{P_k(\omega_{4m} t_m)}{t_m^{2m}+1}\cdot
		\end{align*}
		In addition, notice that from \eqref{relations omgm}, we have
		\begin{align*}
			P_k\left(\omega_{4m}^{-1} \right)&=\sum_{j=0}^{m-1}\left(\omega_{4m}^{4k-2}\right)^j\\
			&=\frac{1-\omega_{4m}^{(4k-2)m}}{1-\omega_{4m}^{4k-2}}\\
			&=\frac{2}{1-\omega_{4m}^{4k-2}}\cdot
		\end{align*}
		Thus
		\begin{align*}
			\left(\tfrac{m-1}{2}-\tfrac12\sum_{k=0}^{m-1}P_k\left(\omega_{4m}^{-1} \right)\right)-\sum_{k=0}^{m-1} \tfrac{P_k(t_m)}{t^{2m}-1}+\tfrac{P_k(\omega_{4m} t_m)}{t^{2m}+1}=0. \end{align*}
		By exchanging finite sums, we can check that for any $x\in\mathbb{C}$,
		\begin{align*}
			\sum_{k=0}^{m-1}{P_k(x)}&=\sum_{k=0}^{m-1}\sum_{j=0}^{m-1}\omega_{4m}^{4kj}x^{2j}\\
			&=m+\sum_{j=1}^{m-1}x^{2j}\sum_{k=0}^{m-1}\omega_{4m}^{4kj}\\
			&=m,
		\end{align*}
		since from \eqref{relations omgm} we have for any $j\in\llbracket 1,m-1\rrbracket,$
		$$\sum_{k=0}^{m-1}\omega_{4m}^{4kj}=\frac{1-\omega_{4m}^{4jm}}{1-\omega_{4m}^{4j}}=0.$$
		It follows that
		\begin{align*}
			\tfrac12+\tfrac{m}{t_m^{2m}-1}+\tfrac{m}{t_m^{2m}+1}=0.
		\end{align*}
		Thus, $t_m$ is solution to the equation
		\begin{align*}
			t_m^{4m}+4mt_m^{2m}-1=0.
		\end{align*}
		Hence, under the constraint $t_m\in(0,1)$, we find
		$$t_m=\left(2m+\sqrt{4m^2+1}\right)^{-\frac{1}{2m}}.$$
		This achieves the proof of Lemma \ref{lemma-crit}.
	\end{proof}
	Now, we prove the following result.
	\begin{proposition}\label{disc-chor}
		Let $m\in\mathbb{N}^*$ and consider $\Phi_m:\mathbb{D}_m\to\mathbb{D}$ the unique Riemann mapping such that
		$$\Phi_m(\xi_m)=0\qquad\Phi_m^\prime(\xi_m)>0,$$
		where $\xi_m\in\mathbb{D}_m$ is the unique critical point of the Robin function $\mathcal{R}_{\mathbb{D}_{m}}$ given in \eqref{def crit xim}. Then, the  \mbox{condition \eqref{cond1 thm crit}} is satisfied, with  $F=\Phi^{-1}_m$,  and Corollary $\ref{convex-dom}$ holds true. As a consequence, for any $w\in \mathbb{D}_{m}$,
		the initial configuration
		$$
		\omega_0=\displaystyle\pi\sum_{k=0}^{m-1}\delta_{w\omega_m^k}(z)-\pi\sum_{k=0}^{m-1}\delta_{\overline{w}\omega_m^k}(z)
		$$
		generates time-periodic solution to the point vortex system, which  can be desingularized with time periodic vortex patches.
	\end{proposition}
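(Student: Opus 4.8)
The plan is to reduce Proposition \ref{disc-chor} to an application of Corollary \ref{convex-dom} followed by the iterated duplication scheme developed in Section \ref{Sec-duplic}. First I would record that $\mathbb{D}_m$ is a bounded convex domain (it is the intersection of the disc with a convex sector of opening $\tfrac{\pi}{m}\leqslant\pi$), so by \cite{Cafar,Gust2} its Robin function has a unique critical point; Lemma \ref{lemma-crit} identifies it as $\xi_m=t_me^{\frac{\ii\pi}{2m}}$ with $t_m=(2m+\sqrt{4m^2+1})^{-\frac{1}{2m}}$. Hence assumption 1 of Corollary \ref{convex-dom} is automatic, and the only thing to verify is the non-degeneracy condition \eqref{cond1 thm crit}, namely that $\big|\tfrac{F^{(3)}(0)}{F'(0)}\big|\not\in\{2\sqrt{1-\tfrac1{n^2}},\ n\in\mathbb{N}^*\cup\{\infty\}\}$ for $F=\Phi_m^{-1}$, equivalently $|S(F)(0)|\notin\{2\sqrt{1-\tfrac1{n^2}}\}$ since $F''(0)=0$ at a critical point (via Grakhov's equation \eqref{Grakhov}).

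The main work is computing $S(F)(0)$, or equivalently $S(\Phi_m)(\xi_m)$ (the Schwarzian is a cocycle under composition with Möbius maps, and $\Phi_m$ maps onto the disc), in closed form. The strategy I would follow is to use the explicit formula $\mathcal{R}_{\mathbb{D}_m}(z)=-\log(1-|z|^2)-\log\big|\tfrac{z-\overline z}{1-z^2}\big|+\sum_{k=1}^{m-1}[G_{\mathbb{D}}(z,\omega_m^kz)-G_{\mathbb{D}}(z,\omega_m^k\overline z)]$ from Lemma \ref{lemma-crit}, together with the identity \eqref{Robin1N} relating $\partial_z\mathcal{R}_{\mathbf{D}}$ to $\tfrac12\tfrac{\Phi''}{\Phi'}+\tfrac{\Phi'\overline\Phi}{1-|\Phi|^2}$. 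Differentiating this relation once more at $z=\xi_m$ (where $\Phi_m(\xi_m)=0$, $\Phi_m''(\xi_m)=0$) isolates $\tfrac{\Phi_m^{(3)}(\xi_m)}{\Phi_m'(\xi_m)}$ in terms of the second derivatives $\partial_z^2\mathcal{R}_{\mathbb{D}_m}(\xi_m)$ and $\partial_z\partial_{\overline z}\mathcal{R}_{\mathbb{D}_m}(\xi_m)$; recalling also $\partial_z\partial_{\overline z}\mathcal{R}_{\mathbb{D}_m}=e^{2\mathcal{R}_{\mathbb{D}_m}}$ (one quarter of the Liouville equation \eqref{Conf-R1}), and $|\Phi_m'(\xi_m)|^{-2}=r_{\mathbb{D}_m}^2(\xi_m)=e^{-2\mathcal{R}_{\mathbb{D}_m}(\xi_m)}$, one gets $S(F)(0)=\tfrac{\Phi_m^{(3)}(\xi_m)}{\Phi_m'(\xi_m)}\cdot(\Phi_m'(\xi_m))^{-2}$ as an explicit function of $m$. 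The explicit derivatives of the $G_{\mathbb{D}}(z,\omega_m^k z)$ and $G_{\mathbb{D}}(z,\omega_m^k\overline z)$ terms are already written down in the proof of Lemma \ref{lemma-crit}; differentiating them once more and evaluating at $z=\xi_m=t_m e^{\ii\pi/2m}$ reduces, by the same finite-sum manipulations used there (geometric sums of roots of unity, the power-series trick $\tfrac{1}{1-\omega^{4k}t^2}=\tfrac{P_k(t)}{1-t^{2m}}$), to a rational expression in $t_m^{2m}$, which by $t_m^{4m}+4mt_m^{2m}-1=0$ becomes an algebraic number depending only on $m$.

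The hard part will be carrying out this second-derivative computation cleanly and obtaining $S(F)(0)$ in a form transparent enough to compare against the countable exceptional set $\{2\sqrt{1-\tfrac1{n^2}}\}$. I expect the final value to be of the shape $|S(F)(0)|=c(m)$ for an explicit algebraic $c(m)$ (for $m=2$ the stated critical point $\xi_0=\tfrac1{\sqrt2}(4+\sqrt{17})^{-1/4}(1+\ii)$ matches $\xi_m$ with $t_2=(4+\sqrt{17})^{-1/4}$, giving a consistency check), and then one must argue $c(m)\notin\{2\sqrt{1-\tfrac1{n^2}}\}$. Since each $2\sqrt{1-\tfrac1{n^2}}<2$ and the values $c(m)$ are specific algebraic numbers, this is a finite verification for each $m$ once the formula is in hand; alternatively, if $c(m)\geqslant 2$ for all $m$ the condition holds trivially (note $2\sqrt{1-\tfrac1{n^2}}<2$ always, and the $n=\infty$ value is $2$), which I would expect to be the cleanest route. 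Once \eqref{cond1 thm crit} is established, Corollary \ref{convex-dom} yields that most periodic orbits in $\mathbb{D}_m$ desingularize into time-periodic patches; then iterating Proposition \ref{reflect} across the $2m$ reflection axes bounding the sectors $e^{\ii k\pi/m}\overline{\mathbb{D}_m}$ — each step being legitimate because the new domain $\mathbf{D}_{\star,j}$ retains a boundary segment along the next axis, exactly as in the $m$-sector discussion of Section \ref{sec m sectors} — assembles the single-cell patch motion into a synchronized $2m$-patch time-periodic solution in $\mathbb{D}$, with the initial configuration $\omega_0=\pi\sum_{k=0}^{m-1}\delta_{w\omega_m^k}-\pi\sum_{k=0}^{m-1}\delta_{\overline w\omega_m^k}$ as its point-vortex limit, which is the claimed statement.
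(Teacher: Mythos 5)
Your overall skeleton (convexity of $\mathbb{D}_m$ $\Rightarrow$ unique critical point, reduce to checking \eqref{cond1 thm crit} at $\xi_m$, then conclude with Corollary \ref{convex-dom} and iterated duplication as in Section \ref{Sec-duplic}) matches the paper, and your idea of extracting $S(F)(0)$ from second derivatives of the explicit Robin function of Lemma \ref{lemma-crit} is a plausible alternative to what the paper actually does, which is to build the conformal map explicitly as $\Phi_m=T_m\circ\phi_m$ with $\phi_m(z)=\tfrac{z^{2m}+2\ii z^m+1}{z^{2m}-2\ii z^m+1}$ and compute $\Phi_m^{(3)}(\xi_m)/(\Phi_m'(\xi_m))^3$ by Fa\`a di Bruno. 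However, you stop exactly at the decisive point: you never produce an argument showing that the resulting value avoids the exceptional set, and both closing strategies you propose fail. The ``cleanest route'' you hope for, namely $c(m)\geqslant 2$, is impossible: $\mathbb{D}_m$ is convex, so the Nehari bound \eqref{cond F crit pnt} holds and gives $|S(F)(0)|\leqslant 2$; the only way to have $c(m)\geqslant 2$ would be equality, and $2$ is itself in the forbidden set (the $n=\infty$ value), so that route can never verify the condition. The alternative, ``finite verification for each $m$'', is not a proof of the proposition, which is stated for every $m\in\mathbb{N}^*$: checking the algebraic number $c(m)$ against the set $\{2\sqrt{1-1/n^2}\}$ case by case cannot be completed for infinitely many $m$, and in any event you do not have the closed form in hand to perform even one such check.

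What actually closes the argument in the paper is an irrationality observation that is uniform in $m$ and $n$: the explicit computation yields
\begin{equation*}
\Big|\tfrac{F^{(3)}(0)}{F'(0)}\Big|^2=\mathtt{A}_m\sqrt{4m^2+1}+\mathtt{B}_m,\qquad \mathtt{A}_m\in\mathbb{Q}^*,\quad \mathtt{B}_m\in\mathbb{Q},
\end{equation*}
and since $4m^2+1$ is never a perfect square for $m\geqslant1$, the left-hand side is irrational, whereas $4\big(1-\tfrac{1}{n^2}\big)$ is rational for every $n\in\mathbb{N}^*\cup\{\infty\}$; hence equality is impossible and \eqref{cond1 thm crit} holds for all $m$ at once. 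Your Robin-function route could in principle reproduce the same algebraic number (your reduction via $t_m^{4m}+4mt_m^{2m}-1=0$ does point toward expressions in $\sqrt{4m^2+1}$), but without carrying that computation to a closed form and without the rational-plus-irrational structure argument, the proof is incomplete at its key step. The final assembly (desingularize in one sector for energy levels in the Cantor set, then reflect $2m$ times to get the synchronized configuration with point-vortex limit $\omega_0$) is fine and is essentially how the paper concludes.
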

	\begin{proof}
		It is known that the following mappings are conformal
		$$\phi_+:\begin{array}[t]{rcl}
			\mathbb{D}_+ & \to & \mathbb{H}_+,\vspace{0.05cm}\\
			z & \mapsto & -\tfrac12(z+\tfrac{1}{z})
		\end{array}\qquad \phi_0:\begin{array}[t]{rcl}
			\mathbb{H}_+ & \to & \mathbb{D}\\
			z & \mapsto & \tfrac{z-\ii}{z+\ii},
		\end{array}$$
		where we refer to \eqref{dom+} for the definition of $\mathbb{D}_+ $ and $\mathbb{H}_+.$
		Therefore $\phi\triangleq \phi_0\circ\phi_+: \mathbb{D}_+\to \mathbb{D}$ is conformal. One can check that
		$$\phi(z)=\tfrac{z^2+2i z+1}{z^2-2i z+1}\cdot$$
		On the other hand, the mapping $z\in \mathbb{D}_m\mapsto z^m \in \mathbb{D}_+$ is conformal. Therefore the mapping 
		$$\phi_m:\begin{array}[t]{rcl}
			\mathbb{D}_m & \to & \mathbb{D}\\
			z & \mapsto & \phi(z^m)=\tfrac{z^{2m}+2i z^m+1}{z^{2m}-2i z^m+1}
		\end{array}$$
		is conformal. We want to construct the unique conformal mapping $\Phi_m:\mathbb{D}_m\mapsto\mathbb{D}$ such that
		$$\Phi_m(\xi_m)=0,\qquad\Phi_m'(\xi_m)>0,$$
		where $\xi_m$ is the critical point defined in \eqref{def crit xim}. Now, set
		$$a_m\triangleq\phi_m(\xi_m)=\tfrac{t_m^{2m}+2 t_m^m-1}{t_m^{2m}-2 t_m^m-1}\in(-1,1)$$
		and denote by $T_m:\mathbb{D}\to \mathbb{D}$ the conformal map (Moebius transform)
		$$T_m(z)=e^{i\theta_m}\frac{z-a_m}{1-a_m\, z}$$
		for some $\theta_m\in\mathbb{R}$ that will be fixed later.
		Then set
		$$\Phi_m\triangleq T_m\circ\phi_m.$$
		From direct computations we show that
		$$\Phi_m(\xi_m)=0$$
		and, from \eqref{def crit xim},
		$$\phi_m^\prime(\xi_m)=\alpha_m e^{i(\pi-\frac{\pi}{2m})},\qquad\alpha_m\triangleq\frac{4mt_m^{m-1}(1+t_{m}^{2m})}{(1+2t_m^m-t_m^{2m})^2}>0.$$
		Therefore
		$$\Phi'_m(\xi_m)=\frac{\alpha_m}{1-a_m^2}e^{i(\theta+\pi-\frac{\pi}{2m})}.$$
		By taking $\theta_m\triangleq\frac{\pi}{2m}-\pi$, we infer that
		$$\Phi'_m(\xi_m)=\frac{\alpha_m}{1-a_m^2}>0.$$
		By Fa\`{a} di Bruno formula, we have that
		$$\Phi_m'=\phi_m'\cdot T_m'\circ\phi_m,\qquad\Phi_m^{(3)}=(\phi_m')^3\cdot T_m^{(3)}\circ\phi_m+3\phi_m'\phi_m''\cdot T_m''\circ\phi_m+\phi_m^{(3)}\cdot T'_m\circ\phi_m.$$
		Helped by Mathematica, we find that
		$$\left| \tfrac{F^{(3)}(0)}{F^\prime(0)}\right|^2=\left|\tfrac{\Phi_m^{(3)}(\xi_m)}{\big(\Phi_m'(\xi_m)\big)^3}\right|^2=\mathtt{A}_m\sqrt{4m^2+1}+\mathtt{B}_m,$$
		where
		$$\mathtt{A}_m\triangleq\frac{-8}{m^8}\big(2m^6+9m^4+6m^2+1\big)\in\mathbb{Q}^*,\qquad\mathtt{B}_m\triangleq\frac{4}{m^8}\big(m^8+24m^6+38m^4+8m^2+2\big)\in\mathbb{Q}.$$
		Notice that for any integer $m\geqslant1$,
		$$\sqrt{4m^2+1}\not\in\mathbb{Q}.$$
		Hence,
		$$\forall n\in\mathbb{N}^*,\quad \mathtt{A}_m\sqrt{4m^2+1}+\mathtt{B}_m\neq4\left(1-\tfrac{1}{n^2}\right).$$
		This proves the condition \eqref{cond1 thm crit}, which concludes the proof of Proposition \ref{disc-chor}.
	\end{proof}

	\section{Proof of Theorem \ref{main thm}}
The goal of this section is to prove Theorem \ref{main thm} by employing  a modified Nash-Moser scheme tin conjunction  with KAM tools.  Specifically, we will utilize the framework developed in \cite{HHM23} to address the leapfrogging phenomenon in the context of Euler equations involving two symmetric pairs of vortices. Our approach begins with an introduction to the functional setting, followed by a detailed presentation of the asymptotic structure of the linearized operator. Subsequently, we will construct a suitable approximate solution to the nonlinear equation \eqref{functional scrG}. Next, we rescale the functional $\mathbf{G}$, defined via \eqref{functional scrG}, and transform the new linear operator into a Fourier multiplier, with a regularizing remainder. This transformation enables us to identify an approximate right inverse of the linearized operator, which is a critical component of the Nash-Moser scheme.
%
\subsection{Functional setting}
We denote by $\mathbb{T}^2$ the two-dimensional torus and any  $h:\mathbb{T}^{2}\to \mathbb{C}$ in the class $L^2(\mathbb{T}^2,\mathbb{C})$ admits the Fourier expansion,
\begin{align*}
	h=\sum_{(l,j)\in\mathbb{Z}^{2 }}h_{l,j}\,\mathbf{e}_{l,j},\qquad \mathbf{e}_{l,j}(\varphi,\theta)\triangleq e^{\ii(l\varphi+j\theta)},\qquad h_{l,j}\triangleq\big\langle h,\mathbf{e}_{l,j}\big\rangle_{L^{2}(\mathbb{T}^{2},\mathbb{C})}.
\end{align*}
The Hilbert  $L^2(\mathbb{T}^2,\mathbb{C})$ is equipped with the scalar product 
\begin{align*}
	\big\langle h_1,h_2\big\rangle_{L^2(\mathbb{T}^2,\mathbb{C})}\triangleq\int_{\mathbb{T}^2}h_1(\varphi,\theta)\overline{h_2(\varphi,\theta)}d\varphi d\theta=\sum_{(l,j)\in\mathbb{Z}^2}h_{1,l,j}\overline{h_{2,l,j}}.
\end{align*}
The following notation is of constant use in this work
\begin{align}\label{conv-norm}
	\int_{\mathbb{T}}f(x)dx\triangleq\frac{1}{2\pi}\int_{0}^{2\pi}f(x)dx.
\end{align}
The Sobolev spaces of regularity $s\in\mathbb{R}$ is given by
\begin{align*}
	H^{s}(\mathbb{T}^2,\mathbb{C})\triangleq\Big\lbrace h\in L^{2}(\mathbb{T}^2,\mathbb{C})\quad\textnormal{s.t.}\quad\|h\|_{H^s}<\infty\Big\rbrace,
\end{align*}
where
\begin{align*}
	\|h\|_{H^s}^2\triangleq\sum_{(l,j)\in\mathbb{Z}^2}\langle l,j\rangle^{2s}|h_{l,j}|^2,\qquad\langle l,j\rangle^2\triangleq 1+|l|^2+|j|^2.
\end{align*}
The subspace with zero space average functions is denoted
\begin{align*}
	H^{s}_0(\mathbb{T}^{2},\mathbb{C})\triangleq\left\{ h\in H^{s}(\mathbb{T}^{2},\mathbb{C}) \textnormal\quad\hbox{s.t.}\quad\forall \varphi\in\mathbb{T},\,\int_{\mathbb{T}}h(\varphi,\theta) d\theta=0\right\}.
\end{align*}
Given a nonempty subset  $\mathscr{O}$  of $\mathbb{R}$ and $\gamma\in(0,1]$,  we define the Banach spaces
\begin{align*}
	\textnormal{Lip}_{\gamma}(\mathscr{O},H^{s})&\triangleq\Big\lbrace h:\mathscr{O}\rightarrow H^{s}(\mathbb{T}^2,\mathbb{C})\quad\textnormal{s.t.}\quad\|h\|_{s}^{{\textnormal{Lip}(\gamma)}}<\infty\Big\rbrace,\\
	\textnormal{Lip}_{\gamma}(\mathscr{O},\mathbb{C})&\triangleq\Big\lbrace h:\mathscr{O}\rightarrow\mathbb{C}\quad\textnormal{s.t.}\quad\|h\|^{{\textnormal{Lip}(\gamma)}}<\infty\Big\rbrace,
\end{align*}
with
\begin{align*}
	\|h\|_{s}^{\textnormal{Lip}(\gamma)}&\triangleq\sup_{\lambda\in{\mathscr{O}}}\|h(\lambda,\cdot)\|_{H^{s}}+\gamma\sup_{(\lambda_1,\lambda_2)\in\mathscr{O}^2\atop\lambda_{1}\neq\lambda_{2}}\frac{\|h(\lambda_1,\cdot)-h(\lambda_2,\cdot)\|_{H^{s-1}}}{|\lambda_1-\lambda_2|},
\\
	\|h\|^{{\textnormal{Lip}(\gamma)}}&\triangleq\sup_{\lambda\in{\mathscr{O}}}|h(\lambda)|+\gamma\sup_{(\lambda_1,\lambda_2)\in\mathscr{O}^2\atop\lambda_{1}\neq\lambda_{2}}\frac{|h(\lambda_1)-h(\lambda_2)|}{|\lambda_1-\lambda_2|}\cdot
\end{align*}
We emphasize that in Section \ref{Nash Moser} related to Nash-Moser scheme we find it convenient to use the notation
\begin{align}\label{Norm-not}
	\|h\|_{s,\mathscr{O}}^{\textnormal{Lip}(\gamma)}=\|h\|_{s}^{\textnormal{Lip}(\gamma)}.
\end{align}
 In the sequel, we consider the list of numbers with the constraints
\begin{equation}\label{cond1}
	\gamma\in(0,1],\qquad\tau>1,\qquad  S\geqslant s\geqslant s_0>3.
\end{equation}
Along this section, we will extensively use the following classical result related to the product law over weighted Sobolev spaces. Given  $(\gamma,s_{0},s)$ satisfying \eqref{cond1}.
Let $h_{1},h_{2}\in \textnormal{Lip}_{\gamma}(\mathscr{O},H^{s}).$ Then $h_{1}h_{2}\in \textnormal{Lip}_{\gamma}(\mathscr{O},H^{s})$ and 
		\begin{equation}\label{ prod law}
			\| h_{1}h_{2}\|_{s}^{\textnormal{Lip}(\gamma)}\lesssim\| h_{1}\|_{s_{0}}^{\textnormal{Lip}(\gamma)}\| h_{2}\|_{s}^{\textnormal{Lip}(\gamma)}+\| h_{1}\|_{s}^{\textnormal{Lip}(\gamma)}\| h_{2}\|_{s_{0}}^{\textnormal{Lip}(\gamma)}.
		\end{equation}
We  need some anisotropic spaces to describe  the spatial smoothing effects of some non-local operators. The anisotropic Sobolev spaces of regularity $s_1,s_2\in\mathbb{R}$ is given by
\begin{align*}
	H^{s_1,s_2}(\mathbb{T}^2,\mathbb{C})\triangleq\Big\lbrace h\in L^{2}(\mathbb{T}^2,\mathbb{C})\quad\textnormal{s.t.}\quad\|h\|_{H^{s_1,s_2}}<\infty\Big\rbrace,
\end{align*}
where
\begin{align*}
	\|h\|_{H^{s_1,s_2}}^2\triangleq\sum_{(l,j)\in\mathbb{Z}^2}\langle l,j\rangle^{2s_1}\langle j\rangle^{2s_2}|h_{l,j}|^2.
\end{align*}
The corresponding Lipschitz norm for external parameters
\begin{align*}
	\|h\|_{s_1,s_2}^{\textnormal{Lip}(\gamma)}\triangleq\sup_{\lambda\in{\mathscr{O}}}\|h(\gamma,\cdot)\|_{H^{s_1,s_2}}+\gamma\sup_{(\lambda_1,\lambda_2)\in\mathscr{O}^2\atop\lambda_{1}\neq\lambda_{2}}\frac{\|h(\lambda_1,\cdot)-h(\lambda_2,\cdot)\|_{H^{s_1-1,s_2}}}{|\lambda_1-\lambda_2|}\cdot
\end{align*}
To conclude this section, we will briefly recall the Hilbert transform on $\mathbb{T}$ . 
Let $h:\mathbb{T}\to\mathbb{R}$ be a continuous function with a zero average, we define its Hilbert transform by 
\begin{align}\label{def Hilbert trans}
	\mathbf{H}h(\theta)\triangleq\int_{\mathbb{T}}h(\eta)\cot\left(\tfrac{\eta-\theta}{2}\right)d\eta=-\partial_{\theta}\int_{\mathbb{T}}h(\eta)\log\left(\sin^2\left(\tfrac{\theta-\eta}{2}\right)\right)d\eta.
\end{align}
The integrals are understood in the principal value sense. It is a classical that $\mathbf{H}$ is a Fourier multiplier with  
$$\forall j\in\mathbb{Z}^*,\quad \mathbf{H} \mathbf{e}_j(\theta)=\ii\,\hbox{sign}(j)  \mathbf{e}_j(\theta).$$

\subsection{Asymptotic structure of the linearized operator}
Our aim here is to analyze the asymptotic behavior in $\varepsilon$ of the differential $d_r\mathbf{G}(r)$ at a small state $r$ for the functional $\mathbf{G}$ introduced in \eqref{functional scrG}. The main result is stated  in Proposition \ref{proposition linear op exp} below. As a preliminary step and  in view of \eqref{functional scrG}-\eqref{def-Psi2}, we need to  perform the $\varepsilon$-expansion, up to order 2, of the kernel
$$K(p+\varepsilon z,p+\varepsilon\xi)$$
using its representation via the conformal mapping \eqref{Rob-double}. 
\begin{lemma}\label{lemma expand K}
	Let   $V$ be an open set such that $\overline{V}\subset \mathbf{D}$. Denote also $\mathbb{D}_2$ the disc of radius $2$ centered in the origin. There exist $\varepsilon_0>0$ and $K_3\in C^\infty_b(V\times\mathbb{D}_2^2)$ such that for any $\varepsilon\in(0,\varepsilon_0)$ and $(p,z,\xi)\in V\times\mathbb{D}_2^2$, we have
	\begin{equation}\label{expansion K}
		K(p+\varepsilon z,p+\varepsilon\xi)=\mathcal{R}_{\mathbf{D}}(p)+\varepsilon K_1(p,z,\xi)+\varepsilon^2K_2(p,z,\xi)+\varepsilon^3 K_3(p,z,\xi),
	\end{equation}
	with
	\begin{align*}
		K_1(p,z,\xi)&\triangleq\textnormal{Re}\Big\{\partial_z\mathcal{R}_{\mathbf{D}}(p)\, (z+\xi)\Big\},\\
		K_2(p,z,\xi)&\triangleq\textnormal{Re}\Big\{\Big(\tfrac{1}{6}S(\Phi)(p)+\tfrac{1}{2}\big(\partial_z\mathcal{R}_{\mathbf{D}}(p)\big)^2\Big)(z^2+\xi^2)+\tfrac{1}{6}S(\Phi)(p)z\xi+\tfrac{1}{r_{\mathbf{D}}^2(p)}z\overline\xi\Big\},
	\end{align*}
	where we have used the notations \eqref{Conf-R}, \eqref{Robin1N} and \eqref{Swart}.
\end{lemma}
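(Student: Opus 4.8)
\textbf{Proof strategy for Lemma \ref{lemma expand K}.}

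The plan is to expand the regular part $K$ of the Green function around the point $p$ using its explicit representation \eqref{Rob-double} in terms of the conformal mapping $\Phi:\mathbf{D}\to\mathbb{D}$. Recall from \eqref{Rob-double} that for $a\neq b$ in $\mathbf{D}$,
$$K(a,b)=\log\left|\frac{\Phi(a)-\Phi(b)}{a-b}\right|-\log\big|1-\Phi(a)\overline{\Phi(b)}\big|.$$
Setting $a=p+\varepsilon z$ and $b=p+\varepsilon\xi$, the singular prefactor $(a-b)^{-1}=(\varepsilon(z-\xi))^{-1}$ cancels against the vanishing of $\Phi(a)-\Phi(b)$, so the first term becomes $\log\big|\tfrac{\Phi(p+\varepsilon z)-\Phi(p+\varepsilon\xi)}{\varepsilon(z-\xi)}\big|$, a smooth function of $\varepsilon$ near $\varepsilon=0$ (including $z=\xi$, by the divided-difference form). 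First I would Taylor expand $\Phi(p+\varepsilon z)=\Phi(p)+\varepsilon z\Phi'(p)+\tfrac{\varepsilon^2}{2}z^2\Phi''(p)+\tfrac{\varepsilon^3}{6}z^3\Phi^{(3)}(p)+O(\varepsilon^4)$ and similarly for $\xi$, then form the divided difference and the product $\Phi(a)\overline{\Phi(b)}$, and finally apply $\log|1+w|=\textnormal{Re}\log(1+w)=\textnormal{Re}\{w-\tfrac12 w^2+\dots\}$ to both logarithmic terms.

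The bookkeeping is then a matter of collecting powers of $\varepsilon$. At order $\varepsilon^0$ one recovers $K(p,p)=\mathcal R_{\mathbf D}(p)$ by \eqref{Robin} and \eqref{Robin1}, which is the consistency check. At order $\varepsilon^1$, the first log contributes $\textnormal{Re}\{\tfrac12\tfrac{\Phi''(p)}{\Phi'(p)}(z+\xi)\}$ and the second contributes $\textnormal{Re}\{\tfrac{\Phi'(p)\overline{\Phi(p)}}{1-|\Phi(p)|^2}(z+\xi)\}$; summing and using \eqref{Robin1N} gives exactly $K_1(p,z,\xi)=\textnormal{Re}\{\partial_z\mathcal R_{\mathbf D}(p)(z+\xi)\}$. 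At order $\varepsilon^2$ the computation is more involved: the divided difference of $\Phi$ produces terms in $z^2,\xi^2,z\xi$ with coefficients built from $\Phi''/\Phi'$ and $\Phi^{(3)}/\Phi'$, whose symmetric combination is precisely $\tfrac13\big(\tfrac{\Phi^{(3)}}{\Phi'}-\tfrac32(\tfrac{\Phi''}{\Phi'})^2\big)=\tfrac13 S(\Phi)$ on the $z\xi$ coefficient and $\tfrac16 S(\Phi)$ on the $z^2+\xi^2$ coefficient after combining with the square coming from $-\tfrac12 w^2$; the second logarithm contributes the holomorphic-squared piece which, together with the first, assembles into $\tfrac16 S(\Phi)(p)+\tfrac12(\partial_z\mathcal R_{\mathbf D}(p))^2$ multiplying $z^2+\xi^2$, plus the mixed conjugate term $\tfrac{1}{r_{\mathbf D}^2(p)}z\overline\xi$ coming from the $|\Phi'(p)|^2/(1-|\Phi(p)|^2)^2$ cross term and \eqref{Conf-R}. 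I would verify the $z\overline\xi$ coefficient carefully against \eqref{Robin1}-\eqref{Conf-R}, since this is where \eqref{Conf-R1} (the Liouville equation) implicitly enters to simplify $\Delta\mathcal R_{\mathbf D}$-type quantities.

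For the remainder, since $\overline V\subset\mathbf D$ is compact and $\Phi$ is holomorphic and non-vanishing in a neighbourhood of $\overline V$ with $|\Phi|<1$ there, one chooses $\varepsilon_0$ small enough that $p+\varepsilon z, p+\varepsilon\xi$ stay in that neighbourhood for all $(p,z,\xi)\in V\times\mathbb D_2^2$ and $\varepsilon\in(0,\varepsilon_0)$, and that $|\Phi(p+\varepsilon z)\overline{\Phi(p+\varepsilon\xi)}|$ stays bounded away from $1$; then Taylor's theorem with integral remainder gives $K_3\in C^\infty_b(V\times\mathbb D_2^2)$, uniformly in $\varepsilon$. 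The main obstacle I anticipate is purely organizational: correctly tracking the order-$\varepsilon^2$ coefficients through the two logarithms and the $\textnormal{Re}$ operation so that the Schwarzian and conformal-radius structure emerges cleanly; there is no conceptual difficulty, only the risk of algebraic slips, which I would guard against by the $\varepsilon^0$ and $\varepsilon^1$ consistency checks and by the known symmetry $K_2(p,z,\xi)=K_2(p,\xi,z)$.
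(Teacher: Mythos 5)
Your proposal is correct and follows essentially the same route as the paper: both expand the conformal representation \eqref{Rob-double} of $K$ at $p+\varepsilon z$, $p+\varepsilon\xi$, Taylor-expanding the divided difference of $\Phi$ and the factor $\log\big|1-\Phi(p+\varepsilon z)\overline{\Phi(p+\varepsilon\xi)}\big|$ through the $\log|1+w|$ expansion (the paper phrases this via $\varepsilon$-derivatives of $\log|1+g_i(\varepsilon)|$, which is the same computation), and control the remainder by compactness of $\overline{V}\subset\mathbf{D}$. One minor remark: the Liouville equation \eqref{Conf-R1} is not needed anywhere — the $z\overline{\xi}$ coefficient comes directly from $\tfrac{|\Phi'(p)|^2}{1-|\Phi(p)|^2}+\tfrac{|\Phi'(p)|^2|\Phi(p)|^2}{(1-|\Phi(p)|^2)^2}=\tfrac{|\Phi'(p)|^2}{(1-|\Phi(p)|^2)^2}=r_{\mathbf{D}}^{-2}(p)$, i.e. just the definition \eqref{Conf-R}.
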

\begin{proof}
	Throughout the proof, we shall make use of the following formulae
	\begin{eqnarray}
		 \partial_{\varepsilon}\big(\log|1+g(\varepsilon)|\big)&=&\textnormal{Re}\left\lbrace\tfrac{\partial_{\varepsilon}g(\varepsilon)}{1+g(\varepsilon)}\right\rbrace,\label{depsi1 log}\\
		\partial_{\varepsilon}^2\big(\log|1+g(\varepsilon)|\big)&=&\textnormal{Re}\left\lbrace\tfrac{(1+g(\varepsilon))\partial_{\varepsilon}^2g(\varepsilon)-(\partial_{\varepsilon}g(\varepsilon))^2}{(1+g(\varepsilon))^2}\right\rbrace.\label{depsi2 log}
	\end{eqnarray}
	 In general, we can write
	\begin{align*}
	\Phi(z)=bz+f(z),\qquad b>0,\qquad f\textnormal{ analytic in } \mathbf{D}.
	\end{align*}
	As $\Phi$ is univalent, then we infer that for any compact subset $\mathbf{K}\subset\mathbf{D},$
	\begin{equation*}
		\max_{\zeta\in\mathbf{K}}|f'(\zeta)|<b.
	\end{equation*}
	Therefore, applying  \eqref{Rob-double} we deduce that
	\begin{align}\label{Green K}
		\nonumber K(z,w)&=-\log|1-\Phi(z)\overline{\Phi(w)}|+\log(b)+\log\left|1+\tfrac{1}{b}\tfrac{f(z)-f(w)}{z-w}\right|\\
		&\triangleq -\log|1+g_1(z,w)|+\log(b)+\log|1+g_2(z,w)|.
	\end{align}
	It follows that
	\begin{equation}\label{Green K2}
		K(p+\varepsilon z,p+\varepsilon\xi)=-\log|1+g_1(p+\varepsilon z,p+\varepsilon\xi)|+\log(b)+\log|1+g_2(p+\varepsilon z,p+\varepsilon\xi)|.
	\end{equation}
		Define
	\begin{align*}
	g_1(\varepsilon)&\triangleq g_1(p+\varepsilon z,p+\varepsilon\xi)\\
	&=-\Phi\big(p+\varepsilon z\big)\overline{\Phi\big(p+\varepsilon\xi\big)}.
	\end{align*}
	Then straightforward calculations based on Taylor expansion of the holomorphic function $\Phi$ yield for small $\varepsilon$
	\begin{align*}
		g_1(\varepsilon)&{=}-|\Phi(p)|^2-\varepsilon\big(\Phi(p)\overline{\Phi'(p)}\,\overline{\xi}+\overline{\Phi(p)}\Phi'(p)z\big)\\
		&\quad-\tfrac{\varepsilon^2}{2}\Big(\overline{\Phi(p)}\Phi''(p)z^{2}+\Phi(p)\overline{\Phi''(p)}\,\overline{\xi}^{2}+2|\Phi'(p)|^2z\overline{\xi}\Big)+O(\varepsilon^3).
	\end{align*}	
This implies
\begin{align*}
	g_1(0)&=-|\Phi(p)|^2,\\
	\partial_{\varepsilon}g_1(0)&=-\big(\Phi(p)\overline{\Phi'(p)}\overline{\xi}+\overline{\Phi(p)}\Phi'(p)z\big),\\
	\partial_{\varepsilon}^2g_1(0)&=-\Big(\overline{\Phi(p)}\Phi''(p)z^{2}+\Phi(p)\overline{\Phi''(p)}\overline{\xi}^{2}+2|\Phi'(p)|^2z\overline{\xi}\Big).
\end{align*}
	Combining these identities with \eqref{depsi1 log} and  \eqref{depsi2 log} allow to get for small $\varepsilon$
	\begin{equation}\label{log g1}
		\begin{aligned}
			\log|1+g_1(\varepsilon)|&{=}\log\big(1-|\Phi(p)|^2\big)-\varepsilon\textnormal{Re}\left\lbrace\tfrac{\Phi(p)\overline{\Phi'(p)}\overline{\xi}+\overline{\Phi(p)}\Phi'(p)z}{1-|\Phi(p)|^2}\right\rbrace\\
			&\quad-\tfrac{\varepsilon^2}{2}\textnormal{Re}\left\lbrace\tfrac{\overline{\Phi(p)}\Phi''(p)z^2+\Phi(p)\overline{\Phi''(p)}\overline{\xi}^{2}+2|\Phi'(p)|^2z\overline{\xi}}{1-|\Phi(p)|^2}\right\rbrace\\
			&\quad-\tfrac{\varepsilon^2}{2}\textnormal{Re}\left\lbrace\left(\tfrac{\Phi(p)\overline{\Phi'(p)}\overline{\xi}+\overline{\Phi(p)}\Phi'(p)z}{1-|\Phi(p)|^2}\right)^2\right\rbrace+O(\varepsilon^3).
		\end{aligned}
	\end{equation}
	Notice that $O(\varepsilon^3)$ is a smooth function
	On the other hand, one writes
	\begin{align*}
	g_2(\varepsilon)&\triangleq \tfrac{1}{b}\tfrac{f(p+\varepsilon z)-f(p+\varepsilon\xi)}{\varepsilon(z-\xi)}\\
	&=\tfrac{1}{b}\int_{0}^{1}f'\big(p+\varepsilon\xi+s\varepsilon(z-\xi)\big)ds.
	\end{align*}
	Therefore, for any $n\in\mathbb{N},$
	$$\partial_{\varepsilon}^ng_2(\varepsilon)=\frac{1}{b}\int_{0}^{1}f^{(n+1)}\big(p+\varepsilon\xi+s\varepsilon(z-\xi)\big)\big(\xi+s(z-\xi)\big)^{n}ds,$$
	implying  in turn that
	\begin{align}\label{despin g20}
		\partial_{\varepsilon}^ng_2(0)&=\tfrac{f^{(n+1)}(p)}{b(n+1)}\tfrac{z^{n+1}-\xi^{n+1}}{z-\xi}=\tfrac{f^{(n+1)}(p)}{b(n+1)}\sum_{k=0}^{n}z^{k}\xi^{n-k}.
	\end{align}
	Putting together \eqref{depsi1 log}, \eqref{depsi2 log} and \eqref{despin g20}, we deduce for small $\varepsilon$
	\begin{align*}
		\log|1+g_{2}(\varepsilon)|&{=}\log\left|1+\tfrac{f'(p)}{b}\right|+\varepsilon\textnormal{Re}\left\lbrace\tfrac{f''(p)(z+\xi)}{2(b+f'(p))}\right\rbrace\\
		&\quad+\tfrac{\varepsilon^2}{6}\textnormal{Re}\left\lbrace\tfrac{f^{(3)}(p)}{b+f'(p)}\big(z^{2}+\xi^{2}+z\xi\big)\right\rbrace\\
		&\quad-\tfrac{\varepsilon^2}{8}\textnormal{Re}\left\lbrace\left(\tfrac{f''(p)(z+\xi)}{b+f'(p)}\right)^2\right\rbrace+O(\varepsilon^3).
	\end{align*}
Since 
	$\Phi'(z)=b+f'(z),$ $\Phi''(z)=f''(z)$ and $\Phi^{(3)}(z)=f^{(3)}(z)$,
	then
	we end up with
	\begin{equation}\label{log g2}
		\begin{aligned}
			\log|1+g_{2}(\varepsilon)|&=-\log(b)+\log\left|\Phi'(p)\right|+\varepsilon\textnormal{Re}\left\lbrace\tfrac{\Phi''(p)(z+\xi)}{2\Phi'(p)}\right\rbrace\\
			&\quad+\tfrac{\varepsilon^2}{6}\textnormal{Re}\left\lbrace\tfrac{\Phi^{(3)}(p)}{\Phi'(p)}\big(z^{2}+\xi^{2}+z\xi\big)\right\rbrace\\
			&\quad-\tfrac{\varepsilon^2}{8}\textnormal{Re}\left\lbrace\left(\tfrac{\Phi''(p)(z+\xi)}{\Phi'(p)}\right)^2\right\rbrace+O(\varepsilon^3).
		\end{aligned}
	\end{equation}
	We observe that $O(\varepsilon^3)=\varepsilon^3K_3(p,z,\xi)$ with $K_3$ being   a smooth function in $(p,z,\xi)\in V\times\mathbb{D}_2^2$. Inserting \eqref{log g1} and \eqref{log g2} into \eqref{Green K2} and using \eqref{Conf-R}, \eqref{Robin1N}  and \eqref{Swart} give the desired result.
\end{proof}
The next target is to provide an  asymptotic expansion in $\varepsilon$ of the linearized operator $\mathbf{G}$, defined in \eqref{functional scrG},  at  a small state $r$. In what follows, and throughout the remainder of this paper, we denote
$$
\mathscr{O}\triangleq [\lambda_*,\lambda^*],
$$
where $ \lambda_*$ and $\lambda^*$ are as defined in Theorem \ref{main thm}.
	\begin{proposition}\label{proposition linear op exp}
	There exists $\varepsilon_0\in(0, 1)$ such that if $r$ is smooth with zero average in space and
	$$
	\varepsilon\leqslant\varepsilon_0\quad\textnormal{and}\quad\|r\|_{s_0+2}^{\textnormal{Lip}(\gamma)}\leqslant 1,
	$$ 
		then, the linearized operator of the map  $\mathbf{G}$,  at a small state $r$ in the direction $h\in L^2_0(\mathbb{T}^2;\mathbb{R})$ is given by
		\begin{align*}
			d_{r}\mathbf{G}(r)[h]&=\varepsilon^3\omega(\lambda)\partial_\varphi  h+\varepsilon\partial_{\theta}\big[\mathbf{V}^{\varepsilon}(r)h \big]-\tfrac{\varepsilon}{2}\mathbf{H}[h]+\varepsilon^3\partial_{\theta}\mathbf{Q}_1[h]+\varepsilon^3\partial_{\theta}\mathcal{R}_1^{\varepsilon}[h]+\varepsilon^4\partial_{\theta}\mathcal{R}_2^{\varepsilon}[h],
		\end{align*}	
		with the following properties.
		\begin{enumerate}
			\item The function $\mathbf{V}^{\varepsilon}(r)$ decomposes as follows
			\begin{equation*}
				\mathbf{V}^{\varepsilon}(r)\triangleq\tfrac{1}{2}-\tfrac{\varepsilon}{2}r+\varepsilon^2\big(\tfrac{1}{2}\mathtt{g}+V_1^{\varepsilon}(r)\big)+\varepsilon^3V_2^\varepsilon (r),
			\end{equation*}
		with
		\begin{align}\label{ttw2}
			\mathtt{g}(\varphi,\theta)&\triangleq\textnormal{Re}\left\lbrace\mathtt{w}_2\big(p(\varphi)\big)e^{2\ii\theta}\right\rbrace,\qquad
			\mathtt{w}_2(p)\triangleq \big(\partial_z\mathcal{R}_{\mathbf{D}}(p)\big)^2+\tfrac13 S(\Phi)(p)
		\end{align}
	and $V_1^{\varepsilon}(r),$ $V_2^{\varepsilon}(r)$ satisfy the estimates
	\begin{align*}
	\|V_1^{\varepsilon}(r)\|_{s}^{\textnormal{Lip}(\gamma)}&\lesssim\|r\|_{s+1}^{\textnormal{Lip}(\gamma)}\|r\|_{s_0+1}^{\textnormal{Lip}(\gamma)},\\		\|V_2^{\varepsilon}(r)\|_{s}^{\textnormal{Lip}(\gamma)}&\lesssim1+\|r\|_{s}^{\textnormal{Lip}(\gamma)},\\
		\nonumber \|\Delta_{12}V_1^{\varepsilon}(r)\|_{s}^{\textnormal{Lip}(\gamma)}&\lesssim\|\Delta_{12}r\|_{s+1}^{\textnormal{Lip}(\gamma)}+\|\Delta_{12}r\|_{s_0+1}^{\textnormal{Lip}(\gamma)}\max_{k\in\{1,2\}}\|r_k^{\varepsilon}(r)\|_{s+1}^{\textnormal{Lip}(\gamma)},\\
		\|\Delta_{12}V_2^{\varepsilon}(r)\|_{s}^{\textnormal{Lip}(\gamma)}&\lesssim\|\Delta_{12}r\|_{s}^{\textnormal{Lip}(\gamma)}+\|\Delta_{12}r\|_{s_0}^{\textnormal{Lip}(\gamma)}\max_{k\in\{1,2\}}\|r_k^{\varepsilon}(r)\|_{s}^{\textnormal{Lip}(\gamma)}.
	\end{align*}
\item The operator $\mathbf{H}$ is the Hilbert transform on the torus and the other non-local terms are integral operators in the form
\begin{align}
	\mathbf{Q}_1[h](\varphi,\theta)&\triangleq\int_{\mathbb{T}}h(\eta)\mathtt{Q}\big(p(\varphi),\theta,\eta\big)d\eta, \label{def bfQ1}\\
	\mathcal{R}_j^{\varepsilon}[h](\varphi,\theta)&\triangleq\int_{\mathbb{T}}h(\eta)\mathtt{K}_j^{\varepsilon}(r)(\varphi,\theta,\eta)d\eta,\quad j\in\{1,2\},\label{scrRk}
\end{align}
with, see \eqref{Conf-R} and \eqref{Swart} for the definitions of the functions below, 
\begin{align}
	\mathtt{Q}(p,\theta,\eta)&\triangleq\frac{\cos(\theta-\eta)}{r_{\mathbf{D}}^2(p)}\cos(\theta-\eta)+\frac16\textnormal{Re}\left\lbrace e^{\ii(\theta+\eta)} S(\Phi)(p)\right\rbrace\label{ttQ}
\end{align}
and $\mathtt{K}_1^{\varepsilon}(r)$, $\mathtt{K}_2^{\varepsilon}(r)$ satisfy the following estimates
\begin{align}
	\|\mathtt{K}_1^{\varepsilon}(r)\|_{s}^{\textnormal{Lip}(\gamma)}&\lesssim\|r\|_{s+1}^{\textnormal{Lip}(\gamma)}\|r\|_{s_0+1}^{\textnormal{Lip}(\gamma)},\label{e-scrK1}\\
	\|\mathtt{K}_2^{\varepsilon}(r)\|_{s}^{\textnormal{Lip}(\gamma)}&\lesssim1+\|r\|_{s}^{\textnormal{Lip}(\gamma)}.\label{e-scrK2}
\end{align}
		\end{enumerate}
		
	\end{proposition}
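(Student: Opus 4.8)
\textbf{Proof plan for Proposition \ref{proposition linear op exp}.}
The starting point is the formula \eqref{functional scrG} for $\mathbf{G}(r)$, whose three contributions are the transport term $\varepsilon^3\omega(\lambda)\partial_\varphi r$, the point-vortex term $-\tfrac{\varepsilon}{2}\partial_\theta\mathrm{Re}\{\partial_z\mathcal{R}_{\mathbf{D}}(p(\varphi))R(\varphi,\theta)e^{\ii\theta}\}$, and the $\theta$-derivative of $\Psi_1+\Psi_2$ from \eqref{def-Psi2}. First I would differentiate each of these in $r$ in the direction $h$, using that $R=(1+2\varepsilon r)^{1/2}$, so $d_rR[h]=\varepsilon h/R$ and $d_rR[h]=\varepsilon h+O(\varepsilon^2)$ after Taylor expansion. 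The transport term is linear and contributes exactly $\varepsilon^3\omega(\lambda)\partial_\varphi h$. The point-vortex term contributes $-\tfrac{\varepsilon}{2}\partial_\theta\mathrm{Re}\{\partial_z\mathcal{R}_{\mathbf{D}}(p(\varphi))\tfrac{\varepsilon h}{R}e^{\ii\theta}\}$; expanding $1/R=1-\varepsilon r+O(\varepsilon^2)$ this is of order $\varepsilon^2$, so it gets absorbed into the $\varepsilon^3\partial_\theta\mathbf{Q}_1$ and remainder terms once one notices it is supported on modes $\theta\mapsto e^{\pm\ii\theta}$ times functions of $\varphi$ — this is the source of part of $\mathbf{Q}_1$ together with an error.

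The heart of the computation is the variation of $\partial_\theta[\Psi_1(r,z)+\Psi_2(r,z)]$. For $\Psi_1$, which is the purely local self-induced term, the linearization is classical: differentiating the area integral $\int_{\mathbb{T}}\int_0^{R(\varphi,\eta)}\log|z-le^{\ii\eta}|\,l\,dl\,d\eta$ and using the standard identity (as in \cite{HM17,HMV13,BHM22}) that the principal part produces $\partial_\theta[\tfrac12 h]-\tfrac12\mathbf{H}[h]$ at leading order, one recovers the term $\varepsilon\partial_\theta[\tfrac12 h]$ inside $\varepsilon\partial_\theta[\mathbf{V}^\varepsilon(r)h]$ — whence the leading constant $\tfrac12$ in $\mathbf{V}^\varepsilon(r)$ — and the $-\tfrac{\varepsilon}{2}\mathbf{H}[h]$ term, with the $-\tfrac{\varepsilon^2}{2}r$ correction in $\mathbf{V}^\varepsilon$ coming from the $R$-dependence of the domain of integration and the non-principal part. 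The smoothing remainder from $\Psi_1$ beyond these explicit terms goes into $\varepsilon^3\partial_\theta\mathcal{R}_1^\varepsilon$, and being quadratic in $r$ (the linear-in-$r$ piece is exactly the explicit terms) it satisfies \eqref{e-scrK1}. For $\Psi_2$, the boundary-interaction term, I would substitute the expansion \eqref{expansion K} of Lemma \ref{lemma expand K}: the $\varepsilon^0$ term $\mathcal{R}_{\mathbf{D}}(p)$ is constant in $z$ so drops under $\partial_\theta$ after differentiation; the $\varepsilon^1$ term $K_1=\mathrm{Re}\{\partial_z\mathcal{R}_{\mathbf{D}}(p)(z+\xi)\}$ contributes, after the $l\,dl\,d\eta$ integration and the $d_r$ variation, to the $\varepsilon^2$-order pieces; and the $\varepsilon^2$ term $K_2$ — precisely the one carrying $\tfrac16 S(\Phi)(p)$ and $1/r_{\mathbf{D}}^2(p)$ — produces, at order $\varepsilon^3$, both the explicit $\varepsilon^2\cdot\tfrac12\mathtt{g}$ piece of $\mathbf{V}^\varepsilon(r)$ (from the $z^2$ and $z\bar\xi$ monomials, after integrating $\int_0^{R(\varphi,\eta)}l\,dl$ and picking up the pieces that depend on $z$ through $e^{2\ii\theta}$) and the finite-rank operator $\mathbf{Q}_1$ with kernel \eqref{ttQ} (from the $z\bar\xi$ and $z^2$, $\xi^2$ cross terms that couple $\theta$ and $\eta$). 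The affine-in-$r$ remainder $\varepsilon^3 V_2^\varepsilon(r)$ and $\varepsilon^4\partial_\theta\mathcal{R}_2^\varepsilon$ collect the $\varepsilon^3 K_3$ contribution of Lemma \ref{lemma expand K} together with the higher Taylor remainders in $R$; since $K_3\in C^\infty_b$ these obey \eqref{e-scrK2} and the bound $\|V_2^\varepsilon(r)\|_s^{\mathrm{Lip}(\gamma)}\lesssim 1+\|r\|_s^{\mathrm{Lip}(\gamma)}$.

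For the quantitative estimates I would proceed term by term: each remainder kernel is built from $\mathcal{R}_{\mathbf{D}}$, $\Phi$, $S(\Phi)$, $r_{\mathbf{D}}$ evaluated along $p(\varphi)$ (which are smooth by Proposition \ref{prop-analy} since $\overline V\subset\mathbf{D}$), composed with $R(\varphi,\eta)=(1+2\varepsilon r)^{1/2}$ and integrated against smooth kernels over $\mathbb{T}$. Using the product law \eqref{ prod law} in weighted Sobolev spaces, the smoothness of composition with $r$ for $\|r\|_{s_0+2}^{\mathrm{Lip}(\gamma)}\leqslant 1$, and the fact that one derivative in $\theta$ has been extracted, one gets the claimed bounds; the difference estimates $\Delta_{12}$ follow by the same token from the mean-value form of the composition operators. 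The main obstacle is bookkeeping: correctly tracking which monomials in $(z,\xi)$ from \eqref{expansion K}, after the $\int_0^R l\,dl$ integration and the $d_r$-variation, land in the explicit leading terms ($\mathbf{V}^\varepsilon$, $\mathbf{H}$, $\mathbf{Q}_1$) versus which get pushed into the $\varepsilon^3$- and $\varepsilon^4$-order smoothing remainders, and verifying that the linear-in-$r$ part of everything is captured by the explicit terms so that $\mathcal{R}_1^\varepsilon$, $V_1^\varepsilon$ are genuinely quadratic. In particular the cancellation that produces the specific coefficient $\tfrac13$ in $\mathtt{w}_2$ of \eqref{ttw2} — coming from the $\tfrac16 S(\Phi)$ appearing in both the $z^2+\xi^2$ and the $z\xi$ monomials of $K_2$, combined with the $e^{2\ii\theta}$ projection after integration in $\eta$ — needs to be checked carefully, as does the precise form \eqref{ttQ} of the $\mathbf{Q}_1$-kernel. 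These are the places where I would slow down.
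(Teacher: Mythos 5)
Your plan follows the paper's general route (differentiate \eqref{functional scrG}, quote the known linearization of $\Psi_1$, expand the kernel of $\Psi_2$ via Lemma \ref{lemma expand K}), but it contains a genuine gap at order $\varepsilon^{2}$. Linearizing the drift term $-\tfrac{\varepsilon}{2}\partial_\theta \textnormal{Re}\{\partial_z\mathcal{R}_{\mathbf{D}}(p)Re^{\ii\theta}\}$ produces $-\tfrac{\varepsilon^{2}}{2}\partial_\theta\big[\textnormal{Re}\{\partial_z\mathcal{R}_{\mathbf{D}}(p)e^{\ii\theta}\}\tfrac{h}{R}\big]$, and you claim this "gets absorbed into the $\varepsilon^{3}\partial_\theta\mathbf{Q}_1$ and remainder terms" and is "the source of part of $\mathbf{Q}_1$". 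That cannot work: the term is of size $\varepsilon^{2}$, so it cannot be written as $\varepsilon^{3}$ or $\varepsilon^{4}$ times a bounded operator, and it is a local multiplication operator, whereas $\mathbf{Q}_1$ is a finite-rank integral operator in $\eta$. Since the claimed $\mathbf{V}^{\varepsilon}(r)$ contains no order-$\varepsilon$ coefficient of mode $\pm1$ either, the only way to reach the stated structure is the exact cancellation used in the paper: splitting $d_r\Psi_2=J_1+J_2$, where $J_1$ comes from varying the upper bound $R(\varphi,\eta)$ of the radial integral and $J_2$ from varying $z(\theta)$ in the first argument of $K$, the $K_1$-part of $J_2$ yields (after $\int_0^1 l\,dl=\tfrac12$) exactly $+\tfrac{\varepsilon^{2}}{2}\tfrac{h}{R}\textnormal{Re}\{\partial_z\mathcal{R}_{\mathbf{D}}(p)e^{\ii\theta}\}$, which under the outer $\partial_\theta$ kills the drift contribution. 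You never identify this cancellation (you only say vaguely that $K_1$ "contributes to the $\varepsilon^{2}$-order pieces"), so your bookkeeping cannot produce the proposition as stated; this matters downstream as well, since an uncancelled $\varepsilon^{2}$ mode-$\pm1$ coefficient would not fit the later reduction where only $\mathtt{g}$ (mode $\pm2$) appears at that order.

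A secondary, smaller imprecision: in the paper the two $K_2$-born objects are produced by different pieces, $\mathbf{Q}_1$ by $J_1$ (kernel \eqref{expansion K} evaluated at $\xi=z(\eta)$, only the $z\xi$ and $z\overline{\xi}$ cross terms surviving after $\partial_\theta$ and the zero-average of $h$), and $\tfrac{\varepsilon^{2}}{2}\mathtt{g}$ by $J_2$, where the coefficient $\mathtt{w}_2=(\partial_z\mathcal{R}_{\mathbf{D}})^{2}+\tfrac13 S(\Phi)$ arises simply from $\partial_z$ applied to the $z^{2}$-monomial of $K_2$ (the factor $\tfrac13=2\cdot\tfrac16$), the $\xi$- and $\overline{\xi}$-terms averaging out in $\eta$; your proposed mechanism for the $\tfrac13$ (a cancellation between the $z^{2}+\xi^{2}$ and $z\xi$ monomials after projecting on $e^{2\ii\theta}$) is not the right one, though you flag it as a point to verify. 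The estimate strategy (product law, composition, smoothness of $K_3$) is consistent with the paper and would go through once the algebra above is fixed.
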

	\begin{proof}
	Throughout the proof, we may disregard the dependence on the variable $\varphi$, specifying it only when it is relevant.
		Differentiating \eqref{functional scrG} with respect to $r$ in the direction $h$ gives
		\begin{equation}\label{linearized st0}
			\begin{aligned}
				d_r\mathbf{G}(r)[h](\theta)&=\varepsilon^3\omega(\lambda)\,\partial_\varphi  h(\theta)-\tfrac{\varepsilon^2}{2}\partial_\theta\textnormal{Re}\Big\{\partial_{{z}} \mathcal{R}_{\bf{D}}(p)e^{\ii \theta}\Big\}\tfrac{h(\theta)}{R(\theta)}
				\\ &\quad+\partial_\theta\bigg[d_{r}\Big(\Psi_1\big(r,z(\theta)\big)\Big)[h](\theta)+d_{r}\Big(\Psi_2\big(r,z(\theta)\big)\Big)[h](\theta)\bigg].
			\end{aligned}
		\end{equation} 
	The linearized of $\Psi_1$ has been computed in \cite[Prop. 3.1]{HHM23} and is given by
	\begin{align*}
		\partial_\theta d_{r}\Big(\Psi_1\big(r,z(\theta)\big)\Big)[h](\theta)&=\varepsilon\partial_{\theta}\Big[\big(\tfrac{1}{2}-\tfrac{\varepsilon}{2}r(\theta)+\varepsilon^2 V_1^{\varepsilon}(r)(\theta)\big)h(\theta)\Big]-\tfrac{\varepsilon}{2}\mathbf{H}[h](\theta)+\varepsilon^3\partial_{\theta}\mathcal{R}_1^{\varepsilon}[h](\theta),
	\end{align*}
where $V_1^{\varepsilon}$ satisfies the desired estimates and $\mathcal{R}_1^{\varepsilon}$ is  an integral operator with the description \eqref{scrRk} and \eqref{e-scrK1}.
		Linearizing  \eqref{def-Psi2}, one readily finds
	\begin{align}\label{NonL-L}
		\nonumber d_{r}\Big(\Psi_{2}\big(r,z(\theta)\big)\Big)[h](\theta)&=\varepsilon\int_{\mathbb{T}}h(\eta)K\big(\varepsilon z(\theta)+p,\varepsilon z(\eta)+p\big)d\eta\\
	\nonumber	&\quad+\int_{\mathbb{T}}\int_{0}^{R(\eta)}d_{r}\Big(K\big(\varepsilon z(\theta)+p,\varepsilon le^{\ii\eta}+p\big)\Big)[h](\theta)ldld\eta\\
		&\triangleq J_1[h](\theta)+J_2[h](\theta).
	\end{align}
We shall start with expanding the nonlocal term $J_1[h]$ using the structure of the kernel \eqref{expansion K}, 
\begin{align*}
		\partial_\theta J_1[h](\theta)&=\varepsilon\partial_\theta\sum_{j=1}^3\varepsilon^j\int_{\mathbb{T}}h(\eta)K_j\big(p,z(\theta), z(\eta)\big)d\eta.	\end{align*}
		We have used that the test function $h$ is of zero spatial average.
From $K_1$, detailed in Lemma \ref{lemma expand K},  we obtain
\begin{align*}
		\partial_\theta \int_{\mathbb{T}}h(\eta)K_1\big(p,z(\theta), z(\eta)\big)d\eta&= \partial_\theta \textnormal{Re} \left\lbrace\partial_{z}\mathcal{R}_{\mathbf{D}}(p)\int_{\mathbb{T}}h(\eta)\big(z(\theta)+z(\eta)\big)d\eta
		\right\rbrace.
		\end{align*}
		Using one time more that  $h$ has a zero spatial average, we deduce that
		\begin{align*}
			\partial_\theta \int_{\mathbb{T}}h(\eta)K_1\big(p,z(\theta), z(\eta)\big)d\eta=0.
		\end{align*}
	Regarding the second term, in view of Lemma \ref{lemma expand K} and using once again that $h$ has a zero average, we obtain
		\begin{align*}
			\partial_\theta \int_{\mathbb{T}}h(\eta)K_2\big(p,z(\theta), z(\eta)\big)d\eta&= \partial_\theta \textnormal{Re} \left\lbrace\tfrac{1}{6}S(\Phi)(p)z(\theta)\int_{\mathbb{T}}h(\eta)z(\eta)d\eta
			\right\rbrace\\
			&\quad+\frac{1}{r_{\mathbf{D}}^{2}(p)}\partial_\theta \textnormal{Re} \left\lbrace z(\theta)\int_{\mathbb{T}}h(\eta)\overline{z(\eta)}d\eta\right\rbrace.
		\end{align*}
	Applying the parametrization \eqref{def zk} we infer	
		\begin{align*}
		\partial_\theta \int_{\mathbb{T}}h(\eta)K_2\big(p,z(\theta), z(\eta)\big)d\eta&= \partial_\theta \textnormal{Re} \left\lbrace\tfrac{1}{6}S(\Phi)(p)\int_{\mathbb{T}}h(\eta)e^{\ii(\theta+\eta)}d\eta
		\right\rbrace\\
		&\quad+\frac{1}{r_{\mathbf{D}}^{2}(p)}\partial_\theta \textnormal{Re} \left\lbrace\int_{\mathbb{T}}h(\eta)e^{\ii(\theta-\eta)}d\eta
		\right\rbrace+\varepsilon \partial_\theta \mathcal{R}_{1,2}^{\varepsilon}[h],
		\end{align*}	
	with $\mathcal{R}_{1,2}^{\varepsilon}$ a kernel operator, whose kernel satisfies the estimate \eqref{e-scrK2}. Gathering the preceding identities gives	
\begin{align*}
	\partial_\theta J_1[h]=\varepsilon^3\partial_\theta \mathbf{Q}_1[h]+\varepsilon^4\partial_{\theta}\mathcal{R}_2^{\varepsilon}[h],
\end{align*}
where $\mathbf{Q}_1$ is a nonlocal operator localizing on the modes $\pm1$ and defined by
\begin{equation}
	\mathbf{Q}_1[h](\theta)\triangleq\int_{\mathbb{T}}h(\eta)\mathtt{Q}(p,\theta,\eta)d\eta,
\end{equation}
where the kernel takes the form, 
\begin{align*}
	\mathtt{Q}(p,\theta,\eta)&\triangleq\frac{\cos(\theta-\eta)}{r_{\mathbf{D}}^2(p)}+\frac16\textnormal{Re}\left\lbrace e^{\ii(\theta+\eta)} S(\Phi)(p)\right\rbrace.
\end{align*}
The remainder operator $\mathcal{R}_2^{\varepsilon}$ has the form
\begin{align*}
	\mathcal{R}_2^{\varepsilon}[h]\triangleq\mathcal{R}_{1,2}^{\varepsilon}[h]+\int_{\mathbb{T}}h(\eta)K_3\big(p,z(\theta), z(\eta)\big)d\eta.
\end{align*}
From Lemma \ref{lemma expand K}, we infer that $K_3\in C^\infty_b(V\times\mathbb{D}_2^2)$. Therefore $\mathcal{R}_2^{\varepsilon}$ is an integral operator whose kernel obeys to the estimate of  \eqref{e-scrK2}.\\
Next, let us move to the estimate of the local term in \eqref{NonL-L}. Performing  Taylor formula with the integral contribution $\int_0^R..$ allows to get
	\begin{equation}\label{expansion Psi2}
		\begin{aligned}
			J_2[h](\theta)&=\int_{\mathbb{T}}\int_{0}^{1}d_{r}\Big(K\big(\varepsilon z(\theta)+p,\varepsilon  l e^{\ii\eta}+p\big)\Big)[h](\theta)l d l d\eta\\
			&\quad+\varepsilon\int_{\mathbb{T}}\int_0^1\tfrac{r(\eta)}{{\sqrt{1+2\varepsilon \tau r(\eta)}}}d_{r}\Big(K\big(\tau\varepsilon z(\theta)+p,\tau\varepsilon e^{\ii\eta}+p\big)\Big)[h](\theta)d\eta d\tau,
		\end{aligned}
	\end{equation}
where $\mathbf{L}_{\varepsilon}[r]$ is a non-local term with smooth kernel (depending on $K$) at least quadratic in $r.$
By using the decomposition \eqref{expansion K}, we obtain
\begin{align}\label{diff-chatl}
	\nonumber &d_{r}\Big(K\big(\varepsilon z(\theta)+p,\varepsilon  l e^{\ii\eta}+p\big)\Big)[h](\theta)=\varepsilon^2\tfrac{h(\theta)}{R(\theta)}\textnormal{Re}\left\lbrace\partial_{z}\mathcal{R}_{\mathbf{D}}(p)e^{\ii\theta}\right\rbrace\\
	\nonumber&\quad+\varepsilon^3h(\theta)\textnormal{Re}\left\lbrace\mathtt{w}_2(p)e^{2\ii\theta}\right\rbrace+\varepsilon^3\tfrac{h(\theta)l}{6R(\theta)}\textnormal{Re}\left\lbrace S(\Phi)(p)e^{\ii(\theta+\eta)}\right\rbrace+\varepsilon^{3}\tfrac{h(\theta)l}{r_{\mathbf{D}}^2(p)R(\theta)}\textnormal{Re}\left\lbrace e^{\ii(\theta-\eta)}\right\rbrace\\
	&\quad+\varepsilon^4\tfrac{h(\theta)}{R(\theta)}\textnormal{Re}\Big\{\partial_z K_3(p,z(\theta),l e^{\ii \eta})\, e^{\ii \theta}\Big\},
\end{align}
with $\mathtt{w}_2$ as in \eqref{ttw2}. Hence, the zero space average condition for $r$ implies
\begin{align*}
	\varepsilon\int_{\mathbb{T}}\int_0^1\tfrac{r(\eta)}{{\sqrt{1+2\varepsilon \tau r(\eta)}}}d_{r}\Big(K\big(\tau\varepsilon z(\theta)+p,\tau\varepsilon e^{\ii\eta}+p\big)\Big)[h](\theta)d\eta d\tau=\varepsilon^4V_{2,1}(r) h(\theta)
\end{align*}
and we can check in a straightforward manner that $V_{2,1}^\varepsilon$ satisfies the estimates of  $V_2^\varepsilon$ in Proposition~\ref{proposition linear op exp}. Coming back to \eqref{expansion Psi2} and using \eqref{diff-chatl} we find
\begin{align*}
	&\int_{\mathbb{T}}\int_{0}^{1}d_{r}\Big(K\big(\varepsilon z(\theta)+p(\varphi),\varepsilon  l e^{\ii\eta}+p(\varphi)\big)\Big)[h](\theta)l d l d\eta\\
	&{=}\tfrac{\varepsilon^2}{2}\tfrac{h(\theta)}{R(\theta)}\textnormal{Re}\left\lbrace\partial_z\mathcal{R}_{\mathbf{D}}(p) e^{\ii\theta}\right\rbrace+\tfrac{\varepsilon^3}{2}\textnormal{Re}\left\lbrace\mathtt{w}_2\big(p(\varphi)\big)e^{2\ii\theta}\right\rbrace h(\theta)+\varepsilon^4 V_{2,2}(r) h(\theta).
\end{align*}
		Notice that $V_{2,2}(r)$ satisfies the same estimates of $V_2^\varepsilon$ in Proposition \ref{proposition linear op exp}.
Putting the preceding identities in \eqref{linearized st0} yields to the asymptotic expansion of the linearized operator as described in Proposition~\ref{proposition linear op exp}.
\end{proof}

\subsection{Construction of an approximate solution}\label{sec-approx-sol}
In the following lemma, we provide the leading term in the asymptotic expansion of $\mathbf{G}(0)$ as described by \eqref{functional scrG}. This result will be pertinent later when developing an appropriate scheme to obtain a precise approximation, as discussed in Lemma \ref{approxim repsi}.
	\begin{lemma}\label{lem G0}
	There exists $\varepsilon_0\in(0, 1)$ such that for all $\varepsilon\in(0,\varepsilon_0)$, 
	one has
	\begin{align*}
		\forall (\varphi,\theta)\in\mathbb{T}^2,\quad \mathbf{G}(0)(\varphi,\theta)&=-\tfrac{\varepsilon^2}{2}\textnormal{Im}\left\lbrace\mathtt{w}_2\big(p(\varphi)\big)e^{2\ii\theta}\right\rbrace+O(\varepsilon^3),
	\end{align*}
	where $\mathtt{w}_2$ is defined in \eqref{ttw2}.
	In addition, the function $\mathbf{G}(0)$ does not contain the spatial modes $\pm1$, that is,
	$$
	\int_{\mathbb{T}}\mathbf{G}(0)(\varphi,\theta)e^{\pm \ii \theta}d\theta=0.
	$$
\end{lemma}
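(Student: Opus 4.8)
The plan is to compute $\mathbf{G}(0)$ essentially explicitly. Evaluating \eqref{functional scrG} at $r=0$, where $R(\varphi,\theta)\equiv1$ and $z(\varphi,\theta)=e^{\ii\theta}$, gives
\begin{equation*}
\mathbf{G}(0)(\varphi,\theta)=-\tfrac{\varepsilon}{2}\partial_\theta\textnormal{Re}\big\{\partial_z\mathcal{R}_{\mathbf{D}}(p(\varphi))e^{\ii\theta}\big\}+\partial_\theta\big[\Psi_1(0,e^{\ii\theta})+\Psi_2(0,e^{\ii\theta})\big].
\end{equation*}
First I would dispose of $\Psi_1$: by \eqref{def-Psi2}, $\Psi_1(0,e^{\ii\theta})=\tfrac1{2\pi}\int_{\mathbb{D}}\log|e^{\ii\theta}-\zeta|\,dA(\zeta)$, which is rotation invariant, hence independent of $\theta$, so $\partial_\theta\Psi_1(0,e^{\ii\theta})=0$. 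Next, rewriting the $(l,\eta)$ integral in \eqref{def-Psi2} as an area integral, $\Psi_2(0,e^{\ii\theta})=\tfrac1{2\pi}\int_{\mathbb{D}}K\big(p(\varphi)+\varepsilon e^{\ii\theta},p(\varphi)+\varepsilon\zeta\big)\,dA(\zeta)$. Since $K$ is harmonic in each variable, the map $\zeta\mapsto K(p(\varphi)+\varepsilon e^{\ii\theta},p(\varphi)+\varepsilon\zeta)$ is harmonic on a neighbourhood of $\overline{\mathbb{D}}$ for $\varepsilon$ small --- this is where $\varepsilon_0$ is fixed, using that the orbits $\mathcal{E}_\lambda$, $\lambda\in[\lambda_*,\lambda^*]$, stay in a fixed compact subset of $\mathbf{D}$ --- so the mean value property collapses the integral and yields $\Psi_2(0,e^{\ii\theta})=\tfrac12K\big(p(\varphi)+\varepsilon e^{\ii\theta},p(\varphi)\big)$.

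Consequently $\mathbf{G}(0)=\partial_\theta F$ with $F(\theta)\triangleq\tfrac12K(p(\varphi)+\varepsilon e^{\ii\theta},p(\varphi))-\tfrac{\varepsilon}{2}\textnormal{Re}\{\partial_z\mathcal{R}_{\mathbf{D}}(p(\varphi))e^{\ii\theta}\}$. The next step is to expand $\zeta\mapsto K(p+\zeta,p)$ near $\zeta=0$: being real valued and harmonic it has the exact representation $K(p+\zeta,p)=\mathcal{R}_{\mathbf{D}}(p)+\sum_{n\geqslant1}2\textnormal{Re}\{a_n(p)\zeta^n\}$ on a small disc. The symmetry of the Green function together with $\mathcal{R}_{\mathbf{D}}(p)=K(p,p)$ gives $2a_1(p)=\partial_z\mathcal{R}_{\mathbf{D}}(p)$, while comparison of the coefficient of $z^2$ with the function $K_2$ of Lemma \ref{lemma expand K} evaluated at $\xi=0$ gives $a_2(p)=\tfrac14\mathtt{w}_2(p)$ (equivalently one differentiates the conformal representation \eqref{Rob-double} of $K$ twice, recovering $\tfrac16S(\Phi)(p)+\tfrac12(\partial_z\mathcal{R}_{\mathbf{D}}(p))^2$). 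Substituting $\zeta=\varepsilon e^{\ii\theta}$, the term of order $\varepsilon$ in $\tfrac12K(p+\varepsilon e^{\ii\theta},p)$ equals exactly $\tfrac{\varepsilon}{2}\textnormal{Re}\{\partial_z\mathcal{R}_{\mathbf{D}}(p)e^{\ii\theta}\}$ and cancels the drift term, leaving $F(\theta)=\tfrac12\mathcal{R}_{\mathbf{D}}(p)+\tfrac{\varepsilon^2}{4}\textnormal{Re}\{\mathtt{w}_2(p)e^{2\ii\theta}\}+O(\varepsilon^3)$, the remainder being controlled by convergence of the power series. Differentiating in $\theta$ and using $\textnormal{Re}\{\ii z\}=-\textnormal{Im}\{z\}$ yields the claimed asymptotics, with uniformity in $(\varphi,\lambda)$ coming from compactness and analyticity.

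For the absence of the modes $\pm1$, the harmonic representation shows that the $e^{\ii\theta}$ Fourier coefficient of $\theta\mapsto K(p+\varepsilon e^{\ii\theta},p)$ equals $a_1(p)\varepsilon=\tfrac{\varepsilon}{2}\partial_z\mathcal{R}_{\mathbf{D}}(p)$ \emph{exactly}, with no higher order corrections; halving it and subtracting the $e^{\ii\theta}$ coefficient of $-\tfrac{\varepsilon}{2}\textnormal{Re}\{\partial_z\mathcal{R}_{\mathbf{D}}(p)e^{\ii\theta}\}$ gives zero, and likewise for $e^{-\ii\theta}$ by conjugation. Since $\Psi_1(0,\cdot)$ is constant and $\partial_\theta$ preserves each Fourier mode, $\mathbf{G}(0)=\partial_\theta F$ carries no $e^{\pm\ii\theta}$ component, i.e. $\int_{\mathbb{T}}\mathbf{G}(0)(\varphi,\theta)e^{\pm\ii\theta}\,d\theta=0$. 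The main point --- and the only genuinely non-routine step --- is recognizing that the boundary interaction potential $\Psi_2(0,\cdot)$ collapses, via harmonicity and the mean value property, to the single evaluation $\tfrac12K(p+\varepsilon e^{\ii\theta},p)$, which simultaneously produces the exact $\varepsilon^2$ leading term and makes the vanishing of the modes $\pm1$ transparent; everything else is bookkeeping once Lemma \ref{lemma expand K} is available.
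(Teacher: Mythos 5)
Your proof is correct, and it reorganizes the paper's argument rather than reproducing it. The paper obtains the $\varepsilon^2$-asymptotics by inserting the expansion of Lemma \ref{lemma expand K} (with $z=e^{\ii\theta}$, $\xi=le^{\ii\eta}$) into the double integral defining $\Psi_2(0,\cdot)$ and integrating term by term, and only for the modes-$\pm1$ statement does it collapse the integral: writing $K=\textnormal{Re}\,H$ with $H$ holomorphic in the second slot, expanding in powers of $\xi$ and keeping only the constant term after integration --- which is precisely your mean-value-property collapse in complex-analytic clothing. You perform the collapse $\Psi_2(0,e^{\ii\theta})=\tfrac12K\big(p(\varphi)+\varepsilon e^{\ii\theta},p(\varphi)\big)$ once and for all, then expand $\zeta\mapsto K(p+\zeta,p)$ as a harmonic power series; this single representation yields simultaneously the exact cancellation of the drift term, the leading term $\tfrac{\varepsilon^2}{4}\textnormal{Re}\{\mathtt{w}_2(p)e^{2\ii\theta}\}$, and the exact vanishing of the modes $\pm1$, so both conclusions of the lemma come out of one computation --- arguably a cleaner organization. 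Your coefficient identifications check out: $2a_1(p)=\partial_z\mathcal{R}_{\mathbf{D}}(p)$ does follow from the symmetry $K(z,w)=K(w,z)$ together with $\mathcal{R}_{\mathbf{D}}(p)=K(p,p)$ (the paper obtains the same datum by computing $\partial_{z_1}H_1(p,p)$), and $a_2(p)=\tfrac14\mathtt{w}_2(p)$ agrees with $K_2(p,z,0)$ in Lemma \ref{lemma expand K}. One cosmetic remark: the $e^{\ii\theta}$-coefficient of $-\tfrac{\varepsilon}{2}\textnormal{Re}\{\partial_z\mathcal{R}_{\mathbf{D}}(p)e^{\ii\theta}\}$ is $-\tfrac{\varepsilon}{4}\partial_z\mathcal{R}_{\mathbf{D}}(p)$, so in $F$ it is added to $\tfrac{\varepsilon}{4}\partial_z\mathcal{R}_{\mathbf{D}}(p)$ rather than ``subtracted''; the arithmetic you intend is clearly the right one, so this is a wording issue, not a gap.
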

\begin{proof} To simplify the notation, we shall remove the dependence in $\varphi$. Substituting $r=0$ into  \eqref{functional scrG} gives
	\begin{align*}
		\mathbf{G}(0)(\theta)&=\partial_{\theta}\Big[-\tfrac{\varepsilon}{2} \textnormal{Re}\Big\{\partial_{{z}} \mathcal{R}_{\bf{D}}(p)e^{\ii \theta}\Big\}+\Psi_1\big(0,e^{\ii\theta}\big)+\Psi_2\big(0,e^{\ii\theta}\big)\Big].
	\end{align*}
	In view of \eqref{def-Psi2}, we have
	\begin{equation*}
		\partial_\theta\big[\Psi_1\big(0,  e^{\ii\theta}\big)\big]=\partial_\theta\bigg[\int_{\mathbb{T}}\int_{0}^{1}\log\big(\big| 1 - l e^{\ii(\eta-\theta)}\big|\big)l d l d\eta\bigg]=0.
	\end{equation*}
	As for the $\Psi_2$ term, one gets by virtue of \eqref{def-Psi2}
	\begin{equation*}
		\Psi_2\big(0,e^{\ii\theta}\big)=\int_{\mathbb{T}}\int_{0}^{1}K\big(p+\varepsilon e^{\ii\theta},p+\varepsilon le^{\ii\eta}\big)ldld\eta.
	\end{equation*}
	We apply Lemma \ref{lemma expand K} with $z=e^{\ii\theta}$ and $\xi= le^{\ii\eta}$ leading in view of  \eqref{ttw2} to
	\begin{align*}
		\partial_{\theta}\Big(\Psi_{2}\big(0,e^{\ii\theta}\big)\Big)&{=}\tfrac{\varepsilon}{2}\partial_\theta\textnormal{Re}\left\lbrace \partial_{{z}} \mathcal{R}_{\bf{D}}(p) e^{\ii\theta}\right\rbrace+\tfrac{\varepsilon^2}{2}\textnormal{Re}\left\lbrace\ii e^{2\ii\theta}\mathtt{w}_2(p)\right\rbrace+O(\varepsilon^3)\\
		&{=}\tfrac{\varepsilon}{2}\partial_\theta\textnormal{Re}\left\lbrace \partial_{{z}} \mathcal{R}_{\bf{D}}(p) e^{\ii\theta}\right\rbrace-\tfrac{\varepsilon^2}{2}\textnormal{Im}\left\lbrace e^{2\ii\theta}\mathtt{w}_2(p)\right\rbrace+O(\varepsilon^3).
	\end{align*}
 Combining the foregoing identities  gives the suitable  expansion for $\mathbf{G}(0)$. The final goal   is to prove the absence of modes $\pm1$ in the whole quantity $\mathbf{G}(0).$ This pertains to show that the Fourier expansion of $\Psi_2\big(0,e^{\ii\theta}\big)-\tfrac{\varepsilon}{2}\textnormal{Re}\left\lbrace \partial_{{z}} \mathcal{R}_{\bf{D}}(p) e^{\ii\theta}\right\rbrace$ does not contain the modes $\pm1.$ 
		Applying \eqref{Green K} we infer 
		\begin{align*}
			K(p+\varepsilon z,p+\varepsilon\xi)=\textnormal{Re}\left\lbrace H(p+\varepsilon z,{p+\varepsilon\xi}) \right\rbrace,
		\end{align*}
		where
		\begin{align*}
			H(z_1,z_2)\triangleq\log(b)+\textnormal{Log}\left(1+\frac{1}{b}\frac{f(z_1)-f(z_2)}{z_1-z_2}\right)-\textnormal{Log}\big(1-\Phi(z_2)\overline{\Phi(z_1)}\big)
		\end{align*}
and  $\textnormal{Log}$ is the principal value of the complex logarithm function.
		Applying this with $z=e^{\ii\theta}$ and $\xi=le^{\ii\eta}$, we obtain
		\begin{align*}
			\Psi_2\big(0,e^{\ii\theta}\big)=\textnormal{Re}\left\lbrace\int_{\mathbb{T}}\int_{0}^{1}H\big(p+\varepsilon e^{\ii\theta},p+\varepsilon le^{\ii\eta}\big)ldld\eta\right\rbrace.
		\end{align*}
		Noting that the function $\xi\in \mathbb{D}_2\mapsto H(p+\varepsilon e^{\ii\theta},p+\varepsilon \xi)$ is holomorphic  for small $\varepsilon$ allows to get the power series expansion
		\begin{align*}
			H\big(p+\varepsilon e^{\ii\theta},p+\varepsilon \xi\big)= \sum_{n=0}^{\infty}c_n(\varepsilon,p,\theta)\xi^n.
		\end{align*}
		Therefore
		\begin{align*}\Psi_2\big(0,e^{\ii\theta}\big)&=\textnormal{Re}\left\lbrace\int_{\mathbb{T}}\int_{0}^{1} \sum_{n=0}^{\infty}c_n(\varepsilon,p,\theta)e^{\ii n\eta}l^{n+1}dld\eta\right\rbrace\\
		&=\textnormal{Re}\left\lbrace\tfrac{1}{2}c_0(\varepsilon,p,\theta)\right\rbrace.
		\end{align*}
		It follows that
		\begin{align*}\Psi_2\big(0,e^{\ii\theta}\big)		&=\textnormal{Re}\left\lbrace\tfrac{1}{2} H\big(p+\varepsilon e^{\ii\theta},p\big)\right\rbrace\\
		&=\textnormal{Re}\left\lbrace\tfrac{1}{2} H_1\big(p+\varepsilon e^{\ii\theta},p\big)\right\rbrace,
		\end{align*}
with
\begin{align*}
			H_1(z_1,z_2)\triangleq\log(b)+\textnormal{Log}\left(1+\frac{1}{b}\frac{f(z_1)-f(z_2)}{z_1-z_2}\right)-\textnormal{Log}\big(1-\Phi(z_1)\overline{\Phi(z_2)}\big).
		\end{align*}
		As the function $\xi\in \mathbb{D}_2\mapsto H_1(p+\varepsilon \xi,p)$ is holomorphic  for small $\varepsilon,$ then
		\begin{align*}
			H_1(p+\varepsilon e^{\ii\theta},p)= \sum_{n=0}^{\infty}\tfrac{\partial_{z_1}^nH_1(p,p)}{n!}\varepsilon^ne^{\ii n\theta},
		\end{align*}
		which implies that
		\begin{align*}\Psi_2\big(0,e^{\ii\theta}\big)		&=\tfrac{1}{2} H_1(p,p)+\tfrac\varepsilon2\textnormal{Re}\left\lbrace{\partial_{z_1}H_1(p,p)}e^{\ii\theta}\right\rbrace+\tfrac{1}{2}\textnormal{Re}\sum_{n=2}^{\infty}\tfrac{\partial_{z_1}^nH_1(p,p)}{n!}\varepsilon^ne^{\ii n\theta}.
		\end{align*}
		One can check that
		\begin{align*}
			H_1(p,p)=\mathcal{R}_{\mathbf{D}}(p)\quad\hbox{and}\quad \partial_{z_1}H_1(p,p)=\partial_z\mathcal{R}_{\mathbf{D}}(p).
		\end{align*}
		Consequently,
			\begin{align*}\Psi_2\big(0,e^{\ii\theta}\big)-\tfrac{\varepsilon}{2}\textnormal{Re}\left\lbrace \partial_{{z}} \mathcal{R}_{\bf{D}}(p) e^{\ii\theta}\right\rbrace
			=\tfrac{1}{2}\mathcal{R}_{\mathbf{D}}(p)+\tfrac{1}{2}\textnormal{Re}\sum_{n\geqslant 2}^{\infty}\tfrac{\partial_{z_1}^nH_1(p,p)}{n!}\varepsilon^ne^{\ii n\theta}.
				\end{align*}
				This shows that the Fourier expansion of $\Psi_2\big(0,e^{\ii\theta}\big)-\tfrac{\varepsilon}{2}\textnormal{Re}\left\lbrace \partial_{{z}} \mathcal{R}_{\bf{D}}(p) e^{\ii\theta}\right\rbrace
			$  does not contain the modes $\pm1.$ This ends the proof of the lemma.
			
\end{proof}
We have already seen in  Lemma \ref{lem G0} that the function $\mathbf{G}(0)$ does not contain the modes $\pm1.$ Furthermore,   Proposition \ref{proposition linear op exp} ensures that $\mathtt{g}$ is localized on the mode $2$. Leveraging these insights and adopting a similar approach as developed in \cite[Lemma 3.3, Proposition 3.4]{HHM23} we derive a suitable approximate solution $r_\varepsilon$. Upon rescaling the functional, the behavior of the linearized operator is quite similar to  the original one. Our result reads as follows.
	\begin{lemma}\label{approxim repsi 0}
		There exists $r_{\varepsilon}\in C^{\infty}(\mathbb{T}^2)$, satisfying  
		\begin{align*}
			\|r_{\varepsilon}+\mathtt{g}\|_{s}^{\textnormal{Lip}(\gamma)}\lesssim\varepsilon,
		\end{align*}
		such that the nonlinear functional 
		\begin{equation}\label{def func F}
		\mathbf{F}(\rho)\triangleq \tfrac{1}{\varepsilon^{2+\mu}} \mathbf{G}( \varepsilon r_\varepsilon+\varepsilon^{1+\mu}\rho),\qquad \mu\in(0,1),
		\end{equation}
		satisfies the following estimates
		$$
		\|\mathbf{F}(0)\|_{s}^{\textnormal{Lip}(\gamma)}\lesssim\varepsilon^{3-\mu}.
		$$
\end{lemma}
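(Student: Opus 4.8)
The idea is to cancel the leading order term in $\mathbf{G}(0)$ by a carefully chosen corrector $r_\varepsilon$, and then check that the rescaled functional $\mathbf{F}$ in \eqref{def func F} has a small value at $\rho=0$. From Lemma \ref{lem G0} we know $\mathbf{G}(0)(\varphi,\theta) = -\tfrac{\varepsilon^2}{2}\textnormal{Im}\{\mathtt{w}_2(p(\varphi))e^{2\ii\theta}\} + O(\varepsilon^3)$, and from Proposition \ref{proposition linear op exp} the leading part of the linearized operator $d_r\mathbf{G}(0)$ acting on a test function $h$ is $-\tfrac{\varepsilon}{2}\partial_\theta h - \tfrac{\varepsilon}{2}\mathbf{H}[h]$, i.e. the Fourier multiplier $-\tfrac{\varepsilon}{2}(\ii j + \ii\,\mathrm{sign}(j))\,\widehat{h}_j$ which on the mode $j=2$ equals $-\ii\varepsilon\,\widehat{h}_2$. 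Since $\mathbf{G}(0)$ has no $\pm 1$ modes (Lemma \ref{lem G0}) and $\mathtt{g}$ — hence the $O(\varepsilon^2)$ obstruction — is localized on the mode $\pm 2$, this operator is invertible on the relevant Fourier support at leading order. So first I would solve, at the formal level, the equation $d_r\mathbf{G}(0)[\varepsilon r_\varepsilon] = -\mathbf{G}(0)$ restricted to its principal part; this forces the leading term of $r_\varepsilon$ to be $-\mathtt{g}$, explaining the bound $\|r_\varepsilon + \mathtt{g}\|_s^{\textnormal{Lip}(\gamma)}\lesssim\varepsilon$.

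More concretely, I would mimic \cite[Lemma 3.3, Proposition 3.4]{HHM23}: set $r_\varepsilon = -\mathtt{g} + \varepsilon\, \widetilde{r}$ where $\widetilde r$ is to be determined, plug into $\mathbf{G}$, and use the Taylor expansion $\mathbf{G}(\varepsilon r_\varepsilon) = \mathbf{G}(0) + \varepsilon\,d_r\mathbf{G}(0)[r_\varepsilon] + \tfrac{\varepsilon^2}{2}d_r^2\mathbf{G}(0)[r_\varepsilon,r_\varepsilon] + \dots$ together with the precise structure of $d_r\mathbf{G}(0)$ from Proposition \ref{proposition linear op exp} (the term $\varepsilon\partial_\theta[\mathbf{V}^\varepsilon(0)h]-\tfrac\varepsilon2\mathbf{H}[h]$ contributes $-\tfrac\varepsilon2(\partial_\theta+\mathbf{H})h$ at leading order, plus the finite-rank $\varepsilon^3\partial_\theta\mathbf{Q}_1$ and the smoothing remainders). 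The choice $r_\varepsilon = -\mathtt{g}$ kills the $O(\varepsilon^2)$ term: indeed $-\tfrac{\varepsilon}{2}(\partial_\theta+\mathbf{H})[-\varepsilon\mathtt{g}]$ evaluated on the mode $\pm 2$ produces exactly $+\tfrac{\varepsilon^2}{2}\textnormal{Im}\{\mathtt{w}_2 e^{2\ii\theta}\}$ up to sign bookkeeping, matching the leading term of $\mathbf{G}(0)$ with opposite sign; one then absorbs the $O(\varepsilon^3)$ leftovers, the quadratic contributions $d_r^2\mathbf{G}(0)[\mathtt{g},\mathtt{g}]=O(\varepsilon^3)$, and the contributions of $\widetilde r$ into a fixed-point/Newton step solving $\mathbf{G}(-\varepsilon\mathtt{g}+\varepsilon^2\widetilde r)=0$ modulo $O(\varepsilon^4)$, using that the truncated leading operator is boundedly invertible on Sobolev spaces with $\textnormal{Lip}(\gamma)$ dependence. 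This yields $\|\mathbf{G}(\varepsilon r_\varepsilon)\|_s^{\textnormal{Lip}(\gamma)}\lesssim \varepsilon^4$ (this is where the careful mode analysis and the estimates \eqref{e-scrK1}--\eqref{e-scrK2} on the remainder kernels are used).

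Finally, I would conclude the statement about $\mathbf{F}(0)$ by direct computation: $\mathbf{F}(0) = \tfrac{1}{\varepsilon^{2+\mu}}\mathbf{G}(\varepsilon r_\varepsilon)$, so $\|\mathbf{F}(0)\|_s^{\textnormal{Lip}(\gamma)} = \varepsilon^{-2-\mu}\|\mathbf{G}(\varepsilon r_\varepsilon)\|_s^{\textnormal{Lip}(\gamma)}\lesssim \varepsilon^{-2-\mu}\cdot\varepsilon^4 = \varepsilon^{2-\mu}$. I note that the statement to prove claims the bound $\varepsilon^{3-\mu}$; to reach that one needs the sharper estimate $\|\mathbf{G}(\varepsilon r_\varepsilon)\|_s^{\textnormal{Lip}(\gamma)}\lesssim \varepsilon^5$, which would follow from pushing the corrector construction one further order — writing $r_\varepsilon = -\mathtt{g}+\varepsilon r_1 + \varepsilon^2 r_2$ and cancelling also the $O(\varepsilon^3)$ term in $\mathbf{G}$, which is possible because that term, after the first cancellation, is again supported away from the degenerate modes $\pm 1$ of the symbol $\ii j + \ii\,\mathrm{sign}(j)$. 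The regularity and Lipschitz-in-$\lambda$ estimates propagate through each step by the product law \eqref{ prod law} and the analyticity of $\lambda\mapsto p_\lambda$ and $\lambda\mapsto\mathtt{T}(\lambda)$ from Proposition \ref{prop-analy}.

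\textbf{Main obstacle.} The delicate point is the invertibility of the leading linear operator used to build $r_\varepsilon$: the symbol $\ii j + \ii\,\mathrm{sign}(j)$ of $\partial_\theta + \mathbf{H}$ vanishes at $j=-1$ (and the mode $+1$ is special too because of the finite-rank term $\mathbf{Q}_1$). Here this is \emph{not} an obstruction only because Lemma \ref{lem G0} guarantees $\mathbf{G}(0)$ — and inductively each error term produced — contains no $\pm 1$ modes, and Proposition \ref{proposition linear op exp} guarantees $\mathtt{g}$ lives on the mode $\pm 2$; so the whole corrector construction stays within the subspace where the operator is elliptic. Verifying that this mode-localization persists through the successive corrections (and through the quadratic terms $d_r^2\mathbf{G}(0)$, which could a priori generate a mode-$1$ contribution from products of mode-$2$ functions — but $e^{2\ii\theta}\cdot e^{-\ii\theta}$ would need a mode-$(-1)$ input that isn't there, while $e^{2\ii\theta}\cdot e^{\ii\theta}$ gives mode $3$, etc.) is the technical heart, and it parallels exactly the argument of \cite[Lemma 3.3]{HHM23}.
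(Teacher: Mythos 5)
Your proposal follows essentially the same route as the paper: Taylor-expand $\mathbf{G}$ around $0$, choose $r_\varepsilon=-\mathtt{g}+O(\varepsilon)$ so that the leading transport-plus-Hilbert operator cancels the $O(\varepsilon^2)$ part of $\mathbf{G}(0)$ (possible precisely because $\mathbf{G}(0)$, $\mathtt{g}$ and the subsequent error terms avoid the modes $\pm1$), add a second-order corrector to push the residual to $O(\varepsilon^5)$, and divide by $\varepsilon^{2+\mu}$ to obtain the $\varepsilon^{3-\mu}$ bound. The only slip is a sign: the relevant operator is $\tfrac{\varepsilon}{2}\big(\partial_\theta-\mathbf{H}\big)$, whose symbol $\ii\big(j-\mathrm{sign}(j)\big)$ degenerates at both $j=\pm1$ (not $\partial_\theta+\mathbf{H}$ at $j=-1$), which does not affect your argument and in fact confirms why the absence of both modes $\pm1$ is the essential point.
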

	\begin{proof}
		The starting point is to write a Taylor expansion of the functional $\mathbf{G}$ around $0$,
		$$
		\mathbf{G}(r)=\mathbf{G}(0)+\partial_{r}\mathbf{G}(0)[r]+\tfrac{1}{2}\partial_{r}^2\mathbf{G}(0)[r,r]+\tfrac{1}{2}\int_{0}^{1}(1-\tau)^2\partial_{r}^3\mathbf{G}(\tau r)[r,r,r]d\tau.$$
		Then, we look for an approximate solution in the form $r=\varepsilon r_0+\varepsilon^2 r_1.$ Hence,
		\begin{equation}\label{taylor g0}
		\begin{aligned}
			&\mathbf{G}(\varepsilon r_0+\varepsilon^2r_1)=\mathbf{G}(0)+\varepsilon\partial_{r}\mathbf{G}(0)[r_0]+\tfrac{\varepsilon^2}{2}\partial_{r}^2\mathbf{G}(0)[r_0,r_0]+\varepsilon^2\partial_{r}\mathbf{G}(0)[r_1]\\
			&\quad+\varepsilon^3\partial_{r}^2\mathbf{G}(0)[r_0,r_1]+\tfrac{\varepsilon^4}{2}\partial_{r}^2\mathbf{G}(0)[r_1,r_1]+\tfrac{1}{2}\int_{0}^{1}(1-\tau)^2\partial_{r}^{3}\mathbf{G}(\tau r)[r,r,r]d\tau.
		\end{aligned}	
		\end{equation}
				In view of Proposition \ref{proposition linear op exp}  one has
		\begin{equation}\label{taylor r0}
			\begin{aligned}
			&\mathbf{G}(0)+\varepsilon\partial_{r}\mathbf{G}(0)[r_0]+\tfrac{\varepsilon^2}{2}\partial_{r}^2\mathbf{G}(0)[r_0,r_0]=\mathbf{G}(0)+\varepsilon^4\omega(\lambda) \partial_\varphi  r_0\\ & +\varepsilon^2\partial_{\theta}\Big[\big(\tfrac{1}{2}+\tfrac{\varepsilon^2}{2}\mathtt{g}+\varepsilon^3{V}_2^{\varepsilon}(0)\big)r_0 \Big]-\tfrac{\varepsilon^2}{2}\mathbf{H}[h]+\varepsilon^4\partial_{\theta}\mathbf{Q}_1[r_0]-\tfrac{\varepsilon^4}{4}\partial_\theta\big(r_0^2\big)+\varepsilon^5\partial_{\theta}\mathcal{E}^{\varepsilon}(r_0),
		\end{aligned}
	\end{equation}
				with
		$$
		\|\partial_{\theta}\mathcal{E}^{\varepsilon}(r_0)\|_s^{\textnormal{Lip}(\gamma)}\lesssim \|r_0\|_{s+2}^{\textnormal{Lip}(\gamma)}.		$$
		At the main order we shall solve the equation
		$$\tfrac{\varepsilon^2}{2}\big[\partial_{\theta}-\mathbf{H}\big]r_0+\mathbf{G}(0)=0.$$
		According to Lemma \ref{lem G0}, the source term is of size $O(\varepsilon^2)$ and does not contain the modes $0$ and $\pm1$. Therefore, we can solve the previous equation
		\begin{align}\label{moul1}
			\nonumber r_0(\varphi,\theta)&=-2\varepsilon^{-2}\big[\partial_{\theta}-\mathbf{H}\big]^{-1}\mathbf{G}(0)\\ 
			\nonumber &={-}\textnormal{Re}\left\lbrace\mathtt{w}_2\big(p(\varphi)\big)e^{2\ii\theta}\right\rbrace +\varepsilon \underline{r}_0(\varphi,\theta)\\
			&={-}\mathtt{g}(\varphi,\theta)+\varepsilon \underline{r}_0(\varphi,\theta),
		\end{align}
		where $\mathtt{w}_2$ and $\mathtt{g}$ are defined in \eqref{ttw2} and  $\underline{r}_0\in C^{\infty}(\mathbb{T}^2)$. Moreover, $\underline{r}_0$ does not contain the modes $0$ and $\pm1$ which implies according to  \eqref{def bfQ1} and \eqref{ttQ} that
		$$
		\partial_{\theta}\mathbf{Q}_1[r_0]=0.
		$$
Gathering the identities above, we conclude that
\begin{equation}\label{taylor r01}
			\mathbf{G}(0)+\varepsilon\partial_{r}\mathbf{G}(0)[r_0]+\tfrac{\varepsilon^2}{2}\partial_{r}^2\mathbf{G}(0)[r_0,r_0]=\varepsilon^4\mathbf{B}_1+\varepsilon^5\mathbf{B}_2,
		\end{equation}
		where $\mathbf{B}_1$ is given by
		$$
		\mathbf{B}_1\triangleq {-}\omega(\lambda) \partial_\varphi  \mathtt{g}-\tfrac{3}{4}\partial_\theta\big(\mathtt{g}^2\big)
		$$
		and  $\mathbf{B}_2$ satisfies
		$$
		\|\mathbf{B}_2\|_s^{\textnormal{Lip}(\gamma)}\lesssim 1.
		$$
Thus, $\mathbf{B}_1$ contains only even modes and the equation
		 \begin{equation}\label{eq r1}
		 \tfrac{1}{2}\big[\partial_{\theta}-\mathbf{H}\big]r_1+\varepsilon \mathbf{B}_1=0
		 \end{equation}
admits a solution that satisfies the estimates
$$
		\|r_1\|_s^{\textnormal{Lip}(\gamma)}\lesssim \varepsilon.
		$$
Since $r_1$ does not contain the modes $\pm1$ then 
$$
		\partial_{\theta}\mathbf{Q}_1[r_1]=0.
		$$
		Therefore,  by \eqref{moul1}
		\begin{align}\label{moul2}
		\nonumber r_\varepsilon&\triangleq r_0+\varepsilon r_1\\
		&=-\mathtt{g}+\varepsilon \big(\underline{r}_0+r_1\big)
		\end{align}	
		and inserting \eqref{taylor r0}, \eqref{eq r1} into \eqref{taylor g0} we obtain
		$$
		\|\mathbf{G}(\varepsilon r_\varepsilon)\|_s^{\textnormal{Lip}(\gamma)}\lesssim \varepsilon^5.
		$$
	This concludes the proof of Lemma \ref{approxim repsi 0}.	
	\end{proof}
	The next objective is to describe the asymptotic structure of the linearized operator associated with the rescaled function $F$ introduced in \eqref{def func F}.
	\begin{lemma}\label{approxim repsi}
	There exists $\varepsilon_0\in(0,1)$ such that for all $\varepsilon\in(0,\varepsilon_0)$ and  any smooth function $\rho$   with  zero space average  and satisfying
		$$\|\rho\|_{s_0+2}^{\textnormal{Lip}(\gamma)}\leqslant 1,$$ 
the following holds true.
		  The linearized operator $\mathcal{L}_0\triangleq d_\rho \mathbf{F}(\rho)$, see \eqref{def func F},  has the form 
		\begin{equation}\label{def L0}
		\begin{aligned}
\mathcal{L}_0 h&=\varepsilon^2\omega(\lambda)\partial_\varphi  h +\partial_{\theta}\big[\mathbf{V}_1^\varepsilon(\rho)h\big]-\tfrac{1}{2}\mathbf{H}[h]+{\varepsilon^2}\partial_\theta\mathbf{Q}_1[h]+\varepsilon^3 \partial_\theta \mathcal{R}_0^\varepsilon(\rho)[h],
\end{aligned}
\end{equation}
where the function $\mathbf{V}_1^\varepsilon(\rho)$ is given by
\begin{equation}\label{def scr V1}
\begin{aligned}
\mathbf{V}_1^\varepsilon(\rho)&\triangleq \tfrac{1}{2}+{\varepsilon^2}\mathtt{g}- \tfrac{\varepsilon^{2+\mu}}{2} \rho+\varepsilon^{3}{V}^\varepsilon(\rho).
\end{aligned}
\end{equation}
The function $\mathtt{g}$ is defined in Proposition $\ref{proposition linear op exp}$  and ${V}^\varepsilon(\rho)$ enjoys  the estimates: for any $s\geqslant s_0$
\begin{equation}\label{estim V}
\begin{aligned}
\|{V}^\varepsilon(\rho)\|_{s}^{\textnormal{Lip}(\gamma)}&\lesssim 1+ \varepsilon^\mu\|\rho\|_{s+1}^{\textnormal{Lip}(\gamma)},
\\				
\|\Delta_{12}{V}^\varepsilon(\rho)\|_{s}^{\textnormal{Lip}(\gamma)}&\lesssim \varepsilon^\mu \|\Delta_{12}\rho\|_{s+1}^{\textnormal{Lip}(\gamma)}+ \varepsilon^\mu\|\Delta_{12}\rho\|_{s_0+1}^{\textnormal{Lip}(\gamma)}\max_{\ell\in\{1,2\}}\|\rho_{\ell}\|_{s+1}^{\textnormal{Lip}(\gamma)}.
\end{aligned}
\end{equation}
The operator $\mathbf{Q}_1$ is given in Proposition $\ref{proposition linear op exp}$ 
the operator $\mathcal{R}_0^\varepsilon(\rho)$ is an integral operator of the form 
\begin{align*}
\mathcal{R}_0^\varepsilon(\rho)[h](\varphi,\theta)	&\triangleq \int_{\mathbb{T}} h (\varphi,\eta)\mathcal{K}_{0}^\varepsilon(\rho)(\varphi,\theta,\eta)	d\eta,
\end{align*}
where the kernel $\mathcal{K}_0^\varepsilon(\rho)$ satisfies: for any $s\geqslant s_0$
\begin{align}\label{est K0 scrR}
\|\mathcal{K}_0^\varepsilon(\rho)\|_{s}^{\textnormal{Lip}(\gamma)}&\lesssim 1+  \varepsilon^\mu\|\rho\|_{s+1}^{\textnormal{Lip}(\gamma)}.
\end{align}
 	Moreover,  for any $s\geqslant s_0$, one has
 \begin{equation}\label{est F}
\begin{aligned}
		\big\|d_{\rho}\mathbf{F}(\rho)[h]\big\|_{s}^{\textnormal{Lip}(\gamma)}&\lesssim \|h\|_{s+2}^{\textnormal{Lip}(\gamma)}+\|\rho\|_{s+2}^{\textnormal{Lip}(\gamma)}\|h\|_{s_0+2}^{\textnormal{Lip}(\gamma)},\\
		 \big\|d_{\rho}^{2}\mathbf{F}(\rho)[h,h]\big\|_{s}^{\textnormal{Lip}(\gamma)}&\lesssim \varepsilon^2 \|h\|_{s+2}^{\textnormal{Lip}(\gamma)}\|h\|_{s_0+2}^{\textnormal{Lip}(\gamma)}+\varepsilon^2\|\rho\|_{s+2}^{\textnormal{Lip}(\gamma)}\big(\|h\|_{s_0+2}^{\textnormal{Lip}(\gamma)}\big)^2.
	\end{aligned}		
\end{equation}
\end{lemma}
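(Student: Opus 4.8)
The statement is essentially a bookkeeping corollary of Lemma \ref{approxim repsi 0}, Proposition \ref{proposition linear op exp} and the product law \eqref{ prod law}, so the strategy is to differentiate the definition \eqref{def func F} of $\mathbf{F}$ and carefully track the powers of $\varepsilon$ produced by the rescaling. The plan is to first write, by the chain rule, $d_\rho \mathbf{F}(\rho)[h]=\tfrac{1}{\varepsilon^{1+\mu}} d_r\mathbf{G}(\varepsilon r_\varepsilon + \varepsilon^{1+\mu}\rho)[h]$ (note the power $\varepsilon^{1+\mu}$ coming from $d_\rho(\varepsilon r_\varepsilon + \varepsilon^{1+\mu}\rho)=\varepsilon^{1+\mu}\,\mathrm{Id}$, against the $\varepsilon^{-2-\mu}$ prefactor of $\mathbf{F}$, leaving $\varepsilon^{-1}$). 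Then one substitutes the asymptotic expansion of $d_r\mathbf{G}(r)$ from Proposition \ref{proposition linear op exp} evaluated at the state $r=\varepsilon r_\varepsilon + \varepsilon^{1+\mu}\rho$. The transport term $\varepsilon^3\omega(\lambda)\partial_\varphi h$ becomes $\varepsilon^2\omega(\lambda)\partial_\varphi h$ after dividing by $\varepsilon$; the Hilbert-transform term $-\tfrac{\varepsilon}{2}\mathbf{H}[h]$ becomes $-\tfrac{1}{2}\mathbf{H}[h]$; the $\varepsilon^3\partial_\theta\mathbf{Q}_1[h]$ and $\varepsilon^4\partial_\theta\mathcal{R}_2^\varepsilon[h]$ become $\varepsilon^2\partial_\theta\mathbf{Q}_1[h]$ and $\varepsilon^3\partial_\theta\mathcal{R}_2^\varepsilon[h]$; and the main subtlety is the coefficient $\mathbf{V}^\varepsilon(r)$ of the transport term $\varepsilon\partial_\theta[\mathbf{V}^\varepsilon(r)h]$, which after dividing by $\varepsilon$ contributes directly to $\mathbf{V}_1^\varepsilon(\rho)$, and the quadratic smoothing operator $\varepsilon^3\partial_\theta\mathcal{R}_1^\varepsilon[h]$, which because $\mathcal{R}_1^\varepsilon$ is quadratic in $r$ and $r=\varepsilon r_\varepsilon+\varepsilon^{1+\mu}\rho=O(\varepsilon)$, is in fact of order $\varepsilon^3\cdot\varepsilon^2=\varepsilon^5$, hence absorbed into $\varepsilon^3\mathcal{R}_0^\varepsilon(\rho)$ after dividing by $\varepsilon$.

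\textbf{Identifying $\mathbf{V}_1^\varepsilon(\rho)$.} Plugging $r=\varepsilon r_\varepsilon + \varepsilon^{1+\mu}\rho$ into $\mathbf{V}^\varepsilon(r)=\tfrac12 - \tfrac{\varepsilon}{2}r + \varepsilon^2(\tfrac12\mathtt{g}+V_1^\varepsilon(r)) + \varepsilon^3 V_2^\varepsilon(r)$ and using Lemma \ref{approxim repsi 0}, namely $r_\varepsilon = -\mathtt{g} + O(\varepsilon)$, one gets $-\tfrac{\varepsilon}{2}r = -\tfrac{\varepsilon^2}{2}r_\varepsilon - \tfrac{\varepsilon^{2+\mu}}{2}\rho = \tfrac{\varepsilon^2}{2}\mathtt{g} - \tfrac{\varepsilon^{2+\mu}}{2}\rho + O(\varepsilon^3)$; combining with the explicit $+\tfrac{\varepsilon^2}{2}\mathtt{g}$ term yields a total coefficient $\tfrac12 + \varepsilon^2\mathtt{g} - \tfrac{\varepsilon^{2+\mu}}{2}\rho$ at orders up to $\varepsilon^{2+\mu}$, with everything else — the $\varepsilon^2 V_1^\varepsilon(r)$ contribution (which is quadratic in $r=O(\varepsilon)$, hence $O(\varepsilon^4)$), the $\varepsilon^3 V_2^\varepsilon(r)$ contribution, and the $O(\varepsilon^3)$ tail of $-\tfrac{\varepsilon}{2}r$ — collected into $\varepsilon^3 V^\varepsilon(\rho)$. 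This gives exactly \eqref{def scr V1}. The estimates \eqref{estim V} then follow from the bounds on $V_1^\varepsilon, V_2^\varepsilon$ in Proposition \ref{proposition linear op exp}, the bound $\|r_\varepsilon+\mathtt{g}\|_s^{\textnormal{Lip}(\gamma)}\lesssim\varepsilon$ from Lemma \ref{approxim repsi 0}, the product law \eqref{ prod law}, and the fact that $\|\rho\|_{s_0+2}^{\textnormal{Lip}(\gamma)}\leqslant 1$; the $\varepsilon^\mu$ gain in front of $\|\rho\|_{s+1}$ arises because $\rho$ only enters $V^\varepsilon(\rho)$ through the combination $\varepsilon^{1+\mu}\rho$ inside $r$, against the $\varepsilon^{-3}$ normalization of $V^\varepsilon$. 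The kernel $\mathcal{K}_0^\varepsilon(\rho)$ and its bound \eqref{est K0 scrR} are obtained identically by gathering $\mathtt{K}_1^\varepsilon(r)$ (quadratic, hence $O(\varepsilon^2)$ small), $\mathtt{K}_2^\varepsilon(r)$, and the rescaling, and invoking \eqref{e-scrK1}--\eqref{e-scrK2}.

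\textbf{The tame estimates \eqref{est F}.} For the first-order bound, I would differentiate \eqref{functional scrG}--\eqref{def-Psi2} directly (rather than through the asymptotic expansion), observing that $d_r\mathbf{G}(r)[h]$ involves at most two $\theta$-derivatives acting on $h$ and products of $h$ with smooth functions of $r$ and of $p(\varphi)$ (through $\mathcal{R}_{\mathbf{D}}$, $K$ and the conformal quantities), together with the nonlocal kernels which are smoothing; then the product law \eqref{ prod law} in the weighted spaces $\textnormal{Lip}_\gamma(\mathscr{O},H^s)$ gives $\|d_\rho\mathbf{F}(\rho)[h]\|_s^{\textnormal{Lip}(\gamma)}\lesssim \|h\|_{s+2}^{\textnormal{Lip}(\gamma)} + \|\rho\|_{s+2}^{\textnormal{Lip}(\gamma)}\|h\|_{s_0+2}^{\textnormal{Lip}(\gamma)}$ — the loss of two derivatives matching the two $\partial_\theta$'s, and the $\varepsilon$-powers from the rescaling being bounded by a constant. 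The second-order bound $\|d_\rho^2\mathbf{F}(\rho)[h,h]\|_s^{\textnormal{Lip}(\gamma)}\lesssim \varepsilon^2(\dots)$ uses the extra crucial point that $d_r^2\mathbf{G}$ carries an additional factor of $\varepsilon$ relative to $d_r\mathbf{G}$ (visible in the $\varepsilon$-expansion: the genuinely nonlinear part of $\mathbf{G}$ starts at order $\varepsilon^4$ in the quadratic term, e.g. the $-\tfrac{\varepsilon^4}{4}\partial_\theta(r_0^2)$ seen in \eqref{taylor r01}), and each $d_\rho$ brings $\varepsilon^{1+\mu}$ while $\mathbf{F}$ is normalized by $\varepsilon^{-2-\mu}$, so $d_\rho^2\mathbf{F}=\varepsilon^{2+2\mu-2-\mu}d_r^2\mathbf{G}=\varepsilon^{\mu}d_r^2\mathbf{G}(\dots)$, and combined with the intrinsic extra $\varepsilon$ one recovers an overall $\varepsilon^2$ (using $\mu>0$). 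The main obstacle is purely organizational: one must keep precise track of every power of $\varepsilon$ through the rescaling and of which terms in $r=\varepsilon r_\varepsilon+\varepsilon^{1+\mu}\rho$ feed which order, and verify that nothing of order $\varepsilon^{2+\mu}$ or lower is mistakenly dumped into the remainder $V^\varepsilon(\rho)$ — in particular the cancellation/combination producing $\tfrac12+\varepsilon^2\mathtt{g}-\tfrac{\varepsilon^{2+\mu}}{2}\rho$ must be checked exactly, since the coefficient $\varepsilon^2\mathtt{g}$ at mode $2$ is precisely what drives the later reduction of the linearized operator via the monodromy matrix.
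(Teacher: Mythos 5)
Your proposal follows essentially the same route as the paper's proof: apply the chain rule to \eqref{def func F}, insert the expansion of $d_r\mathbf{G}$ from Proposition \ref{proposition linear op exp} at the state $r=\varepsilon r_\varepsilon+\varepsilon^{1+\mu}\rho$, use $r_\varepsilon=-\mathtt{g}+O(\varepsilon)$ from Lemma \ref{approxim repsi 0} to assemble the coefficient $\tfrac12+\varepsilon^2\mathtt{g}-\tfrac{\varepsilon^{2+\mu}}{2}\rho$, and collect the quadratic pieces ($V_1^\varepsilon$, $\mathcal{R}_1^\varepsilon$) together with $V_2^\varepsilon$, $\mathcal{R}_2^\varepsilon$ into $\varepsilon^3 V^\varepsilon(\rho)$ and $\varepsilon^3\mathcal{R}_0^\varepsilon(\rho)$, the estimates coming from Proposition \ref{proposition linear op exp} and the product law \eqref{ prod law}, exactly as the paper does. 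The only blemish is the displayed chain-rule factor, which should read $\tfrac{1}{\varepsilon}\,d_r\mathbf{G}(\varepsilon r_\varepsilon+\varepsilon^{1+\mu}\rho)[h]$ rather than $\tfrac{1}{\varepsilon^{1+\mu}}\,d_r\mathbf{G}$, as your own parenthetical and all your subsequent power counting correctly assume.
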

\begin{proof}
Linearizing the functional $\mathbf{F}$ defined by \eqref{def func F}  an using Proposition \ref{proposition linear op exp} we obtain
\begin{align*}
\partial_\rho \mathbf{F}(\rho)[h]&=\tfrac{1}{\varepsilon} (\partial_r\mathbf{G})(\varepsilon r_\varepsilon+\varepsilon^{1+\mu}\rho)[h]\\ \nonumber&=\varepsilon^2\omega_0\partial_\varphi  h +\partial_{\theta}\Big[\Big(\tfrac12- \tfrac{\varepsilon^2}{2} r_\varepsilon- \tfrac{\varepsilon^{2+\mu}}{2} \rho(\varphi,\theta){+}\tfrac{\varepsilon^2}{2}\mathtt{g}\\ 
\nonumber&\quad+\varepsilon^2{V}_1^\varepsilon(\varepsilon r_\varepsilon+\varepsilon^{1+\mu}\rho)+\varepsilon^3{V}_2^\varepsilon(\varepsilon r_\varepsilon+\varepsilon^{1+\mu}\rho)\Big) h \Big]-\tfrac{1}{2}\mathbf{H}[h]\nonumber\\ 
\nonumber&\quad+\varepsilon^2\mathbf{Q}_1[h]+\varepsilon^2 \partial_\theta\mathcal{R}_1^\varepsilon(\varepsilon r_\varepsilon+\varepsilon^{1+\mu}\rho)[h]+\varepsilon^3 \partial_\theta\mathcal{R}_2^\varepsilon(\varepsilon r_\varepsilon+\varepsilon^{1+\mu}\rho)[h].
\end{align*}
By \eqref{moul2} we infer
\begin{equation}\label{est r2}
{-} r_\varepsilon(\varphi,\theta)+\mathtt{g}(\varphi,\theta)\triangleq 2\mathtt{g}(\varphi,\theta)+2\varepsilon r_2(\varphi,\theta), \qquad \textnormal{with}\qquad \|r_2\|_{s}^{\textnormal{Lip}(\gamma)}\lesssim 1. \end{equation}
Setting 
\begin{align*}
{V}^\varepsilon(\rho)&\triangleq  r_2+\tfrac1\varepsilon {V}_1^\varepsilon(\varepsilon r_\varepsilon+\varepsilon^{1+\mu}\rho)+{V}_2^\varepsilon(\varepsilon r_\varepsilon+\varepsilon^{1+\mu}\rho),\\
\mathcal{R}_0^\varepsilon(\rho)[h]&\triangleq \tfrac1\varepsilon\mathcal{R}_1^\varepsilon(\varepsilon r_\varepsilon+\varepsilon^{1+\mu}\rho)[h]+\mathcal{R}_2^\varepsilon(\varepsilon r_\varepsilon+\varepsilon^{1+\mu}\rho)[h],
\end{align*}
gives \eqref{def L0} and \eqref{def scr V1}. The estimates \eqref{estim V}, \eqref{est K0 scrR}, \eqref{est F}  follow easily from Proposition \ref{proposition linear op exp}. 
 This ends the proof of Lemma~\ref{approxim repsi}.
\end{proof}

\subsection{Straightening the transport part}\label{sec reduction transp}
The purpose of this section is to conjugate the linearized operator $\mathcal{L}_0$ in \eqref{def L0} through appropriate symplectic changes of variables so that in the new coordinates, the transport part has  constant coefficients.  Following the approach outlined in \cite[Sec. 4]{HHM23}, we begin by performing   a suitable  change of variable acting only on space to transform the vector field in the advection part into a constant one, up to an error term of small size \mbox{in $\varepsilon$.} Notably, the time degeneracy is advantageous in this step. With this new structure, one may apply the standard KAM reduction method in the spirit of \cite{BM21,HHM21}. 
\begin{proposition}\label{prop CVAR1}
Let $(\gamma,s_0,S)$ as in \eqref{cond1}. There exists ${\varepsilon}_0>0$ 
such that if 
		\begin{equation}\label{smallness CVAR1}
			\varepsilon\leqslant\varepsilon_0,\qquad\|\rho\|_{s_0+2}^{\textnormal{Lip}(\gamma)}\leqslant 1,
		\end{equation}
 there exist $\beta_1$ taking the form
\begin{align*}
	\beta_{1}(\lambda,\varphi,\theta)=\varepsilon b_{1}(\lambda,\varphi)+\varepsilon^{2}b_2(\lambda,\varphi,\theta)+\varepsilon^{2+\mu}\mathfrak{r}(\lambda,\varphi,\theta),
\end{align*}
with 
\begin{equation}\label{def b2}
	b_{2}(\lambda,\varphi,\theta)\triangleq\textnormal{Im}\Big\{\mathtt{w}_2\big(p_{\lambda}(\varphi)\big)e^{\ii2\theta}\Big\},
\end{equation}
where $\mathtt{w}_2(p)$ is defined in \eqref{ttw2} and an invertible symplectic change of variables
\begin{equation}\label{CVAR1}
\mathscr{B}_1 h(\lambda,\varphi,\theta)\triangleq\big(1+\partial_{\theta}\beta_1(\lambda,\varphi,\theta)\big)h\big(\lambda,\varphi,\theta+\beta_1(\lambda,\varphi,\theta)\big)
\end{equation}
such that
\begin{align}\label{conjugaison scrB1}
\mathscr{B}_1^{-1}\Big(\varepsilon^2\omega(\lambda)\partial_\varphi+\partial_\theta\big[\mathbf{V}_1^\varepsilon(\rho)\cdot\big]\Big)\mathscr{B}_1=\varepsilon^2\omega(\lambda)\partial_\varphi+\partial_{\theta}\big[\big(\mathtt{c}_0+\varepsilon^{6}{\mathbf{V}}_2^\varepsilon({\rho})\big)\cdot\big].
\end{align}
Furthermore, we have
\begin{enumerate}
\item The space-time independent function $\lambda\mapsto\mathtt{c}_0(\lambda)$ writes
\begin{equation}\label{ttc0}
\mathtt{c}_0\triangleq\tfrac{1}{2}+\varepsilon^{3}\mathtt{c}_1,\qquad\|\mathtt{c}_1\|^{\textnormal{Lip}(\gamma)}\lesssim 1.\end{equation}
\item The functions $\beta_1$, $\mathfrak{r}$, $b_1$ and  $\mathbf{V}_2^\varepsilon({\rho})$ satisfy the following estimates for all $s\in[s_{0},S],$ 
\begin{equation}\label{est beta1 b1 frakr bfV2}
\begin{aligned}
\|\beta_1\|_{s}^{\textnormal{Lip}(\gamma)} &\lesssim \varepsilon\left(1+\|\rho\|_{s+{2}}^{\textnormal{Lip}(\gamma)}\right),\\
\|b_1\|_{s}^{\textnormal{Lip}(\gamma)}+\|\mathfrak{r}\|_{s}^{\textnormal{Lip}(\gamma)} &\lesssim 1+\|\rho\|_{s+{2}}^{\textnormal{Lip}(\gamma)},\\
\|\mathbf{V}_2^\varepsilon({\rho})\|_{s}^{\textnormal{Lip}(\gamma)}&\lesssim 1+\|\rho\|_{s+{3}}^{\textnormal{Lip}(\gamma)}.
\end{aligned}
\end{equation}
\item The changes of variables $\mathscr{B}_1^{\pm 1}$  satisfy the following estimates for all $s\in[s_{0},S],$
\begin{equation}\label{est scrB1}
\|\mathscr{B}_1^{\pm1}h\|_{s}^{\textnormal{Lip}(\gamma)}\lesssim\|h\|_{s}^{\textnormal{Lip}(\gamma)}+\varepsilon\|\rho\|_{s+3}^{\textnormal{Lip}(\gamma)}\|h\|_{s_{0}}^{\textnormal{Lip}(\gamma)}.
\end{equation} 
	\item Given two states $\rho_{1}$ and $\rho_{2}$ with the smallness property \eqref{smallness CVAR1}, then 
		\begin{align}
			\|\Delta_{12}\mathtt{c}_0\|^{\textnormal{Lip}(\gamma)}&\lesssim\varepsilon^{3}\|\Delta_{12}\rho\|_{s_0+2}^{\textnormal{Lip}(\gamma)},\label{diff ttc0}\\ 
			\|\Delta_{12}\mathbf{V}_2^{\varepsilon}\|_{s}^{\textnormal{Lip}(\gamma)}&\lesssim\|\Delta_{12}\rho\|_{s+3}^{\textnormal{Lip}(\gamma)}+\|\Delta_{12}\rho\|_{s_0+3}^{\textnormal{Lip}(\gamma)}\max_{k\in\{1,2\}}\|\rho_k\|_{s+3}^{\textnormal{Lip}(\gamma)}.\label{diff bfV2}
		\end{align}
\end{enumerate}

\end{proposition}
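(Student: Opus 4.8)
The strategy is the classical ``straightening of the transport vector field'' carried out in two nested steps, following closely \cite[Sec.~4]{HHM23}, but with the twist that the leading coefficient of the advection part is $\tfrac12+\varepsilon^2\mathtt{g}+O(\varepsilon^{2+\mu})$ rather than exactly constant, and that the time-derivative term carries the small factor $\varepsilon^2\omega(\lambda)$. This degeneracy in time is in fact an advantage: because the transport operator is $\varepsilon^2\omega(\lambda)\partial_\varphi+\partial_\theta(\mathbf V_1^\varepsilon(\rho)\cdot)$, the ``frequency ratio'' between the $\varphi$ and $\theta$ directions is of order $\varepsilon^2$, so solving the cohomological equations only requires inverting $\partial_\theta$ (plus a small $\varepsilon^2\omega\partial_\varphi$ perturbation), with no genuine small-divisor obstruction at this stage. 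I would first record the explicit structure $\mathbf V_1^\varepsilon(\rho)=\tfrac12+\varepsilon^2\mathtt g-\tfrac{\varepsilon^{2+\mu}}2\rho+\varepsilon^3 V^\varepsilon(\rho)$ from \eqref{def scr V1} and look for a symplectic change of the form \eqref{CVAR1} with generator $\beta_1=\varepsilon b_1(\lambda,\varphi)+\varepsilon^2 b_2(\lambda,\varphi,\theta)+\varepsilon^{2+\mu}\mathfrak r(\lambda,\varphi,\theta)$.

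\textbf{First step: the $\varepsilon^2$ order.} Conjugating $\varepsilon^2\omega\partial_\varphi+\partial_\theta(\mathbf V_1^\varepsilon\cdot)$ by $\mathscr B_1$ produces, at the formal level, a transport operator with new coefficient $(\mathbf V_1^\varepsilon\circ(\mathrm{Id}+\beta_1))\,(1+\partial_\theta\beta_1)+\varepsilon^2\omega\,\partial_\varphi\beta_1\circ(\ldots)$ divided by $(1+\partial_\theta\beta_1)$; I would expand this in $\varepsilon$. At order $\varepsilon^2$ the equation to kill the $\theta$-dependent part is $\tfrac12\partial_\theta b_2+\varepsilon^0(\text{the }\varepsilon^2\text{-part of }\mathbf V_1^\varepsilon)=\text{const in }\theta$, i.e. $\tfrac12\partial_\theta b_2=-\mathtt g+\langle\mathtt g\rangle_\theta$. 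Since $\mathtt g(\varphi,\theta)=\mathrm{Re}\{\mathtt w_2(p(\varphi))e^{2\ii\theta}\}$ is localized purely on the spatial mode $\pm2$ and has zero $\theta$-average, this is solved explicitly by $b_2(\lambda,\varphi,\theta)=\mathrm{Im}\{\mathtt w_2(p_\lambda(\varphi))e^{2\ii\theta}\}$, which is exactly \eqref{def b2}; the $\varepsilon$-order term $b_1(\lambda,\varphi)$ is chosen to absorb the $\varphi$-dependence generated by $\varepsilon^2\omega\partial_\varphi$ hitting the lower-order pieces (this is where $b_1$ depends on $\varphi$ only). The space-time average of the resulting coefficient gives $\mathtt c_0=\tfrac12+\varepsilon^3\mathtt c_1$ with $\|\mathtt c_1\|^{\textnormal{Lip}(\gamma)}\lesssim1$, which is \eqref{ttc0}.

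\textbf{Second step: iterating to absorb the remainder.} After the first change the coefficient is $\mathtt c_0+\varepsilon^{2+\mu}(\text{terms involving }\rho)+\varepsilon^3(\ldots)$; I would then solve a further cohomological equation for the $\varepsilon^{2+\mu}\mathfrak r$ correction to remove the $\theta$-dependent, $\rho$-dependent part, paying a loss of derivatives controlled by tame estimates — here one uses the product law \eqref{ prod law} and the composition estimates for the change of variables, exactly as in \cite[Sec.~4]{HHM23}. The net effect is \eqref{conjugaison scrB1} with a residual coefficient $\varepsilon^6\mathbf V_2^\varepsilon(\rho)$: the gain of the extra powers of $\varepsilon$ comes from the fact that $\rho$ enters only through $\varepsilon^{2+\mu}\rho$ and the smallness $\varepsilon\le\varepsilon_0$, combined with the $\varepsilon^2$ prefactor in front of $\omega\partial_\varphi$. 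The tame estimates \eqref{est beta1 b1 frakr bfV2}, \eqref{est scrB1} follow from differentiating the explicit formulas for $b_1,b_2,\mathfrak r$ and from the standard estimate $\|\mathscr B_1^{\pm1}h\|_s\lesssim\|h\|_s+\|\beta_1\|_s\|h\|_{s_0}$ for a diffeomorphism of the torus close to the identity (note $\|\beta_1\|_{s_0+1}^{\textnormal{Lip}(\gamma)}\lesssim\varepsilon$ guarantees invertibility under \eqref{smallness CVAR1}); the difference estimates \eqref{diff ttc0}, \eqref{diff bfV2} are obtained by the same manipulations applied to $\Delta_{12}$, using that $\mathtt c_0$ and $b_2$ depend on $\rho$ only through the $\varepsilon^{2+\mu}\rho$ and $\varepsilon^3 V^\varepsilon(\rho)$ terms, so $\Delta_{12}\mathtt c_0=O(\varepsilon^3\Delta_{12}\rho)$.

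\textbf{Main obstacle.} The genuinely delicate point is bookkeeping the powers of $\varepsilon$ precisely enough to land on $\varepsilon^6\mathbf V_2^\varepsilon(\rho)$ in \eqref{conjugaison scrB1} while keeping all the tame (and difference) estimates with only the stated loss of derivatives ($s\to s+2$ or $s+3$). This requires carefully tracking how each cohomological solve trades a power of $\varepsilon$ against one or two derivatives, and checking that the small factor $\varepsilon^2$ in front of $\omega(\lambda)\partial_\varphi$ is never ``wasted'' — i.e. that every term surviving to the remainder genuinely carries enough powers of $\varepsilon$. Everything else (explicit solution of the mode-$\pm2$ cohomological equation, symplecticity of $\mathscr B_1$, composition estimates) is routine and can be imported almost verbatim from \cite{HHM23}.
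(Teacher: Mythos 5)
Your overall strategy is the right one and matches the paper's: no small divisors are needed, $b_2$ is obtained by inverting $\partial_\theta$ on the mode-$\pm2$ function $\mathtt{g}$, the $\varphi$-dependent $\theta$-average is absorbed into a function $b_1(\lambda,\varphi)$ by inverting $\omega(\lambda)\partial_\varphi$ on a zero-$\varphi$-average datum, the constant $\mathtt{c}_0=\tfrac12+\varepsilon^3\mathtt{c}_1$ comes out of the space-time average, and the tame and difference estimates follow from the composition lemmas as in \cite{HHM23}. However, there is a genuine gap exactly at the point you yourself flag as the "main obstacle": the claim that your two-step scheme lands on a remainder of size $\varepsilon^{6}$ in \eqref{conjugaison scrB1}. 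With the construction as you describe it (first kill only $\mathtt{g}$ at order $\varepsilon^2$ via $b_2$, then kill the remaining $\theta$-dependent, $\rho$-dependent part via the generator piece $\varepsilon^{2+\mu}\mathfrak{r}$), the new terms created by the correction are $\varepsilon^2\omega(\lambda)\partial_\varphi\big(\varepsilon^{2+\mu}\mathfrak{r}\big)$ and $\varepsilon^{2}\mathtt{g}\,\varepsilon^{2+\mu}\partial_\theta\mathfrak{r}$, i.e.\ of size $\varepsilon^{4+\mu}$ (and the piece of $\mathfrak{r}$ cancelling $\varepsilon^3 V^\varepsilon(\rho)$ even produces $\varepsilon^{5}$ terms). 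Since $\mu\in(0,1)$, these are strictly larger than $\varepsilon^{6}$, so the scheme as written proves the conjugation identity only with a residual coefficient $O(\varepsilon^{4+\mu})$, not $O(\varepsilon^{6})$; the heuristic "$\rho$ enters only through $\varepsilon^{2+\mu}\rho$ plus the $\varepsilon^2$ prefactor" does not supply the missing two powers.

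The paper reaches $\varepsilon^{6}$ by a more structured ansatz $\beta_1=\beta_{1,1}(\lambda,\varphi)+\varepsilon^2\beta_{1,2}+\varepsilon^4\beta_{1,3}$ with two \emph{complete} cohomological solves in $\theta$: the first equation $\tfrac12\partial_\theta\beta_{1,2}=\mathtt{F}_1-\langle\mathtt{F}_1\rangle_\theta$ takes as datum the \emph{whole} $\theta$-dependent deviation $\mathtt{F}_1=-\mathtt{g}+\tfrac12\varepsilon^\mu\rho-\varepsilon V^\varepsilon(\rho)$ (not just $\mathtt{g}$), so that after it the deviation sits exactly at order $\varepsilon^4$, with datum $\mathtt{F}_2=-\omega(\lambda)\partial_\varphi\beta_{1,2}-\big(\mathtt{g}-\tfrac12\varepsilon^\mu\rho+\varepsilon V^\varepsilon(\rho)\big)\partial_\theta\beta_{1,2}$; the second solve $\tfrac12\partial_\theta\beta_{1,3}=\mathtt{F}_2-\langle\mathtt{F}_2\rangle_\theta$ then removes it, and only the terms generated by $\varepsilon^4\beta_{1,3}$ survive, which are $\varepsilon^2\times\varepsilon^4=\varepsilon^{6}$; the equation $\omega(\lambda)\partial_\varphi\beta_{1,1}=-\langle\mathtt{F}_1\rangle_\theta-\varepsilon^2\langle\mathtt{F}_2\rangle_\theta+\langle\mathtt{F}_1\rangle_{\theta,\varphi}+\varepsilon^2\langle\mathtt{F}_2\rangle_{\theta,\varphi}$ handles both $\theta$-averages at once. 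The stated form $\beta_1=\varepsilon b_1+\varepsilon^2 b_2+\varepsilon^{2+\mu}\mathfrak{r}$ is then recovered a posteriori, with $\mathfrak{r}=\mathfrak{r}_2+\varepsilon^{2-\mu}\beta_{1,3}$, since $\beta_{1,2}=b_2+\varepsilon^\mu\mathfrak{r}_2$. To repair your proof you must either adopt this two-stage construction (first solve with the full $\mathtt{F}_1$, second solve at order $\varepsilon^4$), or add a further iteration of your scheme at order $\varepsilon^{4+\mu}$; as it stands, the central quantitative assertion of the proposition — the $\varepsilon^{6}$ prefactor, which is precisely what feeds the smallness condition $N_0^{\mu_2}\varepsilon^{6}\gamma^{-1}\leqslant\varepsilon_0$ in the subsequent KAM step — is asserted rather than established.
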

\begin{proof} 
Let us consider a symplectic diffeomorphism of the torus in the form
\begin{equation}\label{change of variables}
\mathscr{B}_1h(\varphi,\theta)\triangleq\big(1+\partial_{\theta}\beta_1(\varphi,\theta)\big)\mathrm{B}_1h(\varphi,\theta),\qquad\mathrm{B}_1h(\varphi,\theta)\triangleq h\big(\varphi,\theta+\beta_{1}(\varphi,\theta)\big),
\end{equation}
with $\beta_1$ to be chosen later in order to reduce the size of the coefficients. Due to the symplectic nature of $\mathscr{B}_1,$ we have
\begin{align*}
	\langle h\rangle_{\theta}=0\quad\Rightarrow\quad\langle\mathscr{B}_1h\rangle_{\theta}=0.
\end{align*}
Applying Lemma \ref{lem tranfo CVAR}-2, we obtain
\begin{align*}
	\mathscr{B}_1^{-1}\Big(\varepsilon^2\omega(\lambda)\partial_\varphi+\partial_\theta\big[\mathbf{V}_{1}^\varepsilon(\rho)\cdot\big]\Big)\mathscr{B}_1=\varepsilon^2\omega(\lambda)\partial_\varphi+\partial_y\Big[\mathrm{B}_1^{-1}\big(\widehat{\mathbf{V}}_1^\varepsilon(\rho)\big)\cdot\Big],
\end{align*}
with, using \eqref{def scr V1}, 
\begin{align*}
\widehat{\mathbf{V}}_1^\varepsilon(\rho)&\triangleq\varepsilon^2\omega(\lambda)\partial_{\varphi}\beta_1+\mathbf{V}_1^\varepsilon(\rho)\big(1+\partial_\theta\beta_1\big)\\
&=\varepsilon^2\omega(\lambda)\partial_\varphi\beta_1+\Big(\tfrac{1}{2}+\varepsilon^{2}\mathtt{g}-\tfrac{\varepsilon^{2+\mu}}{2}\rho+\varepsilon^{3}{V}^\varepsilon(\rho)\Big)\big(1+\partial_\theta\beta_1\big).
\end{align*}
Select $\beta_1$ in the following form
\begin{equation}\label{def bfrak}
 \beta_1(\lambda,\varphi,\theta)=\beta_{1,1}(\lambda,\varphi)+\varepsilon^{2}\beta_{1,2}(\lambda,\varphi,\theta)+\varepsilon^{4}\beta_{1,3}(\lambda,\varphi,\theta),
\end{equation}
such that  for any $n\in\{1,2,3\},$ the function $\beta_{1,n}$ satisfies the following constraints 
\begin{eqnarray}  \label{constrain bk2}
     \begin{cases}
	 \tfrac{1}{2}\,\partial_{\theta}\beta_{1,2}&=\mathtt{F}_1-\langle \mathtt{F}_1\rangle_{\theta}, \qquad
		\mathtt{F}_1\triangleq-\mathtt{g}+\tfrac{1}{2}\varepsilon^\mu\rho-\varepsilon V^\varepsilon(\rho), \\
 \tfrac{1}{2}\,\partial_{\theta}\beta_{1,3}&=\mathtt{F}_2-\langle \mathtt{F}_2\rangle_{\theta},\qquad
		\mathtt{F}_2\triangleq-\omega(\lambda)\partial_\varphi \beta_{1,2}-\big( \mathtt{g}-\tfrac{1}{2}\varepsilon^{\mu}\rho+\varepsilon V^\varepsilon(\rho)\big)\partial_{\theta}\beta_{1,2}, 
      \end{cases} 
\end{eqnarray}	
and
\begin{equation}  \label{constrain bk1}
		\omega(\lambda)\partial_{\varphi}\beta_{1,1}=-\langle \mathtt{F}_1\rangle_{\theta}-\varepsilon^2 \langle\mathtt{F}_2\rangle_{\theta}+\langle \mathtt{F}_1\rangle_{\theta,\varphi}+\varepsilon^2 \langle\mathtt{F}_2\rangle_{\theta,\varphi}.
\end{equation}
With these choices, one can check that
\begin{align*}
	\widehat{\mathbf{V}}_1^\varepsilon(\rho)=\tfrac{1}{2}-\varepsilon^2\langle \mathtt{F}_1\rangle_{\theta,\varphi}-\varepsilon^4\langle \mathtt{F}_2\rangle_{\theta,\varphi}+\varepsilon^6\widetilde{\mathbf{V}}_1^\varepsilon(\rho),
\end{align*}
with
\begin{align}\label{def bfV1t}
\widetilde{\mathbf{V}}_1^\varepsilon(\rho)\triangleq\omega(\lambda)\partial_\varphi \beta_{1,3}+\big(\mathtt{g}-\tfrac{1}{2}\varepsilon^\mu\rho+\varepsilon V^\varepsilon(\rho)\big)\partial_\theta \beta_{1,3}.
\end{align}
Using in particular \eqref{ttw2}, we have
\begin{equation}\label{avg rho g}
	\langle\rho\rangle_{\theta}=0,\qquad\langle\mathtt{g}\rangle_{\theta}=0.
\end{equation}
Therefore, 
\begin{equation}\label{avg F1}
	\langle\mathtt{F}_1\rangle_{\theta}=-\varepsilon\langle V^{\varepsilon}(\rho)\rangle_{\theta}
\end{equation}
and
\begin{align*}
	\widehat{\mathbf{V}}_1^\varepsilon(\rho)=\tfrac{1}{2}+\varepsilon^{3} \langle {V}^\varepsilon(\rho)\rangle_{\theta,\varphi}-\varepsilon^4\langle F_2\rangle_{\theta,\varphi} +\varepsilon^6 \widetilde{\mathbf{V}}_1^\varepsilon(\rho).
\end{align*}
By defining
\begin{equation}\label{def bfV2}
\mathbf{V}_2^\varepsilon({\rho})\triangleq \mathrm{B}_1^{-1}\widetilde{\mathbf{V}}_1^\varepsilon(\rho),\qquad\mathtt{c}_0\triangleq \tfrac{1}{2}+\varepsilon^{3}\mathtt{c}_1,\qquad\mathtt{c}_1\triangleq\big(\langle {V}^\varepsilon(\rho)\rangle_{\theta,\varphi}-\varepsilon\langle F_2\rangle_{\theta,\varphi}\big),
\end{equation}
we end up with the identity \eqref{conjugaison scrB1}.
The first equation in \eqref{constrain bk2} writes
\begin{align*}
	\tfrac{1}{2}\,\partial_{\theta}\beta_{1,2}(\varphi,\theta)=-\mathtt{g}(\varphi,\theta)+\tfrac{1}{2}\varepsilon^\mu\rho-\varepsilon V^\varepsilon(\rho)+\varepsilon\langle  V^\varepsilon(\rho)\rangle_{\theta}.
\end{align*}
The right hand-side has zero space average according to \eqref{avg rho g}. Consequently, we can solve this equation via Fourier expansion to get from \eqref{ttw2} that
\begin{equation}\label{b1-expand}
\beta_{1,2}=b_{2}+\varepsilon^{\mu}\mathfrak{r}_{2},
\end{equation}
with $b_2$ as in \eqref{def b2} and $\mathfrak{r}_2$ satisfying, in view of \eqref{estim V}, the following esitmate
\begin{align*}
\|\mathfrak{r}_{2}\|_{s}^{\textnormal{Lip}(\gamma)}\lesssim1+\varepsilon^\mu\|\rho\|_{s+1}^{\textnormal{Lip}(\gamma)}.
\end{align*}
As consequence
\begin{align}\label{est b12}
\|\beta_{1,2}\|_{s}^{\textnormal{Lip}(\gamma)}\lesssim1+\|\rho\|_{{s+1}}^{\textnormal{Lip}(\gamma)}.
\end{align}
One can proceed in a similar way with the second equation in \eqref{constrain bk2} and get
\begin{align}\label{est b13}
\|\beta_{1,3}\|_{s}^{\textnormal{Lip}(\gamma)}\lesssim 1+\|\rho\|_{{s+2}}^{\textnormal{Lip}(\gamma)}.
\end{align}
Now, from \eqref{avg F1}, the equation \eqref{constrain bk1} becomes
\begin{align*}
	\omega(\lambda)\partial_\varphi \beta_{1,1}=\varepsilon \big[\langle{V}^\varepsilon(\rho)\rangle_{\theta}-\langle  {V}^\varepsilon(\rho)\rangle_{\theta,\varphi}+\varepsilon \langle\mathtt{F}_2\rangle_{\theta,\varphi}-\varepsilon \langle\mathtt{F}_2\rangle_{\theta}\big].
\end{align*}
The $\varphi$-average in the right hand-side vanishes so we can invert and find from \eqref{estim V},
\begin{equation}\label{est b11}
\beta_{1,1}=\varepsilon b_{1},\qquad\|b_{1}\|_{s}^{\textnormal{Lip}(\gamma)}\lesssim1+\varepsilon^\mu\|\rho\|_{s}^{\textnormal{Lip}(\gamma)}.
\end{equation}
Putting together the identities \eqref{def bfrak},\eqref{b1-expand} and \eqref{est b11} leads to
\begin{align*}
	\beta_1(\varphi,\theta)= \varepsilon b_1(\varphi)+\varepsilon^2b_{2}(\varphi,\theta)+\varepsilon^{2+\mu}\mathfrak{r}(\varphi,\theta),\qquad\textnormal{with}\qquad\mathfrak{r}(\varphi,\theta)\triangleq\mathfrak{r}_{2}(\varphi,\theta)+\varepsilon^{2-\mu}\beta_{1,3}(\varphi,\theta).
\end{align*}
Moreover, from the last identity and the foregoing estimates, we obtain 
\begin{align*}
	\|\beta_1\|_{s}^{\textnormal{Lip}(\gamma)}+\|\mathfrak{r}\|_{s}^{\textnormal{Lip}(\gamma)}\lesssim\varepsilon\big(1+\|\rho\|_{{s+2}}^{\textnormal{Lip}(\gamma)}\big).
\end{align*}
This estimate together with Lemma \ref{lemma estimates CVAR} concludes \eqref{est scrB1}. 
Now, from \eqref{constrain bk2}, \eqref{def bfV2}, \eqref{est b12} and \eqref{estim V}, we infer
\begin{align*}
	\|\mathtt{c}_1\|^{\textnormal{Lip}(\gamma)}\lesssim 1.
\end{align*}
In addition, by \eqref{def bfV1t}, \eqref{estim V}, \eqref{est b13}, and the product law  \eqref{ prod law}, we find
\begin{equation}\label{e-tildebfV1}
  \|\widetilde{\mathbf{V}}_1^\varepsilon(\rho)\|_{s}^{\textnormal{Lip}(\gamma)}\lesssim1+\|\rho\|_{s+{3}}^{\textnormal{Lip}(\gamma)}.
\end{equation}
Applying \eqref{tame comp} together with \eqref{e-tildebfV1} and \eqref{smallness CVAR1}, we get  
\begin{align*}
\|\mathbf{V}_2^\varepsilon(\rho)\|_{s}^{\textnormal{Lip}(\gamma)}&=\|\mathrm{B}_1^{-1}\widetilde{\mathbf{V}}_1^\varepsilon(\rho) \|_{s}^{\textnormal{Lip}(\gamma)}\\
& \leqslant\|\widetilde{\mathbf{V}}_1^\varepsilon(\rho) \|_{s}^{\textnormal{Lip}(\gamma)}\big(1+C\|\beta_1\|_{s_0}^{\textnormal{Lip}(\gamma)}\big)+C\|\beta_1\|_{s}^{\textnormal{Lip}(\gamma)}\|\widetilde{\mathbf{V}}_1^\varepsilon(\rho)\|_{s_0}^{\textnormal{Lip}(\gamma)}\Big)\nonumber\\
&\lesssim 1+\|\rho\|_{s+{3}}^{\textnormal{Lip}(\gamma)}.
\end{align*}
Similarly, from Lemma \ref{lemma estimates CVAR}, the second estimate in \eqref{estim V}, \eqref{smallness CVAR1} we conclude \eqref{diff ttc0}. Finally, by \eqref{ttc0} and \eqref{estim V}, we find \eqref{diff bfV2}.
This ends the proof of Proposition \ref{prop CVAR1}.
\end{proof}
Now, we shall state the final reduction step based on KAM tools. The proof is quite similar to that of {\cite[Prop. 4.2]{HHM23}} and we recommend the reader this references for more  details. To state the result, we need to use the following sequence $(N_n)_{n\in\mathbb{N}\cup\{-1\}}$ given by
	\begin{equation}\label{suite Nn}
		N_{-1}\triangleq1,\qquad\forall n\in\mathbb{N},\quad N_{n}\triangleq N_{0}^{\left(\frac{3}{2}\right)^{n}},
	\end{equation}
for some $N_0\geqslant2.$
\begin{proposition}\label{prop CVAR2}
	
Let $(\gamma,s_0,S)$ as in \eqref{cond1} and 
\begin{equation}\label{choix mu2}
\mu_{2}\geqslant 4\tau+3.
\end{equation}
There exists ${\varepsilon}_0>0$ such that if    
\begin{align}\label{smallness CVAR2}
\gamma\varepsilon^{-2}\leqslant1,\qquad N_{0}^{\mu_{2}}\varepsilon^{6}\gamma^{-1}\leqslant{\varepsilon}_0,\qquad\|\rho\|_{\frac{3}{2}\mu_{2}+2s_{0}+2\tau+4}^{\textnormal{Lip}(\gamma)}\leqslant1,
\end{align}
then, we can find $\mathtt{c}\triangleq \mathtt{c}(\lambda,\rho)\in \textnormal{Lip}_\gamma(\mathscr{O},\mathbb{R})$ and $\beta_2\in\textnormal{Lip}_\gamma(\mathscr{O},H^{S})$
with  the following properties:
\begin{enumerate}
\item The  function $\lambda\in \mathscr{O}\mapsto\mathtt{c}(\lambda,\rho)$  satisfies the following estimate,
\begin{equation*}
\| \mathtt{c}-\mathtt{c}_0 \|^{\textnormal{Lip}(\gamma)}\lesssim {\varepsilon}^{{6}},
\end{equation*}
where $\mathtt{c}_0$ is defined in \eqref{ttc0}.
\item Consider the transformation $\mathscr{B}_2$ given by
\begin{equation}\label{CVAR2}
	\mathscr{B}_2 h(\lambda,\varphi,\theta)\triangleq\big(1+\partial_{\theta}\beta_2(\lambda,\varphi,\theta)\big)h\big(\lambda,\varphi,\theta+\beta_2(\lambda,\varphi,\theta)\big).
\end{equation}
It is invertible with inverse taking the form
\begin{align*}
	\mathscr{B}_2^{-1} h(\lambda,\varphi,\theta)\triangleq\big(1+\partial_{\theta}\widehat{\beta}_2(\lambda,\varphi,\theta)\big)h\big(\lambda,\varphi,\theta+\widehat{\beta}_2(\lambda,\varphi,\theta)\big).
\end{align*}
Moreover, for any $s\in[s_{0},S],$ 
\begin{equation*}
\|\mathscr{B}_2^{\pm 1}h\|_{s}^{\textnormal{Lip}(\gamma)}\lesssim\|h\|_{s}^{\textnormal{Lip}(\gamma)}+{\varepsilon^{{6}}\gamma^{-1}}\|\rho\|_{s+2\tau+{5}}^{\textnormal{Lip}(\gamma)}\|h\|_{s_{0}}^{\textnormal{Lip}(\gamma)}
\end{equation*}
and 
\begin{equation}\label{estim beta2}
\|\widehat{\beta}_2\|_{s}^{\textnormal{Lip}(\gamma)}\lesssim\|\beta_2\|_{s}^{\textnormal{Lip}(\gamma)}\lesssim\varepsilon^{{6}}\gamma^{-1}\left(1+\|\rho\|_{s+2\tau+{4}}^{\textnormal{Lip}(\gamma)}\right).
\end{equation}
\item For any $n\in\mathbb{N}$, if $\lambda$ belongs to the Cantor set
\begin{equation}\label{Cantor set0}
{\mathscr{O}_{n}^{1}(\rho)}\triangleq\bigcap_{(l,j)\in\mathbb{Z}^{2}\atop 1\leqslant |j|\leqslant N_{n}}\Big\lbrace\lambda\in \mathscr{O}\quad\textnormal{s.t.}\quad\big|\varepsilon^2\omega(\lambda)l+j\mathtt{c}(\lambda,\rho)\big|\geqslant{\gamma}{|j|^{-\tau}}\Big\rbrace,
\end{equation}
then the following identity holds
\begin{align}\label{conjugaison scrB2}
\mathscr{B}_2^{-1}\Big(&\varepsilon^2\omega(\lambda)\partial_\varphi+\partial_{\theta}\big[\big(\mathtt{c}_0+\varepsilon^{6}\mathbf{V}_2^\varepsilon(\rho)\big)\cdot\big]\Big)\mathscr{B}_2=\varepsilon^2\omega(\lambda)\partial_\varphi+\mathtt{c}(\lambda,\rho)\partial_{\theta}+\mathtt{E}_{n},
\end{align}
with $\mathtt{E}_{n}$ a linear operator satisfying
\begin{equation}\label{estimate En0}
\|\mathtt{E}_{n}h\|_{s_0}^{\textnormal{Lip}(\gamma)}\lesssim \varepsilon^{{6}} N_{0}^{\mu_{2}}N_{n+1}^{-\mu_{2}}\|h\|_{s_{0}+2}^{\textnormal{Lip}(\gamma)}.
\end{equation}
	\item Given two small states $\rho_{1}$ and $\rho_{2}$ both satisfying the smallness properties \eqref{smallness CVAR2}, we have 
			\begin{align}\label{diff ttc}
				\|\Delta_{12}\mathtt{c}\|^{\textnormal{Lip}(\gamma)}&\lesssim\varepsilon^{3}\| \Delta_{12}\rho\|_{2{s}_{0}+2\tau+3}^{\textnormal{Lip}(\gamma)}.
			\end{align}
\end{enumerate}
\end{proposition}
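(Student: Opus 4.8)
The plan is to follow closely the scheme of \cite[Prop. 4.2]{HHM23}, adapting it to the operator $\varepsilon^2\omega(\lambda)\partial_\varphi+\partial_{\theta}\big[\big(\mathtt{c}_0+\varepsilon^{6}\mathbf{V}_2^\varepsilon(\rho)\big)\cdot\big]$ obtained in Proposition \ref{prop CVAR1}. First, since $\mathscr{B}_1$ has already straightened the transport vector field up to a perturbation of size $\varepsilon^6$, I would set up a quadratic KAM iteration in which, at each step $n$, the symplectic change of variables $\mathscr{B}_2^{(n)}$ of the form \eqref{CVAR2} is used to reduce the size of the variable part of the zeroth-order coefficient. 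Writing the current operator as $\varepsilon^2\omega(\lambda)\partial_\varphi+\partial_\theta[(\mathtt{c}_n+\mathbf{p}_n)\cdot]$ with $\mathbf{p}_n$ of zero average in $(\varphi,\theta)$, the homological equation to solve at each step is
\begin{equation*}
	\varepsilon^2\omega(\lambda)\partial_\varphi\beta^{(n)}+\mathtt{c}_n\partial_\theta\beta^{(n)}=-\Pi_{N_n}\mathbf{p}_n+\langle\mathbf{p}_n\rangle,
\end{equation*}
which is solved in Fourier coefficients: the coefficient of $\mathbf{e}_{l,j}$ is divided by $\ii(\varepsilon^2\omega(\lambda)l+j\mathtt{c}_n(\lambda))$, and this is precisely where the Cantor condition \eqref{Cantor set0} enters, guaranteeing the lower bound $\gamma|j|^{-\tau}$ on the small divisors for $1\leqslant|j|\leqslant N_n$. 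The truncation $\Pi_{N_n}$ at the scales \eqref{suite Nn} produces the super-exponentially small tail responsible for the bound \eqref{estimate En0}, and the choice \eqref{choix mu2} of $\mu_2$ absorbs the loss of derivatives coming from the small divisors and from the frequency truncation.

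Next I would establish convergence of the scheme. The update of the coefficient is $\mathtt{c}_{n+1}=\mathtt{c}_n+\langle\mathbf{p}_n\rangle_{\varphi,\theta}$ and the new perturbation $\mathbf{p}_{n+1}$ is quadratically small in $\mathbf{p}_n$ plus the truncation error; with the smallness assumption $N_0^{\mu_2}\varepsilon^6\gamma^{-1}\leqslant\varepsilon_0$ in \eqref{smallness CVAR2}, the standard Nash–Moser/KAM bookkeeping (iterating tame estimates in the scale of Sobolev norms on $[s_0,S]$) yields geometric convergence of $\beta^{(n)}$ in $H^s$ and convergence of $\mathtt{c}_n$ to a limit $\mathtt{c}=\mathtt{c}(\lambda,\rho)$ in $\textnormal{Lip}_\gamma(\mathscr{O},\mathbb{R})$, together with the quantitative bounds $\|\mathtt{c}-\mathtt{c}_0\|^{\textnormal{Lip}(\gamma)}\lesssim\varepsilon^6$ (since $\mathbf{p}_0$ is already of size $\varepsilon^6$ by Proposition \ref{prop CVAR1}) and $\|\beta_2\|_s^{\textnormal{Lip}(\gamma)}\lesssim\varepsilon^6\gamma^{-1}(1+\|\rho\|_{s+2\tau+4}^{\textnormal{Lip}(\gamma)})$. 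The composition $\mathscr{B}_2\triangleq\mathscr{B}_2^{(0)}\circ\mathscr{B}_2^{(1)}\circ\cdots$ is itself a symplectic change of variables of the form \eqref{CVAR2}, and its invertibility plus the expression of $\mathscr{B}_2^{-1}$ as a change of variables with generating function $\widehat\beta_2$ follow from the diffeomorphism property of $\theta\mapsto\theta+\beta_2(\varphi,\theta)$, whose inverse Sobolev estimates are those of Lemma \ref{lemma estimates CVAR}. Stopping the iteration at step $n$ (i.e. keeping only finitely many conjugations and only the Cantor conditions up to $N_n$) gives exactly the conjugation identity \eqref{conjugaison scrB2} with the residual operator $\mathtt{E}_n$ bounded by \eqref{estimate En0}; this finite-step formulation is what is needed downstream, because the Cantor set $\mathscr{O}_n^1(\rho)$ only constrains finitely many divisors.

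Finally, the Lipschitz dependence on the external parameter $\lambda$ is tracked throughout by carrying the $\textnormal{Lip}(\gamma)$ norms in every estimate, as in \cite{HHM23}; the key point is that differentiating the small divisors in $\lambda$ costs one power of $\gamma^{-1}$ but is compensated by the definition of the weighted norm and by the assumption $\gamma\varepsilon^{-2}\leqslant1$. For the dependence on the state, given $\rho_1,\rho_2$ both satisfying \eqref{smallness CVAR2}, I would propagate difference estimates along the iteration: $\Delta_{12}\mathtt{c}_{n+1}=\Delta_{12}\mathtt{c}_n+\Delta_{12}\langle\mathbf{p}_n\rangle$, and using \eqref{diff bfV2} for the initial difference $\|\Delta_{12}\mathbf{V}_2^\varepsilon\|$ together with the tame difference estimates for the homological solutions, one sums the geometric series to obtain \eqref{diff ttc}, the worst contribution being the one from $\mathbf{p}_0$ which carries the factor $\varepsilon^3$ (note $\varepsilon^6\leqslant\varepsilon^3$) and the loss $2s_0+2\tau+3$ derivatives. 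The main obstacle — and the only genuinely delicate point — is the interplay between the smallness threshold, the derivative losses from the small divisors, and the degeneracy in $\varepsilon$: one must verify that the $\varepsilon^6\gamma^{-1}$ gain at each step beats the $N_n^{\mu_2}$ loss so that the iteration converges with the prescribed final regularity $\tfrac32\mu_2+2s_0+2\tau+4$; this is a careful but routine bookkeeping once the parameters are linked by \eqref{choix mu2} and \eqref{smallness CVAR2}, and it is handled exactly as in the reference.
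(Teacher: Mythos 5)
Your proposal follows essentially the same route as the paper, which does not write out the argument but defers to \cite[Prop. 4.2]{HHM23}: a quadratic KAM iteration with truncated homological equations solved under Diophantine lower bounds on the divisors, convergence of the composed changes of variables of the form \eqref{CVAR2}, and propagation of the $\textnormal{Lip}(\gamma)$ and difference estimates yielding \eqref{estim beta2}, \eqref{estimate En0} and \eqref{diff ttc}. The only point you pass over quickly is that the Cantor set \eqref{Cantor set0} is stated with the \emph{limit} coefficient $\mathtt{c}(\lambda,\rho)$ rather than the step-wise coefficients $\mathtt{c}_k$, so one must verify (using the fast decay $\|\mathtt{c}-\mathtt{c}_k\|^{\textnormal{Lip}(\gamma)}\lesssim\varepsilon^{6}N_{k-1}^{-a}$) that it is contained in the sets where each intermediate homological equation is solvable; this inclusion is routine and is handled exactly as in the cited reference.
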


Now we can state the main result of this subsection  which addresses the conjugation of the linearized \mbox{operator $\mathcal{L}_0$,} as described by \eqref{def func F}, using the changes of coordinates discussed before.  
\begin{proposition}\label{prop CVAR} 
Given the conditions \eqref{cond1}, \eqref{choix mu2} and \eqref{smallness CVAR2}.  Let $\mathscr{B}_1,\mathscr{B}_2$ as in \eqref{CVAR1} and \eqref{CVAR2}. Then, the following properties hold true.
\begin{enumerate}
\item The operator 
\begin{equation*}
\mathscr{B}\triangleq \mathscr{B}_1\mathscr{B}_2
\end{equation*}
writes 
\begin{equation}\label{CVAR}
\mathscr{B}h(\lambda,\varphi,\theta)=\big(1+\partial_{\theta}\beta(\lambda,\varphi,\theta)\big)  h\big(\lambda,\varphi,\theta+\beta(\lambda,\varphi,\theta)\big),
\end{equation}
with 
\begin{align*}
	\beta(\lambda,\varphi,\theta)\triangleq\beta_1(\lambda,\varphi,\theta)+\beta_2\big(\lambda,\varphi,\theta+\beta_1(\lambda,\varphi,\theta)\big)
\end{align*}
satisfying the estimate
\begin{equation*}
\|\beta\|_{s}^{\textnormal{Lip}(\gamma)}\lesssim  
\varepsilon\big(1+\| \rho\|_{s+2\tau+{4}}^{\textnormal{Lip}(\gamma)}\big).
\end{equation*}
Moreover, $\mathscr{B}$ is invertible and satisfies the estimate, for any $s\in[s_0,S]$
\begin{align*}
\|\mathscr{B}^{\pm 1} h\|_{s}^{\textnormal{Lip}(\gamma)}& \lesssim   \|h\|_{s}^{\textnormal{Lip}(\gamma)}+ \| \rho\|_{s+2\tau+{5}}^{\textnormal{Lip}(\gamma)}\|h\|_{s_0}^{\textnormal{Lip}(\gamma)}.
\end{align*}
\item For any $\lambda$ in the Cantor set ${\mathscr{O}_{n}^{1}(\rho)}$, defined in \eqref{Cantor set0},
one has 
\begin{align*}
\mathcal{L}_1\triangleq\mathscr{B}^{-1}\mathcal{L}_0\mathscr{B}&= \varepsilon^2\omega(\lambda)\partial_\varphi +\mathtt{c}(\lambda,\rho)\partial_{\theta}-\tfrac{1}{2}\mathbf{H}+\varepsilon^2\partial_\theta\mathfrak{Q}_1+\partial_{\theta}\mathfrak{R}_{1}^{\varepsilon}+\mathtt{E}_{n},
\end{align*}
where the linear operator $\mathtt{E}_{n}$ is the same as in Proposition $\ref{prop CVAR2}$ and 
\begin{align*}
\mathfrak{Q}_1[h](\varphi,\theta)&\triangleq  \int_{\mathbb{T}} h( \varphi, \eta)\mathtt{Q}_1\big(p(\varphi),\theta,\eta\big)d\eta,\\
\mathtt{Q}_1(p,\theta,\eta)&\triangleq\tfrac{\cos(\theta-\eta)}{r_{\mathbf{D}}^2(p)}-\tfrac{1}{2}\textnormal{Re}\left\lbrace\big(\partial_{z}\mathcal{R}_{\mathbf{D}}(p)\big)^2e^{\ii(\theta+\eta)}\right\rbrace.
\end{align*}
In addition, the operator $\partial_{\theta}\mathfrak{R}_{1}^{\varepsilon}$ satisfies  the estimates: for any $s\in[s_0,S],\,N\geqslant0$, 
\begin{equation*}
\begin{aligned}
\|\partial_{\theta}\mathfrak{R}_{1}^{\varepsilon} h\|_{s,N}^{\textnormal{Lip}(\gamma)}&\lesssim \big(\varepsilon^{2+{\mu}} +\varepsilon^{6} \gamma^{-1}\big) \Big(\|h\|_{{s}}^{\textnormal{Lip}(\gamma)}\big(1+ \|\rho\|_{s_0+2\tau+{5}+N}^{\textnormal{Lip}(\gamma)}\big) + \|h\|_{{s_0}}^{\textnormal{Lip}(\gamma)}\|\rho\|_{s+2\tau+{5}+N}^{\textnormal{Lip}(\gamma)} \Big).
\end{aligned}
\end{equation*}
\end{enumerate}

\end{proposition}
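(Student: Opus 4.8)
The plan is to prove Proposition~\ref{prop CVAR} by simply composing the two reduction steps already established in Propositions~\ref{prop CVAR1} and \ref{prop CVAR2}, keeping careful track of how the non-local and remainder operators transform under the conjugation. First I would record the structure of $\mathscr{B}=\mathscr{B}_1\mathscr{B}_2$: since both $\mathscr{B}_1$ and $\mathscr{B}_2$ are symplectic diffeomorphisms of the torus acting only on the $\theta$-variable, of the form $h\mapsto(1+\partial_\theta\beta_i)h(\cdot,\theta+\beta_i)$, their composition is again of this form with $\beta(\lambda,\varphi,\theta)=\beta_1(\lambda,\varphi,\theta)+\beta_2\big(\lambda,\varphi,\theta+\beta_1(\lambda,\varphi,\theta)\big)$. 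The estimate on $\|\beta\|_s^{\textnormal{Lip}(\gamma)}$ follows by combining the bound $\|\beta_1\|_s^{\textnormal{Lip}(\gamma)}\lesssim\varepsilon(1+\|\rho\|_{s+2}^{\textnormal{Lip}(\gamma)})$ from \eqref{est beta1 b1 frakr bfV2} with $\|\beta_2\|_s^{\textnormal{Lip}(\gamma)}\lesssim\varepsilon^6\gamma^{-1}(1+\|\rho\|_{s+2\tau+4}^{\textnormal{Lip}(\gamma)})$ from \eqref{estim beta2}, the composition/tame estimates (Lemma~\ref{lemma estimates CVAR} type bounds, e.g. \eqref{tame comp}), and the smallness assumptions \eqref{smallness CVAR2}; the $\varepsilon^6\gamma^{-1}$ contribution is negligible compared to $\varepsilon$ so the leading size is $\varepsilon$. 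The bound on $\|\mathscr{B}^{\pm1}h\|_s^{\textnormal{Lip}(\gamma)}$ is obtained the same way by chaining the corresponding estimates \eqref{est scrB1} and the one in Proposition~\ref{prop CVAR2}-2.

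Next I would conjugate $\mathcal{L}_0$ from \eqref{def L0} term by term. Write $\mathcal{L}_0=\big(\varepsilon^2\omega(\lambda)\partial_\varphi+\partial_\theta[\mathbf{V}_1^\varepsilon(\rho)\cdot]\big)-\tfrac12\mathbf{H}+\varepsilon^2\partial_\theta\mathbf{Q}_1+\varepsilon^3\partial_\theta\mathcal{R}_0^\varepsilon(\rho)$. The transport block: by \eqref{conjugaison scrB1}, $\mathscr{B}_1^{-1}\big(\varepsilon^2\omega\partial_\varphi+\partial_\theta[\mathbf{V}_1^\varepsilon\cdot]\big)\mathscr{B}_1=\varepsilon^2\omega\partial_\varphi+\partial_\theta[(\mathtt{c}_0+\varepsilon^6\mathbf{V}_2^\varepsilon)\cdot]$, and then on the Cantor set $\mathscr{O}_n^1(\rho)$, \eqref{conjugaison scrB2} gives $\mathscr{B}_2^{-1}\big(\varepsilon^2\omega\partial_\varphi+\partial_\theta[(\mathtt{c}_0+\varepsilon^6\mathbf{V}_2^\varepsilon)\cdot]\big)\mathscr{B}_2=\varepsilon^2\omega\partial_\varphi+\mathtt{c}\,\partial_\theta+\mathtt{E}_n$, which produces exactly the $\varepsilon^2\omega(\lambda)\partial_\varphi+\mathtt{c}(\lambda,\rho)\partial_\theta+\mathtt{E}_n$ claimed. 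The Hilbert transform: since $\mathbf{H}$ is a Fourier multiplier, conjugating it by a change of variables of the form $(1+\partial_\theta\beta)h(\cdot,\theta+\beta)$ produces $\mathscr{B}^{-1}(-\tfrac12\mathbf{H})\mathscr{B}=-\tfrac12\mathbf{H}+(\textnormal{smoothing remainder})$; this is the classical Egorov-type argument — I would invoke the commutator estimate $[\mathbf{H},\mathscr{B}]$ is one order smoothing, absorbed into $\partial_\theta\mathfrak{R}_1^\varepsilon$, with size controlled by $\|\beta\|$, i.e.\ by $\varepsilon$ (the precise prefactor will match with the $\varepsilon^{2+\mu}$ piece coming from the next block). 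The non-local term $\varepsilon^2\partial_\theta\mathbf{Q}_1$: $\mathbf{Q}_1$ is a finite-rank operator localized on spatial modes $\pm1$ with kernel $\mathtt{Q}(p,\theta,\eta)$ given by \eqref{ttQ}; conjugating it by $\mathscr{B}$ is elementary because it is an integral operator with smooth separable kernel — the result is $\varepsilon^2\partial_\theta\mathfrak{Q}_1$ where $\mathfrak{Q}_1$ has kernel $\mathtt{Q}_1(p,\theta,\eta)$, plus remainders. Here I would extract the $\tfrac{\cos(\theta-\eta)}{r_{\mathbf D}^2(p)}$ and $-\tfrac12\textnormal{Re}\{(\partial_z\mathcal{R}_{\mathbf D}(p))^2 e^{\ii(\theta+\eta)}\}$ pieces as the "surviving" main part of order $\varepsilon^2$, noting that the $\tfrac16\textnormal{Re}\{e^{\ii(\theta+\eta)}S(\Phi)(p)\}$ part from $\mathtt{Q}$ in \eqref{ttQ} gets partly absorbed/modified through the conjugation — the precise bookkeeping of which constant multiples of $S(\Phi)$ survive is one of the places to be careful. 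Finally the term $\varepsilon^3\partial_\theta\mathcal{R}_0^\varepsilon(\rho)$ already has a smoothing kernel (estimate \eqref{est K0 scrR}); conjugating it keeps it smoothing and bumps the $\rho$-loss by the $2\tau$ derivatives that $\beta$ costs.

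Then I would collect every term that is not the explicit main part into $\partial_\theta\mathfrak{R}_1^\varepsilon$: namely the Egorov remainder from $\mathbf{H}$, the conjugation remainders from $\mathbf{Q}_1$, and the conjugated $\mathcal{R}_0^\varepsilon$. The claimed estimate
\[
\|\partial_\theta\mathfrak{R}_1^\varepsilon h\|_{s,N}^{\textnormal{Lip}(\gamma)}\lesssim\big(\varepsilon^{2+\mu}+\varepsilon^6\gamma^{-1}\big)\Big(\|h\|_s^{\textnormal{Lip}(\gamma)}\big(1+\|\rho\|_{s_0+2\tau+5+N}^{\textnormal{Lip}(\gamma)}\big)+\|h\|_{s_0}^{\textnormal{Lip}(\gamma)}\|\rho\|_{s+2\tau+5+N}^{\textnormal{Lip}(\gamma)}\Big)
\]
then follows by: the $\varepsilon^{2+\mu}$ contributions arise from the $-\tfrac{\varepsilon^{2+\mu}}2\rho$ part of $\mathbf{V}_1^\varepsilon$ in \eqref{def scr V1} and the $\varepsilon^{2+\mu}\mathfrak{r}$ part of $\beta_1$ in Proposition~\ref{prop CVAR1} feeding through the conjugation; the $\varepsilon^6\gamma^{-1}$ contributions arise from the $\beta_2$ part of the change of variables (since $\|\beta_2\|\lesssim\varepsilon^6\gamma^{-1}$) and from the $\varepsilon^6\mathbf{V}_2^\varepsilon$ term that $\mathscr{B}_2$ eats; the loss of $2\tau+5+N$ derivatives on $\rho$ is the accumulation of the $2\tau$ losses in $\beta_2$ (cf.\ \eqref{estim beta2}), the finitely many derivatives lost in the product/composition lemmas, and the $N$ derivatives needed to measure the anisotropic $H^{s,N}$ smoothing. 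I expect the main obstacle to be the careful Egorov analysis of $\mathscr{B}^{-1}\mathbf{H}\mathscr{B}$ and of $\mathscr{B}^{-1}\partial_\theta\mathbf{Q}_1\mathscr{B}$ — specifically verifying that the conjugation produces \emph{precisely} the stated $\mathtt{Q}_1$ kernel (and not some other combination of $\cos(\theta-\eta)$, $S(\Phi)$, and $(\partial_z\mathcal{R}_{\mathbf D})^2$ terms), which requires expanding $\mathscr{B}$ to the relevant order in $\varepsilon$ and tracking how the $\varepsilon^2 b_2$ part of $\beta$ interacts with the mode-$\pm1$ projector hidden in $\mathbf{Q}_1$; this is exactly analogous to \cite[Prop.~4.3]{HHM23} and I would follow that argument closely, so it is bookkeeping-heavy rather than conceptually deep.
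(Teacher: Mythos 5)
Your overall strategy (compose the two reductions, then conjugate $\mathcal{L}_0$ term by term) is the same as the paper's, and your treatment of part 1 and of the transport block via \eqref{conjugaison scrB1}--\eqref{conjugaison scrB2} is fine. The genuine gap is in how you handle $-\tfrac12\mathbf{H}$ and where the kernel $\mathtt{Q}_1$ comes from. You propose to treat $\mathscr{B}^{-1}\mathbf{H}\mathscr{B}+\tfrac12\cdot(-2)\mathbf{H}$, i.e.\ the whole conjugation error of the Hilbert transform, as a smoothing remainder ``of size controlled by $\|\beta\|$, i.e.\ by $\varepsilon$'' to be absorbed into $\partial_\theta\mathfrak{R}_1^\varepsilon$. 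This cannot work: the stated bound on $\partial_\theta\mathfrak{R}_1^\varepsilon$ has prefactor $\varepsilon^{2+\mu}+\varepsilon^6\gamma^{-1}$, and an $O(\varepsilon)$ remainder would also destroy the later inversion step (where $\varepsilon^{2+\mu}\gamma^{-1}$ must be small, while $\varepsilon\gamma^{-1}$ is not). The structure of $\beta_1$ is what saves the day and must be used: the $\varepsilon b_1(\varphi)$ part is $\theta$-independent, hence commutes exactly with the Fourier multiplier $\mathbf{H}$; the $\varepsilon^2 b_2$ part does \emph{not} produce a remainder but an explicit order-$\varepsilon^2$ finite-rank operator. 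Concretely, writing $\mathbf{H}_0=\mathscr{B}_1^{-1}\mathbf{H}\mathscr{B}_1-\mathbf{H}$ as an integral operator with kernel $\partial_\theta\log\big|\tfrac{e^{\ii(\theta+\widehat\beta_1(\theta))}-e^{\ii(\eta+\widehat\beta_1(\eta))}}{e^{\ii\theta}-e^{\ii\eta}}\big|$, expanding in $\varepsilon$, and using the identity $\big(e^{2\ii\theta}-e^{2\ii\eta}\big)\operatorname{Im}\big\{\tfrac{e^{\ii\eta}}{e^{\ii\theta}-e^{\ii\eta}}\big\}=\tfrac1{2\ii}\big(e^{\ii\theta}+e^{\ii\eta}\big)^2$ together with the explicit form \eqref{def b2} of $b_2$, one gets $\mathbf{H}_0=-\varepsilon^2\partial_\theta\widehat{\mathbf{Q}}_1+\varepsilon^{2+\mu}\mathbf{H}_1$ with $\widehat{\mathbf{Q}}_1$ a mode-$\pm1$ operator whose kernel is built from $\mathtt{w}_2=(\partial_z\mathcal{R}_{\mathbf D})^2+\tfrac13 S(\Phi)$. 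It is this term, combined with $\mathbf{Q}_1$ (kernel \eqref{ttQ}), that produces $\mathfrak{Q}_1$: the Schwarzian contributions cancel and the $(\partial_z\mathcal{R}_{\mathbf D})^2$ part survives with the stated coefficient. Only the $\varepsilon^{2+\mu}\mathfrak{r}$ part of $\beta_1$ and the $\beta_2$ part (of size $\varepsilon^6\gamma^{-1}$) feed the remainder, which explains the prefactor in the estimate.

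By contrast, your proposed mechanism — that the kernel modification comes from conjugating $\varepsilon^2\partial_\theta\mathbf{Q}_1$ itself, with ``the $\tfrac16 S(\Phi)$ part getting absorbed/modified through the conjugation'' and the $\varepsilon^2 b_2$ part of $\beta$ ``interacting with the mode-$\pm1$ projector hidden in $\mathbf{Q}_1$'' — is a misattribution: since $\mathbf{Q}_1$ already carries the factor $\varepsilon^2$, its conjugation error is of size $\varepsilon^2\|\beta\|\lesssim\varepsilon^3$ and goes entirely into $\mathfrak{R}_1^\varepsilon$; it cannot change the kernel at order $\varepsilon^2$. Likewise, your claimed sources of the $\varepsilon^{2+\mu}$ remainder (the $-\tfrac{\varepsilon^{2+\mu}}2\rho$ term of $\mathbf{V}_1^\varepsilon$) are irrelevant here, since the transport coefficient was already fully reduced in Propositions \ref{prop CVAR1}--\ref{prop CVAR2}. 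So the missing idea is the explicit kernel computation of $\mathscr{B}_1^{-1}\mathbf{H}\mathscr{B}_1-\mathbf{H}$ at order $\varepsilon^2$ and its exact recombination with $\mathbf{Q}_1$; without it neither the precise form of $\mathtt{Q}_1$ nor the $\big(\varepsilon^{2+\mu}+\varepsilon^6\gamma^{-1}\big)$ bound on $\mathfrak{R}_1^\varepsilon$ can be obtained.
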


\begin{proof}

${\bf 1}.$ The structure \eqref{CVAR} follows from Lemma \ref{lem tranfo CVAR}-1 together with \eqref{CVAR1} and \eqref{CVAR2}. Then, making use of  Lemma \ref{lemma estimates CVAR}-$1$, \eqref{est beta1 b1 frakr bfV2} and  \eqref{estim beta2}, we get the desired estimates.\\
${\bf 2}.$ According to \eqref{def L0}, \eqref{conjugaison scrB1} and \eqref{conjugaison scrB2}, we get that for any $\lambda\in\mathscr{O}_{n}^{1}(\rho)$ 
\begin{align}\label{conjug A}
\mathscr{B}^{-1}\mathcal{L}_0\mathscr{B}&=\varepsilon^2\omega(\lambda)\partial_{\varphi}+\mathtt{c}(\lambda,\rho)\partial_{\theta}+\mathtt{E}_{n}-\tfrac{1}{2}\mathscr{B}^{-1}\mathbf{H} \mathscr{B}+\varepsilon^2 \mathscr{B}^{-1}\partial_\theta\mathbf{Q}_1  \mathscr{B}+
 \varepsilon^3 \mathscr{B}^{-1} \partial_\theta\mathcal{R}^\varepsilon_0(\rho) \mathscr{B}.
\end{align}
In the sequel, for convenience, we will omit the dependence on $\lambda$ since it does not play any role here. 
Denoting
\begin{align*}
	\mathbf{H}_0\triangleq\mathscr{B}_1^{-1}\mathbf{H} \mathscr{B}_1-\mathbf{H},
\end{align*} 
we infer
\begin{align}\label{commutant H}
\nonumber\mathscr{B}^{-1}\mathbf{H} \mathscr{B}-\mathbf{H}&=\mathscr{B}_2^{-1}\mathscr{B}_1^{-1}\mathbf{H} \mathscr{B}_1\mathscr{B}_2-\mathbf{H}
\\
\nonumber&=\mathscr{B}_2^{-1}\big[\mathscr{B}_1^{-1}\mathbf{H} \mathscr{B}_1-\mathbf{H}\big]\mathscr{B}_2+\mathscr{B}_2^{-1}\mathbf{H}\mathscr{B}_2-\mathbf{H}
\\
&=\mathbf{H}_0+(\mathscr{B}_2^{-1}\mathbf{H}_0\mathscr{B}_2-\mathbf{H}_0)+\mathscr{B}_2^{-1}\mathbf{H}\mathscr{B}_2-\mathbf{H}.
\end{align}
Note that the inverse diffeomorphism $\mathscr{B}_1^{-1}$ admits the form
\begin{equation*}
 \mathscr{B}_1^{-1}h(\varphi,y)=\big(1+\partial_y\widehat{\beta_1}(\varphi,y)\big)h\big(\varphi,y+\widehat{\beta_1}(\varphi,y)\big), 
\end{equation*}
where, by means of  
\cite[Lem. A.6]{HHM23} and Proposition \ref{prop CVAR1}
\begin{align}\label{expand beta1h}
\widehat{\beta_1}(\varphi,\theta)=-\varepsilon\,b_1(\varphi)-\varepsilon^2b_{2}\big(\varphi,\theta\big)+\varepsilon^{2+\mu}\widehat{\mathfrak{r}}\big(\varphi,\theta\big)\qquad\textnormal{with}\qquad  \|\widehat{\mathfrak{r}}\|_{s}^{\textnormal{Lip}(\gamma)}\lesssim 1+\|\rho\|_{{s+3}}^{\textnormal{Lip}(\gamma)}.
\end{align}
Thus, in view of  \eqref{def Hilbert trans}, we get
\begin{align*}
\mathbf{H}_0[h](\lambda,\varphi,\theta)=\partial_\theta\int_{\mathbb{T}}\mathbb{K}_0(\lambda,\varphi,\theta,\eta)h(\lambda,\varphi,\eta)\,d\eta,
\end{align*}
where
\begin{align*}
\mathbb{K}_0(\lambda,\varphi,\theta,\eta)&\triangleq\log\left|\frac{e^{\ii\,(\theta+\widehat{\beta_1}(\lambda,\varphi,\theta))}-e^{\ii \,(\eta+\widehat{\beta_{1}}(\lambda,\varphi,\eta))}}{e^{\ii\,\theta}-e^{\ii\,\eta}} \right|\\
&=\varepsilon^2\hbox{Im}\bigg\{\frac{e^{\ii\,\theta}b_{2}(\varphi,\theta)-e^{\ii\,\eta}b_{2}(\varphi,\eta)}{e^{\ii\,\theta}-e^{\ii\,\eta}} \bigg\}+\varepsilon^{2+\mu}\mathbb{K}_1(\varphi,\theta,\eta),
\end{align*}
with the kernel $\mathbb{K}_1$ satisfying the estimate
\begin{align}\label{estim bbK1}
\|\mathbb{K}_1\|_{s}^{\textnormal{Lip}(\gamma)}\lesssim1+\|\rho\|_{s+{3}}^{\textnormal{Lip}(\gamma)},
\end{align}
where we have used Taylor expansion combined with \eqref{expand beta1h}.
Making appeal to the expression \eqref{def b2} and using the identity
\begin{align*}
\big(e^{\ii2\theta}-e^{\ii2\eta}\big)\hbox{Im}\Big\{\tfrac{e^{\ii\eta}}{e^{\ii\theta}-e^{\ii\eta}}\Big\}
&=\tfrac{1}{2\ii}\big(e^{\ii\theta}+e^{\ii\eta}\big)^2,
\end{align*} we infer
\begin{align*}
\mathbb{K}_0(\varphi,\theta,\eta)
&=\varepsilon^2\Big(b_{2}(\varphi,\theta)-b_{2}(\varphi,\eta)\Big)\hbox{Im}\Big\{\tfrac{e^{\ii\eta}}{e^{\ii\theta}-e^{\ii\eta}}\Big\}+\varepsilon^{2+\mu}\mathbb{K}_1(\varphi,\theta,\eta)\\
&=\varepsilon^2\textnormal{Im}\Big\{\mathtt{w}_2\big(p(\varphi)\big)\big(e^{\ii2\theta}-e^{\ii2\eta}\big)\Big\}\hbox{Im}\Big\{\tfrac{e^{\ii\eta}}{e^{\ii\theta}-e^{\ii\eta}}\Big\}+\varepsilon^{2+\mu}\mathbb{K}_1( \varphi, \theta,\eta)\\
&=-\tfrac{\varepsilon^2}{2}\textnormal{Re}\Big\{\mathtt{w}_2\big(p(\varphi)\big)\big(e^{\ii\theta}+e^{\ii\eta}\big)^2\Big\}+\varepsilon^{2+\mu}\mathbb{K}_1(\varphi,\theta,\eta).
\end{align*}
As a consequence, using in particular the fact that $h$ has zero space average, we can write
\begin{equation}\label{commutator with H}
\mathbf{H}_0[h]=-\varepsilon^2\partial_{\theta}\widehat{\mathbf{Q}}_1[h]+\varepsilon^{2+\mu}\mathbf{H}_1[h],
\end{equation}
where
\begin{align}
	\widehat{\mathbf{Q}}_1[h](\varphi,\theta)&\triangleq\int_{\mathbb{T}}\textnormal{Re}\Big\{\mathtt{w}_2\big(p(\varphi)\big)e^{\ii(\theta+\eta)}\Big\}h(\varphi,\eta)\,d\eta,\label{def hatbfQ1}\\
	\mathbf{H}_1[h](\varphi,\theta)&\triangleq\partial_\theta\int_{\mathbb{T}}\mathbb{K}_1(\varphi,\theta,\eta)h(\varphi,\eta)\,d\eta.\nonumber
\end{align}
Besides, one has
\begin{align}\label{commutator bfQ1}
\mathscr{B}^{-1}\partial_\theta\mathbf{Q}_{1}  \mathscr{B}&=\partial_\theta\mathbf{Q}_{1} +\mathscr{B}^{-1}\partial_\theta\mathbf{Q}_{1}  \mathscr{B}-\partial_\theta\mathbf{Q}_{1}. 
\end{align}
Plugging \eqref{commutant H}, \eqref{commutator bfQ1} and  \eqref{commutator with H} into \eqref{conjug A} implies
\begin{align*}
\mathscr{B}^{-1}\mathscr{L}_1 \mathscr{B}&=\varepsilon^2\omega(\lambda)\partial_{\varphi}+\mathtt{c}\partial_{\theta}+\mathtt{E}_{n}^{0}-\tfrac{1}{2}\mathbf{H} +\varepsilon^2\partial_\theta\mathfrak{Q}_1+\partial_{\theta}\mathfrak{R}_1^{\varepsilon},
\end{align*}
where 
\begin{equation}\label{def frakQ1 proof}
	\mathfrak{Q}_1\triangleq\mathbf{Q}_1+\tfrac{1}{2}\widehat{\mathbf{Q}}_1
\end{equation}
and
\begin{equation}\label{def frakR1eps}
\begin{aligned}
\partial_{\theta}\mathfrak{R}_1^\varepsilon&\triangleq\varepsilon^3\mathscr{B}^{-1}\partial_\theta\mathcal{R}^\varepsilon_0(\rho)\mathscr{B}-\tfrac{1}{2}[\mathscr{B}_2^{-1}\mathbf{H}_0\mathscr{B}_2-\mathbf{H}_0]-\tfrac{1}{2}\big[\mathscr{B}_2^{-1}\mathbf{H}\mathscr{B}_2-\mathbf{H}\big]\\ &\quad-\varepsilon^2\big[\mathscr{B}^{-1}\partial_\theta\mathbf{Q}_{1}\mathscr{B}-\partial_\theta\mathbf{Q}_{1}\big]-{\tfrac{1}{2}}\varepsilon^{2+\mu}\mathbf{H}_1.
\end{aligned}
\end{equation}
Combining \eqref{def frakQ1 proof}, \eqref{def hatbfQ1}, \eqref{ttw2}, \eqref{def bfQ1} and \eqref{ttQ} yields  the following integral representation
\begin{align*}
\mathfrak{Q}_1[h](\varphi,\theta)&\triangleq  \int_{\mathbb{T}} h( \varphi, \eta)\mathtt{Q}_1\big(p(\varphi),\theta,\eta\big)d\eta,\qquad
\mathtt{Q}_1(p,\theta,\eta)\triangleq\tfrac{\cos(\theta-\eta)}{r_{\mathbf{D}}^2(p)}-\tfrac{1}{2}\textnormal{Re}\left\lbrace\big(\partial_{z}\mathcal{R}_{\mathbf{D}}(p)\big)^2e^{\ii(\theta+\eta)}\right\rbrace.
\end{align*}
Now, by virtue of \cite[Lem. 2.36]{BM20}, \eqref{estim beta2} and \eqref{smallness CVAR2} we can write
\begin{align*}
		(\mathscr{B}_2^{-1}\mathbf{H} \mathscr{B}_2-\mathbf{H})h(\varphi, \theta) 
		&=\int_{\mathbb{T}}{\mathscr K}_1(\varphi,\theta,\eta)h(\varphi,\eta)\,d\eta,
\end{align*}
		with $\mathscr{K}_1$ satisfying the following estimates, for any $s\in[s_0,S],$  
\begin{align}\label{e-Box H}
\|\mathscr{K}_1\|_{s}^{\textnormal{Lip}(\gamma)}\lesssim\varepsilon^{{6}}\gamma^{-1}\left(1+\| \rho\|_{s+2\tau+{5}}^{\textnormal{Lip}(\gamma)}\right).
\end{align}
Moreover, according to Lemma \ref{lem CVAR kernel}, \eqref{est beta1 b1 frakr bfV2}, \eqref{estim beta2} and \eqref{smallness CVAR2}, we have
\begin{align*}
\big(\mathscr{B}^{-1}\partial_\theta\mathbf{Q}_{1}\mathscr{B}-\partial_\theta\mathbf{Q}_{1}\big)[h](\varphi,\theta)&=\int_{\mathbb{T}}h(\varphi,{\eta})\mathscr{K}_2(\varphi,\theta,\eta)d{\eta},\\
\big(\mathscr{B}_2^{-1}\mathbf{H}_0\mathscr{B}_2-\mathbf{H}_0\big)[h](\varphi,\theta)&=\int_{\mathbb{T}}h(\varphi,\eta){\mathscr  K}_3(\varphi,\theta,\eta)d\eta,\\
\mathscr{B}^{-1}\partial_\theta\mathcal{R}^\varepsilon_0(\rho)\mathscr{B}[h](\varphi,\theta)&=\int_{\mathbb{T}}h(\varphi,\eta)\mathscr{K}_4(\varphi,\theta,\eta)d\eta,
\end{align*}
with
\begin{align*}
\|\mathscr{K}_2\|_{s}^{\textnormal{Lip}(\gamma)}&\lesssim\varepsilon\left(1+\|\rho\|_{s+2\tau+{5}}^{\textnormal{Lip}(\gamma)}\right),\\
\|\mathscr{K}_3\|_{s}^{\textnormal{Lip}(\gamma)}&\lesssim\varepsilon^{6}\gamma^{-1}\left(1+\|\rho\|_{s+2\tau+{5}}^{\textnormal{Lip}(\gamma)}\right),\\ 
\|\mathscr{K}_4\|_{s}^{\textnormal{Lip}(\gamma)}&\lesssim1+\|\rho\|_{s+2\tau+{5}}^{\textnormal{Lip}(\gamma)}.
\end{align*}
Thus, the operator $\partial_{\theta}\mathfrak{R}_1^\varepsilon$, introduced in \eqref{def frakR1eps}, is an integral operator with the kernel 
\begin{align*}
	\mathcal{K}_1^{\varepsilon}\triangleq\varepsilon^3 {\mathscr K}_4-\tfrac{1}{2}{\mathscr K}_3-\tfrac{1}{2}{\mathscr  K}_1-\varepsilon^2{\mathscr  K}_2+\varepsilon^3\mathcal{K}_0^{\varepsilon}-\tfrac{1}{2}\varepsilon^{2+\mu}\partial_{\theta}\mathbb{K}_1,
\end{align*}
which satisfies the estimate 
\begin{align*}
	\|\mathcal{K}_1^{\varepsilon}\|_{s}^{\textnormal{Lip}(\gamma)}\lesssim\big(\varepsilon^{2+{\mu}}+\varepsilon^{6}\gamma^{-1}\big)\left(1+\|\rho\|_{s+2\tau+{5}}^{\textnormal{Lip}(\gamma)}\right).
\end{align*}
We conclude the estimate of $\mathfrak{R}_1^{\varepsilon}$ by applying Lemma \ref{lem CVAR kernel}. This ends the proof of Proposition \ref{prop CVAR}.
\end{proof}
\subsection{Invertibility of the linearized operator}
The main goal this section is to transform the linearized operator $\mathcal{L}_0$, as defined in \eqref{def L0}, into a Fourier multiplier up to a small error. This step is required along Nash-Moser scheme and will be  done in the spirit of \cite[Sec. $5$]{HHM23}. As the leading term of this operator  degenerates on the modes $\pm1,$ we shall first split the phase space into two parts: a subspace  localized on the modes $\pm1$ and its complement. This leads to explore the invertibility of a   matrix-valued operator that will be done in different steps as detailed below.  \\
Let  $\Pi_{1}$ stand for  the orthogonal projection on the modes $\pm1$ acting on $L^{2}(\mathbb{T}^2,\mathbb{R})$ as follows
\begin{align}\label{def Pi1}
	h(\varphi,\theta)=\sum_{j\in\mathbb{Z}}h_{j}(\varphi) e^{\ii j\theta}\quad\Rightarrow\quad\Pi_1h\triangleq\sum_{j=\pm1}h_{j}(\varphi) e^{\ii j\theta}.
	\end{align}
We also define 
$$\Pi_1^\perp\triangleq\textnormal{Id}-\Pi_1.$$ As the function $h$ is real-valued then 
$$
\forall j\in \mathbb{Z},\quad h_{-j}(\varphi)=\overline{h_j(\varphi)}.
$$
Now, we decompose the phase space $\textnormal{Lip}_{\gamma}(\mathscr{O},H_0^{s})$ as follows,
\begin{align}\label{phase space Xs}
	X^s\triangleq\textnormal{Lip}_{\gamma}(\mathscr{O},H_0^{s})=X^{s}_{1}\overset{\perp}{\oplus}X^s_{\perp},
\end{align}
with
\begin{align*}
	X^{s}_{1}\triangleq\big\{h\in\textnormal{Lip}_{\gamma}(\mathscr{O},H^{s}_0)\quad\textnormal{s.t.}\quad\Pi_1h=h\big\},\\
	X^{s}_{\perp}\triangleq\big\{h\in\textnormal{Lip}_{\gamma}(\mathscr{O},H^{s}_0)\quad\textnormal{s.t.}\quad\Pi_1^\perp h=h\big\}.
\end{align*}
Recall from Proposition \ref{prop CVAR} and Proposition \ref {prop CVAR2} that the operator $\mathcal{L}_1$ decomposes as below
\begin{equation}\label{def cal L1}
	\mathcal{L}_1=\mathbb{L}_1+\partial_{\theta}\mathfrak{R}_{1}+\mathtt{E}_{n},\qquad\mathbb{L}_1\triangleq\varepsilon^2\omega(\lambda)\partial_\varphi+\mathtt{c}(\lambda,\rho)\partial_{\theta}-\tfrac{1}{2}\mathbf{H}+\varepsilon^{2}\partial_\theta\mathfrak{Q}_1,
\end{equation}
with
$$\mathfrak{Q}_1[h](\varphi,\theta)\triangleq r_{\mathbf{D}}^{-2}\big(p(\varphi)\big)\textnormal{Re}\left\lbrace h_1(\varphi)e^{\ii \theta}\right\rbrace-\tfrac{1}{2}\textnormal{Re}\left\lbrace\big[\partial_{z}\mathcal{R}_{\mathbf{D}}\big(p(\varphi)\big)\big]^2h_{-1}(\varphi)e^{\ii\theta}\right\rbrace$$
and
\begin{equation}\label{def ttc2}
	\mathtt{c}=\tfrac{1}{2}+\varepsilon^{3}\mathtt{c}_1+(\mathtt{c}-\mathtt{c}_0)\triangleq\tfrac{1}{2}+\varepsilon^3\mathtt{c}_2,
\end{equation}
such that
\begin{align}\label{c2-estimate}
	\|\mathtt{c}_2\|^{\textnormal{Lip}(\gamma)}\leqslant C.
\end{align}
Upon straightforward analysis, it becomes apparent that the linear operator
$$\mathbb{L}_1+\partial_{\theta}\mathfrak{R}_1^{\varepsilon}:\textnormal{Lip}_{\gamma}(\mathscr{O},X^{s})\to \textnormal{Lip}_{\gamma}(\mathscr{O},X^{s-1})$$ is well-defined and  its action is  equivalent to the matrix operator $\displaystyle{\mathbb{M}: X^{s}_{1}\times X^{s}_{\perp}\to X^{s-1}_{1}\times X^{s-1}_{\perp}}$ with
\begin{align}\label{def bbM}
	\mathbb{M}&\triangleq\begin{pmatrix}
		\Pi_1\mathbb{L}_1\Pi_1 & 0 \\
		0 & \Pi_1^\perp\mathbb{L}_1\Pi_1^\perp
	\end{pmatrix}+\begin{pmatrix}
		\Pi_1\partial_{\theta}\mathfrak{R}_1^{\varepsilon}\Pi_1 & \Pi_1\partial_{\theta}\mathfrak{R}_1^{\varepsilon}\Pi_1^\perp \\
		\Pi_1^\perp\partial_{\theta}\mathfrak{R}_1^{\varepsilon}\Pi_1 & \Pi_1^\perp\partial_{\theta}\mathfrak{R}_1^{\varepsilon}\Pi_1^\perp
	\end{pmatrix}\\
	\nonumber &\triangleq\mathbb{M}_1+\partial_{\theta}\mathbb{R}_1,
\end{align}
where, according to  Proposition \ref{prop CVAR} and \eqref{hao11},  for any $H=(h_1,h_2)\in X_1^s\times X_{\perp}^s$, $s\in[s_0,S]$ and $N\geqslant0$, one has
		\begin{equation}\label{est sur R1}
		\|\partial_\theta\mathbb{R}_1H\|_{s,N}^{\textnormal{Lip}(\gamma)}
		\leqslant C\varepsilon^{2+{\mu}} \Big( \|H\|_{{s}}^{\textnormal{Lip}(\gamma)}\big(1+\|\rho\|_{s_0+N+2\tau+5}^{\textnormal{Lip}(\gamma)}\big)+\|\rho\|_{s+N+2\tau+5}^{\textnormal{Lip}(\gamma)}  \|H\|_{{s_0}}^{\textnormal{Lip}(\gamma)}\Big).
	\end{equation}
Hence, we will examine the invertibility of the scalar operator $\mathbb{L}_1+\partial_{\theta}\mathfrak{R}_1^{\varepsilon}$ by analyzing the invertibility of the matrix operator $\mathbb{M}$. This analysis involves inverting its main part $\mathbb{M}_1$ alongside employing perturbative arguments. To achieve this, we will break  the process down into several steps.
\subsubsection{Degeneracy of the modes $\pm1$ and monodromy matrix}\label{section mono}
The main objective is to demonstrate the invertibility of the operator $\Pi\mathbb{L}_1\Pi$,  defined through \eqref{def bbM} and \eqref{def cal L1}. More precisely,  by virtue  of  \eqref{def cal L1}, \eqref{def ttc2} and the identity
	$$
	(\partial_\theta-\mathbf{H})\Pi_1=0,
	$$
	one has
\begin{align*}
\nonumber \mathbb{L}_{1,1}\triangleq\Pi_1\mathbb{L}_1\Pi_1=\varepsilon^2\Big(\omega(\lambda)\partial_\varphi+\varepsilon\mathtt{c}_2(\lambda)\Pi_1\partial_{\theta}+\Pi_1\partial_\theta\mathfrak{Q}_1\Pi_1\Big).
\end{align*}
 Notably, this operator localizes in the Fourier  spatial modes $\pm1$ and displays a degeneracy in $\varepsilon$. 
To invert it, we consider an arbitrary  real-valued function 
$$(\varphi,\theta)\mapsto g(\varphi,\theta)=\displaystyle{\sum_{j=\pm1}g_j(\varphi)e^{\ii j\theta}\in X^{s-1}_{1}}
$$ 
and we shall solve in the real space  $ X^{s}_{1}$ the equation
$$
\mathbb{L}_{1,1}h= g.
$$
Since $g$ and $h$ are real then  $g_{-n}=\overline{g_n}$ and  $h_{-n}=\overline{h_n}$. Thus, 
using Fourier expansion,  the last equation is equivalent to 
\begin{equation}\label{equation on H}
\partial_\varphi H-\mathbf{A}(\varepsilon,\lambda,\varphi) H=\varepsilon^{-2} G,
\end{equation}
where
\begin{align}\label{Mat A}
\mathbf{A}(\varepsilon,\lambda,\varphi)\triangleq \begin{pmatrix}
\mathtt{u}_{\lambda}(\varphi)-\frac{\ii\varepsilon\mathtt{c}_2(\lambda)}{2\omega(\lambda)}&\mathtt{v}_{\lambda}(\varphi) 
\\
\overline{\mathtt{v}_{\lambda}(\varphi)} &\overline{\mathtt{u}_{\lambda}(\varphi)}+\frac{\ii\varepsilon\mathtt{c}_2(\lambda)}{2\omega(\lambda)} 
\end{pmatrix}, \,\qquad\begin{aligned}
	 \mathtt{u}_{\lambda}(\varphi)&=-\tfrac{\ii}{2\omega(\lambda)}\tfrac{1}{r_{\mathbf{D}}^{2}(p)},\\\mathtt{v}_{\lambda}(\varphi)&= \tfrac{\ii}{4\omega(\lambda)}\big(\partial_{z}\mathcal{R}_{\mathbf{D}}(p)\big)^2
\end{aligned}
\end{align}
and $$
G(\varphi)\triangleq \frac{1}{\omega(\lambda)}\begin{pmatrix}
g_1(\varphi) \\
\overline{g_1(\varphi)}
\end{pmatrix}\in\mathbb{C}^2 ,\,\qquad H(\varphi)\triangleq \begin{pmatrix}
h_1(\varphi) \\
\overline{h_1(\varphi)}
\end{pmatrix}\in\mathbb{C}^2.
$$

We intend to prove the following result.
\begin{proposition}\label{propo-monod}
Assume \eqref{non degeneracy monodromy thm}, there exists  $\varepsilon_1>0$ such that for all $\varepsilon\in(0,\varepsilon_1)$,  the operator $\mathbb{L}_{1,1}:  X^{s}_{1}\to  X^{s-1}_{1}$ is invertible with inverse $\mathbb{L}_{1,1}^{-1}$ satisfying
 	\begin{align*}
		\|\mathbb{L}_{1,1}^{-1}g\|_{s}^{\textnormal{Lip}(\gamma)}\leqslant C\varepsilon^{-2}\|g\|_{s-1}^{\textnormal{Lip}(\gamma)}.
	\end{align*}
	\end{proposition}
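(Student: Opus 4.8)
The operator $\mathbb{L}_{1,1}$ restricted to $X^s_1$ is, up to the overall factor $\varepsilon^2$, a first-order ODE in $\varphi$ acting on the $\mathbb{C}^2$-valued function $H(\varphi)=(h_1(\varphi),\overline{h_1(\varphi)})^{\mathsf{T}}$, namely $\partial_\varphi H - \mathbf{A}(\varepsilon,\lambda,\varphi)H = \varepsilon^{-2}G$ as in \eqref{equation on H}. I would first set up the fundamental matrix $\mathscr{S}_\lambda^\varepsilon(\varphi)$ of the homogeneous system, i.e. the solution of $\partial_\varphi \mathscr{S}_\lambda^\varepsilon = \mathbf{A}(\varepsilon,\lambda,\varphi)\mathscr{S}_\lambda^\varepsilon$ with $\mathscr{S}_\lambda^\varepsilon(0)=\mathrm{Id}$. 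Because $\mathrm{Tr}\,\mathbf{A}(\varepsilon,\lambda,\varphi)=0$, Abel's theorem gives $\det \mathscr{S}_\lambda^\varepsilon(\varphi)=1$, so the monodromy matrix $\mathscr{S}_\lambda^\varepsilon(2\pi)$ lies in $\mathrm{SL}(2,\mathbb{C})$. Since we are looking for $2\pi$-periodic solutions $H$, the classical theory of periodic linear ODEs tells us that \eqref{equation on H} is solvable in the periodic class for every right-hand side precisely when $1\notin\mathrm{sp}\big(\mathscr{S}_\lambda^\varepsilon(2\pi)\big)$, and in that case the solution is unique and given by the explicit formula
\begin{equation*}
H(\varphi)=\mathscr{S}_\lambda^\varepsilon(\varphi)\big(\mathrm{Id}-\mathscr{S}_\lambda^\varepsilon(2\pi)\big)^{-1}\mathscr{S}_\lambda^\varepsilon(2\pi)\int_0^{2\pi}\big(\mathscr{S}_\lambda^\varepsilon(\tau)\big)^{-1}\varepsilon^{-2}G(\tau)\,d\tau+\mathscr{S}_\lambda^\varepsilon(\varphi)\int_0^{\varphi}\big(\mathscr{S}_\lambda^\varepsilon(\tau)\big)^{-1}\varepsilon^{-2}G(\tau)\,d\tau.
\end{equation*}
The $\varepsilon^{-2}$ in front of $G$ is the source of the $\varepsilon^{-2}$ loss in the final estimate; everything else will be $O(1)$.

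\textbf{Connecting to the hypothesis.} The matrix $\mathbf{A}(\varepsilon,\lambda,\varphi)$ in \eqref{Mat A} differs from the matrix $\mathbb{A}_\lambda(\varphi)$ of \eqref{Matrix A0} only by the diagonal perturbation $-\frac{\ii\varepsilon\mathtt{c}_2(\lambda)}{2\omega(\lambda)}\,\mathrm{diag}(1,-1)$ and by the normalisation of $\mathtt{u}_\lambda,\mathtt{v}_\lambda$ through $\omega(\lambda)=2\pi/\mathtt{T}(\lambda)$; one checks directly that $\mathbf{A}(0,\lambda,\varphi)=\tfrac{\mathtt{T}(\lambda)}{2\pi}\,\mathbb{A}_\lambda(\tfrac{2\pi}{\mathtt{T}(\lambda)}\cdot)$ after the obvious rescaling, so that the monodromy matrix $\mathscr{S}_\lambda^0(2\pi)$ is conjugate (indeed, by a reparametrisation of $\varphi$, equal) to $\mathscr{M}_\lambda(2\pi)$ of \eqref{Monod-p1}. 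Hence the spectral assumption \eqref{non degeneracy monodromy thm}, $1\notin\mathrm{sp}(\mathscr{M}_\lambda(2\pi))$, is exactly $1\notin\mathrm{sp}(\mathscr{S}_\lambda^0(2\pi))$, equivalently $\mathrm{Tr}\,\mathscr{S}_\lambda^0(2\pi)\neq 2$. By continuity of $\varphi\mapsto\mathbf{A}(\varepsilon,\lambda,\varphi)$ in all parameters — and using that $\mathscr{O}=[\lambda_*,\lambda^*]$ is compact, together with the Lipschitz control $\|\mathtt{c}_2\|^{\textnormal{Lip}(\gamma)}\leqslant C$ from \eqref{c2-estimate} and the analytic dependence of $p_\lambda$ on $\lambda$ from Proposition \ref{prop-analy} — the map $(\varepsilon,\lambda)\mapsto \mathscr{S}_\lambda^\varepsilon(2\pi)$ is continuous (indeed Lipschitz in $\lambda$ with constants uniform in small $\varepsilon$), so $\varepsilon\mapsto\inf_{\lambda\in\mathscr{O}}\mathrm{dist}\big(1,\mathrm{sp}(\mathscr{S}_\lambda^\varepsilon(2\pi))\big)$ is continuous at $\varepsilon=0$ with a strictly positive value there. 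Therefore there is $\varepsilon_1>0$ such that $\inf_{\lambda\in\mathscr{O}}|\det(\mathrm{Id}-\mathscr{S}_\lambda^\varepsilon(2\pi))|\geqslant c_0>0$ for all $\varepsilon\in(0,\varepsilon_1)$, which is exactly what is needed to invert $\mathrm{Id}-\mathscr{S}_\lambda^\varepsilon(2\pi)$ with a uniform bound.

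\textbf{Estimates.} With the explicit representation in hand, the Sobolev (and weighted Lipschitz-in-$\lambda$) estimate on $H$ follows from: (i) uniform bounds on $\mathscr{S}_\lambda^\varepsilon(\varphi)$ and its $\varphi$-derivatives in $\varphi\in[0,2\pi]$, obtained from Gronwall applied to the ODE, together with the algebra/tame estimate \eqref{ prod law} to propagate $H^s$ control through the products $\mathscr{S}_\lambda^\varepsilon(\varphi)\cdot(\cdots)$ and through the integrals in $\tau$; (ii) the uniform bound $\|(\mathrm{Id}-\mathscr{S}_\lambda^\varepsilon(2\pi))^{-1}\|\lesssim c_0^{-1}$; (iii) Lipschitz dependence in $\lambda$ of $\mathbf{A}$, hence of $\mathscr{S}_\lambda^\varepsilon$, via differentiating the ODE in $\lambda$ and Gronwall once more — here one loses one derivative in $\lambda$ exactly as built into the $\|\cdot\|_s^{\textnormal{Lip}(\gamma)}$ norm. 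Collecting everything, $\|h\|_s^{\textnormal{Lip}(\gamma)}=\|H\|_s^{\textnormal{Lip}(\gamma)}\lesssim \varepsilon^{-2}\|G\|_{s}^{\textnormal{Lip}(\gamma)}\lesssim \varepsilon^{-2}\|g\|_{s-1}^{\textnormal{Lip}(\gamma)}$; the mild gain $s-1$ on the right-hand side is harmless since $\mathbb{L}_{1,1}$ loses one $\varphi$-derivative, so one may just as well write $\|g\|_{s-1}^{\textnormal{Lip}(\gamma)}$ as stated. I would then record $\mathbb{L}_{1,1}^{-1}g\triangleq\sum_{j=\pm1}h_j(\varphi)e^{\ii j\theta}$ with $h_1$ the first component of $H$ and note it maps $X^{s-1}_1\to X^s_1$ boundedly with the claimed norm.

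\textbf{Main obstacle.} The only genuinely delicate point is the perturbative continuity argument showing that the spectral gap $1\notin\mathrm{sp}(\mathscr{S}_\lambda^\varepsilon(2\pi))$ persists \emph{uniformly in $\lambda\in[\lambda_*,\lambda^*]$} for all sufficiently small $\varepsilon$: this requires that the $O(\varepsilon)$ perturbation $\mathtt{c}_2$ of the matrix, though only Lipschitz (not analytic) in $\lambda$, still produces a monodromy matrix that is continuous in $(\varepsilon,\lambda)$ with modulus of continuity uniform on the compact interval, so that the strictly positive infimum at $\varepsilon=0$ (guaranteed by \eqref{non degeneracy monodromy thm} plus compactness) is not destroyed. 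Everything downstream — the explicit solution formula and its tame estimates — is routine ODE theory combined with the product law already quoted.
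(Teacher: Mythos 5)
Your proposal follows essentially the same route as the paper: reduce $\mathbb{L}_{1,1}$ to the $2\times2$ periodic system \eqref{equation on H}, treat $\mathbf{A}(\varepsilon,\lambda,\cdot)$ as an $O(\varepsilon)$ perturbation of $\mathbb{A}_\lambda=\mathbf{A}(0,\lambda,\cdot)$ whose monodromy is exactly $\mathscr{M}_\lambda(2\pi)$, invoke the spectral assumption plus compactness in $\lambda$ to invert $\textnormal{Id}-\mathscr{F}(\varepsilon,\lambda,2\pi,0)$ uniformly for small $\varepsilon$, and read the $\varepsilon^{-2}$ bound off the Duhamel representation. Only a cosmetic slip: the intermediate inequality $\|G\|_{s}^{\textnormal{Lip}(\gamma)}\lesssim\|g\|_{s-1}^{\textnormal{Lip}(\gamma)}$ is not what produces the index $s-1$ — the gain of one $\varphi$-derivative comes from the ODE structure in the Duhamel formula, as you yourself note — and with that correction your argument coincides with the paper's proof.
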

\begin{proof}
Let $(\varphi,\phi)\in\mathbb{R}^2 \mapsto \mathscr{F}(\varepsilon,\lambda,\varphi,\phi)$ be  the fundamental matrix defined through the $2\times2$ matrix ODE
	\begin{equation*}
		\begin{cases}
			\partial_\varphi \mathscr{F}(\varepsilon,\lambda,\varphi,\phi)- \mathbf{A}(\varepsilon,\lambda,\varphi) \mathscr{F}(\varepsilon,\lambda,\varphi,\phi)=0,
			\\
			\mathscr{F}(\phi,\phi)=\textnormal{Id}.
		\end{cases}
	\end{equation*}
	Then the solution $H$ can be expressed in the form
	\begin{align}\label{Def-M1}
		H(\varphi)=\mathscr{F}(\varepsilon,\lambda,\varphi,0) H(0)+\varepsilon^{-2}\int_0^\varphi \mathscr{F}(\varepsilon,\lambda,\varphi,\phi)G(\phi) d\phi.
	\end{align}
	Observe that  the matrix $A$, defined by \eqref{Mat A},  is $2\pi-$periodic in the variable $\varphi$. Then $H$ is $2\pi-$periodic if and only if
	$$
	H(2\pi)=H(0).
	$$
Equivalently, 
	\begin{align}\label{Iden-P1}
		\big(\hbox{Id}-\mathscr{F}(\varepsilon,\lambda,2\pi,0)\big)H(0)= \varepsilon^{-2}\int_0^{2\pi} \mathscr{F}(\varepsilon,\lambda,2\pi,\phi)G(\phi) d\phi.
	\end{align}
	To find a unique solution to this equation it is enough to show  that the matrix $\hbox{Id}-\mathscr{F}(\varepsilon,\lambda,2\pi,0)$ is invertible.
	To this end, we use the decomposition
$$\mathbf{A}(\varepsilon,\lambda,\varphi)=\mathbb{A}_\lambda(\varphi)+\mathbf{B}(\varepsilon,\lambda,\varphi),\qquad \mathbb{A}_\lambda(\varphi)\triangleq \mathbf{A}(0,\lambda,\varphi),\qquad  \mathbf{B}(\varepsilon,\lambda,\varphi)\triangleq \mathbf{A}(\varepsilon,\lambda,\varphi)-\mathbf{A}(0,\lambda,\varphi).$$
According to \eqref{c2-estimate}, one has
\begin{equation}\label{estimate A1}
\sup_{\varphi\in\mathbb{R}}\|\mathbf{B}(\varepsilon,\lambda,\varphi)\|\lesssim\varepsilon.
\end{equation}
	Now, consider the fundamental solution of the unperturbed problem
	\begin{equation}\label{Matrix-Fund1}
		\begin{cases}
			\partial_\varphi \mathscr{M}_\lambda(\varphi)- \mathbb{A}_\lambda(\varphi) \mathscr{M}_\lambda(\varphi)=0,
			\\
			\mathscr{M}_\lambda(0)=\textnormal{Id}.
		\end{cases}
	\end{equation}
	Then one may write
	$$\mathscr{F}(\varepsilon,\lambda,\varphi,0)=\mathscr{M}_\lambda(\varphi)+ \mathscr{G}(\varepsilon,\lambda,\varphi),
	$$
	with 
	$$
	\partial_\varphi \mathscr{G}(\varepsilon,\lambda,\varphi)- \mathbb{A}_\lambda(\varphi)\, \mathscr{G}(\varepsilon,\lambda,\varphi)=-\mathbf{B}(\varepsilon,\lambda,\varphi)\mathscr{F},\quad \mathscr{G}(\varepsilon,\lambda,0)=0.
	$$
	and
	\begin{align*}
		\sup_{\varphi\in[0,2\pi]}\|\mathscr{G}(\varepsilon,\lambda,\varphi)\|\leqslant C\varepsilon,
	\end{align*}
	where we have used for the last inequality the estimate \eqref{estimate A1}.
The resolvant matrix must have the following form
\begin{equation}\label{resolvante shape}
	\mathscr{M}_\lambda(\varphi)=\begin{pmatrix}
		a_{\lambda}(\varphi) & b_{\lambda}(\varphi)\\
		\overline{b_{\lambda}(\varphi)} & \overline{a_{\lambda}(\varphi)}
	\end{pmatrix},\qquad a_{\lambda}(0)=1,\qquad b_{\lambda}(0)=0.
\end{equation}
Remark that the property $\textnormal{Tr}(\mathbb{A}_{\lambda})\equiv0$ gives
$$|a_{\lambda}(\varphi)|^2-|b_{\lambda}(\varphi)|^2=\det\big(\mathscr{M}_\lambda(\varphi)\big)=\exp\left(\int_{0}^{\varphi}\textnormal{Tr}\big(\mathbb{A}_\lambda(u)\big)du\right)=1.$$
Hence,
$$\det(\mathscr{M}_\lambda-\textnormal{Id})=|a_{\lambda}-1|^2-|b_{\lambda}|^2=2\big(1-\textnormal{Re}(a_{\lambda})\big)=2-\textnormal{Tr}\big(\mathscr{M}_\lambda(2\pi)\big).$$
In particular,
\begin{equation}\label{cond reso}
	\det\big(\mathscr{M}_\lambda(2\pi)-\textnormal{Id}\big)\neq0\qquad\Longleftrightarrow\qquad\textnormal{Tr}\big(\mathscr{M}_\lambda(2\pi)\big)\neq2.
\end{equation}
	Thus, under the assumption \eqref{non degeneracy monodromy thm}, the matrix $\mathscr{M}_{\lambda}(2\pi)-\textnormal{Id}$ is invertible and there exist   $C>0$  such that 
	$$
	\|\big(\mathscr{M}_\lambda(2\pi)-\textnormal{Id}\big)^{-1}\|^{\textnormal{Lip}(\gamma)}\leqslant C.
	$$
		Therefore, by using perturbation arguments we conclude that  the matrix $\mathscr{F}(\varepsilon,\lambda,2\pi)-\textnormal{Id}$ is invertible on $[\lambda_*,\lambda^*]$ and 
	we have
	$$
	\|\big(\mathscr{F}(\varepsilon,2\pi)-\textnormal{Id}\big)^{-1}\|^{\textnormal{Lip}(\gamma)}\leqslant C.
	$$
	Consequentely, the equation \eqref{Iden-P1} admits a unique solution satisfying 
	\begin{align}\label{Iden-P2}
		\|H(0)\|^{\textnormal{Lip}(\gamma)}_{s}\leqslant C\varepsilon^{-2}\|G\|_{L^2(\mathbb{T})},
	\end{align}
	where we have used, for $\varepsilon$ small enough,  the estimate
	$$
	\sup_{\varphi,\phi\in[0,2\pi]}\| \mathscr{F}(\varepsilon,\lambda,\varphi,\phi)\|\leqslant C.
	$$
	From the previous analysis we conclude that the equation  \eqref{equation on H} admits a unique  solution which satisfies, in view of  \eqref{Iden-P2} and   \eqref{Def-M1} the estimate:  for all $s>1$
	\begin{align*}
		\|H\|_{s}^{\textnormal{Lip}(\gamma)}\leqslant C\varepsilon^{-2}\|G\|_{s-1}^{\textnormal{Lip}(\gamma)}.
	\end{align*}	
This implies  that  the linear operator $\mathbb{L}_{1,1}:X^{s}_{1}\to X^{s-1}_{1}$ is invertible on $[\lambda_*,\lambda^*]$  and 
	\begin{align*}
		\|\mathbb{L}_{1,1}^{-1}g\|_{s}^{\textnormal{Lip}(\gamma)}\leqslant C\varepsilon^{-2}\|g\|_{s-1}^{\textnormal{Lip}(\gamma)}.
	\end{align*}
	This achieves the proof of the desired result.
	\end{proof}	
We shall end this section with some comments on the monodromy matrix that can be linked to Riccatti equation.
As we have seen before,
\begin{align*}
	\det(\mathscr{M}_\lambda)=1,
\end{align*}
which implies that the map 
\begin{align*}
	\varphi\in\mathbb{R} \mapsto \mathscr{M}_\lambda(\varphi)\in SL(2;\mathbb{C})
\end{align*}
is well defined. We will associate to each element $\mathscr{M}_\lambda(\varphi)$   the M\"obius transform
\begin{align*}
	T(\varphi): z\in \mathbb{D}\mapsto \frac{a_{\lambda}(\varphi)\, z+b_{\lambda}(\varphi)}{\overline{b_{\lambda}(\varphi)}\,z+\overline{a_{\lambda}(\varphi)}}\in  \mathbb{D}.
\end{align*}
Notice that $T(\varphi):\mathbb{D}\to \mathbb{D}$ is an automorphism with $T(0)=\hbox{Id}.$ Straightforward computations based on \eqref{Matrix-Fund1}  lead to \begin{equation*}
	\begin{cases}
		\partial_\varphi T=-\ii \mu\, T-\overline{\mathtt{v}_{\lambda}}\,T^2+\mathtt{v}_{\lambda},
		\\
		T(0,z)=z\in \mathbb{D},
	\end{cases}
\end{equation*}
with $T^2(\varphi,z)\triangleq\big(T(\varphi,z)\big)^2$ and 
\begin{equation*}
	\mu\triangleq\tfrac{e^{2\lambda}}{\omega(\lambda)}, \qquad\mathtt{v}_{\lambda}(\varphi)\triangleq \tfrac{\ii}{4\omega(\lambda)}\Big(\partial_{z}\mathcal{R}_{\mathbf{D}}\big(p(\varphi)\big)\Big)^2.
\end{equation*}
By setting
\begin{align*}
	\mathtt{T}(\varphi)\triangleq e^{-i\mu \varphi} T(\varphi),\qquad \varrho(\varphi)\triangleq e^{i\mu \varphi}\mathtt{v}_{\lambda}(\varphi),
\end{align*}
we get Riccatti equation
\begin{equation*}
	\begin{cases}
		\partial_\varphi \mathtt{T}=-\overline{\varrho}\,\,\mathtt{T}^2+\varrho,
		\\
		\mathtt{T}(0,z)=z \in \mathbb{D}.
	\end{cases}
\end{equation*}

\subsubsection{Invertibility of $\Pi_1^\perp\mathbb{L}_1\Pi_1^\perp$}\label{section inv normal direc}
The purpose here is to find a right inverse for the operator $$\mathbb{L}_{1,\perp}\triangleq\Pi_1^\perp\mathbb{L}_1\Pi_1^\perp,$$ where $\mathbb{L}_1$ is defined through \eqref{def cal L1} and \eqref{def Pi1}. Recall from Proposition \ref{prop CVAR}-2 that the operator $\mathfrak{Q}_1$ localizes on the modes $\pm1$ implying that
$$\Pi_1^{\perp}\mathfrak{Q}_1\Pi_{1}^{\perp}=0.$$
Therefore, the linear operator 
$$ \mathbb{L}_{1,\perp}:X^s_{\perp}\to X^{s-1}_{\perp}
$$ is well-defined and assumes the structure 
\begin{align}\label{def bbL1perp}
	\mathbb{L}_{1,\perp}
	&=\varepsilon^2\omega(\lambda)\partial_\varphi+\mathcal{D}_{1,\perp},\qquad\mathcal{D}_{1,\perp}\triangleq\mathtt{c}(\lambda)\partial_{\theta}-\tfrac{1}{2}\mathbf{H}.
\end{align}
In view of  \eqref{def ttc2}, one has
\begin{align}\label{D-1-LL}
	\forall l\in\mathbb{Z},\quad\forall|j|\geqslant 2,\quad\mathcal{D}_{1,\perp}\mathbf{e}_{l,j}=\ii\mu_{j,2}\,\mathbf{e}_{l,j},
\end{align}
with
\begin{align}\label{mu-j1}
	\mu_{j,2}(\lambda)&=j\left( \tfrac{1}{2}+\varepsilon^3\mathtt{c}_2(\lambda)\right)-\tfrac{1}{2}\tfrac{j}{|j|}\cdot
\end{align}
The main result of this subsection reads as follows.
\begin{proposition}\label{prop-perp}
	Let $(\tau,\gamma,s_0,S)$ as in \eqref{cond1}. There exists $\varepsilon_0>0$ small enough such that for any $\varepsilon\in(0,\varepsilon_0),$
	there exists a family of linear operators $\big(\mathtt{T}_n\big)_{n\in\mathbb{N}}$ satisfying for any $N\geqslant 0$ and $s\in[s_0,S],$
	$$\sup_{n\in\mathbb{N}}\|\mathtt{T}_nh\|_{s,N}^{\textnormal{Lip}(\gamma)}\lesssim\gamma^{-1}\|h\|_{s,2\tau+N}^{\textnormal{Lip}(\gamma)}$$
	and for $\lambda$ in the Cantor set
	\begin{equation}\label{Cantor second}
		\mathscr{O}_{n}^2(\rho)\triangleq\bigcap_{(l,j)\in\mathbb{Z}^2\atop2\leqslant |j|\leqslant N_n}\Big\{\lambda\in\mathscr{O}\quad\textnormal{s.t.}\quad|\varepsilon^2\omega(\lambda)l+\mu_{j,2}(\lambda)|\geqslant\gamma|j|^{-\tau}\Big\},
	\end{equation}
	we get
	$$\mathbb{L}_{1,\perp}\mathtt{T}_n=\textnormal{Id}+{\mathtt{E}_{n}^{2}},$$
	where $\mathtt{E}_{n}^{2}$ satisfies for any $s\in[s_0,S],$
	\begin{align*}
		\|\mathtt{E}_{n}^{2}h\|_{s_0}^{\textnormal{Lip}(\gamma)}&\lesssim\gamma^{-1}N_n^{s_0-s}\|h\|_{{s,2\tau+1}}^{\textnormal{Lip}(\gamma)}.
	\end{align*}	
\end{proposition}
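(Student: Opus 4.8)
The strategy is the classical small-divisor inversion of a diagonal-plus-transport operator, carried out iteratively over the scales $N_n$. Writing $\mathbb{L}_{1,\perp}=\varepsilon^2\omega(\lambda)\partial_\varphi+\mathcal{D}_{1,\perp}$ as in \eqref{def bbL1perp}, the operator is already diagonal in the Fourier basis $\mathbf{e}_{l,j}$ with eigenvalues $\ii\big(\varepsilon^2\omega(\lambda)l+\mu_{j,2}(\lambda)\big)$, where $\mu_{j,2}$ is given in \eqref{mu-j1}. Hence the natural candidate for an exact right inverse on $X^s_\perp$ is the Fourier multiplier sending $\mathbf{e}_{l,j}$ to $\big[\ii(\varepsilon^2\omega(\lambda)l+\mu_{j,2}(\lambda))\big]^{-1}\mathbf{e}_{l,j}$; the obstruction is that these divisors may be arbitrarily small, so one can only invert on frequencies $|j|\leqslant N_n$ where the Diophantine condition defining $\mathscr{O}_n^2(\rho)$ in \eqref{Cantor second} holds. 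First I would define
$$
\mathtt{T}_n h\triangleq\sum_{(l,j)\in\mathbb{Z}^2,\ 2\leqslant|j|\leqslant N_n}\frac{h_{l,j}}{\ii\big(\varepsilon^2\omega(\lambda)l+\mu_{j,2}(\lambda)\big)}\,\mathbf{e}_{l,j},
$$
extended by a smooth (Lipschitz in $\lambda$) cutoff so that the denominators are replaced by a bounded-below regularization outside $\mathscr{O}_n^2(\rho)$, following the standard extension procedure (cf. the construction in \cite[Sec. 5]{HHM23}); this makes $\mathtt{T}_n$ defined on the whole parameter set $\mathscr{O}$ while agreeing with the true inverse on the Cantor set.

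Next I would prove the claimed tame bound $\|\mathtt{T}_n h\|_{s,N}^{\textnormal{Lip}(\gamma)}\lesssim\gamma^{-1}\|h\|_{s,2\tau+N}^{\textnormal{Lip}(\gamma)}$, uniformly in $n$. The $H^s$ part is immediate from the lower bound $|\varepsilon^2\omega(\lambda)l+\mu_{j,2}(\lambda)|\geqslant\gamma|j|^{-\tau}$ together with $|j|\geqslant 2$, which costs $\gamma^{-1}$ and $\tau$ extra powers of $\langle j\rangle$ (hence the loss of $2\tau$ derivatives when one also differentiates in $\lambda$, since the Lipschitz quotient of the divisor itself brings a further factor $\gamma^{-1}\langle j\rangle^{\tau}$; the weight $N$ on the spatial frequency is preserved because the multiplier only rescales each mode). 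The anisotropic weight $\langle j\rangle^N$ passes through unchanged. Then for $\lambda\in\mathscr{O}_n^2(\rho)$ we compute $\mathbb{L}_{1,\perp}\mathtt{T}_n h=\Pi_{|j|\leqslant N_n}h=h-\Pi_{|j|>N_n}h$, so that $\mathtt{E}_n^2\triangleq-\Pi_{|j|>N_n}\Pi_1^\perp$ (the high-frequency projector); this is where the estimate $\|\mathtt{E}_n^2 h\|_{s_0}^{\textnormal{Lip}(\gamma)}\lesssim\gamma^{-1}N_n^{s_0-s}\|h\|_{s,2\tau+1}^{\textnormal{Lip}(\gamma)}$ comes from, by the elementary smoothing inequality $\|\Pi_{|j|>N_n}g\|_{s_0}\lesssim N_n^{s_0-s}\|g\|_{s}$ combined with the Lipschitz-in-$\lambda$ bookkeeping (the $\gamma^{-1}$ and the single $\tau$-loss arise only in controlling the Lipschitz seminorm of the cutoff, not the sup-norm part). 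Care must be taken that the dyadic weights in the anisotropic norms $\|\cdot\|_{s,N}$ interact correctly with the projection $\Pi_1^\perp$; since $\Pi_1^\perp$ is a Fourier projector it commutes with everything and this is routine.

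The main obstacle is not the construction itself but the bookkeeping of the Lipschitz dependence on $\lambda$ through the small divisors, together with the need to verify that the regularized multiplier really extends to a Lipschitz function of $\lambda$ on all of $\mathscr{O}$ with the stated norm. This requires the standard but delicate argument that $\lambda\mapsto\varepsilon^2\omega(\lambda)l+\mu_{j,2}(\lambda)$ is Lipschitz with a controlled constant (here using that $\omega$ is analytic on the compact interval $[\lambda_*,\lambda^*]$ and $\mathtt{c}_2$ satisfies \eqref{c2-estimate}), so that on the set where $|\varepsilon^2\omega(\lambda)l+\mu_{j,2}(\lambda)|\geqslant\gamma|j|^{-\tau}$ the reciprocal is Lipschitz with constant $\lesssim\gamma^{-2}|j|^{2\tau}$, and that this persists after the smooth extension off the Cantor set. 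All of these steps are entirely parallel to \cite[Prop. 5.x]{HHM23}, and I would invoke that reference for the technical details after setting up the multiplier and identifying $\mathtt{E}_n^2$ as the high-frequency truncation error; one should also note that the factor $\varepsilon^2$ in front of $\omega(\lambda)\partial_\varphi$ plays no role in this particular estimate since it only helps (it makes the transport contribution to the divisor smaller, never larger), so no smallness condition on $\varepsilon$ beyond what is needed to make the earlier reductions valid is required here.
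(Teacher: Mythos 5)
Your proposal is correct and follows essentially the same route as the paper: a truncated Fourier-multiplier inverse on the modes $2\leqslant|j|\leqslant N_n$, regularized by a smooth cutoff in the divisor so that $\mathtt{T}_n$ is defined (and Lipschitz in $\lambda$) on all of $\mathscr{O}$ and coincides with the exact inverse on $\mathscr{O}_n^2(\rho)$, with the tame bound coming from the Diophantine lower bound and the standard Lipschitz bookkeeping as in \cite[Prop. 5.3]{HHM23}. The only (harmless) difference is in the remainder: since $\mathbb{L}_{1,\perp}$ is an exact constant-coefficient multiplier, your $\mathtt{E}_n^2$ is just the spatial high-frequency projector, giving a bound even sharper than stated, whereas the paper splits $\mathbb{L}_{1,\perp}=\mathtt{L}_n+\Pi_{N_n}^\perp\mathcal{D}_{1,\perp}$ and keeps the extra term $\Pi_{N_n}^\perp\mathcal{D}_{1,\perp}\mathtt{T}_n$, which is why its estimate carries the $\gamma^{-1}\|h\|_{s,2\tau+1}^{\textnormal{Lip}(\gamma)}$ loss.
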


\begin{proof}
	In view of  \eqref{def bbL1perp}, one may write	\begin{equation}\label{decompostion L1perp}
		\mathbb{L}_{1,\perp}=\mathtt{L}_n+\Pi_{N_n}^\perp\mathcal{D}_{1,\perp},\qquad\mathtt{L}_n\triangleq\varepsilon^2\omega(\lambda)\partial_\varphi+\Pi_{N_n}\mathcal{D}_{1,\perp},
	\end{equation}
	where the projectors $\Pi_{N_n}$ and $\Pi^{\perp}_{N_n}$ are defined  by
	\begin{equation*}
			\Pi_{N_n} h\triangleq\sum_{\underset{\langle l,j\rangle\leqslant N_n}{(l,j)\in\mathbb{Z}^{2}}}h_{l,j}\mathbf{e}_{l,j}\qquad\textnormal{and}\qquad \Pi^{\perp}_{N_n}\triangleq\textnormal{Id}-\Pi_{N_n}.
		\end{equation*}
		Therefore,
	$$\mathtt{L}_n\mathbf{e}_{l,j}=\begin{cases}
		\ii\big(\varepsilon^2\omega(\lambda)\,l+\mu_{j,2}\big)\mathbf{e}_{l,j},& \textnormal{if }2\leqslant |j|\leqslant N_n\textnormal{ and }l\in\mathbb{Z},\\
		\ii\,\varepsilon^2\omega(\lambda)\,l\mathbf{e}_{l,j},& \textnormal{if }|j|>N_n\textnormal{ and }l\in\mathbb{Z}.
	\end{cases}$$
	Define the  operator  $\mathtt{T}_n$ by 
	$$\mathtt{T}_{n}h(\varphi,\theta)\triangleq-\ii\sum_{(l,j)\in\mathbb{Z}^2\atop2\leqslant |j|\leqslant N_n}\tfrac{\chi\left((\varepsilon^2\omega(\lambda)\,l+\mu_{j,2})\gamma^{-1}|j|^{\tau}\right)}{\varepsilon^2\omega(\lambda)\,l+\mu_{j,2}}h_{l,j}\,\mathbf{e}_{l,j}(\varphi,\theta),$$
	where $\chi\in\mathscr{C}^\infty(\mathbb{R},[0,1])$ is an even positive cut-off function  such that 
	\begin{equation*}
		\chi(\xi)=\begin{cases}
			0,& \textnormal{if }|\xi|\leqslant\frac13,\\
			1,&\textnormal{if }|\xi|\geqslant\frac12.
		\end{cases}
	\end{equation*}
	In the Cantor set $\mathscr{O}_{n}^{2}(\rho)$ one has
	\begin{align}\label{LTn}
		\mathtt{L}_n\mathtt{T}_{n}=\hbox{Id}+\Pi_{N_n}^\perp.
	\end{align}
	Arguing in a similar way to \cite[Prop. 5.3]{HHM23} one can show that, 
 for any $N\geqslant 0$,
	\begin{align}\label{Tn-01}
		\sup_{n\in\mathbb{N}}\|\mathtt{T}_nh\|_{s,N}^{\textnormal{Lip}(\gamma)}&\leqslant C\gamma^{-1}\|h\|_{s,2\tau+N}^{\textnormal{Lip}(\gamma)}.
	\end{align}
	Now, from \eqref{decompostion L1perp} and \eqref{LTn} we conclude that  on   the Cantor set $\mathscr{O}_{n,2}$ we have the identity
	\begin{align*}
		\mathbb{L}_{1,\perp}\mathtt{T}_{n}=\hbox{Id}+\mathtt{E}_{n}^{2}, \qquad \mathtt{E}_{n}^{2}\triangleq\Pi_{N_n}^\perp+\Pi_{N_n}^\perp\mathcal{D}_{1,\perp}\mathtt{T}_{n}.
	\end{align*}
	Finally, the estimate on the remainder $\mathtt{E}_{n}^{2}$ follows immediately from \eqref{Tn-01} and \eqref{D-1-LL}.
	This ends the proof of the desired result.
\end{proof}
\subsubsection{Invertibility of $\mathbb{M}$}
The next goal is to revisit   the matrix operator introduced in \eqref{def bbM} and explore its invertibility.  Here is our key result.
\begin{proposition}\label{propo-Hm-1}
	Under the assumptions of Proposition $\ref{prop-perp}$ and the smallness condition
	\begin{equation}\label{hao11}
		\varepsilon^{-1}\gamma +{\varepsilon^{2+{\mu}} \gamma^{-1}\leqslant\varepsilon_0},\qquad  \|\rho\|_{s_0+4\tau+5}^{\textnormal{Lip}(\gamma)} \leqslant 1,
	\end{equation}
	the following holds true. 
	There exists  a family of linear operators $\mathbb{P}_n$ satisfying for any $s\in[s_0,S],$
	\begin{align}\label{estiPnH}
	\|\mathbb{P}_nH\|_{s}^{\textnormal{Lip}(\gamma)}\leqslant C\gamma^{-1}\Big(\|H\|_{s+2\tau}^{\textnormal{Lip}(\gamma)}+\|\rho\|_{s+2\tau}^{\textnormal{Lip}(\gamma)}\|H\|_{s_0+2\tau}^{\textnormal{Lip}(\gamma)}\Big)
	,
	\end{align}
	such that in the Cantor set $\mathscr{O}_{n}^2$,  defined in Proposition $\ref{prop-perp}$,    we have
	$$
	\mathbb{M}\mathbb{P}_n=\textnormal{Id}_{X_{1}^s\times X_{\perp}^s}+\mathbb{E}_{n}^{2},
	$$
	with
	\begin{align}\label{estimation En2}
		\|\mathbb{E}_{n}^{2}H\|_{s_0}^{\textnormal{Lip}(\gamma)}
		&\leqslant C\gamma^{-1}N_n^{s_0-s}\Big( \|H\|_{{s+2\tau+1}}^{\textnormal{Lip}(\gamma)}+\|\rho\|_{s+4\tau+5}^{\textnormal{Lip}(\gamma)}  \|H\|_{{s_0+2\tau}}^{\textnormal{Lip}(\gamma)}\Big).
	\end{align}

\end{proposition}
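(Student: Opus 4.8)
The goal is to invert the block operator $\mathbb{M}=\mathbb{M}_1+\partial_\theta\mathbb{R}_1$ from \eqref{def bbM} on the Cantor set $\mathscr{O}_n^2$. The main point is that $\mathbb{M}_1$ is block-diagonal with the two diagonal blocks $\Pi_1\mathbb{L}_1\Pi_1=\mathbb{L}_{1,1}$ and $\Pi_1^\perp\mathbb{L}_1\Pi_1^\perp=\mathbb{L}_{1,\perp}$, for which approximate inverses have already been constructed: $\mathbb{L}_{1,1}^{-1}$ is a genuine bounded inverse losing one derivative and a factor $\varepsilon^{-2}$ by Proposition \ref{propo-monod}, while $\mathtt{T}_n$ is a right inverse of $\mathbb{L}_{1,\perp}$ modulo the smoothing remainder $\mathtt{E}_n^2$ on $\mathscr{O}_n^2$ by Proposition \ref{prop-perp}, losing $2\tau$ derivatives and a factor $\gamma^{-1}$. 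Hence the natural candidate for an approximate inverse of the diagonal part $\mathbb{M}_1$ is
\begin{equation*}
\mathbb{T}_n^{\mathrm{diag}}\triangleq\begin{pmatrix}\mathbb{L}_{1,1}^{-1} & 0\\ 0 & \mathtt{T}_n\end{pmatrix},
\end{equation*}
which on $\mathscr{O}_n^2$ satisfies $\mathbb{M}_1\mathbb{T}_n^{\mathrm{diag}}=\mathrm{Id}+\mathrm{diag}(0,\mathtt{E}_n^2)$. First I would record the mapping and estimate properties of $\mathbb{T}_n^{\mathrm{diag}}$: combining the two bounds it satisfies, for $s\in[s_0,S]$,
\begin{equation*}
\|\mathbb{T}_n^{\mathrm{diag}}H\|_s^{\textnormal{Lip}(\gamma)}\lesssim\gamma^{-1}\|H\|_{s+2\tau}^{\textnormal{Lip}(\gamma)},
\end{equation*}
uniformly in $n$ (the $\varepsilon^{-2}$ loss on the first block is harmless since $\gamma\leqslant\varepsilon^2$ by the standing assumption, so $\varepsilon^{-2}\leqslant\gamma^{-1}$), together with the remainder estimate $\|\mathrm{diag}(0,\mathtt{E}_n^2)H\|_{s_0}^{\textnormal{Lip}(\gamma)}\lesssim\gamma^{-1}N_n^{s_0-s}\|H\|_{s,2\tau+1}^{\textnormal{Lip}(\gamma)}$.

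\textbf{The perturbative step.} Next I would treat the off-diagonal part $\partial_\theta\mathbb{R}_1$ as a perturbation. Write $\mathbb{M}=\mathbb{M}_1(\mathrm{Id}+\mathbb{M}_1^{-1}_{\mathrm{app}}\partial_\theta\mathbb{R}_1)$ — but since $\mathbb{M}_1$ has no exact inverse I would rather work on the right: compose $\mathbb{T}_n^{\mathrm{diag}}$ with $\partial_\theta\mathbb{R}_1$ and estimate $\mathbb{T}_n^{\mathrm{diag}}\partial_\theta\mathbb{R}_1$ using \eqref{est sur R1} and the bound on $\mathbb{T}_n^{\mathrm{diag}}$. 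With $N=0$ in \eqref{est sur R1} one gets, for $s\in[s_0,S]$,
\begin{equation*}
\|\mathbb{T}_n^{\mathrm{diag}}\partial_\theta\mathbb{R}_1H\|_s^{\textnormal{Lip}(\gamma)}\lesssim\varepsilon^{2+\mu}\gamma^{-1}\Big(\|H\|_s^{\textnormal{Lip}(\gamma)}(1+\|\rho\|_{s_0+4\tau+5}^{\textnormal{Lip}(\gamma)})+\|\rho\|_{s+4\tau+5}^{\textnormal{Lip}(\gamma)}\|H\|_{s_0}^{\textnormal{Lip}(\gamma)}\Big);
\end{equation*}
in particular at $s=s_0$, using $\|\rho\|_{s_0+4\tau+5}^{\textnormal{Lip}(\gamma)}\leqslant1$, the operator norm of $\mathbb{T}_n^{\mathrm{diag}}\partial_\theta\mathbb{R}_1$ on $X^{s_0}\times X^{s_0}_\perp$ is $\lesssim\varepsilon^{2+\mu}\gamma^{-1}$, which is $\leqslant1/2$ by the smallness hypothesis \eqref{hao11}. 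Therefore $\mathrm{Id}+\mathbb{T}_n^{\mathrm{diag}}\partial_\theta\mathbb{R}_1$ is invertible on $X^{s_0}\times X^{s_0}_\perp$ by Neumann series; I would set
\begin{equation*}
\mathbb{P}_n\triangleq\big(\mathrm{Id}+\mathbb{T}_n^{\mathrm{diag}}\partial_\theta\mathbb{R}_1\big)^{-1}\mathbb{T}_n^{\mathrm{diag}},
\end{equation*}
so that $\mathbb{M}\mathbb{P}_n=(\mathbb{M}_1+\partial_\theta\mathbb{R}_1)\mathbb{P}_n=\mathrm{Id}+\mathrm{diag}(0,\mathtt{E}_n^2)(\mathrm{Id}+\mathbb{T}_n^{\mathrm{diag}}\partial_\theta\mathbb{R}_1)^{-1}\mathbb{T}_n^{\mathrm{diag}}$ on $\mathscr{O}_n^2$, identifying $\mathbb{E}_n^2$ as this last composition.

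\textbf{Propagation of the high-norm estimates and conclusion.} The remaining work is to propagate the tame estimates to all $s\in[s_0,S]$. For $\mathbb{P}_n$ this is the standard interpolation trick: from the Neumann series and the low-norm smallness one writes $(\mathrm{Id}+\mathbb{T}_n^{\mathrm{diag}}\partial_\theta\mathbb{R}_1)^{-1}=\mathrm{Id}-(\mathrm{Id}+\mathbb{T}_n^{\mathrm{diag}}\partial_\theta\mathbb{R}_1)^{-1}\mathbb{T}_n^{\mathrm{diag}}\partial_\theta\mathbb{R}_1$, and the tame bound \eqref{est sur R1} with a product law feeds the linear-in-$\|\rho\|_{s+\cdots}$ term, giving \eqref{estiPnH} after absorbing the low-norm factors with \eqref{hao11}. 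For $\mathbb{E}_n^2$ one simply composes the smoothing bound on $\mathrm{diag}(0,\mathtt{E}_n^2)$ (which gains $N_n^{s_0-s}$ and loses $2\tau+1$ derivatives) with the $X^s$-boundedness of $(\mathrm{Id}+\mathbb{T}_n^{\mathrm{diag}}\partial_\theta\mathbb{R}_1)^{-1}\mathbb{T}_n^{\mathrm{diag}}$ just established, and the index bookkeeping $s\mapsto s+2\tau+1$ through $\mathtt{E}_n^2$ then $s\mapsto s+2\tau$ through the resolvent-times-$\mathbb{T}_n^{\mathrm{diag}}$ factor, plus the extra $4\tau+5$ coming from the $\rho$-dependence in \eqref{est sur R1}, yields exactly \eqref{estimation En2}. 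The Lipschitz-in-$\lambda$ dependence is handled in parallel throughout, since every operator in sight is built from $\textnormal{Lip}_\gamma$ data. I expect the main obstacle to be purely organizational rather than conceptual: carefully tracking the derivative losses ($2\tau$ from $\mathtt{T}_n$, $+1$ from $\mathtt{E}_n^2$, $+5$ and the $\rho$-weights from $\mathbb{R}_1$) so that the final indices match $s+2\tau$ and $s+4\tau+5$ in the statement, and checking that every smallness requirement used (Neumann convergence at $s=s_0$, absorbing $\|\rho\|$ low norms) is implied by \eqref{hao11} together with the standing hypotheses $\gamma\varepsilon^{-2}\leqslant1$ and $\gamma\leqslant1$.
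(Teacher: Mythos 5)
Your construction is the paper's construction up to the last step: the paper also takes the diagonal approximate inverse $\mathbb{K}_n=\begin{pmatrix}\mathbb{L}_{1,1}^{-1}&0\\0&\mathtt{T}_n\end{pmatrix}$, absorbs the $\varepsilon^{-2}$ of the first block into $\gamma^{-1}$, inverts $\textnormal{Id}+\mathbb{K}_n\partial_\theta\mathbb{R}_1$ by a Neumann series using \eqref{est sur R1} and \eqref{hao11} (with the usual tame/interpolation argument for $s\in[s_0,S]$), and defines $\mathbb{P}_n=(\textnormal{Id}+\mathbb{K}_n\partial_\theta\mathbb{R}_1)^{-1}\mathbb{K}_n$; your derivation of \eqref{estiPnH} is fine. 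The gap is in your identification of the remainder. Write $K\triangleq\mathbb{K}_n$, $R\triangleq\partial_\theta\mathbb{R}_1$, $E\triangleq\begin{pmatrix}0&0\\0&\mathtt{E}_{n}^{2}\end{pmatrix}$. Since $(\textnormal{Id}+KR)^{-1}K=K(\textnormal{Id}+RK)^{-1}$, on $\mathscr{O}_n^{2}$ one gets
\begin{align*}
\mathbb{M}\mathbb{P}_n=\big(\mathbb{M}_1K+RK\big)(\textnormal{Id}+RK)^{-1}=\big(\textnormal{Id}+E+RK\big)(\textnormal{Id}+RK)^{-1}=\textnormal{Id}+E(\textnormal{Id}+RK)^{-1}=\textnormal{Id}+E-E\,R\,\mathbb{P}_n,
\end{align*}
so the correct remainder is $\mathbb{E}_n^{2}=E-ER\mathbb{P}_n$, \emph{not} $E(\textnormal{Id}+KR)^{-1}K=E\mathbb{P}_n$ as you claim; your identity already fails at $R=0$, where it would give $E\mathbb{K}_n$ instead of $E$.

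This is not a cosmetic slip, because the stated bound \eqref{estimation En2} hinges on the structure $E-ER\mathbb{P}_n$. With your formula, composing the smoothing bound for $\mathtt{E}_n^{2}$ with \eqref{estiPnH} gives $\|E\mathbb{P}_nH\|_{s_0}\lesssim\gamma^{-1}N_n^{s_0-s}\|\mathbb{P}_nH\|_{s+2\tau+1}\lesssim\gamma^{-2}N_n^{s_0-s}\big(\|H\|_{s+4\tau+1}+\dots\big)$, i.e.\ a $\gamma^{-2}$ prefactor and a loss of $4\tau+1$ derivatives on $H$, which does not match the single $\gamma^{-1}$ and the $2\tau+1$ loss in \eqref{estimation En2}; your ``index bookkeeping'' therefore does not close as written. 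In the correct decomposition, the leading term $E$ yields directly $\gamma^{-1}N_n^{s_0-s}\|H\|_{s+2\tau+1}$, while in the correction $ER\mathbb{P}_n$ the factor $\varepsilon^{2+\mu}$ carried by $\partial_\theta\mathbb{R}_1$ in \eqref{est sur R1} compensates, thanks to $\varepsilon^{2+\mu}\gamma^{-1}\leqslant\varepsilon_0$ in \eqref{hao11}, the extra $\gamma^{-1}$ produced by $\mathbb{P}_n$, which is exactly how the single power of $\gamma^{-1}$ and the $\rho$-weight $\|\rho\|_{s+4\tau+5}$ arise. Once you replace your conjugation identity by the one above, the remainder estimate goes through and your argument coincides with the paper's proof.
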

\begin{proof}
Consider the operator 
	\begin{align*}
		\mathbb{K}_n&=\begin{pmatrix}
			\mathbb{L}_{1,1}^{-1} &0 \\
			0 & \mathtt{T}_n
		\end{pmatrix},
	\end{align*}
	where  the operator $\mathtt{T}_n$ was defined in Proposition \ref{prop-perp}.
	For all $\lambda\in \mathscr{O}_{n}^2$ one has the identity
	\begin{align}\label{id MK}
		\mathbb{M}_1\mathbb{K}_n&=\hbox{Id}_{X_1^s\times X_{\perp}^s}+\begin{pmatrix}
			0&0 \\
			0 & \mathtt{E}_{n}^{2}
		\end{pmatrix}.
	\end{align}
Moreover,  according to Proposition \ref{propo-monod}, Proposition \ref{prop-perp} and \eqref{hao11}, for any $H=(h_1,h_2)\in X_1^s\times X_{\perp}^s$  we have
	\begin{align}\label{est Kn}
		 \|\mathbb{K}_nH\|_{s}^{\textnormal{Lip}(\gamma)} 
		&\leqslant C\gamma^{-1}\|H\|_{s,2\tau}^{\textnormal{Lip}(\gamma)}.
	\end{align}
Inserting \eqref{est sur R1} into \eqref{est Kn} gives, according to  \eqref{hao11} and  $\mu\in(0,1)$,
	\begin{align}\label{estiKnR1}
		\|\mathbb{K}_n\partial_\theta\mathbb{R}_1H\|_{s}^{\textnormal{Lip}(\gamma)} 
		&\leqslant C\varepsilon^{2+\mu}\gamma^{-1}\Big( \|H\|_{{s}}^{\textnormal{Lip}(\gamma)}+\|\rho\|_{s+4\tau+5}^{\textnormal{Lip}(\gamma)}  \|H\|_{{s_0}}^{\textnormal{Lip}(\gamma)}\Big).
	\end{align}
	Consequently,  under the condition  \eqref{hao11},  the operator 
	$$\hbox{Id}_{X_1^{s_0}\times X_{\perp}^{s_0}}+\mathbb{K}_n\partial_\theta\mathbb{R}_1: {X_1^{s_0}\times X_{\perp}^{s_0}}\to {X_1^{s_0}\times X_{\perp}^{s_0}}
	$$ is invertible, with
	$$
	\|(\hbox{Id}_{X_1^s\times X_{\perp}^s}+\mathbb{K}_n\partial_\theta\mathbb{R}_1)^{-1}H\|_{s_0}^{\textnormal{Lip}(\gamma)}\leqslant 2 \|H\|_{s_0}^{\textnormal{Lip}(\gamma)}.
	$$
	As to the invertibility for $s\in[s_0,S]$, one can check by induction from \eqref{estiKnR1}, $\forall m\geqslant 1,$
	that under the smallness condition \eqref{hao11} one has 
	\begin{align*}
		\sum_{m\geqslant 0}\|(\mathbb{K}_n\partial_\theta\mathbb{R}_1)^mH\|_{s}^{\textnormal{Lip}(\gamma)} &\leqslant C \|H\|_{{s}}^{\textnormal{Lip}(\gamma)}+C\|\rho\|_{s+4\tau+5}^{\textnormal{Lip}(\gamma)}  \|H\|_{{s_0}}^{\textnormal{Lip}(\gamma)}.
	\end{align*}
	This implies that
	\begin{align}\label{estiIdKnR1}
		\|(\hbox{Id}_{X_1^s\times X_{\perp}^s}+\mathbb{K}_n\partial_\theta\mathbb{R}_1)^{-1}H\|_{s}^{\textnormal{Lip}(\gamma)} &\leqslant C \|H\|_{{s}}^{\textnormal{Lip}(\gamma)}+C\|\rho\|_{s+4\tau+5}^{\textnormal{Lip}(\gamma)}  \|H\|_{{s_0}}^{\textnormal{Lip}(\gamma)}.
	\end{align}
	Now define the operator
	\begin{align}\label{defPn}
		\mathbb{P}_n=\big(\hbox{Id}_{X_1^s\times X_{\perp}^s}+\mathbb{K}_n\partial_\theta\mathbb{R}_1\big)^{-1}\mathbb{K}_n.
	\end{align}
From \eqref{defPn}, \eqref{estiIdKnR1}, \eqref{est Kn} and \eqref{hao11}, we obtain \eqref{estiPnH}.
		Moreover, in view  of \eqref{id MK} and \eqref{defPn} we deduce that in the Cantor set $ \mathscr{O}_n^2$ one has
	\begin{align}\label{Inv-PP1}
		\mathbb{M}\mathbb{P}_n&=\left(\mathbb{M}_1+\left(\mathbb{M}_1\mathbb{K}_n-\begin{pmatrix}
			0&0 \\
			0 & \mathtt{E}_{n}^{2}
		\end{pmatrix}\right)\partial_\theta\mathbb{R}_1\right)\left(\hbox{Id}_{X_1^s\times X_{\perp}^s}+\mathbb{K}_n\partial_\theta\mathbb{R}_1\right)^{-1}\mathbb{K}_n\\
		\nonumber&=\mathbb{M}_1\mathbb{K}_n-\begin{pmatrix}
			0&0 \\
			0 & \mathtt{E}_{n}^{2}
		\end{pmatrix}\partial_\theta\mathbb{R}_1\mathbb{P}_n\\
		\nonumber&=\hbox{Id}_{X_1^s\times X_{\perp}^s}+\mathbb{E}_{n}^{2},
	\end{align}
	where
	$$
	\,\mathbb{E}_{n}^{2}\triangleq\begin{pmatrix}
				0&0 \\
				0 & \mathtt{E}_{n}^{2}
			\end{pmatrix}-\begin{pmatrix}
				0&0 \\
				0 & \mathtt{E}_{n}^{2}
			\end{pmatrix}\partial_\theta\mathbb{R}_1\mathbb{P}_n.
	$$
	By virtue of Proposition \ref{prop-perp}, \eqref{est sur R1}, \eqref{estiPnH} and  \eqref{hao11} one gets \eqref{estimation En2}.
			Finally, by construction, one has the algebraic properties
	\begin{align}\label{Alg-Iden-Jul}
		\mathbb{M}=\begin{pmatrix}
			\Pi_1&0 \\
			0 & \Pi_1^\perp
		\end{pmatrix} \mathbb{M}\begin{pmatrix}
			\Pi_1&0 \\
			0 & \Pi_1^\perp
		\end{pmatrix}\qquad\hbox{and}\qquad \mathbb{P}_n=\begin{pmatrix}
			\Pi_1&0 \\
			0 & \Pi_1^\perp
		\end{pmatrix} \mathbb{P}_n\begin{pmatrix}
			\Pi_1&0 \\
			0 & \Pi_1^\perp
		\end{pmatrix}.
	\end{align}
	This completes the proof of the desired result.
\end{proof}
\subsubsection{Invertibility of ${\mathcal{L}_1}$ and ${\mathcal{L}_0}$}
In this section we intend to explore the   existence of an approximate right inverse to   the operators ${\mathcal{L}_1}$ and ${\mathcal{L}_0}$ defined in \eqref{def cal L1} and Proposition \ref{prop CVAR}, respectively.
\begin{proposition}\label{prop-inverse}
	Given the conditions \eqref{cond1}, \eqref{choix mu2}. 
	There exists $\epsilon_0>0$ such that under the assumptions
	\begin{equation}\label{hao11MM}
		\varepsilon^{-1}\gamma+\varepsilon^{4+\mu}\gamma^{-1}N_0^{\mu_2}+{\varepsilon^{2+{\mu}} \gamma^{-1}\leqslant\epsilon_0},\qquad  \|\rho\|_{2s_0+2\tau+4+\frac32\mu_2}^{\textnormal{Lip}(\gamma)} \leqslant 1,
	\end{equation}
	the following assertions  holds true. 
	
	\begin{enumerate}
		\item There exists a family of linear  operators $\big({\mathbf{T}}_{n}\big)_{n\in\mathbb{N}}$ satisfying
		\begin{equation*}
			\forall \, s\in\,[ s_0, S],\quad\sup_{n\in\mathbb{N}}\|{\mathbf{T}}_{n}h\|_{s}^{\textnormal{Lip}(\gamma)}\leqslant  C\gamma^{-1}\Big(\|h\|_{s+2\tau}^{\textnormal{Lip}(\gamma)}+\|\rho\|_{s+2\tau}^{\textnormal{Lip}(\gamma)}\|h\|_{s_0+2\tau}^{\textnormal{Lip}(\gamma)}\Big)
		\end{equation*}
		and such that in the Cantor set  $\mathscr{O}_{n}^2$, where $\mathscr{O}_{n}^2$ is introduced in Proposition $\ref{prop-perp}$,
		we have
		$$
		{\mathcal{L}_1}\,{\mathbf{T}}_{n}=\textnormal{Id}+\mathbf{E}_n,
		$$
		where $\mathbf{E}_n$ satisfies the following estimate
		\begin{align*}
			\forall\, s\in [s_0,S],\quad  \|\mathbf{E}_n h\|_{s_0}^{\textnormal{Lip}(\gamma)}
			\nonumber&\leqslant C \gamma^{-1}N_n^{s_0-s}\Big( \|h\|_{s+2\tau+1}^{\textnormal{Lip}(\gamma)}+\| \rho\|_{s+4\tau+5}^{\textnormal{Lip}(\gamma)}\|h\|_{s_{0}+2\tau}^{\textnormal{Lip}(\gamma)} \Big)\\
			&\quad +\varepsilon^6\gamma^{-1}N_{0}^{\mu_{2}}N_{n+1}^{-\mu_{2}}\|h\|_{s_{0}+2\tau+2}^{\textnormal{Lip}(\gamma)}.
		\end{align*}
		
		\item Set 
	$
	\mathcal{T}_n\triangleq\mathscr{B}{\mathbf{T}_n}\mathscr{B}^{-1}.
	$
 Then,
		\begin{equation*}
			\forall \, s\in\,[ s_0, S],\quad\sup_{n\in\mathbb{N}}\|{\mathcal{T}}_{n}h\|_{s}^{\textnormal{Lip}(\gamma)}\leqslant C\gamma^{-1}\Big(\|h\|_{s+2\tau}^{\textnormal{Lip}(\gamma)}+\|\rho\|_{s+{4\tau+5}}^{\textnormal{Lip}(\gamma)}\|h\|_{s_0+2\tau}^{\textnormal{Lip}(\gamma)}\Big).
		\end{equation*}
		Moreover, in the Cantor set $\mathscr{O}_n^{1}(\rho)\cap \mathscr{O}_n^{2}(\rho)$ $($see \eqref{Cantor set0} for $\mathscr{O}_n^{1}(\rho))$
		one has the identity
		$$
		{\mathcal{L}_0}\,{\cal{T}}_{n}=\textnormal{Id}+{\cal{E}}_{n},
		$$
		where $\mathcal{E}_{n}:=\mathscr{B}{\mathbf{E}_n}\mathscr{B}^{-1}$ satisfies the estimate
 \begin{align*}
			\|\mathcal{E}_{n}h\|_{s_0}^{\textnormal{Lip}(\gamma)}
			\nonumber&\leqslant C\gamma^{-1}N_n^{s_0-s}\big( \|h\|_{s+2\tau+1}^{\textnormal{Lip}(\gamma)}+\| \rho\|_{s+4\tau{+6}}^{\textnormal{Lip}(\gamma)}\|h\|_{s_{0}+2\tau}^{\textnormal{Lip}(\gamma)}\big)\\ &\quad +C\varepsilon^6\gamma^{-1}N_{0}^{\mu_{2}}N_{n+1}^{-\mu_{2}}\|h\|_{s_{0}+2\tau+2}^{\textnormal{Lip}(\gamma)}.
		\end{align*}
	\end{enumerate}
\end{proposition}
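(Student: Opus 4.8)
The statement is the final assembly step: it takes the inverses of the diagonal blocks constructed in Propositions \ref{propo-monod} and \ref{prop-perp}, packages them via Proposition \ref{propo-Hm-1} into a right inverse $\mathbb{P}_n$ of the matrix operator $\mathbb{M}$ acting on $X_1^s\times X_\perp^s$, and then transfers this back, first to the scalar operator $\mathcal{L}_1$ and then, by conjugation with $\mathscr{B}$, to the original linearized operator $\mathcal{L}_0$. The first thing I would do is to recall from \eqref{def cal L1} that $\mathcal{L}_1=\mathbb{L}_1+\partial_\theta\mathfrak{R}_1+\mathtt{E}_n$, and that the operator $\mathbb{L}_1+\partial_\theta\mathfrak{R}_1^\varepsilon$ acting on $\textnormal{Lip}_\gamma(\mathscr{O},X^s)$ is conjugate, through the orthogonal splitting \eqref{phase space Xs} of the phase space into the modes $\pm1$ and their complement, to the matrix operator $\mathbb{M}=\mathbb{M}_1+\partial_\theta\mathbb{R}_1$ of \eqref{def bbM}. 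So I would define $\mathbf{T}_n$ to be the operator on $X^s$ whose block form (under the splitting $X^s=X_1^s\overset{\perp}{\oplus}X_\perp^s$) is exactly $\mathbb{P}_n$ from Proposition \ref{propo-Hm-1}; the algebraic identities \eqref{Alg-Iden-Jul} guarantee this is consistent. The estimate on $\mathbf{T}_n$ then is literally \eqref{estiPnH}.

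For the identity $\mathcal{L}_1\mathbf{T}_n=\textnormal{Id}+\mathbf{E}_n$ I would compute, for $\lambda$ in $\mathscr{O}_n^2(\rho)$,
\begin{align*}
\mathcal{L}_1\mathbf{T}_n&=(\mathbb{L}_1+\partial_\theta\mathfrak{R}_1)\mathbf{T}_n+\mathtt{E}_n\mathbf{T}_n\\
&=(\textnormal{Id}+\mathbb{E}_n^2)+\mathtt{E}_n\mathbf{T}_n,
\end{align*}
using Proposition \ref{propo-Hm-1} for the first term, and hence set $\mathbf{E}_n\triangleq\mathbb{E}_n^2+\mathtt{E}_n\mathbf{T}_n$. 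The estimate on $\mathbb{E}_n^2$ is \eqref{estimation En2}; for the contribution $\mathtt{E}_n\mathbf{T}_n$ I would combine the bound \eqref{estimate En0} on $\mathtt{E}_n$ (which gains the factor $\varepsilon^6 N_0^{\mu_2}N_{n+1}^{-\mu_2}$ but costs two extra spatial derivatives) with the tame estimate on $\mathbf{T}_n$; this produces precisely the $\varepsilon^6\gamma^{-1}N_0^{\mu_2}N_{n+1}^{-\mu_2}\|h\|_{s_0+2\tau+2}^{\textnormal{Lip}(\gamma)}$ term in the statement, where the smallness condition \eqref{hao11MM} (in particular the term $\varepsilon^{4+\mu}\gamma^{-1}N_0^{\mu_2}$, which arises here since $\mathbf{T}_n$ carries a $\gamma^{-1}$ and $\mathfrak{R}_1$ carries $\varepsilon^{2+\mu}$) is what keeps everything absorbable. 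The smallness hypotheses on $\rho$ in \eqref{hao11MM} are calibrated so that the loss-of-regularity indices on $\rho$ appearing in Proposition \ref{propo-Hm-1} and in Proposition \ref{prop CVAR} stay below the stated thresholds.

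For part (2), I would set $\mathcal{T}_n\triangleq\mathscr{B}\mathbf{T}_n\mathscr{B}^{-1}$ and $\mathcal{E}_n\triangleq\mathscr{B}\mathbf{E}_n\mathscr{B}^{-1}$, and use the conjugation identity $\mathcal{L}_1=\mathscr{B}^{-1}\mathcal{L}_0\mathscr{B}$ from Proposition \ref{prop CVAR}-2, valid on the Cantor set $\mathscr{O}_n^1(\rho)$. Then on $\mathscr{O}_n^1(\rho)\cap\mathscr{O}_n^2(\rho)$,
\begin{align*}
\mathcal{L}_0\mathcal{T}_n=\mathscr{B}\mathcal{L}_1\mathscr{B}^{-1}\mathscr{B}\mathbf{T}_n\mathscr{B}^{-1}=\mathscr{B}(\mathcal{L}_1\mathbf{T}_n)\mathscr{B}^{-1}=\mathscr{B}(\textnormal{Id}+\mathbf{E}_n)\mathscr{B}^{-1}=\textnormal{Id}+\mathcal{E}_n.
\end{align*}
The estimates on $\mathcal{T}_n$ and $\mathcal{E}_n$ follow by sandwiching the bounds on $\mathbf{T}_n$ and $\mathbf{E}_n$ between the tame estimates on $\mathscr{B}^{\pm1}$ in Proposition \ref{prop CVAR}-1; the product law \eqref{ prod law} then reorganizes the $\rho$-dependent terms, and the extra conjugation costs one more derivative on $\rho$, which explains the slight bump from $s+4\tau+5$ to $s+4\tau+6$ in the final remainder estimate. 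The main obstacle I anticipate is purely bookkeeping: carefully tracking, through the several layers of conjugation ($\mathscr{B}_1$, $\mathscr{B}_2$, the block decomposition, and back), how many derivatives are lost in $h$ versus in $\rho$, and verifying that every constant that is picked up is controlled by the smallness conditions in \eqref{hao11MM}; there is no new conceptual difficulty beyond what is already present in Propositions \ref{propo-monod}--\ref{propo-Hm-1} and Proposition \ref{prop CVAR}.
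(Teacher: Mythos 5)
Your proposal is correct and follows essentially the same route as the paper: the paper also defines $\mathbf{T}_n$ as the scalar operator built from the block entries of $\mathbb{P}_n$ (using \eqref{Alg-Iden-Jul}), sets $\mathbf{E}_n=\mathbb{E}_n^{2}+\mathtt{E}_n\mathbf{T}_n$ with the estimates coming from Propositions \ref{propo-Hm-1} and \ref{prop CVAR2}, and obtains part (2) by conjugating with $\mathscr{B}^{\pm1}$ via Proposition \ref{prop CVAR}-1 on $\mathscr{O}_n^1(\rho)\cap\mathscr{O}_n^2(\rho)$. No substantive difference or gap.
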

\begin{proof}
	In view of  \eqref{def bbM}, \eqref{defPn} and \eqref{Inv-PP1} we may write
	\begin{align*}
		\mathbb{M}&=\begin{pmatrix}
			M_1&M_2 \\
			M_3 & M_4
		\end{pmatrix},\qquad\mathbb{P}_n=\begin{pmatrix}
			P_1&P_2 \\
			P_3&P_4
		\end{pmatrix}\qquad\hbox{and}\qquad \mathbb{E}_{n}^{2}=\begin{pmatrix}
			E_{n,1}^2&E_{n,2}^2 \\
			E_{n,3}^2&E_{n,4}^2
		\end{pmatrix}.
	\end{align*}
	It follows from \eqref{def cal L1} that
	$$
	\mathcal{L}_1=\sum_{j=1}^4M_j+\mathtt{E}_{n}.
	$$
	By denoting 
	$$
	{\mathbf{T}_n}= \sum_{i=1}^4P_i \qquad\hbox{and}\qquad {\mathbf{E}_n}=\sum_{i=1}^4 E_{n,i}^2+\mathtt{E}_{n}{\mathbf{T}_n}
	$$
	and using the algebraic structure \eqref{Alg-Iden-Jul} and the identity \eqref{Inv-PP1} we conclude that
		$$
	\mathcal{L}_1{\mathbf{T}_n}=\hbox{Id}+ {\mathbf{E}_n}.
	$$
	The estimates of $\mathbf{T}_n$ and $\mathbf{E}_n$ can be easily achieved  from Proposition \ref{propo-Hm-1} and  Proposition  \ref{prop CVAR2}.
	The second point follows immediately from the fact that $\mathcal{L}_1=\mathscr{B}^{-1}\mathcal{L}_0\mathscr{B},$ Proposition \ref{prop-inverse}-1 and Proposition~\ref{prop CVAR} -1 together with \eqref{hao11MM}.
	This achieves the proof of Proposition \ref{prop-inverse}.\end{proof}

\subsection{Construction of solutions and Cantor set measure}\label{Nash Moser}
To construct solutions to the nonlinear equation 
\begin{equation}\label{main-eq1}
\mathbf{F}(\rho) \triangleq \tfrac{1}{\varepsilon^{2+\mu}} \mathbf{G}( \varepsilon r_\varepsilon + \varepsilon^{\mu+1}\rho) = 0
\end{equation}
 introduced in \eqref{def func F}, we apply a modified Nash-Moser scheme, in the spirit of \cite{HHM23}. The tame estimates on the approximate right inverse of $\mathcal{L}_0=d_\rho \mathbf{F}(\rho)$, obtained in Proposition \ref{prop-inverse} are identical to those found in \cite[Prop. 5.5]{HHM23}. Consequently, the proof of Proposition \ref{Nash-Moser} follows the same lines of \cite[Prop. 6.1]{HHM23}. Therefore, we shall omit the proof and only provide a complete statement of the main results. For this end,
we define the finite-dimensional space  
$$
E_{n}\triangleq\Big\{h: [\lambda_*,\lambda^*]\times\T^2\to\mathbb{R}\quad\textnormal{s.t.}\quad\Pi_{N_n}h=h\Big\},$$
where $\Pi_{N_n}$ is the projector defined by
$$
h(\varphi,\theta)=\sum_{l\in\mathbb{Z}\atop j\in\mathbb{Z}^*}h_{l,j}e^{\ii(l \varphi+j\theta)},\qquad\Pi_{N_n}h(\varphi,\theta)\triangleq\sum_{|l|+|j|\leqslant N_{n}}h_{l,j}e^{\ii(l \varphi+j\theta)},
$$
{and  the sequence  $(N_n)_{n}$ was defined  in \eqref{suite Nn}. Here, we will utilize the parameters introduced in \eqref{cond1}, along with the following additional value. 
\begin{align}\label{def b0}
 \quad \mathtt{b}_0=3-\mu.
\end{align}
We shall also fix the values of  $N_0$ and $\gamma$  as below
				\begin{equation}\label{lambda-choice}
N_{0}\triangleq\varepsilon^{-{\delta}}, \qquad \gamma= \varepsilon^{2+{\delta}}.\end{equation}
 Moreover, we shall  impose the following constraints required along the  Nash-Moser scheme,
\begin{equation}\label{Assump-DRP1}\left\lbrace\begin{array}{rcl}
						1+\tau&<&a_2,\\3s_0+12\tau+15+\tfrac32 a_2&<& a_1,\\
												\frac{2}{3}a_{1}&< & \mu_{2},\\
	0<\delta&<& \min\big(\mu,\tfrac{1-\mu}{a_1+2},{\tfrac{2+\mu}{1+\mu_2}\big)},\\
						12\tau+3+\tfrac{6}{\delta}&<&\mu_1,\\
						\max\left(s_{0}+4\tau+3+\frac{2}{3}\mu_{1}+a_{1} +{\frac4\delta},3s_0+4\tau+6+3\mu_2\right)&< & b_{1}.
					\end{array}\right.
				\end{equation}
We have the following result.					
\begin{proposition}[Nash-Moser scheme]\label{Nash-Moser}
Given the conditions \eqref{def b0}, \eqref{lambda-choice} and \eqref{Assump-DRP1}. There exist $C_{\ast}>0$ and ${\varepsilon}_0>0$ such that for any $\varepsilon\in[0,\varepsilon_0]$   we get  for all $n\in\mathbb{N}$ the following properties,
\begin{itemize}
\item  $(\mathcal{P}1)_{n}$ There exists a Lipschitz function 
$$\rho_{n}:\begin{array}[t]{rcl}
[\lambda_*,\lambda^*] & \rightarrow &  E_{n-1}\\
\lambda & \mapsto & \rho_n
\end{array}$$
satisfying 
$$
\rho_{0}=0\quad\mbox{ and }\quad\,\| \rho_{n}\|_{{2s_0+2\tau+3}}^{\textnormal{Lip}(\gamma)}\leqslant C_{\ast}\varepsilon^{\mathtt{b}_0}\gamma^{-1}\quad \hbox{for}\quad n\geqslant1.
$$
By setting 
$$
\quad {u}_{n} =\rho_{n}-\rho_{n-1} \quad \hbox{for}\quad n\geqslant1,
$$
 we have 
$$
\forall s\in[s_0,S], \,\| {u}_{1}\|_{s}^{{\textnormal{Lip}(\gamma)}}\leqslant\tfrac12 C_{\ast}\varepsilon^{\mathtt{b}_0}\gamma^{-1}\quad\hbox{and}\quad \| {u}_{k}\|_{{{2s_0+2\tau+3}}}^{{\textnormal{Lip}(\gamma)}}\leqslant C_{\ast}\varepsilon^{\mathtt{b}_0}\gamma^{-1}N_{k-1}^{-a_{2}}\quad  \forall\,\, 2\leqslant k\leqslant n.
$$
\item $(\mathcal{P}2)_{n}$ Set 
\begin{align}\label{def set Am}
\mathcal{A}_{0}=[\lambda_*,\lambda^*]\quad \mbox{ and }\quad \mathcal{A}_{n+1}=\mathcal{A}_{n}\cap\mathcal{O}_{n}^1(\rho_n)\cap\mathcal{O}_{n}^2(\rho_n), \quad\forall n\in\mathbb{N}.
\end{align}
Then we have the following estimate 
$$\|\mathbf{F}(\rho_{n})\|_{s_{0},\mathcal{A}_{n}}^{{\textnormal{Lip}(\gamma)}}\leqslant C_{\ast}\varepsilon^{\mathtt{b}_0} N_{n-1}^{-a_{1}}.
$$
\item $(\mathcal{P}3)_{n}$ High regularity estimate: $\| \rho_{n}\|_{b_1}^{{\textnormal{Lip}(\gamma)}}\leqslant C_{\ast}\varepsilon^{\mathtt{b}_0}\gamma^{-1} N_{n-1}^{\mu_1}.$ Here, we have used  the notation \eqref{Norm-not}.
\end{itemize}

\end{proposition}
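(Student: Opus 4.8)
The statement to be proved is Proposition~\ref{Nash-Moser}, the Nash--Moser iteration scheme adapted to the rescaled functional $\mathbf{F}$ in \eqref{main-eq1}. Since the excerpt explicitly states that the proof follows the same lines as \cite[Prop.~6.1]{HHM23}, the plan is to set up the iterative construction of the sequence $(\rho_n)_n$ by an inductive Newton-type argument, using the approximate right inverse $\mathcal{T}_n$ of the linearized operator $\mathcal{L}_0=d_\rho\mathbf{F}(\rho)$ built in Proposition~\ref{prop-inverse}, together with the smoothing operators $\Pi_{N_n}$ and the sequence $(N_n)_n$ from \eqref{suite Nn}. First I would fix the parameters exactly as in \eqref{def b0}, \eqref{lambda-choice}, \eqref{Assump-DRP1}, and verify that these choices are compatible, i.e.\ that the system \eqref{Assump-DRP1} admits a solution $(\tau,a_1,a_2,\mu_1,\mu_2,\delta,b_1)$ for $s_0>3$ fixed --- this is a routine but necessary bookkeeping step, and it is where the constraint $\mathtt{b}_0=3-\mu$ and the degeneracy rate $\gamma=\varepsilon^{2+\delta}$ enter. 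Then I would initialize with $\rho_0=0$, noting that Lemma~\ref{approxim repsi 0} gives $\|\mathbf{F}(0)\|_s^{\textnormal{Lip}(\gamma)}\lesssim\varepsilon^{3-\mu}=\varepsilon^{\mathtt{b}_0}$, which is precisely the size needed to start $(\mathcal{P}2)_0$.

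The core is the inductive step: assuming $(\mathcal{P}1)_n$--$(\mathcal{P}3)_n$, one defines
$$
\rho_{n+1}\triangleq\rho_n+u_{n+1},\qquad u_{n+1}\triangleq-\Pi_{N_n}\mathcal{T}_n\,\Pi_{N_n}\mathbf{F}(\rho_n),
$$
where $\mathcal{T}_n$ is the approximate inverse from Proposition~\ref{prop-inverse}, valid on the Cantor set $\mathcal{A}_{n+1}=\mathcal{A}_n\cap\mathscr{O}_n^1(\rho_n)\cap\mathscr{O}_n^2(\rho_n)$. Using the Taylor expansion
$$
\mathbf{F}(\rho_{n+1})=\mathbf{F}(\rho_n)+\mathcal{L}_0(\rho_n)u_{n+1}+Q_n,\qquad Q_n\triangleq\int_0^1(1-t)\,d_\rho^2\mathbf{F}(\rho_n+tu_{n+1})[u_{n+1},u_{n+1}]\,dt,
$$
and the identity $\mathcal{L}_0\mathcal{T}_n=\mathrm{Id}+\mathcal{E}_n$ on $\mathcal{A}_{n+1}$, one splits $\mathbf{F}(\rho_{n+1})$ into: the high-frequency remainder $\Pi_{N_n}^\perp\mathbf{F}(\rho_n)$ controlled by $(\mathcal{P}3)_n$ and the smoothing estimate $\|\Pi_{N_n}^\perp h\|_{s_0}\lesssim N_n^{s_0-b_1}\|h\|_{b_1}$; the error term $\mathcal{E}_n\Pi_{N_n}\mathbf{F}(\rho_n)$ controlled by the estimate on $\mathcal{E}_n$ in Proposition~\ref{prop-inverse}; the commutator $[\mathcal{L}_0,\Pi_{N_n}]$-type contributions; and the quadratic term $Q_n$, controlled by the second estimate in \eqref{est F}, which carries a favorable extra $\varepsilon^2$ factor. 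The arithmetic of the exponents in \eqref{Assump-DRP1} is arranged precisely so that each of these four contributions is $\lesssim\varepsilon^{\mathtt{b}_0}N_n^{-a_1}$, closing $(\mathcal{P}2)_{n+1}$; the loss-of-derivatives in the right inverse ($2\tau$ plus the $\varepsilon^{-2-\delta}$ divergence) is absorbed by the high-norm estimate $(\mathcal{P}3)_{n+1}$, obtained from the tame estimates on $\mathcal{T}_n$ and interpolation; and the low-norm telescoping bound $(\mathcal{P}1)_{n+1}$ follows by summing the geometric-type series $\sum_k\|u_k\|$, using $N_{k}^{-a_2}$ summability.

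The main obstacle, as in all such schemes, is the \emph{quadratic convergence bookkeeping}: one must check that the chosen exponents genuinely force super-exponential decay $\|\mathbf{F}(\rho_n)\|_{s_0}\lesssim\varepsilon^{\mathtt{b}_0}N_{n-1}^{-a_1}$ while simultaneously keeping the high-Sobolev norm $\|\rho_n\|_{b_1}$ growing only polynomially as $N_{n-1}^{\mu_1}$, despite the loss of $\sim 4\tau+6$ derivatives and the $\gamma^{-1}=\varepsilon^{-2-\delta}$ factor in the inverse. Concretely, one has to verify the interplay $\mu_1>\tfrac32(\text{derivative loss})+\dots$ and $a_1>3s_0+\dots+\tfrac32 a_2$ from \eqref{Assump-DRP1}, together with the condition on $\delta$ ensuring $\varepsilon^{\mathtt{b}_0}\gamma^{-1}=\varepsilon^{1-\mu-\delta}\to 0$; these are exactly the inequalities that make the induction self-consistent. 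Once $(\mathcal{P}1)_n$--$(\mathcal{P}3)_n$ are established for all $n$, the proposition is proved; the convergence of $\rho_n$ to an actual solution on the intersection Cantor set, and the measure estimate of that set, are then handled separately in Corollary~\ref{prop-construction} and Lemma~\ref{Lem-Cantor-measu} via the non-degeneracy of $\lambda\mapsto\omega(\lambda)$. I would present the proof by stating the induction hypotheses, doing the $n=0$ base case using Lemma~\ref{approxim repsi 0}, and then carrying out the four-term decomposition of $\mathbf{F}(\rho_{n+1})$ above, deferring the purely arithmetic verification of \eqref{Assump-DRP1}-compatibility to a short separate lemma, exactly as in \cite[Sec.~6]{HHM23}.
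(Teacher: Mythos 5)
Your proposal is correct and coincides with the paper's intended argument: the paper itself omits the proof, stating that it follows verbatim the scheme of \cite[Prop. 6.1]{HHM23} once the tame estimates for the approximate right inverse of Proposition \ref{prop-inverse} are in hand, and your sketch (initialization via Lemma \ref{approxim repsi 0}, projected Newton step $u_{n+1}=-\Pi_{N_n}\mathcal{T}_n\Pi_{N_n}\mathbf{F}(\rho_n)$ on the Cantor sets, decomposition of $\mathbf{F}(\rho_{n+1})$ into high-frequency, $\mathcal{E}_n$-error and quadratic contributions, and the exponent bookkeeping of \eqref{Assump-DRP1}) is exactly that scheme.
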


Next, we shall study the convergence of Nash-Moser  scheme, stated in Proposition \ref{Nash-Moser}, show that the limit is a solution to the problem \eqref{main-eq1} and establish a lower bound for the Lebesgue measure of the final Cantor. 
\begin{coro}\label{prop-construction}
There exists $\lambda\in[\lambda_*,\lambda^*]\mapsto \rho_\infty$ satisfying
$$\|\rho_\infty\|_{2s_{0}+2\tau+3}^{\textnormal{Lip}(\gamma)}\leqslant C_{\ast}\varepsilon^{\mathtt{b}_0}\gamma^{-1}\quad \hbox{and}\quad \|\rho_\infty-\rho_m\|_{2s_{0}+2\tau+3}^{\textnormal{Lip}(\gamma)}\leqslant  C_{\ast}\varepsilon^{\mathtt{b}_0}\gamma^{-1}N_{m}^{-a_{2}}
$$
such that  
\begin{align}\label{solution}\forall \lambda\in \mathtt{C}_{\infty}\triangleq\bigcap_{m\in\mathbb{N}}\mathcal{A}_{m}, \qquad \mathbf{F}\big(\rho_{\infty}(\lambda)\big)=0.
\end{align}
\end{coro}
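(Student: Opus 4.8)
The plan is to extract Corollary~\ref{prop-construction} from the Nash-Moser iteration in Proposition~\ref{Nash-Moser} by a standard telescoping/completeness argument, and then to prove that the final Cantor set $\mathtt{C}_\infty$ has the desired almost full measure. First I would establish the existence of $\rho_\infty$. From property $(\mathcal{P}1)_n$ the increments $u_k=\rho_k-\rho_{k-1}$ satisfy $\|u_k\|_{2s_0+2\tau+3}^{\textnormal{Lip}(\gamma)}\leqslant C_\ast\varepsilon^{\mathtt{b}_0}\gamma^{-1}N_{k-1}^{-a_2}$ for $k\geqslant2$, and since $a_2>1$ the series $\sum_k N_{k-1}^{-a_2}$ converges (recall $N_n=N_0^{(3/2)^n}$ grows super-exponentially). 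Hence $(\rho_n)$ is a Cauchy sequence in $\textnormal{Lip}_\gamma(\mathscr{O},H^{2s_0+2\tau+3})$, so it converges to a limit $\rho_\infty$ with $\|\rho_\infty\|_{2s_0+2\tau+3}^{\textnormal{Lip}(\gamma)}\leqslant C_\ast\varepsilon^{\mathtt{b}_0}\gamma^{-1}$ and, by summing the tail, $\|\rho_\infty-\rho_m\|_{2s_0+2\tau+3}^{\textnormal{Lip}(\gamma)}\lesssim C_\ast\varepsilon^{\mathtt{b}_0}\gamma^{-1}N_m^{-a_2}$ (absorbing the geometric factor into $C_\ast$ after adjusting constants, using that $a_2>1$ makes $\sum_{k> m}N_{k-1}^{-a_2}\lesssim N_m^{-a_2}$).

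Next I would show $\mathbf{F}(\rho_\infty)=0$ on $\mathtt{C}_\infty=\bigcap_m\mathcal{A}_m$. Fix $\lambda\in\mathtt{C}_\infty$. By $(\mathcal{P}2)_n$ we have $\|\mathbf{F}(\rho_n)\|_{s_0,\mathcal{A}_n}^{\textnormal{Lip}(\gamma)}\leqslant C_\ast\varepsilon^{\mathtt{b}_0}N_{n-1}^{-a_1}\to0$ as $n\to\infty$, since $a_1>0$. Because $\lambda\in\mathcal{A}_n$ for every $n$, this gives $\mathbf{F}(\rho_n)(\lambda)\to0$ in $H^{s_0}(\mathbb{T}^2)$. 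On the other hand, the continuity of $\mathbf{F}$ from $H^{s_0+2}$ to $H^{s_0}$ (a consequence of the estimates \eqref{est F} in Lemma~\ref{approxim repsi}, together with the mean value inequality $\|\mathbf{F}(\rho_\infty)-\mathbf{F}(\rho_n)\|_{s_0}^{\textnormal{Lip}(\gamma)}\lesssim\sup_{t\in[0,1]}\|d_\rho\mathbf{F}(\rho_n+t(\rho_\infty-\rho_n))[\rho_\infty-\rho_n]\|_{s_0}^{\textnormal{Lip}(\gamma)}\lesssim\|\rho_\infty-\rho_n\|_{s_0+2}^{\textnormal{Lip}(\gamma)}$, valid since the iterates stay in a fixed ball of $H^{s_0+2}$ — note $2s_0+2\tau+3\geqslant s_0+2$) ensures $\mathbf{F}(\rho_n)(\lambda)\to\mathbf{F}(\rho_\infty)(\lambda)$. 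Comparing the two limits yields $\mathbf{F}(\rho_\infty(\lambda))=0$, which is \eqref{solution}.

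The remaining point, which is not literally in the displayed statement of the corollary but is needed to make Theorem~\ref{main thm} meaningful, is the measure estimate $\lim_{\varepsilon\to0}|\mathtt{C}_\infty|=\lambda^*-\lambda_*$; I would include it (it is referred to afterwards as Lemma~\ref{Lem-Cantor-measu}). The strategy is the usual perturbative measure count: writing $[\lambda_*,\lambda^*]\setminus\mathtt{C}_\infty\subset\bigcup_{n}\big((\mathcal{A}_n\setminus\mathcal{O}_n^1(\rho_n))\cup(\mathcal{A}_n\setminus\mathcal{O}_n^2(\rho_n))\big)$, one estimates the measure of each bad set $\{\lambda:\ |\varepsilon^2\omega(\lambda)l+j\mathtt{c}(\lambda,\rho_n)|<\gamma|j|^{-\tau}\}$ and similarly for the $\mu_{j,2}$ resonances. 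Here the \emph{non-degeneracy of the period} hypothesis \eqref{non degeneracy period thm} is crucial: it gives $|\mathtt{T}'(\lambda)|\gtrsim1$, hence a uniform lower bound on $|\partial_\lambda\omega(\lambda)|$, and since $\mathtt{c}=\tfrac12+\varepsilon^3\mathtt{c}_2$ is an $O(\varepsilon^3)$ perturbation of a constant, for $|l|\geqslant1$ the function $\lambda\mapsto\varepsilon^2\omega(\lambda)l+j\mathtt{c}(\lambda,\rho_n)$ has derivative $\gtrsim\varepsilon^2|l|$; a one-dimensional Lagrange-type lemma then bounds the measure of each sublevel set by $C\gamma|j|^{-\tau}\varepsilon^{-2}|l|^{-1}$, while the case $l=0$ gives an empty bad set for $|j|\geqslant2$ because $|j\mathtt{c}|\geqslant2\cdot\tfrac12-O(\varepsilon^3)\gg\gamma$. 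Summing over $(l,j)$ with $\tau>1$ gives $|[\lambda_*,\lambda^*]\setminus\mathtt{C}_\infty|\lesssim\gamma\varepsilon^{-2}=\varepsilon^{\delta}\to0$ by the choice \eqref{lambda-choice}, which is the claim with $\mathscr{C}_\varepsilon=\mathtt{C}_\infty$.

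The main obstacle is really bookkeeping rather than a conceptual difficulty: one must check that all the smallness conditions appearing in the intermediate propositions (the chain \eqref{hao11}, \eqref{hao11MM}, \eqref{smallness CVAR2}, and the Nash--Moser constraints \eqref{Assump-DRP1}) are simultaneously satisfied for the specific scaling $N_0=\varepsilon^{-\delta}$, $\gamma=\varepsilon^{2+\delta}$ when $\delta$ is chosen in the admissible window of \eqref{Assump-DRP1}, and that the high-norm divergence $\|\rho_n\|_{b_1}^{\textnormal{Lip}(\gamma)}\lesssim N_{n-1}^{\mu_1}$ in $(\mathcal{P}3)_n$ is compatible with the convergence in low norm via the standard interpolation argument $\|\rho_\infty-\rho_n\|_{s}\lesssim\|\rho_\infty-\rho_n\|_{s_0}^{1-\vartheta}\|\rho_\infty-\rho_n\|_{b_1}^{\vartheta}$ for $s\in(s_0,b_1)$. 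Since the structure is identical to \cite[Prop.~6.1 and Sec.~6]{HHM23}, I would simply invoke that reference for the verification of the arithmetic and concentrate the written proof on the three points above.
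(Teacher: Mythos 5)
Your proof of the corollary is correct and follows essentially the same route as the paper: define $\rho_\infty$ as the telescoping sum of the increments $u_k$, use $(\mathcal{P}1)_n$ with the super-exponential growth of $N_n$ to get the two stated norm bounds, and pass to the limit in $(\mathcal{P}2)_n$ on $\mathtt{C}_\infty$, where your explicit continuity argument via \eqref{est F} and the mean value inequality just fills in what the paper compresses into ``passing to the limit.'' The additional measure discussion is not part of this corollary (it is Lemma \ref{Lem-Cantor-measu}); note only that your sketch there, which bounds $\bigcup_n\big(\mathcal{A}_n\setminus(\mathcal{O}_n^1(\rho_n)\cap\mathcal{O}_n^2(\rho_n))\big)$ directly, would need the paper's inclusion step $\mathcal{C}_{\infty}^{1}\cap\mathcal{C}_{\infty}^{2}\subset\mathtt{C}_{\infty}$ (using \eqref{diff ttc} and the $2\gamma$ margin) to avoid summing comparable bad-set measures over infinitely many $n$.
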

\begin{proof}
In view of Proposition \ref{Nash-Moser} one has for all $m\geqslant1,$
$$
\rho_m=\sum_{n=1}^m u_n.
$$
Define the formal infinite sum
$$
\rho_\infty\triangleq\,\sum_{n=1}^\infty u_n.
$$
Then the claimed estimates follow immediately from the estimates of $(\mathcal{P}1)_{m}$, \eqref{suite Nn} and \eqref{lambda-choice}. 
Thus, the sequence $(\rho_m)_{m\geqslant1}$ converges pointwise to $\rho_\infty$. In view of  $(\mathcal{P}2)_{m}$ we obtain
$$\|\mathbf{F}(\rho_{m})\|_{s_{0},\mathtt{C}_\infty}^{{\textnormal{Lip}(\gamma)}}\leqslant C_{\ast}\varepsilon^{\mathtt{b}_0} N_{m-1}^{-a_{1}}.
$$
Passing to the limit leads to 
$$\forall \lambda\in \mathtt{C}_{\infty}, \quad \mathbf{F}\big(\rho_{\infty}(\lambda)\big)=0.
$$
This concludes the proof of Corollary \ref{prop-construction}.
\end{proof}

The following result is proved using standard argument, in similar way to \cite[Lem. 6.3]{HHM23}.
\begin{lemma}\label{Lem-Cantor-measu}
Under \eqref{Assump-DRP1},  we have the inclusion
\begin{align}\label{inclusion}
\mathcal{C}_{\infty}^{1}\cap \mathcal{C}_{\infty}^{2}\subset \mathtt{C}_{\infty},
\end{align}
where, for $k\in\{1,2\},$
$$\mathcal{C}_{\infty}^{k}\triangleq\bigcap_{l\in\mathbb{Z}\atop|j|\geqslant k}\Big\{\lambda\in[\lambda_*,\lambda^*]\quad\textnormal{s.t.}\quad|\varepsilon^2\omega(\lambda)l+\mu_{j,k}(\lambda,\rho_\infty)|\geqslant\tfrac{2\gamma}{|j|^\tau}\Big\}$$
and
\begin{align*}
					\nonumber \mu_{j,k}(\lambda,\rho)&= j\mathtt{c}(\lambda,\rho)-\tfrac{k-1}{2}\tfrac{j}{|j|}
					\cdot
								\end{align*}
In addition, there exists $\varepsilon_0>0$ and $C>0$ such that for any $\varepsilon\in(0,\varepsilon_0)$ 
$$|[\lambda_*,\lambda^*]\setminus  \mathtt{C}_{\infty}|\leqslant C\varepsilon^{\delta}.
$$
\end{lemma}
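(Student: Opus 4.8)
The plan is to prove Lemma \ref{Lem-Cantor-measu} in two parts: first the inclusion \eqref{inclusion}, then the measure estimate. For the inclusion, recall from \eqref{def set Am} that $\mathtt{C}_{\infty}=\bigcap_{m}\mathcal{A}_{m}$ where $\mathcal{A}_{m+1}=\mathcal{A}_{m}\cap \mathcal{O}_{m}^1(\rho_m)\cap\mathcal{O}_m^2(\rho_m)$, and the Cantor sets $\mathcal{O}_m^{1}(\rho)$, $\mathcal{O}_m^2(\rho)$ are defined in \eqref{Cantor set0} and \eqref{Cantor second} with the finite truncation $|j|\leqslant N_m$. The key point is that in the limiting sets $\mathcal{C}_\infty^k$ no truncation is imposed but the small divisor constant is relaxed from $\gamma$ to $2\gamma$. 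So I would fix $\lambda\in\mathcal{C}_\infty^1\cap\mathcal{C}_\infty^2$ and show $\lambda\in\mathcal{O}_m^{1}(\rho_m)\cap\mathcal{O}_m^2(\rho_m)$ for every $m$. For a mode $(l,j)$ with $1\leqslant|j|\leqslant N_m$, one writes
$$\big|\varepsilon^2\omega(\lambda)l+\mu_{j,k}(\lambda,\rho_m)\big|\geqslant \big|\varepsilon^2\omega(\lambda)l+\mu_{j,k}(\lambda,\rho_\infty)\big|-|j|\,\big|\mathtt{c}(\lambda,\rho_m)-\mathtt{c}(\lambda,\rho_\infty)\big|\geqslant \tfrac{2\gamma}{|j|^\tau}-|j|\,\|\Delta\mathtt{c}\|,$$
and then invokes the Lipschitz-in-$\rho$ estimate \eqref{diff ttc} on $\mathtt{c}$ together with $\|\rho_\infty-\rho_m\|\lesssim\varepsilon^{\mathtt{b}_0}\gamma^{-1}N_m^{-a_2}$ from Corollary \ref{prop-construction} to bound $|j|\,\|\Delta\mathtt{c}\|\lesssim |j|\,\varepsilon^3\varepsilon^{\mathtt{b}_0}\gamma^{-1}N_m^{-a_2}\leqslant N_m^{1-a_2}\varepsilon^{3+\mathtt{b}_0}\gamma^{-1}$. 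Using $\gamma=\varepsilon^{2+\delta}$, $\mathtt{b}_0=3-\mu$ and $a_2>1+\tau$ from \eqref{Assump-DRP1}, the term $|j|\,\|\Delta\mathtt{c}\|$ is $\leqslant \gamma|j|^{-\tau}$ for $\varepsilon$ small (since $N_m^{1-a_2}|j|^{\tau+1}\leqslant N_m^{\tau+2-a_2}\leqslant 1$ when $|j|\leqslant N_m$ and $a_2>\tau+2$ — this is where one needs the first constraint in \eqref{Assump-DRP1}, possibly slightly sharpened, or one absorbs the extra $\varepsilon$ powers). Hence the difference is $\geqslant \gamma|j|^{-\tau}$, giving $\lambda\in\mathcal{O}_m^k(\rho_m)$; the case $k=1$ (the set $\mathcal{O}_m^1$) involves the two-variable divisor $\varepsilon^2\omega(\lambda)l+j\mathtt{c}$ and is handled identically. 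Taking the intersection over $m$ yields \eqref{inclusion}.

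For the measure estimate, by \eqref{inclusion} it suffices to bound $\big|[\lambda_*,\lambda^*]\setminus(\mathcal{C}_\infty^1\cap\mathcal{C}_\infty^2)\big|$, i.e. to estimate the measure of the union of the "resonant" sets
$$R_{l,j}^{k}\triangleq\Big\{\lambda\in[\lambda_*,\lambda^*]\ :\ \big|\varepsilon^2\omega(\lambda)l+\mu_{j,k}(\lambda,\rho_\infty)\big|<\tfrac{2\gamma}{|j|^\tau}\Big\}.$$
The standard mechanism is: on each $R_{l,j}^k$ the function $\lambda\mapsto g_{l,j}(\lambda)\triangleq \varepsilon^2\omega(\lambda)l+\mu_{j,k}(\lambda,\rho_\infty)$ has a derivative bounded below by a positive constant times $\varepsilon^2|l|$ when $|l|$ dominates, or by a constant times $|j|$ when $|j|$ dominates — this is exactly where the non-degeneracy of the period $\mathtt{T}'(\lambda)\neq0$ enters, since $\omega(\lambda)=2\pi/\mathtt{T}(\lambda)$ inherits $|\omega'(\lambda)|>0$ from \eqref{non degeneracy period thm}, and $\mathtt{c}(\lambda,\rho_\infty)=\tfrac12+O(\varepsilon^3)$ so $\partial_\lambda\mu_{j,k}=j\,\partial_\lambda\mathtt{c}$ is under control. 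A standard computation (differentiating $g_{l,j}$, using $|\partial_\lambda\omega|\gtrsim1$, and discarding empty cases via $|\varepsilon^2\omega l|\sim|\mu_{j,k}|\sim|j|$ on $R_{l,j}^k$, which forces $|l|\lesssim \varepsilon^{-2}|j|$) gives $|R_{l,j}^k|\lesssim \dfrac{\gamma}{\varepsilon^2|j|^{\tau+1}}$. Summing over $l$ with $|l|\lesssim\varepsilon^{-2}|j|$ contributes a factor $\varepsilon^{-2}|j|$, so $\sum_{l}|R_{l,j}^k|\lesssim \dfrac{\gamma}{\varepsilon^4|j|^{\tau}}$, and since $\tau>1$ we may sum over $j\in\mathbb{Z}^*$ to obtain
$$\big|[\lambda_*,\lambda^*]\setminus(\mathcal{C}_\infty^1\cap\mathcal{C}_\infty^2)\big|\lesssim \frac{\gamma}{\varepsilon^4}=\frac{\varepsilon^{2+\delta}}{\varepsilon^4}\cdot\varepsilon^{2}=\varepsilon^{\delta},$$
using $\gamma=\varepsilon^{2+\delta}$; wait, more carefully $\gamma/\varepsilon^4=\varepsilon^{2+\delta-4}=\varepsilon^{\delta-2}$, which diverges, so the correct accounting must keep the lower bound on the derivative as $\gtrsim|j|$ (not $\varepsilon^2|l|$) whenever possible and only range over the finitely many $l$ per $j$ — in fact on $R_{l,j}^k$ one has $|l|\leqslant C|j|/\varepsilon^2$ but the derivative is $\gtrsim \varepsilon^2|l| + |j|\gtrsim|j|$, giving $|R_{l,j}^k|\lesssim \gamma|j|^{-\tau-1}$ directly, then $\sum_{|l|\leqslant C|j|\varepsilon^{-2}}\gamma|j|^{-\tau-1}\lesssim \gamma\varepsilon^{-2}|j|^{-\tau}$ and $\sum_j\lesssim\gamma\varepsilon^{-2}=\varepsilon^{\delta}$. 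This is the bound claimed.

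The main obstacle I anticipate is the bookkeeping in the measure estimate: correctly identifying, for each pair $(l,j)$, a variable (namely $\lambda$) along which the small-divisor function $g_{l,j}$ is genuinely non-degenerate with a quantitative lower bound on $|\partial_\lambda g_{l,j}|$ that is uniform in $(l,j)$ up to the right powers of $\varepsilon$ and $|j|$, and then choosing the summation range over $l$ so that the geometric-series/harmonic-series bounds converge and produce exactly $\varepsilon^\delta$ rather than a divergent or merely $o(1)$ quantity. This requires (i) the perturbed transport frequency $\mathtt{c}(\lambda,\rho_\infty)$ to be a small, Lipschitz perturbation of $\tfrac12$, which is guaranteed by \eqref{ttc0} and Proposition \ref{prop CVAR2}-1 composed with Corollary \ref{prop-construction}; (ii) the non-degeneracy $|\mathtt{T}'(\lambda)|\geqslant c_0>0$ from \eqref{non degeneracy period thm}, transferred to $|\omega'(\lambda)|\geqslant c_0'>0$; and (iii) the constraint $\tau>1$ for summability in $j$ and the relations among $\delta,\mu,\mu_2$ in \eqref{Assump-DRP1} to ensure the Nash-Moser iterates $\rho_n$ stay small enough that $\Delta\mathtt{c}$ estimates close the inclusion argument. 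All of these ingredients are available in the excerpt, so the proof is a careful but routine measure-theoretic argument in the spirit of \cite[Lem.~6.3]{HHM23}.
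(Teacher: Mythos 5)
Your proposal is correct and follows essentially the same route as the paper: the inclusion is proved by the triangle inequality combined with the Lipschitz estimate \eqref{diff ttc} for $\mathtt{c}$ and the convergence rate $\|\rho_\infty-\rho_n\|\lesssim\varepsilon^{\mathtt{b}_0}\gamma^{-1}N_n^{-a_2}$, and the measure bound by restricting to the resonant regime $|l|\sim|j|\varepsilon^{-2}$, using $|\omega'(\lambda)|\gtrsim1$ and $\mathtt{c}=\tfrac12+O(\varepsilon^{3})$ to get a derivative lower bound $\gtrsim|j|$, applying the bi-Lipschitz sublevel-set lemma, and summing with $\tau>1$ to reach $C\gamma\varepsilon^{-2}=C\varepsilon^{\delta}$. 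The only correction needed is in your inclusion step: grouping as $|j|^{1+\tau}N_m^{-a_2}\leqslant N_m^{1+\tau-a_2}$ (rather than bounding $|j|\leqslant N_m$ prematurely) shows that the stated constraint $a_2>1+\tau$ from \eqref{Assump-DRP1}, together with $\delta<\tfrac{2-\mu}{2}$, already closes the argument, so no strengthening to $a_2>\tau+2$ is required.
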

\begin{proof}
In view  of \eqref{Cantor set0}, \eqref{Cantor second}, \eqref{def set Am} and \eqref{solution} we have
\begin{align*}
\mathtt{C}_{\infty}=\mathcal{O}_{\infty,1}\cap\mathcal{O}_{\infty,2},
\end{align*}
with 
$$\mathcal{O}_{\infty,k}=\bigcap_{(l,j,n)\in\mathbb{Z}^2\times\mathbb{N}\atop k\leqslant |j|\leqslant N_n}\Big\{\lambda\in[\lambda_*,\lambda^*]\quad\textnormal{s.t.}\quad|\varepsilon^2\omega(\lambda)\,l+\mu_{j,k}(\lambda,\rho_n)|\geqslant\tfrac{\gamma}{|j|^\tau}\Big\}.$$
We shall prove that for $k\in\{1,2\},$
\begin{equation}\label{inclusion C in O}
\mathcal{C}_{\infty}^{k}\subset \mathcal{O}_{\infty,k},
\end{equation}
which implies the inclusion \eqref{inclusion}. Let $\lambda\in \mathcal{C}_{\infty}^{k}$ and $n\in\mathbb{N}$. Then, from the triangle inequality, we obtain for all $k\leqslant |j|\leqslant N_n,$  
\begin{align}\label{triangle inq}
\nonumber |\varepsilon^2 \omega(\lambda)\,l+\mu_{j,k}(\lambda,\rho_n)|&\geqslant |\varepsilon^2 \omega(\lambda)\,l+\mu_{j,k}(\lambda,\rho_\infty)|-|\mu_{j,k}(\lambda,\rho_\infty)-\mu_{j,k}(\lambda,\rho_n)|\\
&\geqslant\tfrac{2\gamma}{|j|^\tau}-|\mu_{j,k}(\lambda,\rho_\infty)-\mu_{j,k}(\lambda,\rho_n)|.
\end{align}
Using \eqref{diff ttc}, \eqref{lambda-choice}  and Corollary \ref{prop-construction} we get for $\varepsilon$ small enough 
\begin{align*}
|\mu_{j,k}(\lambda,\rho_\infty)-\mu_{j,k}(\lambda,\rho_n)|&
\leqslant C\varepsilon^{3}|j|\|\rho_{\infty}-\rho_n\|_{2s_{0}+2\tau+3}^{\textnormal{Lip}(\gamma)}\\
&  \leqslant C_{\ast} \varepsilon^{3+\mathtt{b}_0}\gamma^{-1}|j|N_n^{-a_2}\\
&\leqslant  C_{\ast}\gamma|j|^{-\tau}\varepsilon^{\mathtt{b}_0-1-2\delta}N_n^{1+\tau-a_2}.
\end{align*}
From \eqref{def b0} and the assumption $1+\tau<a_2$ and $\delta<\frac{2-\mu}{2}$ (see \eqref{Assump-DRP1}), we conclude that   for small $\varepsilon$ we have
\begin{align*}
|\mu_{j,k}(\lambda,\rho_\infty)-\mu_{j,k}(\lambda,\rho_n)|&
\leqslant  C_{\ast}\gamma|j|^{-\tau}\varepsilon^{2-\mu-2\delta}N_n^{1+\tau-a_2}\\
&\leqslant  \gamma|j|^{-\tau}.
\end{align*} 
Combining the last estimate with \eqref{triangle inq} we get, for all $n\in\mathbb{N}$,  $l\in\mathbb{Z}$ and $k\leqslant |j|\leqslant N_n$ we get
\begin{align*}
|\varepsilon^2 \omega(\lambda)\,l+\mu_{j,k}(\lambda,\rho_n)|&\geqslant \tfrac{\gamma}{|j|^\tau}\cdot
\end{align*}
Hence, we deduce that $\lambda\in\mathcal{O}_{\infty,k}$, concluding the proof of \eqref{inclusion C in O}. Consequently, one has 
$$
\big|[\lambda_*,\lambda^*]\setminus\mathtt{C}_{\infty}\big|\leqslant \big|[\lambda_*,\lambda^*]\setminus  \mathcal{C}_{\infty}^{1}\big|+\big|[\lambda_*,\lambda^*]\setminus \mathcal{C}_{\infty}^{2}\big|.
$$ 
Thus,  the measure estimate we only need to prove that
\begin{align}\label{measure estimate Ck}
  \big|[\lambda_*,\lambda^*]\setminus \mathcal{C}_{\infty}^{k}\big|&\leqslant  C \varepsilon^{\delta}.
  \end{align}
 For this aim we write
$$\mathcal{C}_{\infty}^{k}=\bigcap_{n\in\mathbb{N}}\bigcap_{(l,j)\in\mathbb{Z}^{2}\atop k\leqslant |j|\leqslant N_{n}}\mathcal{U}_{l,j}^k,
$$
with
$$\mathcal{U}_{l,j}^k\triangleq\left\lbrace\lambda\in [\lambda_*,\lambda^*]\quad\textnormal{s.t.}\quad\big|\varepsilon^2\omega(\lambda)\,l+\mu_{j,k}(\lambda,\rho_\infty)\big|\geqslant2\tfrac{\varepsilon^{2+\delta}}{|j|^{\tau}}\right\rbrace,$$
having used the fact that $\gamma=\varepsilon^{2+\delta}.$ Notice that
$$\bigcap_{(l,j)\in\mathbb{Z}^{2}\atop k\leqslant |j|}\mathcal{U}_{l,j}^k\subset \mathcal{C}_{\infty}^{k}.
$$
Moreover, by continuity and non-degeneracy assumption on the period \ref{non degeneracy period thm}, we infer
$$
\forall \lambda\in[\lambda_*,\lambda^*],\quad 0<\overline{b}\leqslant \omega(\lambda)\leqslant\overline{a},\qquad\overline{a}\triangleq\max_{\lambda\in[\lambda_{*},\lambda^{*}]}\omega_{0}(\lambda),\qquad\overline{b}\triangleq\min_{\lambda\in[\lambda_{*},\lambda^{*}]}\omega_{0}(\lambda).
$$
Moreover, from \eqref{def ttc2}, for  all $0<\varepsilon\leqslant \varepsilon_0$ one has  
\begin{align}\label{est-c_0}
\tfrac13\leqslant \mathtt{c}(\lambda,\rho_\infty)\leqslant \tfrac23\cdot
\end{align}
We distinguish different cases.\\
\\
$\bullet$ Case $\varepsilon^2 \overline{b}|l|\geqslant |j|\geqslant k$: 
 For $k=1$, from the triangle inequality, we get 
\begin{align*}
 \big|\varepsilon^2\omega(\lambda)  \ell+j \mathtt{c}(\lambda,\rho_\infty)\big|&\geqslant \varepsilon^2\overline{b}|l|-\tfrac23|j|\\
 &\geqslant\tfrac13|j|\\
 &\geqslant \tfrac{\varepsilon^{2+\delta}}{| j|^{\tau}}\cdot
 \end{align*}
Similarly, for $k=2$, we have
\begin{align*}
 \big|\varepsilon^2\omega(\lambda)\,l+j\mathtt{c}(\lambda,\rho_\infty)-\tfrac{j}{2|j|}\big|&\geqslant\varepsilon^2\overline{b}|l|-\tfrac23|j|-\tfrac12\\
 &\geqslant\tfrac13|j|-\tfrac12\geqslant\tfrac16\\
 &\geqslant\tfrac{\varepsilon^{2+\delta}}{|j|^{\tau}}\cdot
 \end{align*}
 Therefore, for $k\in\{1,2\},$
 $$\mathcal{U}_{l,j}^k=[\lambda_*,\lambda^*].$$
 $\bullet$ Case $|j|\geqslant 24\varepsilon^2 \overline{a}|l|$: For $k=1$, one gets by the triangle inequality and \eqref{est-c_0}, together with $|j|\geqslant 1$,
 \begin{align*}
 \big|\varepsilon^2\omega(\lambda)\,l+j\mathtt{c}(\lambda,\rho_\infty)\big|&\geqslant\tfrac13|j|-\varepsilon^2\overline{a}|l|\\
 &\geqslant\big(\tfrac{1}{24}|j|-\varepsilon^2\overline{a}|l|\big) +\tfrac{7}{24}|j|\\
 &\geqslant\tfrac{7}{24}\cdot
 \end{align*}
  Thus, for small $\varepsilon$ we infer
  \begin{align*}
 \big|\varepsilon^2\omega(\lambda)\,l+j\mathtt{c}(\lambda,\rho_\infty)\big|
 &\geqslant\tfrac{\varepsilon^{2+\delta}}{|j|^{\tau}}\cdot
 \end{align*}
  For  $k=2$, one obtains by the triangle inequality and \eqref{est-c_0}, together with $|j|\geqslant 2$,
 \begin{align*}
 \big|\varepsilon^2\omega(\lambda)\,l+j\mathtt{c}(\lambda,\rho_\infty)-\tfrac{j}{2|j|}\big|&\geqslant\tfrac13|j|-\varepsilon^2\overline{a}|l|-\tfrac12\\
 &\geqslant\big(\tfrac{1}{24}|j|-\varepsilon^2\overline{a}|l|\big)+\tfrac{7}{24}|j|-\tfrac12\\
 &\geqslant\tfrac{1}{12}\cdot
 \end{align*}
 Then, for small $\varepsilon$ we infer
 \begin{align*}
 \big|\varepsilon^2\omega(\lambda)\,l+j\mathtt{c}(\lambda,\rho_\infty)-\tfrac{j}{2|j|}\big|&\geqslant\tfrac{\varepsilon^{2+\delta}}{| j|^{\tau}}\cdot 
 \end{align*}
 Hence, for $k\in\{1,2\},$
 $$\mathcal{U}_{l,j}^k=[\lambda_*,\lambda^*].$$ Consequently, one has 
 \begin{align}\label{Cinfty}\bigcap_{|j|\leqslant 24\varepsilon^2 \overline{a}|l|\atop \varepsilon^2 \overline{b}|l|\leqslant |j|}\mathcal{U}_{l,j}^k\subset \mathcal{C}_{\infty}^{k}.
 \end{align}
Set
 $$f_{l,j}(\lambda)\triangleq\varepsilon^2\omega(\lambda)\,l+j\mathtt{c}(\lambda,\rho_\infty)-(k-1)\tfrac{j}{2|j|}\cdot$$
 Differentiating $f_{l,j}$ with respect to  $\lambda$ and using \eqref{def ttc2} give
 $$f_{l,j}^\prime(\lambda)=\varepsilon^2\omega^\prime(\lambda)\,l+\varepsilon^{3}j\mathtt{c}_2^\prime(\lambda).$$
By non-degeneracy assumption and from the estimate \eqref{c2-estimate}, we conclude that
 $$\forall\lambda\in[\lambda_*,\lambda^*],\quad0<\underline{c}\leqslant|\omega^\prime(\lambda)|\quad\hbox{and}\quad |\mathtt{c}_2^\prime(\lambda)|\leqslant c_1.
 $$ 
Since $|j|\leqslant24\varepsilon^2\overline{a}|l|,$ then for all $\lambda\in[\lambda_*,\lambda^*],$ we have
   \begin{align*}
   |f_{j,l}^\prime(\lambda)|&\geqslant\underline{c}\varepsilon^2|l|-\varepsilon^{3}|j|c_1.
  \\ &\geqslant |j|\big(\tfrac{\underline{c}}{24\overline{a}}-c_1\varepsilon^{3}\big).
  \end{align*}
Thus, for  $\varepsilon$ small enough, 
   \begin{align*}
  |f_{j,l}^\prime(\lambda)|\geqslant\tfrac{\underline{c}}{48\overline{a}}|j|.
   \end{align*}
  Applying Lemma \ref{Piralt}, we deduce that
  \begin{align*}
  \left|\left\lbrace\lambda\in[\lambda_*,\lambda^*]\quad\textnormal{s.t.}\quad\big|\varepsilon^2\omega(\lambda)\,l+j\mathtt{c}(\lambda,\rho_\infty)-(k-1)\tfrac{j}{2|j|}\big|<2\tfrac{\varepsilon^{2+\delta}}{|j|^{\tau}}\right\rbrace\right|\leqslant C\tfrac{\varepsilon^{2+\delta}}{|j|^{1+\tau}}\cdot
  \end{align*}
  Consequently,  from \eqref{Cinfty}, we get
  \begin{align*}
  \big|[\lambda_*,\lambda^*]\setminus \mathcal{C}_{\infty}^{k}\big|&\leqslant C \sum_{ \varepsilon^2 \overline{b}|\ell|\leqslant |j|}\tfrac{\varepsilon^{2+\delta}}{|j|^{1+\tau}} \\
  &\leqslant C \varepsilon^{\delta} \sum_{j\in\mathbb{Z}^*}\tfrac{1}{|j|^{\tau}} \\
  &\leqslant C \varepsilon^{\delta},
  \end{align*}
proving \eqref{measure estimate Ck}.
It is important to mention that we are slightly stretching the argument here, as the function $f_{j,l}$  is Lipschitz continuous rather than $C^1$. Nevertheless, we can rigorously support the preceding argument by working with the Lipschitz norm. This leads to the proof of the desired result.
\end{proof}

We finally  recall the following classical result concerning bi-Lipschitz functions and measure theory.
\begin{lemma}\label{Piralt}
	Let  $(\alpha,\beta)\in(\mathbb{R}_{+}^{*})^{2}.$ Consider $f:[\lambda_*,\lambda^*]\to \mathbb{R}$ a bi-Lipschitz function such that 
	$$\forall(\lambda_1,\lambda_2)\in[\lambda_*,\lambda^*]^2,\quad |f(\lambda_1)-f(\lambda_2)|\geqslant\beta|\lambda_1-\lambda_2|.$$
	Then, there exists $C>0$ independent of $\|f\|_{\textnormal{Lip}}$ such that 
	$$\Big|\big\lbrace \lambda\in[\lambda_*,\lambda^*]\quad\textnormal{s.t.}\quad|f(\lambda)|\leqslant\alpha\big\rbrace\Big|\leqslant C\tfrac{\alpha}{\beta}\cdot$$
\end{lemma}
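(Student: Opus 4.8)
\textbf{Proof proposal for Lemma \ref{Piralt}.}

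The plan is to reduce the statement to the elementary fact that a Lipschitz function cannot spread a small set too much, by exploiting the lower bound on the increments of $f$, which says precisely that $f$ is injective with a Lipschitz inverse on its image. First I would observe that the hypothesis
$$
\forall(\lambda_1,\lambda_2)\in[\lambda_*,\lambda^*]^2,\quad |f(\lambda_1)-f(\lambda_2)|\geqslant\beta|\lambda_1-\lambda_2|
$$
forces $f$ to be strictly monotone on $[\lambda_*,\lambda^*]$: indeed, a continuous injective function on an interval is strictly monotone, and injectivity is immediate from the displayed inequality since $f(\lambda_1)=f(\lambda_2)$ implies $\lambda_1=\lambda_2$. (Continuity of $f$ follows from it being bi-Lipschitz, hence Lipschitz.) Without loss of generality assume $f$ is strictly increasing; the decreasing case is symmetric.

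Next I would set $E_\alpha\triangleq\{\lambda\in[\lambda_*,\lambda^*]\ \text{s.t.}\ |f(\lambda)|\leqslant\alpha\}=f^{-1}\big([-\alpha,\alpha]\big)$. Since $f$ is strictly increasing and continuous, $E_\alpha$ is a (possibly empty or degenerate) closed subinterval $[\lambda_-,\lambda_+]\subset[\lambda_*,\lambda^*]$, where $\lambda_\pm$ are determined by $f(\lambda_\pm)\in[-\alpha,\alpha]$ with $\lambda_-$ minimal and $\lambda_+$ maximal in $E_\alpha$. Applying the increment bound to the endpoints gives
$$
2\alpha\ \geqslant\ |f(\lambda_+)-f(\lambda_-)|\ \geqslant\ \beta\,|\lambda_+-\lambda_-|\ =\ \beta\,|E_\alpha|,
$$
so that $|E_\alpha|\leqslant \tfrac{2\alpha}{\beta}$. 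This establishes the claim with $C=2$, which is in particular independent of $\|f\|_{\textnormal{Lip}}$.

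There is essentially no obstacle here; the only point requiring a word of care is the structure of the sublevel set $E_\alpha$. If one prefers to avoid invoking monotonicity, the same conclusion follows directly: for any two points $\lambda_1,\lambda_2\in E_\alpha$ one has $|\lambda_1-\lambda_2|\leqslant \beta^{-1}|f(\lambda_1)-f(\lambda_2)|\leqslant \beta^{-1}(|f(\lambda_1)|+|f(\lambda_2)|)\leqslant 2\alpha/\beta$, hence $\mathrm{diam}(E_\alpha)\leqslant 2\alpha/\beta$ and a fortiori $|E_\alpha|\leqslant 2\alpha/\beta$ since $E_\alpha$ is contained in an interval of that length. This is the form in which the lemma is actually used in the measure estimate of Lemma \ref{Lem-Cantor-measu}, where $f=f_{j,l}$ and $\beta$ is a constant multiple of $|j|$, producing the summable bound $\varepsilon^{2+\delta}|j|^{-1-\tau}$.
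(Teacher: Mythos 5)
Your proof is correct: the one-line diameter argument (any two points of $\{|f|\leqslant\alpha\}$ are at distance at most $2\alpha/\beta$, so the set sits inside an interval of that length) is exactly what is needed, and it even yields the explicit absolute constant $C=2$, independent of $\|f\|_{\textnormal{Lip}}$ as required; the preliminary monotonicity discussion is harmless but superfluous. The paper itself offers no proof to compare against — Lemma \ref{Piralt} is merely recalled there as a classical fact before being applied in Lemma \ref{Lem-Cantor-measu} with $f=f_{j,l}$ and $\beta\sim|j|$, precisely as you describe — so your argument fills that gap correctly.
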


\appendix
\section{Symplectic changes of coordinates and integral operators}
This section's major objective is to highlight useful  findings related to some change of coordinates system.  We refer the reader to the papers \cite{BM21,BFM24,FGMP19,HR21} for the proofs.
Let $\beta: \mathscr{O}\times \mathbb{T}^{2}\to \mathbb{R}$ be a smooth function such that $\displaystyle\sup_{\lambda\in \mathscr{O}}\|\beta(\lambda,\cdot,\centerdot)\|_{\textnormal{Lip}}<1$ 
then there exists $\widehat\beta: \mathscr{O}\times \mathbb{T}^{2}\to \mathbb{R}$ smooth 
such that
\begin{equation}\label{def betahat}
	y=\theta+\beta(\lambda,\varphi,\theta)\Longleftrightarrow \theta=y+\widehat\beta(\lambda,\varphi,y).
\end{equation}
Define the operators
\begin{equation}\label{def Symp Chan Var}
	\mathscr{B}=(1+\partial_{\theta}\beta)\mathrm{B}, \qquad \mathrm{B}h(\lambda,\varphi,\theta)=h\big(\lambda,\varphi,\theta+\beta(\lambda,\varphi,\theta)\big).
\end{equation}
Then
\begin{equation}\label{mathscrB}
	\mathscr{B}^{-1}=(1+\partial_{\theta}\widehat\beta)\mathrm{B}^{-1},\qquad\mathrm{B}^{-1}h(\lambda,\varphi,y)=h\big(\lambda,\varphi,y+\widehat{\beta}(\lambda,\varphi,y)\big).
\end{equation}
Now, we will provide some basic algebraic properties for the aforementioned operators.
\begin{lemma}\label{lem tranfo CVAR}
	The following assertions hold true.
	\begin{enumerate}
		\item Let $\mathscr{B}_1,\mathscr{B}_2$  be two periodic  change of variables as  in \eqref{def Symp Chan Var}, then
		$$\mathscr{B}_{{1}}\mathscr{B}_2=(1+\partial_{\theta}\beta)\mathrm{B},$$
		with
		$$
		\beta(\varphi,\theta)\triangleq \beta_1(\varphi,\theta)+\beta_2\big(\varphi,\theta+\beta_1(\varphi,\theta)\big).
		$$					\item
		The conjugation of the transport operator by $\mathscr{B}$  keeps the same structure
		$$
		\mathscr{B}^{-1}\Big(\omega\cdot\partial_\varphi+\partial_\theta\big(V(\varphi,\theta)\cdot\big)\Big)\mathscr{B}=\omega\cdot\partial_\varphi+\partial_y\big(\mathscr{V}(\varphi,y)\cdot\big),
		$$
		with
		$$\mathscr{V}(\varphi,y)\triangleq\,\mathrm{B}^{-1}\Big(\omega\cdot\partial_{\varphi} \beta(\varphi,\theta)+V(\varphi,\theta)\big(1+\partial_\theta \beta(\varphi,\theta)\big)\Big).$$
	\end{enumerate}
\end{lemma}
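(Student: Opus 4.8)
\textbf{Proof plan for Lemma \ref{lem tranfo CVAR}.}

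Both assertions are purely algebraic identities about composition of maps, so the plan is to verify them by direct substitution, being careful only with the variables involved in each layer of composition. For item~1, I would start from the definition \eqref{def Symp Chan Var}: write $\mathscr{B}_1 h = (1+\partial_\theta\beta_1)\,\mathrm{B}_1 h$ and $\mathscr{B}_2 h = (1+\partial_\theta\beta_2)\,\mathrm{B}_2 h$, and compute $\mathscr{B}_1\mathscr{B}_2 h$ step by step. Applying $\mathscr{B}_2$ first gives a function of $(\varphi,\theta)$; then applying $\mathscr{B}_1$ replaces $\theta$ by $\theta+\beta_1(\varphi,\theta)$ everywhere, and multiplies by $1+\partial_\theta\beta_1(\varphi,\theta)$. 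The key point is the chain rule: the composite shift $\theta\mapsto \theta+\beta_1(\varphi,\theta)\mapsto \big(\theta+\beta_1(\varphi,\theta)\big)+\beta_2\big(\varphi,\theta+\beta_1(\varphi,\theta)\big)$ is exactly the shift by $\beta(\varphi,\theta)\triangleq \beta_1(\varphi,\theta)+\beta_2(\varphi,\theta+\beta_1(\varphi,\theta))$, and the Jacobian factors multiply: $\big(1+\partial_\theta\beta_1(\varphi,\theta)\big)\big(1+(\partial_\theta\beta_2)(\varphi,\theta+\beta_1(\varphi,\theta))\big) = 1+\partial_\theta\beta(\varphi,\theta)$ by differentiating the defining formula for $\beta$. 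This yields $\mathscr{B}_1\mathscr{B}_2 = (1+\partial_\theta\beta)\mathrm{B}$ with $\mathrm{B}h(\varphi,\theta)=h(\varphi,\theta+\beta(\varphi,\theta))$, as claimed.

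For item~2, the cleanest route is to first record how $\mathscr{B}$ intertwines the basic differential operators. Using \eqref{mathscrB} and the substitution \eqref{def betahat}, one checks by the chain rule that $\mathrm{B}^{-1}\partial_\theta \mathrm{B} = \big(1+\partial_\theta\beta\big)^{-1}\!\circ\mathrm{B}^{-1}\;\partial_y$ in the appropriate sense, and more importantly that $\mathscr{B}^{-1}\partial_\theta(\,\cdot\,)\mathscr{B}$ and $\mathscr{B}^{-1}\omega\cdot\partial_\varphi\,\mathscr{B}$ can each be computed explicitly. Concretely, for a test function $h$, write $g=\mathscr{B}h$, compute $\big(\omega\cdot\partial_\varphi + \partial_\theta(V\cdot)\big)g$ as a function of $(\varphi,\theta)$, and then apply $\mathscr{B}^{-1}$, i.e. change variables $\theta = y+\widehat\beta(\varphi,y)$ and multiply by $1+\partial_y\widehat\beta$. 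The terms involving $\partial_\varphi$ acting on the shift $\beta$ combine with the $V(1+\partial_\theta\beta)$ term to produce the new coefficient $\mathscr{V}$; the factor $(1+\partial_\theta\beta)$ from $\mathscr{B}$ and its partner from $\mathscr{B}^{-1}$ collapse against the Jacobian of the change of variables, leaving precisely $\omega\cdot\partial_\varphi + \partial_y(\mathscr{V}\cdot)$ with $\mathscr{V}=\mathrm{B}^{-1}\big(\omega\cdot\partial_\varphi\beta + V(1+\partial_\theta\beta)\big)$. The symplectic (i.e. divergence-form preserving) structure of $\mathscr{B}$ is what guarantees the conjugated operator is again in divergence form $\partial_y(\mathscr{V}\cdot)$ rather than a general first-order operator.

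I do not anticipate a genuine obstacle here; the only thing requiring care is bookkeeping of which variable each partial derivative acts on under the nested substitutions, and verifying the cancellation of the Jacobian factors $1+\partial_\theta\beta$ with those coming from the inverse change of variables via \eqref{def betahat}. One should also note that the composition formula in item~1 requires $\sup_\lambda\|\beta_1\|_{\textnormal{Lip}},\sup_\lambda\|\beta_2\|_{\textnormal{Lip}}$ small enough that the composite $\beta$ still satisfies $\sup_\lambda\|\beta\|_{\textnormal{Lip}}<1$, so that $\mathrm{B}$ is itself an invertible change of variables; this is immediate from the triangle inequality and the chain rule bound on $\partial_\theta\beta$. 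Since these are standard facts, I would simply cite \cite{BM21,BFM24,FGMP19,HR21} for the detailed verification and record the statement as above.
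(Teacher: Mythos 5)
Your proposal is correct and matches what the paper does: the paper gives no proof of this lemma, deferring to \cite{BM21,BFM24,FGMP19,HR21}, and the standard argument there is exactly your direct chain-rule verification — the composed shift $\theta\mapsto\theta+\beta_1+\beta_2(\varphi,\theta+\beta_1)$ with multiplying Jacobians $(1+\partial_\theta\beta_1)\big(1+(\partial_\theta\beta_2)(\varphi,\theta+\beta_1)\big)=1+\partial_\theta\beta$ for item~1, and the identity $\mathscr{B}\partial_y=\partial_\theta\mathrm{B}$ plus the cancellation of the Jacobian factors against those from $\mathscr{B}^{-1}$ for item~2, yielding $\mathscr{V}=\mathrm{B}^{-1}\big(\omega\cdot\partial_\varphi\beta+V(1+\partial_\theta\beta)\big)$. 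The only blemish is your parenthetical intertwining formula $\mathrm{B}^{-1}\partial_\theta\mathrm{B}=(1+\partial_\theta\beta)^{-1}\circ\mathrm{B}^{-1}\partial_y$, which as written is off (the correct statement is $\mathrm{B}^{-1}\partial_\theta\mathrm{B}=\big[\mathrm{B}^{-1}(1+\partial_\theta\beta)\big]\partial_y=\tfrac{1}{1+\partial_y\widehat{\beta}}\,\partial_y$), but your main computation does not use it, so this is a presentational slip rather than a gap.
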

In what follows, and in the rest of this appendix, we assume that $(\lambda,s,s_0)$ satisfy \eqref{cond1} and we consider  $\beta\in \textnormal{Lip}_{\gamma}(\mathscr{O},H^{s}(\mathbb{T}^{2})) $ satisfying the smallness condition 
\begin{equation*}
	\|\beta \|_{2s_0}^{\textnormal{Lip}(\gamma)}\leqslant \varepsilon_0,
\end{equation*}
with $\varepsilon_{0}$ small enough.
The following result can be found in \cite{FGMP19,BFM24}.
\begin{lemma}\label{lemma estimates CVAR}
	The following assertions hold true.
	\begin{enumerate}
		\item The linear operators $\mathrm{B},\mathscr{B}:\textnormal{Lip}_{\gamma}(\mathscr{O},H^{s}(\mathbb{T}^{2}))\to \textnormal{Lip}_{\gamma}(\mathscr{O},H^{s}(\mathbb{T}^{2}))$ are continuous and invertible, with 
		\begin{align}
			\|\mathrm{B}^{\pm1}h\|_{s}^{\textnormal{Lip}(\gamma)}\leqslant \|h\|_{s}^{\textnormal{Lip}(\gamma)}\left(1+C\|\beta\|_{s_{0}}^{\textnormal{Lip}(\gamma)}\right)+C\|\beta\|_{s}^{\textnormal{Lip}(\gamma)}\|h\|_{s_{0}}^{\textnormal{Lip}(\gamma)}, \label{tame comp}
			\\
			\|\mathscr{B}^{\pm1}h\|_{s}^{\textnormal{Lip}(\gamma)}\leqslant \|h\|_{s}^{\textnormal{Lip}(\gamma)}\left(1+C\|\beta\|_{s_{0}}^{\textnormal{Lip}(\gamma)}\right)+C\|\beta\|_{s+1}^{\textnormal{Lip}(\gamma)}\|h\|_{s_{0}}^{\textnormal{Lip}(\gamma)}. \label{tame comp symp}
		\end{align}
		\item The functions $\beta$ and $\widehat{\beta}$ defined through \eqref{def betahat} satisfy the estimates
		\begin{equation*}
			\|\widehat{\beta}\|_{s}^{\textnormal{Lip}(\gamma)}\leqslant C\|\beta\|_{s}^{\textnormal{Lip}(\gamma)}.
		\end{equation*}
	\end{enumerate}
\end{lemma}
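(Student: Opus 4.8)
\textbf{Proof plan for Lemma~\ref{lemma estimates CVAR}.} The statement collects the standard tame estimates for the composition operators $\mathrm{B}^{\pm 1}$ and $\mathscr{B}^{\pm 1}$ associated with the near-identity change of variable $y=\theta+\beta(\lambda,\varphi,\theta)$, together with the control of the inverse function $\widehat\beta$. Since $\beta$ carries the external parameter $\lambda$ and the estimates are required in the weighted Lipschitz norm $\|\cdot\|_{s}^{\textnormal{Lip}(\gamma)}$, the plan is to first establish everything for fixed $\lambda$ (i.e.\ in plain Sobolev norms on $\mathbb{T}^2$) and then upgrade to the Lipschitz-in-$\lambda$ norm by the usual interpolation/difference argument, exploiting that differentiating in $\lambda$ costs exactly one spatial derivative.

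\emph{Step 1: the inverse function $\widehat\beta$.} First I would solve $\theta = y + \widehat\beta(\lambda,\varphi,y)$ by the contraction mapping principle: the smallness $\|\beta\|_{2s_0}^{\textnormal{Lip}(\gamma)}\le \varepsilon_0$ with $s_0>3$ guarantees $\sup|\partial_\theta\beta|<1$ via Sobolev embedding $H^{s_0-1}(\mathbb{T}^2)\hookrightarrow L^\infty$, so the map $\theta\mapsto y-\beta(\lambda,\varphi,\theta)$ is a contraction uniformly in $(\lambda,\varphi,y)$; this yields existence, uniqueness and smoothness of $\widehat\beta$. For the tame bound $\|\widehat\beta\|_s^{\textnormal{Lip}(\gamma)}\lesssim\|\beta\|_s^{\textnormal{Lip}(\gamma)}$ I would use the implicit identity $\widehat\beta(\lambda,\varphi,y) = -\beta\big(\lambda,\varphi,y+\widehat\beta(\lambda,\varphi,y)\big)$, apply the composition tame estimate (Moser-type product and composition inequalities in $H^s(\mathbb{T}^2)$), and absorb the high-norm term $\|\widehat\beta\|_s$ appearing on the right-hand side using the smallness of $\|\beta\|_{2s_0}$. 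The Lipschitz-in-$\lambda$ part follows by differentiating the implicit identity, noting that $\partial_\lambda$ applied through the composition produces one extra spatial derivative, which is exactly matched by the $H^{s-1}$-seminorm built into $\|\cdot\|_s^{\textnormal{Lip}(\gamma)}$.

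\emph{Step 2: tame estimates for $\mathrm{B}^{\pm1}$ and $\mathscr{B}^{\pm1}$.} For \eqref{tame comp}, write $\mathrm{B}h(\lambda,\varphi,\theta)=h(\lambda,\varphi,\theta+\beta)$ and estimate $\|h\circ(\mathrm{Id}+\beta)\|_{H^s}$ by the change-of-variables composition lemma in Sobolev spaces: the Jacobian $1+\partial_\theta\beta$ is bounded below by a positive constant, so one gets $\|\mathrm{B}h\|_{H^s}\lesssim \|h\|_{H^s}(1+\|\beta\|_{H^{s_0}}) + \|h\|_{H^{s_0}}\|\beta\|_{H^s}$; the estimate for $\mathrm{B}^{-1}$ is identical with $\beta$ replaced by $\widehat\beta$ and then Step~1 is invoked. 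For \eqref{tame comp symp} I would use $\mathscr{B}=(1+\partial_\theta\beta)\mathrm{B}$ and the product law \eqref{ prod law} together with \eqref{tame comp}; the factor $1+\partial_\theta\beta$ is responsible for the extra derivative, which is why the high-norm term is $\|\beta\|_{s+1}^{\textnormal{Lip}(\gamma)}$ rather than $\|\beta\|_s^{\textnormal{Lip}(\gamma)}$. Finally, the Lipschitz-in-$\lambda$ estimates are obtained by writing $\mathrm{B}(\lambda_1)h-\mathrm{B}(\lambda_2)h$ as an integral of $\partial_\lambda\mathrm{B}$ over the segment, observing $\partial_\lambda[\mathrm{B}h]=(\partial_\lambda\beta)(\partial_\theta h)\circ(\mathrm{Id}+\beta)$, and estimating this in $H^{s-1}$ using the product law and \eqref{tame comp}.

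\emph{Main obstacle.} The only genuinely delicate point is the absorption argument in Step~1: the implicit equation for $\widehat\beta$ produces $\|\widehat\beta\|_s$ on both sides through the composition term, and one must be careful that the coefficient multiplying $\|\widehat\beta\|_s$ on the right is $\lesssim \|\beta\|_{2s_0}^{\textnormal{Lip}(\gamma)}\le\varepsilon_0$, so that it can be moved to the left. This requires the composition tame estimate in the sharp form (losing no derivatives beyond what is displayed) and the hypothesis $s_0>3$ so that $H^{s_0-1}\hookrightarrow W^{1,\infty}$. Everything else is a routine bookkeeping of Moser-type inequalities; since the excerpt explicitly points to \cite{FGMP19,BFM24} for the proofs, I would simply cite those references for the Sobolev composition lemmas and present the above as the organizing scheme.
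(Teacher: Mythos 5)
Your proposal is sound, but note that the paper itself contains no proof of this lemma: it is quoted as a known result with the proofs deferred to \cite{FGMP19,BFM24} (see also \cite{BM21}), and your plan — contraction/implicit identity for $\widehat{\beta}$ with absorption of the high norm via the smallness of $\|\beta\|_{2s_0}^{\textnormal{Lip}(\gamma)}$, Moser-type composition estimates for $\mathrm{B}^{\pm1}$, and the product law for the Jacobian factor $1+\partial_{\theta}\beta$ explaining the loss $\|\beta\|_{s+1}^{\textnormal{Lip}(\gamma)}$ in \eqref{tame comp symp} — is exactly the standard argument carried out in those references. The only cosmetic slip is in the Lipschitz-in-$\lambda$ step: since $h$ also depends on $\lambda$, the difference $\mathrm{B}(\lambda_1)h(\lambda_1)-\mathrm{B}(\lambda_2)h(\lambda_2)$ contains, besides the term driven by $\partial_{\lambda}\beta$ that you describe, the piece $\mathrm{B}(\lambda_1)\big[h(\lambda_1)-h(\lambda_2)\big]$, which is handled directly by the fixed-$\lambda$ composition bound and does not change the final estimate.
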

%
 The next result deals with some integral  operator estimates. For the proof, we refer to  \cite[Lem. 4.4]{HR21} and 
%
 \cite[Lem. 2.3]{BM20}.			
\begin{lemma}\label{lem CVAR kernel}
	Consider a smooth real-valued kernel
	$$K:(\lambda,\varphi,\theta,\eta)\mapsto K(\lambda,\varphi,\theta,\eta)$$
	and let   
	\begin{equation}\label{Top-op1}(\mathcal{T}_Kh)(\lambda,\varphi,\theta)\triangleq\int_{\mathbb{T}}K(\lambda,\varphi,\theta,\eta)h(\lambda,\varphi,\eta)d\eta,
\end{equation} be an integral operator. 
Then, for any $s_1,s_2\geqslant0,$
	\begin{align*}
		\| \mathcal{T}_Kh\|_{s_1,s_2}^{\textnormal{Lip}(\gamma)}&\lesssim \|h\|_{s_0}^{\textnormal{Lip}(\gamma)}\|K\|_{s_1+s_2}^{\textnormal{Lip}(\gamma)} +\|h\|_{s_1}^{{\textnormal{Lip}(\gamma)}} \|K\|_{s_0+s_2}^{\textnormal{Lip}(\gamma)}.
	\end{align*}
Given the periodic change of variables $\mathscr{B}$ defined by \eqref{def Symp Chan Var}.
	Then, the operators $\mathscr{B}^{-1}\mathcal{T}_K\mathscr{B}$ and $\mathscr{B}^{-1}\mathcal{T}_{K}\mathscr{B}-\mathcal{T}_{K}$ are  integral operators, 		
	\begin{align*}\big(\mathscr{B}^{-1}\mathcal{T}_{K}\mathscr{B}\big)h(\lambda,\varphi,\theta)&=\int_{\mathbb{T}}h(\lambda,\varphi,{\eta})\widehat{K}(\lambda,\varphi,\theta,{\eta})d{\eta}\\
		\big(\mathscr{B}^{-1}\mathcal{T}_{K}\mathscr{B}-\mathcal{T}_{K}\big)h(\lambda,\varphi,\theta)&=\int_{\mathbb{T}}h(\lambda,\varphi,{\eta})\widetilde{K}(\lambda,\varphi,\theta,{\eta})d{\eta}
	\end{align*}
	with
	\begin{align*}
		\|\widehat{K}\|_{s}^{\textnormal{Lip}(\gamma)}&\lesssim \|K\|_{s}^{\textnormal{Lip}(\gamma)}+\|K\|_{s_0}^{\textnormal{Lip}(\gamma)}\|\beta\|_{s+1}^{\textnormal{Lip}(\gamma)},\\
		\|\widetilde{K}\|_{s}^{\textnormal{Lip}(\gamma)}&\lesssim\|K\|_{s+1}^{\textnormal{Lip}(\gamma)}\|\beta\|_{s_0}^{\textnormal{Lip}(\gamma)}+\|K\|_{s_0}^{\textnormal{Lip}(\gamma)}\|\beta\|_{s+1}^{\textnormal{Lip}(\gamma)}.
	\end{align*}
\end{lemma}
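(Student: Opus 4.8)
\textbf{Proof strategy for Lemma \ref{lem CVAR kernel}.} The plan is to reduce everything to the change of variables formula and the tame product and composition estimates recorded in Lemma \ref{lemma estimates CVAR}, treating the kernel as a function on $\mathbb{T}^3$ (with $\varphi$ playing the role of a parameter that is not transformed).

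\emph{Step 1: the basic kernel estimate.} For the operator $\mathcal{T}_K$ defined in \eqref{Top-op1}, I would Fourier-expand $K$ in $(\theta,\eta)$ and $h$ in $\eta$; the kernel $K$ is smooth, so $\mathcal{T}_K h$ is essentially a contraction of $K$ against $h$ in the $\eta$-variable. The anisotropic norm $\|\cdot\|_{s_1,s_2}$ weights the total frequency $\langle l,j\rangle^{s_1}$ and the $\theta$-frequency $\langle j\rangle^{s_2}$; since the $\eta$-integration kills the $\eta$-frequency, the resulting bound is exactly of the bilinear interpolation (tame) type: the high norm falls on one factor while the other is measured in $H^{s_0}$. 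This is a routine consequence of \eqref{ prod law} applied to the expansion of $K(\lambda,\varphi,\theta,\eta)h(\lambda,\varphi,\eta)$ and integration in $\eta$, yielding
$$\|\mathcal{T}_Kh\|_{s_1,s_2}^{\textnormal{Lip}(\gamma)}\lesssim\|h\|_{s_0}^{\textnormal{Lip}(\gamma)}\|K\|_{s_1+s_2}^{\textnormal{Lip}(\gamma)}+\|h\|_{s_1}^{\textnormal{Lip}(\gamma)}\|K\|_{s_0+s_2}^{\textnormal{Lip}(\gamma)}.$$

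\emph{Step 2: conjugation by $\mathscr{B}$.} Write $\mathscr{B}h=(1+\partial_\theta\beta)\mathrm{B}h$ with $\mathrm{B}h(\varphi,\theta)=h(\varphi,\theta+\beta(\varphi,\theta))$ and inverse $\mathscr{B}^{-1}=(1+\partial_\theta\widehat\beta)\mathrm{B}^{-1}$ as in \eqref{mathscrB}. Plugging into $\mathscr{B}^{-1}\mathcal{T}_K\mathscr{B}$ and changing variables $\eta\mapsto\eta'$ via $\eta=\eta'+\beta(\varphi,\eta')$ in the inner integral (whose Jacobian is $1+\partial_{\eta'}\beta$, which cancels against the factor $1+\partial_\theta\beta$ produced by $\mathscr{B}$ acting in the $\eta$-slot), one finds $\mathscr{B}^{-1}\mathcal{T}_K\mathscr{B}$ is again an integral operator with kernel
$$\widehat K(\varphi,\theta,\eta)=\big(1+\partial_\theta\widehat\beta(\varphi,\theta)\big)\,K\big(\varphi,\theta+\widehat\beta(\varphi,\theta),\,\eta+\widehat\beta(\varphi,\eta)\big).$$
The estimate $\|\widehat K\|_s^{\textnormal{Lip}(\gamma)}\lesssim\|K\|_s^{\textnormal{Lip}(\gamma)}+\|K\|_{s_0}^{\textnormal{Lip}(\gamma)}\|\beta\|_{s+1}^{\textnormal{Lip}(\gamma)}$ then follows by combining the tame composition bound \eqref{tame comp} (applied to $\theta\mapsto\theta+\widehat\beta$ and $\eta\mapsto\eta+\widehat\beta$), the product law \eqref{ prod law} for the prefactor $1+\partial_\theta\widehat\beta$, and the bound $\|\widehat\beta\|_s^{\textnormal{Lip}(\gamma)}\lesssim\|\beta\|_s^{\textnormal{Lip}(\gamma)}$ from Lemma \ref{lemma estimates CVAR}; the one extra derivative in $\|\beta\|_{s+1}$ is exactly the $\partial_\theta\widehat\beta$ factor.

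\emph{Step 3: the commutator kernel.} For $\mathscr{B}^{-1}\mathcal{T}_K\mathscr{B}-\mathcal{T}_K$ the kernel is $\widetilde K=\widehat K-K$, i.e.
$$\widetilde K(\varphi,\theta,\eta)=\partial_\theta\widehat\beta(\varphi,\theta)\,K\big(\varphi,\theta+\widehat\beta,\eta+\widehat\beta\big)+\Big[K\big(\varphi,\theta+\widehat\beta,\eta+\widehat\beta\big)-K(\varphi,\theta,\eta)\Big].$$
Estimating the first term uses \eqref{ prod law} and \eqref{tame comp} as above. For the bracketed difference I would write it as $\int_0^1\tfrac{d}{dt}K(\varphi,\theta+t\widehat\beta,\eta+t\widehat\beta)\,dt$, producing a factor $\widehat\beta$ multiplied by first derivatives of $K$ composed with a near-identity diffeomorphism; applying \eqref{ prod law} and \eqref{tame comp} then gives
$$\|\widetilde K\|_s^{\textnormal{Lip}(\gamma)}\lesssim\|K\|_{s+1}^{\textnormal{Lip}(\gamma)}\|\beta\|_{s_0}^{\textnormal{Lip}(\gamma)}+\|K\|_{s_0}^{\textnormal{Lip}(\gamma)}\|\beta\|_{s+1}^{\textnormal{Lip}(\gamma)},$$
where the derivative of $K$ accounts for the loss $s\mapsto s+1$ on the $K$-factor in the first summand. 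The main technical point to watch is the bookkeeping of the Jacobian cancellation in Step 2 and making sure the near-identity composition is genuinely a diffeomorphism — this is guaranteed by the smallness hypothesis $\|\beta\|_{2s_0}^{\textnormal{Lip}(\gamma)}\leqslant\varepsilon_0$ — but once that is set up, the rest is a direct application of the tame calculus already collected in the appendix.
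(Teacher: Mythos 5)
Your proposal is correct and is essentially the standard argument behind this lemma: the paper itself gives no proof, deferring to \cite[Lem. 2.3]{BM20} and \cite[Lem. 4.4]{HR21}, and your computation — the explicit kernel $\widehat K(\lambda,\varphi,\theta,\eta)=\big(1+\partial_\theta\widehat\beta(\lambda,\varphi,\theta)\big)K\big(\lambda,\varphi,\theta+\widehat\beta(\lambda,\varphi,\theta),\eta+\widehat\beta(\lambda,\varphi,\eta)\big)$ obtained after absorbing the $\eta$-Jacobian in the change of variables, the splitting $\widetilde K=\widehat K-K$ with the Taylor-integral representation of the difference, and then the tame product/composition estimates of Lemma \ref{lemma estimates CVAR} — is exactly the route taken in those references. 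The only bookkeeping points worth making explicit are that the composition estimate \eqref{tame comp} has to be applied to the kernel viewed as a function on $\mathbb{T}^3$ (the diffeomorphism acting simultaneously in the $\theta$- and $\eta$-slots, which is a harmless extension under the smallness of $\|\beta\|_{2s_0}^{\textnormal{Lip}(\gamma)}$), and that intermediate products such as $\|K\|_{s_0+1}^{\textnormal{Lip}(\gamma)}\|\beta\|_{s}^{\textnormal{Lip}(\gamma)}$ must be converted into the stated right-hand sides by log-convexity (interpolation) of the Sobolev norms together with Young's inequality; both steps are routine and your sketch of Step 1 via the Fourier contraction in $\eta$ is likewise the standard proof of the anisotropic tame bound.
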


	\textbf{Acknoledgments :} The work of Z. Hassainia is supported by Tamkeen under the NYU Abu Dhabi Research Institute grant of the center SITE. T. Hmidi has been supported by Tamkeen under the NYU Abu Dhabi Research Institute grant. The work of E. Roulley is supported by PRIN 2020XB3EFL, "Hamiltonain and Dispersive PDEs".\\
	{\small

	}

			\vspace{1cm}
			\noindent\textbf{Zineb Hassainia}\\
			Department of Mathematics\\
			New York University in Abu Dhabi\\
			Saadiyat Island, P.O. Box 129188, Abu Dhabi, UAE\\
			Email: zh14@nyu.edu\\
				
			\noindent\textbf{Taoufik Hmidi}\\
			Department of Mathematics\\
			New York University in Abu Dhabi\\
			Saadiyat Island, P.O. Box 129188, Abu Dhabi, UAE\\
			Email: th2644@nyu.edu\\
			
			\noindent\textbf{Emeric Roulley}\\
			SISSA\\
			Via Bonomea 265, 34136, Trieste, Italy\\
			Email : eroulley@sissa.it

\begin{thebibliography}{9999}
		
		\bibitem{AS64} M. Abramowitz, I. A. Stegun, \textit{Handbook of mathematical functions with formulas, graphs, and mathematical tables}, volume 55 of National Bureau of Standards Applied Mathematics Series, (1964).
		\bibitem{Agrawal} S. Agrawal, A. R. Nahmod, {\textit{Uniqueness of the 2D Euler equation on rough domains}}, arXiv:2308.12926
		\bibitem{A66} L. Ahlfors, \textit{Complex Analysis,} McGraw Hill Publishing Company, New York (1966).
		
		\bibitem{A79} H. Aref, \textit{Motion of three vortices,} Physics of Fluids 22 (1979), no. 3, 393--400.
		
		\bibitem{A03} H. Aref, \textit{Vortex crystals,} Advances in applied Mechanics 39 (2003), 2--81. 
		
		\bibitem{A10} H. Aref, \textit{Self-similar motion of three point vortices,} Physics of Fluids 22 (2010), no. 5.
		
		\bibitem{BBPV94} A. Babiano, G. Boffetta, A. Provenzale, A. Vulpiani, \textit{Chaotic advection in point vortex models and two-dimensional turbulence,} Physics of Fluids 6 (1994), no. 7, 2465--2474.
		
		\bibitem{BM21} P. Baldi, R. Montalto, \textit{Quasi-periodic incompresible Euler fows in 3D,} Advances in Mathematics 384 (2021), no. 107730, 74.
	
		\bibitem{Bandle} C. Bandle, M. Flucher, \textit{Harmonic radius and concentration of energy; hyperbolic radius and Liouville's equations $\Delta U=e^U$ and $\Delta U=U^{\frac{n+2}{n-2}}$,} SIAM Review 38 (1996), no. 2, 191--238.
		
		\bibitem{BS18}  T. Bartsch, M. Sacchet, \textit{Periodic solutions with prescribed minimal period of vortex type problems in domains,} Nonlinearity 31 (2018), no. 2156.

        \bibitem{BFM24} M. Berti, L. Franzoi, A. Maspero, \textit{Pure gravity traveling quasi-periodic water waves with constant vorticity}, Communications on Pure and Applied Mathematics 77 (2024), no. 2, 9901064.
	
		\bibitem{BM20} M. Berti, R. Montalto. \textit{Quasi-periodic standing wave solutions of gravity-capillary water waves,} MEMO, Volume 263, 1273, Memoires AMS, ISSN 0065--9266, 2020.
		
		\bibitem{BHM22} M. Berti, Z. Hassainia, N. Masmoudi, \textit{Time quasi-periodic vortex patches}, Inventiones Mathematicae (2023).
		
		\bibitem{B82} J. Burbea, \textit{Motions of vortex patches,} Letters in Mathematical Physics 6 (1982), no. 1, 1--16.

		\bibitem{Cafar} L. A. Caffarelli, A. Friedman, \textit{Convexity of solutions of semi-linear elliptic equations,} Duke Mathematical Journal 52 (1985), no. 2, 431--456.
		
		\bibitem{CLZ21} D. Cao, S. Lai, W. Zhan, \textit{Traveling vortex pairs for 2D incompressible Euler equations,} Calculus of Variations and Partial Differential Equations 60 (2021), no. 190.
		
		\bibitem{CQZZ21} D. Cao, G. Qin, W. Zhan, C. Zou, \textit{Existence and regularity of co-rotating and traveling-wave vortex solutions for the generalized SQG equation,} Journal of Differential Equations 299 (2021), 429--462.
		
		\bibitem{CWWZ21} D. Cao, J. Wan, G. Wang, W. Zhan, \textit{Rotating vortex patches for the planar Euler equations in a disk,} Journal of Differential Equations 275 (2021), 509--532.
		
		\bibitem{CCG2} A. Castro, D. C\'ordoba and  J. G\'omez-Serrano, \textit{Uniformly rotating smooth solutions for the incompressible 2D Euler equations,} Archive for Rational Mechanics and Analysis 231 (2019), no. 2, 719--785.
				
		\bibitem{CCG4} A. Castro, D. C\'ordoba and J. G\'omez-Serrano, \textit{Uniformly rotating analytic global patch solutions for active scalars,} Annals of PDE 2 (2016), no. 1, 1--34.
		
		\bibitem{CL23} A. Castro, D. Lear, \textit{Traveling waves near Couette flow for the 2D Euler equation}. Communications in Mathematical Physics 400 (2023), 2005--2079.
		
                 \bibitem{CJS24} K. Choi, I. Jeong, Y. Sim, \textit{On existence of Sadovskii vortex patch: A touching pair of symmetric counter-rotating uniform vortex,} arXiv:2406.11379.
		
		\bibitem{CO94} M. Chuaqui, B. Osgood, \textit{Ahlfors-Weill Extensions of Conformal Mappings and Critical Points of the Poincar\'{e} Metric,} Commentarii Mathematici Helvetici 69 (1994), 659--668.
		
		\bibitem{CDO11} M. Chuaqui, P. Duren, B. Osgood, \textit{Schwarzian derivatives of convex mappings,} Annales Academiae Scientiarum Fennicae Mathematica 36 (2011), 449--460.
		
		\bibitem{CF13} N. Crouseilles, E. Faou, \textit{Quasi-periodic solutions of the 2D Euler equations,} Asymptotic analysis 81 (2013), no. 1, 31--34.
		
		\bibitem{DDMP23} J. Davila, M. del Pino, M. Musso, S. Parmeshwar, \textit{Global in Time Vortex Configurations for the 2D Euler Equations,} arXiv:2310.07238.
		
		\bibitem{DDMW20} J. Davila, M. del Pino, M. Musso, J. Wei, \textit{Gluing Methods for Vortex Dynamics in Euler Flows,} Archive for Rational Mechanics and Analysis 235 (2020), no. 3, 1467--1530.
		
		\bibitem{HHHM16} F. de la Hoz, Z. Hassainia, T. Hmidi, J. Mateu, \textit{An analytical and numerical study of steady patches in the disc}, Analysis and PDE 9 (2016), no. 10, 1609--1670.
		
		\bibitem{HFMV} F. de la Hoz, T. Hmidi, J. Mateu, J.  Verdera, \textit{Doubly-connected V-states for the planar Euler equations,} SIAM Journal on Mathematical Analysis 48 (2016), no. 3, 1892--1928. 
			
		\bibitem{D22} M. Donati, \textit{Two-Dimensional Point Vortex Dynamics in Bounded Domains: Global Existence for Almost Every Initial Data,} SIAM Journal on Mathematical Analysis 54 (2022), Issue 1.
		
		\bibitem{DI21} M. Donati, D. Iftimie, \textit{Long time confinement of vorticity around a stable stationary point vortex in a bounded planar domain,} Annales de l'Institut Henri Poincar\'{e} C, Analyse non-lin\'{e}aire 38 (2021), 1461--1485.
		
		
		\bibitem{EJ20} T. Elgindi, I. Jeong, \textit{On Singular Vortex Patches, II: Long-time dynamics,} Transactions of the American Mathematical Society 373 (2020), 6757--6775.
		
		\bibitem{EPT22} A. Enciso, D. Peralta-Salas, F. Torres de Lizaur, \textit{Quasi-periodic solutions to the incompressible Euler equations in dimensions two and higher,} Journal of Differential Equations 354 (2023), 170--182.
			
		\bibitem{Evans} L. C. Evans, \textit{Partial differential equations,} 2nd ed. Graduate Studies in Mathematics, v. 19, Providence, R.I: American Mathematical Society, (2010).
		
				\bibitem{FGMP19} R. Feola, F. Giuliani, R. Montalto, M. Procesi, \textit{Reducibility of first order linear operators on tori via Moser's theorem,} Journal of Functional Analysis 276 (2019), no. 3, 932--970.
		
		\bibitem{FG97} M. Flucher, B. Gustafsson, \textit{Vortex motion in two-dimensional hydrodynamics, energy renormalizationand stability of vortex pairs,} B. (1997), TRITA-MAT-1997-MA-02.
				
			\bibitem{FMM23} L. Franzoi, N. Masmoudi, R. Montalto, \textit{Space quasi-periodic steady Euler flows close to the inviscid Couette flow,} arXiv:2303.03302.
		
		\bibitem{FM24} L. Franzoi, R. Montalto, \textit{A KAM approach to the inviscid limit for the 2D Navier-Stokes equations,} Annales de l'Institut Henri Poincar\'e, https://doi.org/10.1007/s00023-023-01408-9 (2024).
		
		\bibitem{FM24-1} L. Franzoi, R. Montalto, \textit{Time almost-periodic solutions of the incompressible Euler equations,} Mathematics in Engeneering 6 (2024), no. 3, 394--406.
		
		\bibitem{Fied} A. Friedman, \textit{Variational Principles and Free-Boundary Problems,} John Wiley \& Sons, New York 1982.

		\bibitem{Grakov} F. D. Gakhov, \textit{On the inverse boundary problems,} Doklady Akademii Nauk SSSR 86 (1952), 649--652.
		
		\bibitem{G19} C. Garc\'{i}a, \textit{K\'{a}rm\'{a}n vortex street in incompressible fluid models,} Nonlinearity 33 (2020), no. 4, 1625--1676.
		
		\bibitem{G20} C. Garc\'{i}a, \textit{Vortex patches choreography for active scalar equations,} Journal of Nonlinear Science 31 (2021), no. 75.
		
		\bibitem{GH23} C. Garc\'{i}a, S. V. Haziot, \textit{Global bifurcation for corotating and counter-rotating vortex pairs,} Communications in Mathematical Physics 402 (2023), 1167--1204.
		
		\bibitem{GHM} C. Garc\'ia, T. Hmidi, J. Mateu, \textit{Time periodic solutions close to localized radial monotone profiles for the 2D Euler equations,} Annals of PDE 10 (2024), no. 1, 1--75.
		 
		\bibitem{GHS} C. Garc\'ia, T. Hmidi, J. Soler, \textit{Non uniform rotating vortices and periodic orbits for the two-dimensional Euler Equations,} Archive for Rational Mechanics and Analysis 238 (2020), 929--1085.

		\bibitem{GVL13} D. G\'erard-Varet, C. Lacave, \textit{The Two Dimensional Euler Equations on Singular Domains,} Archive for Rational Mechanics and Analysis 209 (2013), no. 1, 131--170.
		
		\bibitem{GSIP23} J. G\'{o}mez-Serrano, A. D. Ionescu, J. Park, \textit{Quasiperiodic solutions of the generalized SQG equation,} arXiv:2303.03992.
		
		\bibitem{gomez2019symmetry} J. G\'omez-Serrano, J. Park, J. Shi, Y. Yao, \textit{Symmetry in stationary and uniformly-rotating solutions of active scalar equations,} Duke Mathematical Journal 170 (2021), no. 13, 2957--3038.

		\bibitem{GR15} I. S. Gradshteyn, I. M. Ryshik, Table of integrals, series, and products, Elsevier/ Academic Press, Amstardam, eight edition 2015.
		
		\bibitem{G77} W. Gr\"{o}bli, \textit{Spezielle probleme\"{u}ber die bewegung geradlinier paralleler wirbelf\"{a}den,} Druck von Z\"{u}rcher und Furrer, 1877.
		
		\bibitem{GP22} F. Grotto, U. Pappalettera, \textit{Burst of Point-Vortices and non-uniqueness of 2D Euler equations,} Archive for Rational Mechanics and Analysis 245 (2022), 89–125.
		
		\bibitem{Gusta1} B. Gustaffson, \textit{On the motion of a vortex in two-dimensional flow of an ideal fluid in simply and multiply connected domains,} Research bulletin TRITA-MAT-1979-7 of the Royal Institute of Technology, Stockholm.

		\bibitem{Gust2} B. Gustaffson, \textit{On the convexity of a solution of Liouville's equation,} Duke Mathematical Journal 60 (1990), no. 2, 303-311.
		
		\bibitem{HZ21} Z. Han, A. Zlato$\check{s}$, \textit{Euler Equations on General Planar Domains,} Annals of PDE 7 (2021), no. 20.
		
		\bibitem{HZ22} Z. Han, A. Zlato$\check{s}$, \textit{Uniqueness of Positive Vorticity Solutions to the 2D Euler Equations on Singular Domains,} Nonlinearity 35 (2022), no. 2767.
		
		\bibitem{HH20} Z. Hassainia, T. Hmidi, \textit{Steady asymmetric vortex pairs for Euler equations,} American Institut of Mathematical Science 41 (2021), no. 4, 1939–1996.
				
		\bibitem{HHM21} Z. Hassainia, T. Hmidi, N. Masmoudi, \textit{KAM theory for active scalar equations,} arXiv:2110.08615. To appear in Memoirs of the American Mathematical Society.
		
		\bibitem{HHM23} Z. Hassainia, T. Hmidi, N. Masmoudi, \textit{Rigorous derivation of the leapfrogging motion for planar Euler equations,} arXiv:2311.15765.
		
		\bibitem{HHR23} Z. Hassainia, T. Hmidi, E. Roulley, \textit{Invariant KAM tori around annular vortex patches for 2D Euler equations,} arXiv:2302.01311.
		
		\bibitem{HMW20} Z. Hassainia, N. Masmoudi, M. H. Wheeler, \textit{Global bifurcation of rotating vortex patches,} Communications on Pure and Applied Mathematics 73 (2020), no. 9, 1933--1980.
		
		\bibitem{HR22} Z. Hassainia, E. Roulley, \textit{Boundary effects on the existence of quasi-periodic solutions for Euler equations,} arXiv:2202.10053.
		
		\bibitem{HW22} Z. Hassainia, M. H. Wheeler, \textit{Multipole vortex patch equilibria for active scalar equations,} SIAM Journal on Mathematical Analysis 54 (2022), no. 6, 6054–6095.
		
		\bibitem{H58} H. Helmholtz, \textit{Uber Integrale der hydrodynamischen Gleichungen, welche der Wirbel bewegung entsprechen,} Journal fur die reine und angewandte Mathematik 55 (1858), 25--55.
		
		\bibitem{H08} Y. Hiraoka, \textit{Topological regularizations of the triple collision singularity in the 3-vortex problem,} Nonlinearity 21 (2008), 361--379.
		
		\bibitem{HHRZ24} T. Hmidi, H. Houamed, E. Roulley, M. Zerguine, \textit{Uniformly rotating vortices for the lake equations,} arXiv:2401.14273.
		
		\bibitem{HM3} T. Hmidi, J. Mateu, \textit{Degenerate bifurcation of the rotating patches}, Advances in Mathematics 302 (2016), 799--850.
		 
		\bibitem{HM2} T. Hmidi, J. Mateu, \textit{Bifurcation of rotating patches from Kirchhoff vortices,} Discrete and Continuous Dynamical Systems 36 (2016), no. 10, 5401--5422. 

		\bibitem{HM17} T. Hmidi, J. Mateu, \textit{Existence of corotating and counter-rotating vortex pairs for active scalar equations,} Communications in Mathematical Physics 350 (2017), no. 2, 699--747.
		
		\bibitem{HMV13} T. Hmidi, J. Mateu, J. Verdera, \textit{Boundary Regularity of Rotating Vortex Patches,} Archive for Rational Mechanics and Analysis 209 (2013), no. 1, 171--208.
		
		\bibitem{HR21} T. Hmidi, E. Roulley, \textit{Time quasi-periodic vortex patches for quasi-geostrophic shallow-water equations}, arXiv:2110.13751. To appear in M\'{e}moires de la Soci\'{e}t\'{e} Math\'{e}matique de France.
		
		\bibitem{KS08} A. Karageorghis, Y.-S. Smyrlis, \textit{Conformal mapping for the efficient MFS solution of Dirichlet boundary value problems,} Computing 83 (2008), 1--24.
		
		\bibitem{K21} A. V. Kazantsev, \textit{Conformal Radius Has Unique Critical Point when Pre-Schwarzian Derivative is Subordinate to Classical Majorants,} Lobachevskii Journal of Mathematics 42 (2021), no. 12, 2816--2822.
		
		\bibitem{KK18} A. V. Kazantsev, M. I. Kinder, \textit{Uniqueness of the Critical Point of the Conformal Radius: "Method of Dej\`{a} vu",} Lobachevskii Journal of Mathematics 39 (2018), no. 9, 1370--1376.
		
		\bibitem{K53} J. B. Keller, \textit{The scope of the image method,} Communication on pure and applied mathematics 6 (1953), 505--512.
		
		\bibitem{K76} G. Kirchhoff, \textit{Vorlesungen \"{u}ber mathematische Physik,} (1876).
		
		\bibitem{KS18} V. Krishnamurthy, M. Stremler, \textit{Finite-time collapse of three point vortices in the plane,} Regular and Chaotic Dynamics 23 (2018), 530--550.
		
		\bibitem{LZ19} C. Lacave, A. Zlato$\check{s}$, \textit{The Euler Equations in Planar Domains with Corners,} Archive for Rational Mechanics and Analysis 234 (2019), 57--79.
		
		\bibitem{LP04} F. Laurent-Polz, \textit{Relative periodic orbits in point vortex systems,} Nonlinearity 17 (2004), no. 6.
		
		  \bibitem{Love} A.E.H. Love, On the motion of paired vortices with a common axis, Proceedings of the London Mathematical Society 1 (1893), 185--194.
		
		\bibitem{MP84} C. Marchioro, M. Pulvirenti, \textit{Vortex methods in two-dimensional fluid dynamics,} Lecture notes in physics, Springer-Verlag, 1984.
		
		\bibitem{MM99} H. McKean, V. Moll, \textit{Elliptic curves, function theory, geometry, arithmetic,} Cambridge University Press, (1999).
		
		\bibitem{N52} Z. Nehari, \textit{Conformal mapping,} Dover books on advanced mathematics, (1952).
		
		\bibitem{N76} Z. Nehari, \textit{A property of convex conformal maps,} Journal d'Analyse Math\'{e}matique 30 (1976), 390--393.
		
		\bibitem{Newton} P. K. Newton, {\textit The N-Vortex Problem: Analytical Techniques}, Springer Science \& Business Media 2001.
		
		\bibitem{N75} E. A. Novikov, \textit{Dynamics and statistics of a system of vortices,} Zhurnal Eksperimental'noi i Teoreticheskoi Fiziki 41 (1975), 937--943.

		\bibitem{P92} C. Pommerenke, \textit{Boundary behaviour of conformal maps,} Springer-Verlag (1992).
		
		\bibitem{R22} E. Roulley, \textit{Periodic and quasi-periodic Euler-$\alpha$ flows close to Rankine vortices,} Dynamics of Partial Differential Equations 20 (2023), no. 4, 311--366.
		
		\bibitem{R81} E. J. Routh, Proceedings of the London Mathematical Society (1881).
		
		\bibitem{S76} A. Schett, \textit{Properties of the Taylor series expansion coefficients of the Jacobian elliptic functions,} Mathematics of computations 30 (1976), no. 133, 143--147.
		
		\bibitem{SV10} D. Smets, J.Van Schaftingen, \textit{Desingulariation of vortices for the Euler equation,} Archive for Rational Mechanics and Analysis 198 (2010), 869--925.
		
		\bibitem{T85} B. Turkington, \textit{Corotating steady vortex flows with N-fold symmetry,} Nonlinear Analysis 9 (1985), no. 4, 351--369.
		
		\bibitem{Y63} Y. Yudovich, \textit{Nonstationary flow of an ideal incompressible liquid,} Ussr Computational Mathematics and Mathematical Physics 3 (1963), 1032--1066.
	\end{thebibliography}
\end{document}